\documentclass{amsart}

\usepackage{amsfonts}
\usepackage{amssymb}
\usepackage{enumerate}
\usepackage{amsmath}
\usepackage{amsthm}
\usepackage{graphicx}
\usepackage[english]{babel}

\numberwithin{equation}{section}

\newtheorem{theorem}{Theorem}[section]
\newtheorem{lemma}[theorem]{Lemma}

\newtheorem{problem}[theorem]{Problem}

\newtheorem{corollary}[theorem]{Corollary}
\newtheorem{statement}[theorem]{Statement}
\newtheorem{fact}[theorem]{Fact}
\newtheorem*{m}{Theorem \ref{t:main}}
\newtheorem*{M}{Theorem \ref{t:Main}}
\newtheorem*{c:occup}{Corollary \ref{c:occup}}
\newtheorem*{c:MainHP}{Corollary \ref{c:MainHP}}
\newtheorem*{c:Mainp}{Corollary \ref{c:Mainp}}
\newtheorem*{c:HP}{Corollary \ref{c:HP}}
\newtheorem*{occup2}{Corollary \ref{c:occup2}}
\newtheorem*{ex}{Theorem \ref{t:ex}}
\newtheorem*{sing}{Theorem \ref{t:sing}}
\newtheorem*{nonshy}{Theorem \ref{t:nonshy}}
\newtheorem*{maxlevel}{Theorem \ref{t:maxlevel}}

\newtheorem*{t:graph}{Theorem \ref{t:graph}}
\newtheorem*{t:graph2}{Theorem \ref{t:graph2}}

\theoremstyle{definition}
\newtheorem{example}[theorem]{Example}
\newtheorem{remark}[theorem]{Remark}
\newtheorem{definition}[theorem]{Definition}
\newtheorem{notation}[theorem]{Notation}

\DeclareMathOperator{\inter}{int}

\DeclareMathOperator{\cl}{cl}
\DeclareMathOperator{\diam}{diam}
\DeclareMathOperator{\dist}{dist}
\DeclareMathOperator{\Lip}{Lip}

\DeclareMathOperator{\graph}{graph}
\DeclareMathOperator{\pr}{pr}
\DeclareMathOperator{\supp}{supp}
\DeclareMathOperator{\Var}{Var}
\DeclareMathOperator{\Prob}{Pr}
\DeclareMathOperator{\conv}{conv}

\newcommand{\NN}{\mathbb{N}}

\newcommand{\RR}{\mathbb{R}}

\newcommand{\iA}{\mathcal{A}}
\newcommand{\iB}{\mathcal{B}}
\newcommand{\iC}{\mathcal{C}}

\newcommand{\iG}{\mathcal{G}}
\newcommand{\iH}{\mathcal{H}}
\newcommand{\iI}{\mathcal{I}}
\newcommand{\iJ}{\mathcal{J}}
\newcommand{\iK}{\mathcal{K}}
\newcommand{\iL}{\mathcal{L}}

\newcommand{\iR}{\mathcal{R}}
\newcommand{\iF}{\mathcal{F}}
\newcommand{\iP}{\mathcal{P}}
\newcommand{\iS}{\mathcal{S}}
\newcommand{\iN}{\mathcal{N}}
\newcommand{\iU}{\mathcal{U}}
\newcommand{\iT}{\mathcal{T}}
\newcommand{\iV}{\mathcal{V}}

\newcommand{\iZ}{\mathcal{Z}}
\newcommand{\iY}{\mathcal{Y}}

\overfullrule10pt

\begin{document}

\title[Dimensions of fibers and graphs of prevalent continuous maps]{Hausdorff and packing dimension of fibers and graphs of prevalent continuous maps}
\author{Rich\'ard Balka}
\address{Current affiliation: Department of Mathematics, University of British Columbia, and Pacific Institute for the Mathematical Sciences, Vancouver,
BC V6T 1Z2, Canada}
\address{Former affiliations: Department of Mathematics, University of Washington, Box 354350, Seattle, WA 98195-4350, USA and Alfr\'ed R\'enyi Institute of Mathematics, Hungarian Academy of Sciences, PO Box 127, 1364 Budapest, Hungary}
\email{balka@math.ubc.ca}
\thanks{The first author was supported by the
Hungarian Scientific Research Fund grants no.~72655 and 104178. The third author was supported by the Hungarian
Scientific Research Fund grants no.~72655, 83726 and 104178.}

\author{Udayan B. Darji}

\address{Department of Mathematics, University of Louisville, Louisville, KY 40292,
USA}

\email{ubdarj01@louisville.edu}

\author{M\'arton Elekes}

\address{Alfr\'ed R\'enyi Institute of Mathematics, Hungarian Academy of Sciences,
PO Box 127, 1364 Budapest, Hungary and E\"otv\"os Lor\'and
University, Institute of Mathematics, P\'azm\'any P\'eter s. 1/c,
1117 Budapest, Hungary}

\email{elekes.marton@renyi.mta.hu}

\subjclass[2010]{Primary: 28A78, 28C10, 46E15, 60B05; Secondary: 54E52.}

\keywords{Haar null, shy, prevalent, Hausdorff dimension, packing dimension, level set, fiber, graph, continuous map, H\"older map, Lipschitz map, ultrametric space, Baire category, generic, occupation measure, Brownian motion.}

\begin{abstract} The notions of shyness and prevalence generalize the property of being zero and full Haar measure to arbitrary (not necessarily locally compact) Polish groups.
The main goal of the paper is to answer the following question:
What can we say about the Hausdorff and packing dimension of the fibers of prevalent continuous maps?

Let $K$ be an uncountable compact metric space. We prove that the prevalent $f\in C(K,\RR^d)$
has many fibers with almost maximal Hausdorff dimension.
This generalizes a theorem of Dougherty and yields that the prevalent $f\in C(K,\RR^d)$ has graph of maximal Hausdorff dimension, generalizing a result of Bayart and Heurteaux. We obtain similar results for the packing dimension.

We show that for the prevalent $f\in C([0,1]^m,\RR^d)$ the set of $y\in f([0,1]^m)$ for which $\dim_H f^{-1}(y)=m$ contains a
dense open set having full measure with respect to the occupation measure $\lambda^m \circ f^{-1}$, where $\dim_H$ and
$\lambda^m$ denote the Hausdorff dimension and the $m$-dimensional Lebesgue measure, respectively. We also prove an analogous result when $[0,1]^m$ is replaced by any self-similar set satisfying the open set condition.

We cannot replace the occupation measure with Lebesgue measure in the above statement: We show that the functions
$f\in C[0,1]$ for which positively many level sets are singletons form a non-shy set in $C[0,1]$.
In order to do so, we generalize a theorem of Antunovi\'c, Burdzy, Peres and Ruscher. As a complementary result
we prove that the functions $f\in C[0,1]$ for which
$\dim_H f^{-1}(y)=1$ for all $y\in (\min f,\max f)$ form a non-shy set in $C[0,1]$.

We also prove sharper results in which large Hausdorff dimension is replaced by positive measure with respect to generalized Hausdorff measures, which answers a problem of Fraser and Hyde.
\end{abstract}

\maketitle

\tableofcontents

\section{Introduction}

Let $G$ be a \emph{Polish group}, that is, a separable topological group which is endowed with a compatible complete metric.
If $G$ is locally compact then there exists a \emph{Haar measure} on $G$, that is,
a left translation invariant regular Borel measure which is finite on compact sets and positive on non-empty open sets.
The concept of Haar measure does not extend to groups that are not locally compact, but the idea of Haar measure zero sets does. The following definition is due to Christensen \cite{C} and was rediscovered by Hunt, Sauer and York \cite{HSY}.

\begin{definition} \label{d:shy} For an abelian Polish group $G$ a set $A\subset G$ is \emph{shy} or \emph{Haar null}
if there exists a Borel set $B\subset G$ and a Borel probability measure $\mu$ on $G$ such that $A\subset B$ and $\mu\left(B+x\right) = 0$ for all $x\in G$.
The complement of a shy set is called a \emph{prevalent} set.
\end{definition}

Christensen proved in \cite{C} that shy sets form a $\sigma$-ideal and in locally compact abelian Polish groups Haar measure zero sets and shy sets coincide.
Later Tops{\o}e and Hoffmann-J{\o}rgensen \cite{THJ} and Mycielski \cite{M} extended the definition to all Polish groups,
but here we consider only the abelian case.

\begin{notation} The Hausdorff and packing dimension of a metric space $X$ is denoted by $\dim_H X$ and $\dim_P X$.
We use the convention $\dim_H \emptyset=\dim_P  \emptyset=-1$.
For a compact metric space $K$ let us denote by $C(K,\mathbb{R}^d)$ the set of continuous functions from $K$ to $\mathbb{R}^d$
endowed with the maximum norm. Then $C(K,\mathbb{R}^d)$ is a Banach space. We simply write $C[0,1]=C([0,1],\RR)$.
\end{notation}

Over the last 25 years there has been a large interest in studying dimensions of various sets related to `typical' continuous functions. If typical means generic in the sense of Baire category, then the following theorem about level sets is folklore.

\begin{theorem}\label{t:folklore} For the generic $f\in C[0,1]$ for all $y\in f([0,1])$ we have
$$\dim_H f^{-1}(y)=0.$$
\end{theorem}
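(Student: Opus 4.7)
My plan is to reduce the statement to showing that a countable intersection of dense open sets in $C[0,1]$ consists of functions with the desired property. Write
$$
\{f\in C[0,1] : \forall y\in\RR,\ \dim_H f^{-1}(y)=0\}\ =\ \bigcap_{n,k\in\NN} U_{n,k},
$$
where $U_{n,k}=\{f\in C[0,1] : \mathcal{H}^{1/n}_\infty(f^{-1}(y))<1/k \text{ for every } y\in\RR\}$, using that for bounded sets $\mathcal{H}^s(E)=0$ iff $\mathcal{H}^s_\infty(E)=0$, and that $\dim_H E=0$ iff $\mathcal{H}^{1/n}(E)=0$ for every $n$ (with the convention $\dim_H\emptyset=-1$). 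It therefore suffices to prove that each $U_{n,k}$ contains a dense open subset of $C[0,1]$.

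The core quantitative lemma is the following: if $g\in C[0,1]$ is piecewise linear with $N$ pieces and every slope of absolute value exactly $M$, and if $\|f-g\|_\infty<\eta$, then for each $y\in\RR$,
$$
f^{-1}(y)\subset g^{-1}\bigl([y-\eta,y+\eta]\bigr),
$$
and the right-hand side is a union of at most $N$ intervals, each of length $\leq 2\eta/M$. Consequently $\mathcal{H}^{1/n}_\infty(f^{-1}(y))\leq N(2\eta/M)^{1/n}$. Fixing $n,k$ and choosing $M$ large and $\eta$ small relative to $M$ makes this bound strictly less than $1/k$ uniformly in $y$, so the open ball $B(g,\eta)$ sits inside $U_{n,k}$.

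To finish I would verify density: given any $h\in C[0,1]$ and $\varepsilon>0$, first approximate $h$ within $\varepsilon/2$ by a piecewise linear function $h_1$, and then add a sawtooth perturbation of amplitude $\varepsilon/2$ with very fine teeth. By choosing the teeth sufficiently narrow, the resulting piecewise linear function $g$ can be made to have slopes of absolute value as large as desired on every piece while remaining within $\varepsilon$ of $h$. This produces the required steep piecewise linear approximations inside every ball of $C[0,1]$, giving the dense open subset of $U_{n,k}$ around each such $g$. Intersecting over $n,k\in\NN$ yields the comeager set of the statement.

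The only delicate point is bookkeeping the order of quantifiers in the approximation step: once $n$ and $k$ are fixed, the number of pieces $N$ of $g$ may be forced to be large in order to control the uniform closeness to $h$, after which $M$ must be chosen large relative to $N$, and finally $\eta$ small relative to $M$, so that $N(2\eta/M)^{1/n}<1/k$. This is the only real obstacle; it is a routine computation but it drives the whole construction.
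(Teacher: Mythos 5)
Your argument is essentially correct, and since the paper explicitly labels Theorem~\ref{t:folklore} as folklore and includes no proof, there is nothing in the paper to compare it against. Your approach --- writing the target set as a countable intersection of sets $U_{n,k}$ defined by the bound $\mathcal{H}^{1/n}_\infty(f^{-1}(y))<1/k$, then showing each $U_{n,k}$ contains a dense open set built from balls around steep piecewise linear functions --- is exactly the standard Baire-category proof of this fact, and all the steps check out: the inclusion $f^{-1}(y)\subset g^{-1}([y-\eta,y+\eta])$, the covering of the latter by at most $N$ intervals of length $\le 2\eta/M$, and the resulting bound $\mathcal{H}^{1/n}_\infty(f^{-1}(y))\le N(2\eta/M)^{1/n}$ which is made $<1/k$ by shrinking $\eta$ once $N$ and $M$ are fixed.

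Two small remarks. First, your displayed equality should read $\dim_H f^{-1}(y)\le 0$ rather than $=0$ if the quantifier ranges over all $y\in\RR$, since with the convention $\dim_H\emptyset=-1$ the set on your left-hand side as written is empty; of course restricting to $y\in f([0,1])$ restores equality, and your $U_{n,k}$ decomposition handles this correctly since $\mathcal{H}^{1/n}_\infty(\emptyset)=0$. Second, the quantifier bookkeeping is a touch more generous than needed: once $g$ is fixed (so $N$ is finite and all slopes have magnitude $\ge M>0$), one can always take $\eta$ small enough to force $N(2\eta/M)^{1/n}<1/k$ regardless of how $M$ compares to $N$; you only need $M$ large enough to dominate the slopes of the intermediate piecewise linear approximant $h_1$, ensuring no piece of $g$ is flat.
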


Mauldin and Williams \cite{MW} proved the next theorem.

\begin{theorem}[Mauldin-Williams] \label{t:MW} For the generic $f\in C[0,1]$ we have
$$\dim_H \graph(f)=1.$$
\end{theorem}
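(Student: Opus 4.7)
The plan is to prove the generic equality $\dim_H \graph(f) = 1$ by establishing the lower bound for \emph{every} $f \in C[0,1]$ and the upper bound for the generic $f$ separately. The lower bound is immediate: the first-coordinate projection $\graph(f) \to [0,1]$ is a $1$-Lipschitz surjection, so $\dim_H \graph(f) \geq \dim_H [0,1] = 1$.

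For the upper bound I would show that for every $s > 1$, the set
$$R_s = \{f \in C[0,1] : \mathcal{H}^s(\graph(f)) = 0\}$$
is residual in $C[0,1]$; intersecting over $s = 1 + 1/n$ then yields $\dim_H \graph(f) \leq 1$ for the generic $f$. Since $\graph(f)$ is compact, one may compute $\mathcal{H}^s(\graph(f))$ using finite open covers, so
$$R_s = \bigcap_{m,k \in \NN} U_{m,k},$$
where $U_{m,k}$ is the set of those $f$ admitting a finite open cover $V_1,\dots,V_N$ of $\graph(f)$ with $\diam V_i < 1/k$ and $\sum_i \diam(V_i)^s < 1/m$.

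It then suffices to check that each $U_{m,k}$ is open and dense. For openness, if $V_1,\dots,V_N$ witnesses $f \in U_{m,k}$, then $V := \bigcup_i V_i$ is an open neighborhood of the compact set $\graph(f)$, hence $\graph(f) + B(0,\rho) \subset V$ for some $\rho > 0$. Any $g \in C[0,1]$ with $\|g - f\|_\infty < \rho$ has $\graph(g) \subset \graph(f) + \{0\} \times (-\rho,\rho) \subset V$, so the \emph{same} finite family witnesses $g \in U_{m,k}$. For density, piecewise linear functions are dense in $C[0,1]$, and any polygonal arc of total length $L$ can be covered by roughly $\lceil L/\delta \rceil$ open balls of diameter $\delta$, with resulting $s$-sum of order $L \delta^{s-1}$, which tends to $0$ as $\delta \to 0$ because $s > 1$; thus every piecewise linear function lies in $U_{m,k}$.

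The main obstacle is the technical care required in the openness argument: one must invoke the compactness of $\graph(f)$ both to reduce to finite covers (so that $\mathcal{H}^s(\graph(f)) = 0$ can be witnessed by a \emph{single} finite family) and to run the Lebesgue-number-type argument above, ensuring that the same cover continues to work under small perturbations of $f$ without any need to recompute diameters or $s$-sums.
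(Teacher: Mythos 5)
Your proof is correct and self-contained. Note that the paper does not actually prove Theorem~\ref{t:MW}; it is cited from Mauldin and Williams~\cite{MW}, so there is no in-paper argument to compare against. Your argument is the natural Baire-category scheme for ``generic objects have minimal dimension'': lower bound for free via the $1$-Lipschitz projection onto the first coordinate, upper bound by showing $R_s = \{f : \iH^s(\graph(f))=0\}$ is residual for every $s>1$ and intersecting along a sequence $s_n \searrow 1$.

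All the steps check out. The identity $R_s = \bigcap_{m,k} U_{m,k}$ is valid because for a compact set one may reduce Hausdorff pre-measure computations to finite open covers (fatten the covering sets slightly and extract a finite subcover, with negligible change to the $s$-sum). The openness argument is the right one: compactness of $\graph(f)$ inside the open set $V=\bigcup_i V_i$ gives a uniform margin $\rho>0$, a vertical $\rho$-perturbation of $f$ keeps the graph inside $V$, and the \emph{same} finite family $V_1,\dots,V_N$ (hence the same diameters and $s$-sum) witnesses membership in $U_{m,k}$. The density argument through piecewise linear functions is also sound: a polygonal arc of length $L$ is covered by $O(L/\delta)$ balls of diameter $\delta$, giving $s$-sum $O(L\delta^{s-1})\to 0$, and this can be made both $<1/m$ and with diameters $<1/k$ by taking $\delta$ small. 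One remark: the Mauldin--Williams paper in fact proves a sharper statement about the $1$-dimensional Hausdorff measure of the generic graph (not merely the dimension), but for the theorem as stated here your more elementary residuality argument fully suffices.
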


As for the higher dimensional analogues, the next result was obtained by Kirchheim \cite{Ki}.

\begin{theorem}[Kirchheim]
If $m,d\in \NN^+$ and  $m\geq d$ then for the generic
$f \in C([0,1]^m, \RR^d)$ for all $y\in \inter f\left([0,1]^m\right)$ we have
\[
\dim_H f^{-1}(y)=m-d.
\]
\end{theorem}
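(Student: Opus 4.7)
My plan is to prove the theorem via Baire category, establishing the upper bound $\dim_H f^{-1}(y)\le m-d$ (for all $y$) and the lower bound $\dim_H f^{-1}(y)\ge m-d$ (for $y\in\inter f([0,1]^m)$) on separate dense $G_\delta$ subsets of $C([0,1]^m,\RR^d)$, and intersecting.

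For the upper bound, for each $\epsilon>0$ and $n\in\NN$ I would let $U_{\epsilon,n}$ be the set of $f$ such that every fiber $f^{-1}(y)$ admits a cover by sets of diameter $\le 1/n$ with $(m-d+\epsilon)$-power-sum of diameters $<1/n$. Openness of $U_{\epsilon,n}$ follows from the inclusion $g^{-1}(y)\subseteq f^{-1}(\bar B(y,\|f-g\|_\infty))$, which allows a single cover to serve nearby maps after mild inflation. Density uses piecewise linear approximation: partition $[0,1]^m$ into cubes of mesh $h$ and replace $f$ by a PL map $g$ with vertex values in general position. Each cube contributes either nothing or an affine $(m-d)$-slice of diameter $O(h)$ to the fiber $g^{-1}(y)$, and the average over $y$ of the number of cubes meeting $g^{-1}(y)$ is $O(h^{-(m-d)})$; choosing $g$ carefully makes this bound uniform in $y$, so the total $(m-d+\epsilon)$-power-sum is $O(h^\epsilon)$, which is $<1/n$ once $h$ is small. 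Intersecting over rational $\epsilon$ and $n\in\NN$ yields the upper bound.

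For the lower bound, the topological tool is the Hurewicz--Wallman inequality: for any continuous map $g\colon Z\to\RR^d$ with $Z$ a compact metric space of topological dimension at least $m$, one has $m\le d+\sup_{w}\dim_{\mathrm{top}} g^{-1}(w)$. Applied to $f$ restricted to $f^{-1}(B)$ for each open ball $B\subseteq\inter f([0,1]^m)$, this produces fibers of topological (hence Hausdorff) dimension $\ge m-d$ at a \emph{dense} subset of $\inter f([0,1]^m)$, for every continuous $f$. To upgrade density to ``every interior point'' generically, for each rational ball $B=B(q,1/n)$ and each $k\in\NN$ I would set
\[
W_{B,k}=\{f:B\not\subseteq\inter f([0,1]^m)\}\cup\{f:\iH^{m-d-1/k}_{1/k}(f^{-1}(y))>k\text{ for all }y\in B\},
\]
verify that $W_{B,k}$ is open (via Hausdorff continuity of fibers and lower semicontinuity of restricted Hausdorff content) and dense (perturb $f$ by post-composing with a small diffeomorphism of $\RR^d$ that sweeps every point of $B$ near a designated high-dimensional preimage of $f$), and intersect over $B,k$.

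The main obstacle is the density step in the lower bound: producing fibers of dimension $\ge m-d$ at \emph{every} point of a neighborhood $B$ simultaneously, rather than at a single or merely dense subset of points, requires a careful perturbation propagating topological richness uniformly across a whole open ball, and this uniformization is the substantive core of the argument.
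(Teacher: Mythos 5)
The paper does not prove this theorem---it is cited directly from Kirchheim \cite{Ki}---so there is no internal proof to compare against; the following assesses your proposal on its own merits.

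The overall shape (intersect dense $G_\delta$ sets witnessing the upper and lower bound, using that topological dimension bounds Hausdorff dimension from below) is reasonable, and the upper-bound sketch is plausible in spirit. However, the stated openness argument for $U_{\epsilon,n}$ is incomplete: a cover of $f^{-1}(y)$ by small sets need not cover $f^{-1}\left(\bar B(y,\delta)\right)$, because the latter may contain points of $[0,1]^m$ far from $f^{-1}(y)$ where $f$ merely takes values close to $y$. ``Mild inflation'' of the cover in the domain does not automatically reach those points. A correct formulation must build the $\delta$-margin into the covering condition (cover the thickened preimage rather than the single fiber), and the argument must then be made uniform in $y$ using compactness; this is repairable but is a real issue as written.

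The genuine gap is the lower-bound density step. Post-composing $f$ with a diffeomorphism $\phi$ of $\RR^d$ gives $(\phi\circ f)^{-1}(y)=f^{-1}\left(\phi^{-1}(y)\right)$. A small diffeomorphism is a bijection close to the identity, so this merely relabels the fibers of $f$ by a nearby bijection of $\RR^d$; it cannot merge fibers, and it cannot convert ``$f$ has a large fiber at a dense set of $y\in B$'' into ``$f$ has a large fiber at every $y\in B$.'' If the set of exceptional $y$ with small fibers is nonempty for $f$, it is nonempty for $\phi\circ f$ as well, for every diffeomorphism $\phi$ near the identity. You correctly identify uniformization across all of $B$ as the substantive core of the argument, but the proposed target-side perturbation cannot supply it: what is needed is a domain-side perturbation that simultaneously creates, for every $y\in B$, a preimage piece of topological dimension $m-d$ (and of definite $(m-d-1/k)$-content), together with a verification that such perturbed maps lie in an open subset of $W_{B,k}$. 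That construction is exactly what remains missing.
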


Now let $K$ be an arbitrary compact metric space. In order to determine the Hausdorff dimension of the level sets of the generic $f\in C(K,\RR)$,
we need a new notion of dimension, the \emph{topological Hausdorff dimension}, see \cite{BBE} and \cite{BBE2}.
More generally, the right concept to describe the Hausdorff dimension of the fibers of the generic $f\in C(K,\RR^d)$ is the so-called \emph{$d^\textrm{th}$ inductive topological Hausdorff dimension}, see \cite{B}.

The case of graphs is much simpler, the strategy of Mauldin and Williams actually easily yields the following general result, see also \cite{BBE2}.

\begin{theorem}
Let $K$ be an uncountable compact metric space and $d\in \mathbb{N}^+$.
Then for the generic $f\in C(K, \RR^d)$ we have
\[
\dim_H \graph(f)=\dim_H K.
\]
\end{theorem}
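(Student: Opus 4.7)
The plan is to split the equality $\dim_H\graph(f)=\dim_H K$ into the unconditional lower bound $\dim_H\graph(f)\ge \dim_H K$ and the residual upper bound $\dim_H\graph(f)\le\dim_H K$. For the lower bound, equip $K\times\RR^d$ with the maximum product metric; then the projection $\pi_K\colon \graph(f)\to K$, $(x,f(x))\mapsto x$, is a $1$-Lipschitz bijection, so $\dim_H K\le\dim_H\graph(f)$ holds for every $f\in C(K,\RR^d)$ with no genericity needed.

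For the upper bound it suffices to show that, for every rational $s>\dim_H K$, the set
\[
G_s=\{f\in C(K,\RR^d):\dim_H\graph(f)\le s\}
\]
is comeager; intersecting over rationals $s\downarrow\dim_H K$ then gives the theorem. I would rewrite $G_s=\bigcap_{n\ge 1}U_{s,1/n}$, where
\[
U_{s,\eta}=\{f\in C(K,\RR^d):\iH^s_\infty(\graph(f))<\eta\}
\]
and $\iH^s_\infty$ denotes $s$-dimensional Hausdorff content, and then verify that each $U_{s,\eta}$ is open and dense.

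Openness follows from an upper-semicontinuity argument. Given $f\in U_{s,\eta}$, cover $\graph(f)$ by open balls $\{B_j\}$ with $\sum\diam(B_j)^s<\eta$; by compactness of $\graph(f)$ extract a finite subcover, still with sum bound $<\eta$; then for any $g$ sufficiently close to $f$ in sup norm, $\graph(g)$ lies in these finitely many balls slightly enlarged in the $\RR^d$-factor, and the $\diam^s$-sum varies continuously with the enlargement, so $g\in U_{s,\eta}$.

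For density, I use that Lipschitz maps are dense in $C(K,\RR^d)$. Given $f_0$ and $\epsilon>0$, choose $\delta>0$ so that $d_K(x,y)<\delta$ implies $\|f_0(x)-f_0(y)\|<\epsilon$, take a finite open cover $\{V_i\}$ of $K$ by sets of diameter less than $\delta$, pick $x_i\in V_i$, and let $\{\phi_i\}$ be a Lipschitz partition of unity subordinate to $\{V_i\}$; set $f=\sum_i\phi_i\,f_0(x_i)$. Then $f$ is Lipschitz with $\|f-f_0\|_\infty<\epsilon$, and since $\graph(f)$ is a Lipschitz image of $K$ under $x\mapsto(x,f(x))$, one gets $\dim_H\graph(f)\le\dim_H K<s$, which forces $\iH^s_\infty(\graph(f))=0<\eta$. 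The main technical point is really the openness step — extracting a finite near-minimising cover of the compact graph and controlling the $\diam^s$-sum under small $C^0$-perturbations of $f$ — but this is routine once the maximum product metric and open-ball covers are fixed.
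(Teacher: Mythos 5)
Your argument is correct, and it is essentially the approach the paper has in mind: the paper does not prove this theorem explicitly but states it follows from the Mauldin–Williams strategy (see~\cite{MW} and~\cite{BBE2}), which is exactly the decomposition you use — the trivial lower bound via the $1$-Lipschitz projection $\graph(f)\to K$, density of Lipschitz maps (with bi-Lipschitz graphs), and openness of the small-content sets $U_{s,\eta}$ via a finite-subcover/perturbation argument. The only cosmetic slip is that $G_s=\bigcap_n U_{s,1/n}$ is really only an inclusion $\supseteq$ (one could have $\dim_H\graph(f)=s$ with $\iH^s(\graph(f))>0$), but since comeagerness of the smaller set suffices, this does not affect the proof.
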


These theorems indicate that the generic $f\in C[0,1]$ behaves quite regularly in
a sense, e.g. its level sets and graph have minimal Hausdorff
dimension, similarly to the case of smooth functions. It is quite natural to expect more chaotic
behavior from typical continuous functions, which is already a reason to replace genericity with another notion.
Moreover, since these problems are measure theoretic in nature, it is natural to
replace Baire category by the more measure theoretic concept of prevalence.

\bigskip

In contrast to Theorem~\ref{t:folklore}, we show that the prevalent $f\in C[0,1]$ has fibers of maximal Hausdorff dimension.
Let us denote by $\lambda$ the one-dimensional Lebesgue measure.
(Note that $\dim_H X\leq \dim_P X$ for every metric space $X$, so the packing dimension analogue of the following
statement would be weaker.)

\begin{c:occup} For the prevalent $f\in C[0,1]$ there is an open set $U_f\subset \RR$ such that $\lambda(f^{-1}(U_f))=1$ (hence $U_f$
is dense in $f([0,1])$) and for all $y\in U_f$ we have
$$\dim_H f^{-1}(y)=1.$$
\end{c:occup}

In general, prevalent continuous maps have many fibers of cardinality continuum, for the following theorem see
\cite[Theorem~11]{D} and the remark following its proof.

\begin{theorem}[Dougherty] \label{t:D} Let $K$ be an uncountable compact metric space\footnote{Dougherty proved this result only if $K$ is the triadic Cantor set. As every uncountable compact metric space contains a subset homeomorphic to the triadic Cantor set by \cite[Cor.~6.5]{K}, Corollary~\ref{c:hereditary} yields this more general result.} and let $d\in \mathbb{N}^+$.
Then for the prevalent $f\in C(K,\mathbb{R}^d)$ we have
$$\inter f(K)\neq \emptyset.$$
Moreover, there is a non-empty open set $U_f\subset \mathbb{R}^d$ such that for all $y\in U_f$ we have
$$\# f^{-1}(y)=2^{\aleph_0}.$$
\end{theorem}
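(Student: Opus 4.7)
By Corollary~\ref{c:hereditary} and the fact that every uncountable compact metric space contains a homeomorphic copy of the triadic Cantor set (\cite[Cor.~6.5]{K}), it suffices to prove the theorem when $K = \{0,1\}^{\NN}$; write $K_s \subset K$ for the clopen cylinder of sequences extending a finite word $s \in \{0,1\}^{<\NN}$. I would construct a transverse probe measure $\mu$ on $C(K,\RR^d)$ as the law of a tree-indexed random function: let $\{a_s : s \in \{0,1\}^{<\NN}\}$ be i.i.d.\ uniform random vectors on $[0,1]^d$, and set
$$F(\omega) = \sum_{n=1}^{\infty} 2^{-n} a_{\omega|_n}.$$
The series is uniformly majorized, so $F$ is almost surely continuous and $\mu$ is a Borel probability on $C(K,\RR^d)$. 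The decisive feature of $\mu$ is scale-wise self-similarity: for each word $s$, the restriction $F|_{K_s}$ differs from $2^{-|s|}$ times an independent copy of $F$ by a random additive constant, and sibling cylinders $K_{s\cdot 0}$ and $K_{s\cdot 1}$ carry conditionally independent randomness.

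The heart of the argument is to show that for every fixed $g \in C(K,\RR^d)$, $\mu$-almost every $F$ makes $h := g + F$ satisfy the theorem's conclusion: a non-empty open $U_h \subset \RR^d$ lies in $h(K)$ and $\#h^{-1}(y) = 2^{\aleph_0}$ for every $y \in U_h$. For each $y$ in a suitable random open target, I would build a perfect subset of $h^{-1}(y)$ by a level-by-level branching argument: inductively keep a binary subtree of cylinders $K_s$, and at each step use the independent random shift $2^{-|s|} a_s$ (uniform on a cube of side $2^{-|s|}$) to absorb the deterministic increments coming from $g$ and from the partial sum $Y_s = \sum_{k<|s|} 2^{-k} a_{s|_k}$. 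Since $\osc(g, K_s) \to 0$ as $|s|\to\infty$ and the random shift is uniform on a $d$-dimensional cube of comparable diameter, a Borel--Cantelli estimate shows that with probability one both children of every surviving cylinder can be retained; intersecting the surviving cylinders produces a Cantor subset of $h^{-1}(y)$ for each $y \in U_h$.

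The main obstacle is moving from a pointwise-in-$y$ statement to the required uniform one: a single almost-sure event must work for an entire \emph{open} set of targets $y$, not just a countable family. This is where the $d$-dimensionality of the random shifts is essential---at each level $n$ the joint distribution of the $2^n$ shifts carries positive $d$-dimensional Lebesgue density, so the set of simultaneously attainable targets at level $n$ contains a random cube $Q_n \subset \RR^d$ of positive measure. Controlling the nested intersection $\bigcap_n Q_n$ via self-similarity and independence yields a nontrivial random open $U_h$ on which the branching runs uniformly. The conclusion $\inter h(K) \neq \emptyset$ is then an immediate byproduct of $U_h \neq \emptyset$.
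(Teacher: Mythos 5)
Your reduction to the binary Cantor set via the embedding $\{0,1\}^{\NN}\hookrightarrow K$ and Corollary~\ref{c:hereditary} is exactly what the paper does; at that point the paper simply invokes Dougherty's theorem \cite[Theorem~11]{D} and gives no further argument, whereas you attempt to reprove the Cantor-set case from scratch. Unfortunately your probe measure cannot work when $d\geq 2$. With $F(\omega)=\sum_{n\geq 1}2^{-n}a_{\omega|_n}$ and $a_s\in[0,1]^d$, the image $F(K)$ is covered at scale $n$ by the $2^n$ cubes $b_s+[0,2^{-n}]^d$ ($b_s=\sum_{k\leq n}2^{-k}a_{s|_k}$, $|s|=n$), since the tail has diameter at most $2^{-n}$. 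Hence $N_{2^{-n}}(F(K))\leq 2^n$, so $\overline{\dim}_B F(K)\leq 1$ and in particular $\dim_H F(K)\leq 1<d$, which forces $\lambda^d(F(K))=0$ and $\inter F(K)=\emptyset$ for every realization. Taking $g=0$ shows that $\mu$ assigns measure $0$, not $1$, to the translate of $\{h:\inter h(K)\neq\emptyset\}$, so $\mu$ is not a valid witness of prevalence. This is the same counting obstruction you brush past when you assert that the $2^n$ partial sums ``carry positive $d$-dimensional Lebesgue density'': covering a fixed cube in $\RR^d$ to accuracy $2^{-n}$ requires on the order of $2^{nd}$ points, and you only have $2^n$ of them.

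The fix is to slow the decay to match the codomain dimension: on $\{0,1\}^{\NN}$ one should take coefficients of order $2^{-n/d}$ (or, equivalently, keep coefficients $2^{-n}$ but work on a $2^d$-ary tree), so that the level-$n$ cover consists of $2^n$ cubes of side comparable to $2^{-n/d}$, which is the borderline case where the partial sums can plausibly be $2^{-n/d}$-dense in a fixed cube. With that change the rest of your outline — self-similar structure of the tail, a level-by-level branching argument retaining both children to build a perfect subset of each fiber, and a Borel--Cantelli/positivity estimate to obtain a whole random open target set $U_h$ — is in the right spirit and is essentially the strategy of Dougherty's original proof, but the quantitative bookkeeping (survival probabilities of both children, propagating the random cube $Q_n$) needs to be carried out with the corrected decay rate. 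You should also note that, to apply Solecki's criterion (Theorem~\ref{t:S}) as the paper does elsewhere, one needs the target set $\{f:\inter f(K)\neq\emptyset$ and $\#f^{-1}(y)=2^{\aleph_0}$ on a non-empty open set$\}$ to be co-analytic; this is routine but should be said.
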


The next theorem widely generalizes Corollary~\ref{c:occup} and Theorem~\ref{t:D} in Euclidean spaces.
We can find many fibers not only of cardinality continuum, but also with almost maximal Hausdorff and packing dimension.
A Borel measure $\mu$ on a metric space $X$ is called a \emph{mass distribution} if $0<\mu(X)<\infty$.
For the definition of dimensions of mass distributions and their properties see the Preliminaries section.

\begin{M}[Main Theorem, simplified version] Assume that $m,d\in \NN^+$ and $K\subset \RR^m$ is compact.
If $\mu$ is a continuous mass distribution on $K$, then for the prevalent $f\in C(K,\RR^d)$ there is an open set $U_f\subset \RR^d$ such that $\mu(f^{-1}(U_f))=\mu(K)$ and for all $y\in U_f$ we have
$$\dim_H f^{-1}(y)\geq \dim_H \mu \quad \textrm{and} \quad \dim_P f^{-1}(y)\geq \dim_P \mu.$$
\end{M}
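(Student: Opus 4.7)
The plan is to establish prevalence via Christensen's definition by constructing a tight Borel probability measure $\nu$ on $C(K,\RR^d)$ (a probe) such that for every fixed $f\in C(K,\RR^d)$ the set of $g$ for which $f+g$ violates the conclusion is $\nu$-null; that immediately makes the exceptional set shy in the sense of Definition~\ref{d:shy}.

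For the probe I would take $\nu$ to be the distribution of a random continuous map $B=(B_1,\dots,B_d)\colon K\to\RR^d$ with independent components of the form $B_i=\sum_n \xi_{i,n}\phi_n$, where $(\xi_{i,n})$ are independent bounded random variables (e.g.\ uniform on $[-a_n,a_n]$ with $a_n\downarrow 0$ fast enough to ensure a.s.\ uniform convergence) and the $\phi_n\in C(K,\RR)$ are carefully chosen oscillating bumps localized on a nested family of compact sets $K_n\subset K$ with $\mu(K_n)\uparrow\mu(K)$. The choice of $(\phi_n)$ and $(a_n)$ is dictated by three requirements: $\nu$ is tight on $C(K,\RR^d)$; for $\nu$-a.e.\ $B$ the pushforward $B_*\mu$ is absolutely continuous; and for $\nu$-a.e.\ $B$ a large family of level sets $B^{-1}(y)$ inside each $K_n$ supports a Frostman measure of exponent close to $\dim_H\mu$ (and a parallel one close to $\dim_P\mu$).

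The central claim is that for every fixed $f\in C(K,\RR^d)$ and $\nu$-a.e.\ $B$, the function $g:=f+B$ inherits these three properties. The open set $U_g$ of full $g_*\mu$-measure is produced by the convolution-type smoothing of $B$: $g_*\mu$ is absolutely continuous whatever $f$ is, which mimics the classical Brownian smoothing argument and generalizes Dougherty's Theorem~\ref{t:D}. The dimension lower bounds on the fibres come from a disintegration $\mu=\int\mu_y\,d(g_*\mu)(y)$ coupled with an energy/Frostman argument on each slice $g^{-1}(y)=\{x\in K:B(x)=y-f(x)\}$: the randomness of $B$ forces this implicitly defined slice to carry a Frostman measure of exponent arbitrarily close to $\dim_H\mu$ (resp.\ $\dim_P\mu$) inherited from $\mu$, uniformly in $f$. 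A Fubini exchange between the law of $B$ and the disintegration then converts the almost-sure slice statement into the prevalence conclusion.

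The principal obstacle is the uniform expected energy estimate
$$\mathbb{E}\iint \|g(x)-g(y)\|^{-t}\,d\mu(x)\,d\mu(y)<\infty \qquad (t<\dim_H\mu),$$
which must hold for every continuous $f$. Because $f$ carries no quantitative regularity whatsoever, the integrand reduces to the density at zero of $f(x)-f(y)+B(x)-B(y)$, which depends on $f$ only through a translation, and this density must be controlled uniformly on $\supp\mu\times\supp\mu$. Arranging the scales of $(\phi_n)$ and $(a_n)$ so that this density decays as a negative power of $\|x-y\|$ matched to $\dim_H\mu$, while keeping $B$ a.s.\ continuous and $\nu$ tight, is the delicate heart of the argument --- essentially constructing a fractional-Brownian-type random field adapted to the metric geometry of $(K,\mu)$. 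The packing-dimension lower bound requires the parallel upper-energy / box-counting version with $\dim_P\mu$ in place of $\dim_H\mu$.
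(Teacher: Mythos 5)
Your high-level framework matches the paper's: pick a probe measure $\nu$ on $C(K,\RR^d)$ (a law of a random perturbation), show absolute continuity of the occupation measure $\mu\circ(f+B)^{-1}$ for $\nu$-a.e.\ $B$ and every $f$, and bound the fiber dimension from below uniformly in the shift $f$. But the proposed route diverges from the paper's in two ways that matter, and one of the divergences creates a genuine gap.

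The paper does not use an energy/potential-theoretic argument. Its probe is a sum $\sum_n f_n$ where each $f_n$ is piecewise constant on the $n$th-level elementary pieces of a fat Cantor set, taking values $X_{i_1\dots i_n}-Y_{i_1\dots i_n}$ with $X,Y$ independent and uniform on $\{-2^{-n},2^{-n}\}^d$. The fiber lower bound comes from an explicit Cantor subset of the fiber: a nested selection of elementary pieces on which the partial sums $h_n=g+\sum_{i\le m+n}f_i$ stay in a shrinking target ball (Statement~\ref{st:1} and the $\iT_n$/$\sigma$ bookkeeping), and then Lemma~\ref{l:Can} (mass distribution principle on an $(a_n,b_n)$-type Cantor set) gives $\dim_H=1$. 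General $\mu$ on $K\subset\RR^m$ is handled not directly but via a chain of reductions: first $K\subset\RR$ with $\mu=\lambda$ (Theorem~\ref{t:real}), then compact ultrametric spaces using the monotone-metric map $h(x)=\mu((-\infty,x))$ from Keleti--M\'ath\'e--Zindulka (Lemma~\ref{l:Holder}), then arbitrary non-exploding $K$ via Zindulka's embedding of a positive-$\mu$-measure compact piece into an ultrametric space (Theorem~\ref{t:Z}). Your proposal works directly on $K\subset\RR^m$, constructing an fBm-like field adapted to $\mu$; this replaces the paper's reduction chain by what would effectively be a new construction, and it is not clear it can be carried out without the linear order / ultrametric structure that makes the paper's $h$ work.

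The genuine gap is the ``for all $y$ in an open set'' clause. Energy estimates of the type $\mathbb{E}\iint\|g(x)-g(y)\|^{-t}\,d\mu\,d\mu<\infty$, combined with Fubini and a coarea/slicing argument, give lower bounds on $\dim_H g^{-1}(y)$ only for \emph{a.e.}\ $y$ (with respect to $\lambda^d$, or to the occupation measure). They cannot by themselves yield the conclusion that $\{y:\dim_H g^{-1}(y)\ge\dim_H\mu\}$ contains an \emph{open} set, which is what the theorem asserts and what Corollary~\ref{c:occup} (density of $U_f$ in $f([0,1])$) and Theorem~\ref{t:graph} really need. Indeed even for fBm there are exceptional levels (e.g.\ the global extrema) with trivial level sets, so an a.e.\ statement is the best the energy method gives. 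The paper gets the open set by a separate, explicitly combinatorial construction: the nested family $\iT_n$ of admissible target balls is produced with high probability, and $U_h$ is defined as the union of the corresponding open balls, with Lemma~\ref{l:Can} applied \emph{for each} $y\in U_h$. Your proposal would need an analogous mechanism; the disintegration $\mu=\int\mu_y\,d(g_*\mu)(y)$ plus Fubini only recovers the a.e.\ version. A related minor point: absolute continuity of $g_*\mu$ alone gives $\mu(g^{-1}(U))=\mu(K)$ for a suitable open $U$, but says nothing about the fiber dimension on that $U$; the two properties must be proved jointly, which is exactly what the paper's iterative scheme does.
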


After some technical lemmas in Section~\ref{s:tech}, we prove the above theorem for
$K\subset \RR$ and $\mu=\lambda$ in Subsection~\ref{ss:real}, which is the most subtle proof of the paper.
In Subsection~\ref{ss:ultra} we prove this result for ultrametric spaces using ideas from \cite{KMZ}.
In Subsection \ref{ss:main} we finish the proof of the Main Theorem,
we trace back the case of general compact spaces to ultrametric ones by using a theorem of Zindulka \cite{Z}.
Let us denote by $\lambda^m$ the $m$-dimensional Lebesgue measure.

\begin{occup2} Let $m,d\in \NN^+$.
Then for the prevalent $f\in C([0,1]^m,\RR^{d})$
there is an open set $U_{f}\subset \RR^d$ such that
$\lambda^m(f^{-1}(U_f))=1$ (hence $U_f$ is dense in $f([0,1]^m)$ and for all $y\in U_{f}$ we have
$$\dim_{H} f^{-1} (y)=m.$$
\end{occup2}

\begin{c:Mainp}[simplified version]
Let $m,d\in \NN^+$ and let $K\subset \RR^m$ be an uncountable compact set. Then for the prevalent
$f\in C(K,\RR^d)$ for all $s<\dim_P K$ there is a non-empty open set $U_{f,s}\subset \RR^d$ such that for all $y\in U_{f,s}$
we have
$$\dim_P f^{-1} (y)\geq s.$$
In particular, we have
$$\sup\{\dim_P f^{-1}(y): y\in \RR^d\}=\dim_P K.$$
\end{c:Mainp}

In the case of Hausdorff dimension we prove more general versions of the above two corollaries
based on a deep theorem of Mendel and Naor \cite{MN}.

\begin{m}
Let $K$ be an uncountable compact metric space and let $d\in \mathbb{N}^+$. Then for the prevalent
$f\in C(K,\mathbb{R}^d)$ for all $s<\dim_H K$ there is a non-empty open set $U_{f,s}\subset \mathbb{R}^d$ such that for all $y\in U_{f,s}$
we have
$$\dim_H f^{-1}(y)\geq s.$$
In particular, we have
$$\sup\{\dim_H f^{-1}(y): y\in \mathbb{R}^d\}=\dim_H K.$$
\end{m}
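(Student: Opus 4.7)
The plan is to reduce to the ultrametric case established in Subsection~\ref{ss:ultra}, using the Mendel--Naor theorem \cite{MN} as the replacement for Zindulka's theorem (which worked only for $K\subset \RR^m$). Fix $s<\dim_H K$. Mendel--Naor supplies a compact subset $L\subset K$ whose intrinsic metric is bi-Lipschitz equivalent to a compact ultrametric space, with $\dim_H L > s$; since bi-Lipschitz maps preserve Hausdorff dimension, I can treat $L$ itself as ultrametric. Frostman's lemma then furnishes a continuous mass distribution $\mu$ on $L$ with $\dim_H \mu > s$.

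Applying the ultrametric case of the Main Theorem to $(L,\mu)$ would yield that the set
$$
P_L = \{g \in C(L,\RR^d) : \exists\text{ open } U_g\subset\RR^d \text{ with } \mu(g^{-1}(U_g)) = \mu(L) \text{ and } \dim_H g^{-1}(y) \geq s\ \forall y \in U_g\}
$$
is prevalent in $C(L,\RR^d)$; the condition $\mu(L)>0$ forces $U_g\neq\emptyset$, which is what the present statement requires.

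To transfer prevalence from $L$ to $K$, I would use the restriction $R\colon C(K,\RR^d)\to C(L,\RR^d)$, $R(f)=f|_L$, together with a continuous linear section $E$ of $R$ produced by the Dugundji extension theorem (legitimate since $L$ is closed in $K$ and $\RR^d$ is locally convex). If $\nu_L$ is a Borel probability measure witnessing prevalence of $P_L$, then the push-forward $\nu_K := E_\ast \nu_L$ witnesses prevalence of $R^{-1}(P_L)$: for any $f\in C(K,\RR^d)$, using $R\circ E = \id$ and linearity of $R$, the preimage $E^{-1}\bigl((C(K,\RR^d)\setminus R^{-1}(P_L)) + f\bigr)$ simplifies to $(C(L,\RR^d)\setminus P_L) + R(f)$, which is $\nu_L$-null. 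Because $f^{-1}(y)\supset (f|_L)^{-1}(y)$, any $f\in R^{-1}(P_L)$ inherits the desired fiber bound on the non-empty open set associated to $f|_L$. Taking a countable sequence $s_n\nearrow \dim_H K$ and intersecting the resulting prevalent sets (shy sets form a $\sigma$-ideal) delivers the ``for all $s<\dim_H K$'' part; the supremum assertion is immediate from $f^{-1}(y)\subset K$.

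The substantive work is already packaged in the ultrametric subsection and in the Mendel--Naor theorem, so the main obstacle in the present argument is the measure-theoretic transfer step: I must ensure that a transverse measure witnessing shyness on $C(L,\RR^d)$ can be lifted to one on $C(K,\RR^d)$. The crucial ingredients are that $R$ is a continuous linear surjection possessing a continuous (in fact linear) section $E$, so that $E_\ast \nu_L$ is a Borel probability measure and translation by $f\in C(K,\RR^d)$ pulls back under $E$ to translation by $R(f)$ on $C(L,\RR^d)$. Verifying these details carefully — and confirming that $P_L$ (or a Borel superset of its complement) is Borel, as required by Definition~\ref{d:shy} — is the step I expect to require the most care in a complete write-up.
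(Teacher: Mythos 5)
Your proposal is correct and follows essentially the same route as the paper: Mendel--Naor to produce an ultrametric subspace of large Hausdorff dimension, Frostman to get a suitable continuous mass distribution, the ultrametric Theorem~\ref{t:mainu}, and a transfer of prevalence back to $C(K,\RR^d)$, followed by intersecting over a sequence $s_n\nearrow\dim_H K$.

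The one place you deviate is the prevalence transfer: you re-derive Corollary~\ref{c:hereditary} by building a concrete witness $E_\ast\nu_L$ from a Dugundji linear extension operator, whereas the paper simply cites Dougherty's Lemma~\ref{l:D}, which already covers any continuous onto homomorphism of abelian Polish groups (and is proved with a Borel, not necessarily continuous, section). Your computation $E^{-1}\bigl(R^{-1}(P_L^c)+f\bigr)=P_L^c+R(f)$ is correct, and taking $B_K=R^{-1}(B_L)$ handles the Borel hull required by Definition~\ref{d:shy}; this is a perfectly good, slightly more hands-on alternative. Two small presentational points: when you say you "treat $L$ itself as ultrametric," what the paper literally does is apply Theorem~\ref{t:mainu} to the genuine ultrametric space $D$ and then pull back along the composite $R(f)=f|_C\circ h^{-1}$, where $h\colon C\to D$ is the Mendel--Naor bi-Lipschitz map; your shorthand is fine in a sketch but the composite map should appear in a full write-up. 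Also, the paper disposes of the $\dim_H K=0$ case separately via Theorem~\ref{t:D} (so that for $s\in[-1,0)$ one still has a non-empty open $U_{f,s}\subset f(K)$), which you should add for completeness.
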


The supremum is not necessarily attained in the second claims of Corollary~\ref{c:Mainp} and Theorem~\ref{t:main}.

\begin{ex} There is a compact set $K\subset \RR$ such that $\dim_H K = \dim_P K = 1$ and
$$\{f\in C(K,\RR): \dim_H f^{-1}(y) \le \dim_P f^{-1}(y)<1 \textrm{ for all } y\in \RR\}$$
is non-shy in $C(K,\RR)$.
\end{ex}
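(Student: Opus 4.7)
The plan is to construct $K$ as a countable disjoint union of Cantor-like sets of increasing dimension accumulating at a single point, and to prove the non-shyness (in fact, prevalence) via a Wiener-measure probe.

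\textbf{Construction of $K$.} For each integer $n \ge 1$, let $K_n \subset \left(\tfrac{1}{n+1}, \tfrac{1}{n}\right)$ be an affine copy of a self-similar Cantor set satisfying the open set condition with $\dim_H K_n = \dim_P K_n = 1 - \tfrac{1}{n}$, and set $K = \{0\} \cup \bigsqcup_{n=1}^\infty K_n$. Then $K \subset [0,1]$ is compact, and by countable stability of both dimensions,
\[
\dim_H K = \dim_P K = \sup_n\!\left(1 - \tfrac{1}{n}\right) = 1.
\]

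\textbf{Structural reduction.} For any $f \in C(K,\RR)$, continuity of $f$ at $0$ yields $\max_{x \in K_n}|f(x) - f(0)| \to 0$, so for each $y \neq f(0)$ only finitely many $K_n$ meet $f^{-1}(y)$, and $f^{-1}(y)$ is contained in a finite union of those $K_n$'s. Hence
\[
\dim_P f^{-1}(y) \le \max_{n \in N(y)}\!\left(1 - \tfrac{1}{n}\right) < 1,
\]
where $N(y) = \{n : y \in f(K_n)\}$. The theorem therefore reduces to showing that $B = \{f \in C(K,\RR) : \dim_P f^{-1}(f(0)) < 1\}$ is non-shy in $C(K,\RR)$.

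\textbf{Non-shyness via a Brownian probe.} I would prove the stronger statement that $B$ is prevalent. Let $W$ denote standard Wiener measure on $C[0,1]$ and let $\mu$ be its pushforward on $C(K,\RR)$ under the continuous linear restriction $h \mapsto h|_K$. For arbitrary $x \in C(K,\RR)$, a direct calculation gives
\[
\mu(B+x) = W\bigl(\bigl\{h : \dim_P \{t \in K : h(t) = x(t) - x(0)\} < 1\bigr\}\bigr).
\]
Extend $g(t) := x(t) - x(0)$ continuously from $K$ to $[0,1]$ (so $g(0) = 0$); then
\[
\{t \in K : h(t) = g(t)\} \subset \{t \in [0,1] : h(t) = g(t)\}.
\]
By the classical estimate on the zero-crossing rate of Brownian motion with continuous deterministic drift, the latter set almost surely has upper box (hence packing) dimension at most $1/2$. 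Consequently $\dim_P\{t \in K : h(t) = g(t)\} \le 1/2 < 1$ almost surely, so $\mu(B+x) = 1$ for every $x$. This establishes prevalence, and in particular non-shyness.

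\textbf{Main obstacle.} The principal technical ingredient is the upper bound on the packing dimension of the zero set of Brownian motion with an arbitrary continuous (possibly nowhere H\"older) deterministic drift $g$. For $g \equiv 0$ this is classical, and the argument extends on small scales where Brownian oscillations controlled by the law of the iterated logarithm dominate any continuous $g$, so the zero-crossing count remains $O(\delta^{-1/2})$. Verifying this estimate carefully---and addressing the measurability issues required to place the possibly non-Borel set $B$ inside a Borel event with the same transverse behaviour under $\mu$---is the technical crux of the proof.
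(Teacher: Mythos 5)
Your construction of $K$ and the structural reduction to the single level $y=f(0)$ match the paper exactly (the paper uses $K_n\subset[0,1/n]$ but that is immaterial). The divergence is in how non-shyness is established, and here your argument has a genuine gap.

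The paper proves non-shyness of the \emph{smaller} set $\iB=\{f\in C(K,\RR): f^{-1}(f(0))=\{0\}\}$ directly via Lemma~\ref{l:nshy}: given any compact $\iK\subset C(K,\RR)$, Lemma~\ref{l:iK} produces a strictly increasing $h\in C[0,1]$ with $h(0)=0$ and $|f(x)-f(0)|<h(x)$ for all $f\in\iK$ and $x\in K\setminus\{0\}$; then $g=h|_K$ satisfies $\iK+g\subset\iB$. This is a compact-translate argument, so it only needs to defeat one compact family at a time, and it yields non-shyness, not prevalence.

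Your proposal attempts the stronger conclusion that $B=\{f:\dim_P f^{-1}(f(0))<1\}$ is \emph{prevalent}, using Wiener measure as a transverse probe. The proof must then hold for \emph{every} translate $g$, and the step where it breaks is the claim that $\dim_P\iZ(B-g)\le 1/2$ almost surely for an arbitrary continuous deterministic drift $g$. There is no such "classical estimate," and the law-of-the-iterated-logarithm heuristic you invoke fails precisely where it matters: the LIL controls Brownian increments at scale $\sqrt{h}$, but a merely continuous $g$ (for instance one that is $\alpha$-H\"older for $\alpha<1/2$ and no better, such as a Weierstrass-type function) has increments of order $h^\alpha\gg\sqrt{h}$, so Brownian oscillations do \emph{not} dominate on small scales. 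For such $g$ the zero set of $B-g$ inherits the roughness of $g$ and its Hausdorff (and hence packing) dimension can exceed $1/2$; this is exactly the regime studied by Peres and Sousi \cite{PS}, cited in the paper, where the dimension of $\graph(X+f)$ for fBm $X$ and a general continuous $f$ genuinely depends on $f$. So the bound you need is not a "technical crux to verify" but a false statement, and the Wiener probe does not witness prevalence of $B$. If you want to stay with the singleton reduction, the paper's elementary compact-translate construction is the way to close the argument, at the cost of obtaining only non-shyness rather than prevalence (which, given the above, is likely all that is true).
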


If $K$ is `large in its dimension' then the Main Theorem implies the following.

\begin{c:MainHP}[simplified version] Let $m,d\in \RR^d$, and let $K\subset \RR^m$ be compact. Let
$\dim$ be one of $\dim_H$ or $\dim_P$. Assume that there is a continuous mass distribution $\mu$ on $K$ such that $\supp \mu=K$.
Then for the prevalent $f\in C(K,\RR^d)$ there is an open set $U_f\subset \RR^d$ such that $\mu(f^{-1}(U_f))=\mu(K)$ and for all $y\in U_f$
we have
$$\dim f^{-1}(y)=\dim K.$$
\end{c:MainHP}
For sufficiently homogeneous spaces we can generalize the Main Theorem.
Let us denote by $\iH^s$ and $\iP^s$ the $s$-dimensional Hausdorff and packing measure, respectively. For the definitions of
packing measure, self-similar set, and open set condition see \cite{F2}.
\begin{c:HP} Let $m,d\in \NN^+$ and let $K\subset \RR^m$ be a self-similar set satisfying the open set
condition. It is well-known that $\dim_H K=\dim_P K=s$ and $\iH^s(K),\iP^s(K)\in \RR^+$.
Then for the prevalent $f\in C(K,\RR^d)$ there exists an open set $U_f\subset \RR^d$ such that
$\iH^s(f^{-1}(U_f))=\iH^s(K)$ (hence $U_f$ is dense in $f(K)$) and
$$\dim_H f^{-1}(y)=s \textrm{ for all } y\in U_f.$$
Similarly, for the prevalent $f\in C(K,\RR^d)$ there exists an open set $V_f\subset \RR^d$ such that
$\iP^s(f^{-1}(V_f))=\iP^s(K)$ (hence $V_f$ is dense in $f(K)$) and
$$\dim_P f^{-1}(y)=s \textrm{ for all } y\in V_f.$$
\end{c:HP}

For other results in sufficiently homogeneous spaces see Subsection~\ref{ss:hom},
where we describe the compact metric spaces $K$ for which
$\dim_H f^{-1}(y)=\dim_H K$ for the prevalent $f\in C(K,\RR^d)$ and the generic $y\in f(K)$. The characterization is independent of $d$.

Corollary~\ref{c:occup} yields for the prevalent $f\in C[0,1]$ that
$\{y: \dim_H f^{-1}(y)=1\}$ is co-meager in $f([0,1])$
with full $\lambda\circ f^{-1}$ measure. As the main result of Section~\ref{s:sing}, we show that this does not remain true if
we replace the occupation measure $\lambda\circ f^{-1}$ by the Lebesgue measure $\lambda$ on $f([0,1])$.
Let $\exists^{\lambda}$ denote that there exists positively many with respect to $\lambda$.

\begin{nonshy} The set
$$\{f\in C[0,1]:  \exists^{\lambda} y\in \RR \textrm{ such that } f^{-1}(y) \textrm{ is a singleton}\}$$
is non-shy in $C[0,1]$.
\end{nonshy}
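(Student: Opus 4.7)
The standard strategy for proving non-shyness is a convolution/Fubini argument. Denote the set in the statement by $A$. The plan is to construct a Borel probability measure $\nu$ on $C[0,1]$ with the property
\[
\nu\bigl(\{h \in C[0,1] : g + h \in A\}\bigr) > 0 \quad \text{for every } g \in C[0,1].
\]
Given such a $\nu$, non-shyness of $A$ follows by contradiction: if $\mu$ were a Borel probability measure witnessing the shyness of $A$, so that $\mu(A+x) = 0$ for every $x$ (equivalently, since $C[0,1]$ is abelian, $\mu(A-h) = 0$ for every $h$), then Fubini applied to the convolution $\mu \ast \nu$ yields simultaneously
\[
(\mu \ast \nu)(A) = \int \mu(A-h)\,d\nu(h) = 0 \quad \text{and} \quad (\mu \ast \nu)(A) = \int \nu(A-g)\,d\mu(g) > 0,
\]
a contradiction.

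To build the probe $\nu$ I would take the distribution of a carefully chosen random continuous process $H$ on $[0,1]$---most naturally a Brownian motion, possibly augmented by a random drift or reparametrization. The critical probabilistic input is a generalization of the Antunovi\'c--Burdzy--Peres--Ruscher theorem mentioned in the introduction: their original theorem provides isolated zeros for Brownian motion perturbed by a deterministic drift of suitable H\"older regularity, but here one needs a much stronger assertion---for \emph{every} continuous $g \in C[0,1]$, with positive probability a $\nu$-random realisation $H$ should make $g + H$ admit a set of singleton level values of positive Lebesgue measure. The envisioned proof of this extended statement proceeds by an excursion-type argument: exhibiting many disjoint short subintervals $[a_i, b_i] \subset [0,1]$ on each of which $g + H$ executes a ``cap'' whose vertical range is disjoint from the values attained by $g+H$ outside $[a_i, b_i]$, so that each cap contributes an interval of singleton levels; summing these contributions then yields the desired positive Lebesgue measure of singleton levels.

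The main obstacle is precisely this generalized ABPR theorem. The difficulty is that $g$ may be of very low regularity---in particular, if one were tempted to use Wiener measure as $\mu$, then $\mu$-almost every $g$ would be H\"older of order exactly $1/2$---so a naive local-scale analysis of $g + H$ suggests that the sum inherits $g$'s roughness and should have level sets of positive Hausdorff dimension for Lebesgue-almost every level, a priori precluding a positive-measure set of singletons. Overcoming this requires a genuinely multi-scale argument in which the cap excursions of $H$ dominate the oscillations of $g$ on appropriate (not necessarily the smallest) scales, exploiting the stochastic independence of excursions of $H$ in far-apart regions of $[0,1]$ to accumulate a guaranteed positive total length of singleton-level intervals. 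Once the generalized ABPR theorem is in hand, the Fubini argument in the first paragraph concludes the proof.
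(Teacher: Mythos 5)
The paper's argument is structurally quite different from your proposal, and your proposal has a real gap in the place where you yourself flag ``the main obstacle.''

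You propose finding a \emph{single} probe measure $\nu$ (say, Wiener measure) with $\nu(\iA - g)>0$ for \emph{every} $g\in C[0,1]$, and then running a Fubini/convolution contradiction. The paper does the quantifiers the other way around: it verifies the definition of non-shyness directly, by showing that for \emph{every} Borel probability measure $\mu$ on $C[0,1]$ there \emph{exists} a function $g$ (depending on $\mu$) with $\mu(\iA+g)>0$. This is exactly the content of Theorem~\ref{t:sing}. The crucial structural difference is that the paper's $g$ is allowed to depend on $\mu$, and this dependence is exploited: by Theorem~\ref{t:Ulam}, the measure $\mu$ is concentrated (after restriction/normalization) on a compact $\iK\subset C[0,1]$; by Lemma~\ref{l:iK}, the functions in $\iK$ admit a common strictly increasing modulus of continuity $h$; and the paper then builds a ``staircase-like'' function $g$ out of $h$ together with a multi-scale Cantor construction with parameters $\alpha_n$ chosen (inductively, with reference to $\mu$) so that $g$ oscillates faster than any $f\in\iK$ at the relevant scales. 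The passage from ``$\iZ(f)$ is a singleton'' to ``positively many level sets are singletons'' is then handled by Lemma~\ref{l:Bnonshy}, which does use a convolution trick, but with the one-dimensional Lebesgue measure on the constant functions as the probe --- a vastly simpler object than the $\nu$ you envision.

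Your plan, in contrast, needs the probe to work uniformly against \emph{all} drifts $g\in C[0,1]$ with no compactness restriction, and hence with no uniform modulus of continuity to lean on. That is precisely the statement you call the ``generalized ABPR theorem,'' and you correctly identify it as the crux; but you do not prove it, and it is a genuinely stronger and more delicate assertion than what ABPR establish (Theorem~\ref{t:ABPR} of the paper is only the Wiener-measure special case of the paper's Theorem~\ref{t:sing}, with one $g$ produced, not all $g$ handled). Indeed, Section~\ref{s:open} of the paper records that for every drift $g$, almost surely positively many level sets of $B-g$ have Hausdorff dimension at least $1/2$, which at minimum shows that your excursion-cap heuristic must contend with a competing, robustly present mechanism producing large level sets; whether one can nonetheless extract a positive-Lebesgue-measure set of singleton levels for arbitrary $g$ is open. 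So the proposal's main ingredient is unproved, and the route via a universal probe is substantially harder than the one the paper actually takes. You also do not address the measurability issue (Lemma~\ref{l:Bor}), which is needed to make sense of $\mu(\iA+g)$ and of the Fubini computations.
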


Let $\iZ(f)=\{x\in [0,1]: f(x)=0\}$, the next theorem is \cite[Proposition~3.3]{ABPR}.

\begin{theorem}[Antunovi\'c-Burdzy-Peres-Ruscher] \label{t:ABPR}
Let $\mu$ be the Wiener measure on $C[0,1]$. Then there exists a function $g\in C[0,1]$ such that
$$\mu(\{f\in C[0,1]: \iZ(f-g)\setminus \{0\} \textrm{ is a singleton}\})>0.\footnote{It is easy to see that if $g(x)=x^{1/3}$ then $\iZ(f-g)=\{0\}$ for positively many $f$ with respect to the Wiener measure. In order to avoid this degenerate case, we remove the origin.}$$
\end{theorem}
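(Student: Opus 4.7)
The plan is to construct a specific function $g \in C[0,1]$ for which the Wiener measure of the event
\[
E = \{f \in C[0,1] : \iZ(f-g) \setminus \{0\} \text{ is a singleton}\}
\]
is positive. The guiding intuition is that at a point $a > 0$ where $f$ and $g$ coincide, the local behaviour of $f-g$ is governed by whichever of the two functions is rougher. Consequently, if $g$ has H\"older exponent strictly less than $1/2$ at its touching points with $f$, isolated zeros of $f-g$ can arise, in sharp contrast to the classical fact (which is exactly why a smooth $g$ cannot work) that Brownian motion has no isolated zeros in any of its level sets.

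To handle the origin, I would take $g(0) = 0$ and $g(t) \ge t^{1/3}$ on a right-neighbourhood $[0,\delta_0]$. By a standard Kolmogorov-Petrovski / L.I.L.\ estimate, the Wiener measure of the event $\{f(t) < g(t)$ for all $t \in (0,\delta_0]\}$ is positive (in fact it tends to $1$ as $\delta_0 \to 0$); on this event, $0$ is isolated from the right in $\iZ(f-g)$, so any additional zero must lie in $[\delta_0, 1]$.

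For the remainder $[\delta_0, 1]$, I would engineer $g$ to produce exactly one further zero with positive probability. I pick a window $W = [a^* - \eta, a^* + \eta] \subset (\delta_0, 1)$ and an exponent $\alpha \in (0, 1/2)$, and take $g|_W$ to satisfy a two-sided H\"older lower bound $|g(t)-g(s)| \ge c|t-s|^\alpha$ (a suitable Weierstrass-type lacunary series can serve as a building block); outside $W$, I let $g$ grow rapidly enough that $f < g$ on $[\delta_0, 1]\setminus W$ with positive probability. At any candidate touching point $a \in W$ one then obtains the local estimate
\[
\bigl|(f-g)(a+h) - (f-g)(a)\bigr| \ge c|h|^\alpha - C|h|^{1/2}\sqrt{\log(1/|h|)},
\]
which is strictly positive for small $|h|$, so every zero of $f - g$ in $W$ is automatically isolated.

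The positive-probability conclusion then combines three compatible events: (A) $f < g$ on $(0,\delta_0]$, (B) $f < g$ on $[\delta_0, 1] \setminus W$, and (C) $f - g$ changes sign on $W$. By independence of Brownian increments on disjoint intervals, (B) and (C) can be arranged to co-occur with positive probability, while (A) has probability close to one. Together they force $\iZ(f-g) \cap (0,1]$ to be a non-empty subset of $W$ consisting of isolated points. The main obstacle, and the step where the explicit Antunovi\'c--Burdzy--Peres--Ruscher construction does the real work, is ensuring that \emph{exactly one} zero appears in $W$: the H\"older estimate yields only local isolation, so the geometry of $g|_W$ (essentially strict monotonicity modulo controlled lacunary oscillations) and the window width $\eta$ must be fine-tuned so that with positive probability the sign of $f-g$ flips precisely once inside $W$.
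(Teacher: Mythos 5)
The paper does not actually give its own proof of this theorem: it states it as a citation from Antunovi\'c, Burdzy, Peres and Ruscher \cite[Proposition~3.3]{ABPR}, and then proves in Theorem~\ref{t:sing} a generalization valid for \emph{every} Borel probability measure on $C[0,1]$, which is what the paper really uses. That proof is measure-theoretic rather than probabilistic: it reduces by Ulam's theorem to a compact $\iK\subset C[0,1]$, invokes the uniform modulus of continuity of Lemma~\ref{l:iK}, and builds $g$ by a Cantor-scheme iteration driven by the measure $\mu$ itself (the sets $\Gamma_n$, $C_n$ and the parameters $\alpha_n$). Nothing in it uses the Markov property, the law of the iterated logarithm, independence of increments, or any other Brownian structure. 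Your proposal, by contrast, is a reconstruction of the original ABPR-style probabilistic argument, tailored to the Wiener measure. So the two routes are genuinely different, and the paper's buys generality while yours (if completed) would be closer to the ABPR source.

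However, your sketch has a real gap, which you yourself flag. After events (A), (B), (C), what you obtain is: $\iZ(f-g)\setminus\{0\}$ is non-empty, contained in $W$, and (by the H\"older mismatch $c|h|^\alpha$ vs.\ $C|h|^{1/2}\sqrt{\log(1/|h|)}$) consists of isolated points. Since $W$ is compact, this gives \emph{finitely many} zeros with positive probability, but not \emph{exactly one}. Appealing to ``strict monotonicity of $g$ modulo controlled lacunary oscillations'' does not close this: a two-sided lower H\"older bound on a continuous $g$ forces $g|_W$ to be strictly monotone, yet a Brownian path can still cross a strictly monotone curve many times inside $W$, and tuning $\eta$ alone does not prevent this. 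A correct completion would need a quantitative single-crossing estimate, e.g.\ applying the strong Markov property at the first time $f$ meets $g$ in $W$ and then bounding below the probability that $f$ stays strictly on one side of $g$ for the remainder of $W$ and of $[\delta_0,1]$; none of this is supplied. A further small point: you invoke ``independence of Brownian increments on disjoint intervals'' to combine events (B) and (C), but (B) concerns $f$ on $[\delta_0,1]\setminus W$ and (C) concerns $f$ on $W$, and these are paths (not increments) which are not independent; the correct tool is again the Markov property with a conditioning on the value of $f$ at the endpoints of $W$. Until the single-crossing step is made rigorous, the proposal does not establish the theorem.
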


The next theorem generalizes Theorem~\ref{t:ABPR} and easily implies Theorem~\ref{t:nonshy}.

\begin{sing} Let $\mu$ be a Borel probability measure on $C[0,1]$. Then there exists a function $g\in C[0,1]$ such that
$$\mu(\{f\in C[0,1]: \iZ(f-g) \textrm{ is a singleton}\})>0.$$
Consequently, the set $\{f\in C[0,1]: \iZ(f) \textrm{ is a singleton}\}$ is non-shy.
\end{sing}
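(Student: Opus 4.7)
I would prove Theorem \ref{t:sing} by combining a tightness-and-concentration reduction with an explicit construction of $g$.

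First, by inner regularity of the Borel probability measure $\mu$ on the Polish space $C[0,1]$, choose a compact $K \subseteq C[0,1]$ with $\mu(K) > 0$. By the Arzel\`a--Ascoli theorem, $K$ is uniformly bounded and has a common modulus of continuity $\omega$. For a prescribed small $\epsilon > 0$, cover $K$ by finitely many sup-norm balls of radius $\epsilon$ and pigeonhole against $\mu$ to obtain a reference function $f_0 \in K$ and a subset $K_0 := K \cap B_\epsilon(f_0)$ with $\mu(K_0) > 0$ and $\|f - f_0\|_\infty < \epsilon$ for every $f \in K_0$.

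Next, I would set $g := f_0 + \psi$ for a carefully chosen continuous ``barrier'' $\psi \in C[0,1]$. A natural qualitative profile is $\psi(0) < -2\epsilon$, $\psi \equiv 2\epsilon$ on $[\eta, 1]$ for a small $\eta > 0$, with a steep transition through zero on $[0, \eta]$. The sup-norm bound on $K_0$ then forces $(f - g)(0) > \epsilon > 0$ and $(f - g)(x) < -\epsilon < 0$ on $[\eta, 1]$, so every potential zero of $f - g$ is confined to $[0, \eta]$, and at least one exists by the intermediate value theorem.

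The heart of the proof is to ensure this zero is a singleton for a $\mu$-positive subset of $K_0$. The obstacle is that a non-Lipschitz modulus $\omega$ prevents any single continuous monotone $\psi$ on $[0, \eta]$ from pointwise dominating $\omega$ at every sub-scale, so a naive monotonicity argument fails. I would address this by iterating the concentration step: a further pigeonhole on the values of $f$ at a fine dyadic grid in $[0, \eta]$ produces a $\mu$-positive subfamily of $K_0$ with controlled fine-scale behavior of $f - f_0$, and a piecewise-linear $\psi$ with slopes adapted to this grid can then enforce monotonicity of $f - g$ on each sub-interval. An alternative route is a Fubini argument that varies $g$ over a one-parameter family of vertical shifts $g \mapsto g + t$ and extracts a $t$ for which the singleton condition holds on a positive-measure set. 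The main technical hurdle is this uniqueness step; the rest is bookkeeping of the pigeonhole losses so the final subfamily still has positive $\mu$-measure.

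For the ``Consequently'' clause: the set $\{f : \iZ(f - g) \text{ is a singleton}\}$ equals the translate $A + g$ where $A := \{h \in C[0,1] : \iZ(h) \text{ is a singleton}\}$. Since $\mu$ was arbitrary and we have produced $g$ witnessing $\mu(A + g) > 0$, the set $A$ is not shy.
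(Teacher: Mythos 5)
Your setup is on the right track: inner regularity to pass to a compact $\iK$, a barrier function to confine zeros to a small interval $[0,\eta]$, and a pigeonhole step. In fact the paper's proof contains an analogous reduction, namely that it suffices to produce $g$ with $\mu(\{f : \iZ(f-g) \textrm{ has an isolated point}\}) > 0$; once one has an isolated zero in a rational interval $[a,b]$ one glues barrier pieces (built from the modulus-of-continuity majorant $h$ of Lemma~\ref{l:iK}) outside $[a,b]$ to kill all other zeros, exactly as your $\psi$ does. You also correctly identify the genuine difficulty: non-Lipschitz moduli block a monotonicity argument.

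The gap is in the two fixes you propose for the uniqueness step; neither works. The first, pigeonholing grid values and then choosing a piecewise-linear $\psi$ with steep slopes so that $f-g$ is monotone on each sub-interval, is impossible whenever the common modulus $\omega$ of the compact family satisfies $\omega(r)/r \to \infty$ as $r\to 0$. To make $f-g$ monotone on any interval $I$ you would need $\psi(z)-\psi(x) > 2\omega(z-x)$ for \emph{all} $x<z$ in $I$; but splitting $[x,z]$ into $n$ equal pieces and summing gives $\psi(z)-\psi(x) > 2n\,\omega((z-x)/n) \to \infty$, a contradiction. Pinning values at finitely many grid points does not help, since $f$ may still oscillate at all finer scales between the grid points, and there is no finest scale. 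The second fix, averaging over vertical shifts $g\mapsto g+t$ and extracting a good $t$ by Fubini, also fails: for Wiener measure, the event $\{\iZ(B-t) \textrm{ is a singleton}\}$ has probability zero for every fixed constant $t$ (the non-empty level sets of Brownian motion are a.s.\ perfect), so no single shift can produce positive measure.

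The paper's Case~2 is genuinely different: rather than forcing $f-g$ to be monotone, it constructs a continuous, non-decreasing ``generalized staircase'' $g$ whose level-set structure (controlled by a rapidly decreasing sequence $\alpha_n$) together with a Cantor set $C\subset[0,1]$ of ``good heights'' forces every zero of $f-g$ lying in $K=g^{-1}(C)$ to be isolated in $\iZ(f-g)$, uniformly for all $f\in\iK$. The measure-theoretic content is then a Borel--Cantelli-type recursion on nested finite unions of vertical segments $\Gamma_n$: one shows $\mu(\pi(\Gamma_{n+1})\cap\pi(\Gamma_n))\ge\mu(\pi(\Gamma_n))-2^{-(n+2)}$ by a continuity-of-measure argument (using that finite point-families are $\mu$-null after discarding the easy Case~1), and concludes $\mu(\bigcap_n\pi(\Gamma_n))\ge 1/2$. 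This geometric-combinatorial construction of $g$ and the nested $\Gamma_n$ recursion are the missing ideas; without them your outline cannot be completed.
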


As a complement to Theorem~\ref{t:nonshy}, we prove that all non-extremal level sets can be large. The goal of
Section~\ref{s:maxlevel} is to prove the following.

\begin{maxlevel}  The set
$$\{f\in C[0,1]: \dim_H f^{-1}(y)=1 \textrm{ for all } y\in  (\min f, \max f)\}$$
is non-shy in $C[0,1]$.
\end{maxlevel}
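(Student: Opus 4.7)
The plan is to establish the following constructive strengthening, in the spirit of Theorem~\ref{t:sing}: denoting the set in the theorem by $\mathcal{S}$, \emph{for every Borel probability measure $\mu$ on $C[0,1]$ there exists $g\in C[0,1]$ such that}
\[
\mu\bigl(\{f\in C[0,1]: \dim_H (f+g)^{-1}(y)=1 \text{ for all } y\in (\min(f+g),\max(f+g))\}\bigr) > 0.
\]
This immediately implies that no probability measure can witness shyness of $\mathcal{S}$, so $\mathcal{S}$ is non-shy.

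The construction of $g=g_\mu$ would be an inductive, diagonal construction driven by tightness of $\mu$. First fix a compact $K\subset C[0,1]$ with $\mu(K)>1/2$, and for each $n$ a finite $2^{-n}$-net $\mathcal{F}_n=\{f_1^{(n)},\dots,f_{N_n}^{(n)}\}\subset K$. I would build $g$ as the uniform limit of polygonal functions $g_n$, where at stage $n$ one modifies $g_{n-1}$ on a sparse, disjoint family of small intervals by zig-zag excursions of amplitude much larger than $2^{-n}$, so that for every $f_i^{(n)}\in\mathcal{F}_n$ and every $y$ in a $2^{-n}$-dense subset of the range of $f_i^{(n)}+g_n$, the preimage $(f_i^{(n)}+g_n)^{-1}(y)$ contains a self-similar Cantor set of Hausdorff dimension $\ge 1-2^{-n}$. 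Disjointness and the geometric decrease of scales guarantee uniform convergence $g_n\to g$ and survival of previously built scaffolds. A stability lemma then shows that since the stage-$n$ amplitudes exceed $2^{-n}$, for any $f\in K$ within $2^{-n}$ of some $f_i^{(n)}$, the scaffold built at level $y$ survives, after a controlled trimming of dimension loss $o(1)$, inside $(f+g)^{-1}(y')$ for some $y'$ within $2^{-n}$ of $y$. Taking $n\to\infty$ and using density of the treated levels gives $\dim_H(f+g)^{-1}(y)=1$ for every non-extremal $y$ and every $f\in K$, yielding $\mu(\mathcal{S}-g)\ge\mu(K)>1/2$.

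The main obstacle is the joint engineering of the Cantor scaffolds: at stage $n$ one must, for every $f_i^{(n)}\in\mathcal{F}_n$ simultaneously, place a dim-$(1-2^{-n})$ Cantor scaffold at \emph{every} non-extremal level of $f_i^{(n)}+g_n$, subject to the sparsity required for $g_n\to g$ uniformly and to robustness under $2^{-n}$-sup-norm perturbations of $f_i^{(n)}$. This is markedly more delicate than the singleton construction behind Theorem~\ref{t:sing}, since here every non-extremal value must simultaneously be realised as a maximal-dimension fiber rather than merely one specific value. A further subtle point is that the scaffolds must be built inside $[0,1]$ itself, so that as $n$ grows the available ``room'' for sparse excursions is correctly apportioned across the countably many levels treated at all previous stages combined.
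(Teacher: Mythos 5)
Your outer logical frame (for every probability measure $\mu$, produce a $g$ with $\mu(\mathcal{S}-g)>0$, so no measure can witness shyness) is sound and is exactly what the paper packages as Lemma~\ref{l:nshy}: it suffices to show that every compact set $\iK\subset C[0,1]$ can be translated into the target set. The difficulty, and the gap in your proposal, lies entirely in the construction of $g$.

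Your plan replaces the compact set $\iK$ by a sequence of finite $2^{-n}$-nets $\mathcal{F}_n$ and then appeals to a ``stability lemma'' to pass from the net to general $f\in\iK$. This cannot work as stated, for a reason that is not a technicality: two functions $f,f_i^{(n)}$ that are $2^{-n}$-close in sup-norm can have completely unrelated level sets, because the perturbation $f-f_i^{(n)}$, while small in norm, can oscillate arbitrarily fast. A Cantor scaffold engineered inside $(f_i^{(n)}+g)^{-1}(y)$ gives you no control over $(f+g)^{-1}(y)$ or even $(f+g)^{-1}(y')$ for nearby $y'$, and there is no ``controlled trimming'' that repairs this without additional hypotheses on $f-f_i^{(n)}$. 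What is actually needed is control of the \emph{local oscillation} of every $f\in\iK$ uniformly — that is, a common modulus of continuity. This is precisely what Lemma~\ref{l:iK} provides: a single strictly increasing $h$ with $h(0)=0$ and $|f(x)-f(z)|\le h(|x-z|)$ for all $f\in\iK$ and all $x,z$. With $h$ in hand, the paper calibrates the zig-zag amplitudes and mesh sizes $(a_n,b_n)$ of the piecewise-linear pieces $g_n$ so that on each subinterval of the $(m+n)$th mesh the excursion $2^{-(m+n+1)}$ of $g_{m+n+1}$ dominates both the $h$-bounded variation of $f$ and the Lipschitz drift of $G_{m+n}=\sum_{i\le m+n}g_i$; then the intermediate value theorem produces, for \emph{any} $f\in\iK$ and \emph{any} fixed non-extremal level $y$ of $f+g$, a nested family of intervals and hence an $(a_n,b_n)$-type compact set inside $(f+g)^{-1}(y)$ of dimension $1$ by Lemma~\ref{l:Can}. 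No net and no perturbation argument are needed.

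The second gap is in the quantifier over $y$. You propose to build scaffolds only at a $2^{-n}$-dense set of levels at stage $n$ and then let $n\to\infty$. This yields large fibers along a dense set of $y$'s, not for every non-extremal $y$: a scaffold placed at level $y'$ with $|y-y'|<2^{-n}$ lies in $(f+g)^{-1}(y')$, not in $(f+g)^{-1}(y)$, and Hausdorff dimension of fibers is not continuous in the level. The paper sidesteps this entirely by fixing the target $y\in(\min(f+g),\max(f+g))$ \emph{first} and running the IVT-based induction directly at that $y$; the scaffold is manufactured for the given level, not imported from a nearby one. Both of these missing ingredients — the common modulus of continuity and the level-by-level IVT construction — are essential, and without them the ``$\dim_H f^{-1}(y)=1$ for all non-extremal $y$'' statement is not reached.
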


Recently, describing the various fractal dimensions of graphs of prevalent
continuous functions has attracted notable attention, this is the topic of Section~\ref{s:graph}.

First McClure \cite{Mc} proved that the packing dimension, and thus the upper box dimension of the graph of the prevalent $f\in C[0,1]$ is 2.
The analogous result for the lower box dimension
was proved in \cite{FF}, \cite{GJMNOP}, and \cite{Sh}, independently.

Fraser and Hyde \cite{FH} generalized the above results by showing that the prevalent
$f\in C[0,1]$ has graph of Hausdorff dimension 2.
In contrast to Theorem~\ref{t:MW} this means that the prevalent value of $\dim_H \graph(f)$ is as large as possible.

\begin{theorem}[Fraser-Hyde] \label{t:FH} For the prevalent $f\in C[0,1]$ we have
$$\dim_H \graph(f)=2.$$
\end{theorem}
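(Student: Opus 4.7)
The plan is to deduce Theorem~\ref{t:FH} from Corollary~\ref{c:occup} via a classical slicing inequality. The upper bound is immediate: for every $f\in C[0,1]$ the graph $\graph(f)\subset[0,1]\times[\min f,\max f]$ is a bounded subset of $\RR^2$, so $\dim_H\graph(f)\le 2$.

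For the lower bound, let $f\in C[0,1]$ be prevalent and invoke Corollary~\ref{c:occup} to produce an open set $U_f\subset\RR$ with $\lambda(f^{-1}(U_f))=1$ and $\dim_H f^{-1}(y)=1$ for every $y\in U_f$. Since $\lambda(f^{-1}(U_f))>0$ and $f([0,1])=[\min f,\max f]$ is a non-degenerate closed interval, $U_f$ must meet its interior; being open, $U_f\cap(\min f,\max f)$ then contains some open sub-interval $J$ of positive length, and $\dim_H f^{-1}(y)=1$ for every $y\in J$. The horizontal slice of the graph at any such height $y$ is
$$
\graph(f)\cap\bigl(\RR\times\{y\}\bigr)\;=\;f^{-1}(y)\times\{y\},
$$
which has Hausdorff dimension $1$.

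Now apply a classical Marstrand-type slice inequality: if $E\subset\RR^2$ is Borel (which $\graph(f)$ is, being closed) and the set of heights $y$ for which $\dim_H\bigl(E\cap(\RR\times\{y\})\bigr)\ge s$ has positive one-dimensional Lebesgue measure, then $\dim_H E\ge s+1$. Applied to $E=\graph(f)$, $s=1$, and the positive-length interval $J$, this yields $\dim_H\graph(f)\ge 2$, completing the proof. The entire burden is carried by Corollary~\ref{c:occup}; once that is granted, the passage from \emph{large fibers on a $\lambda$-large set of heights} to \emph{maximal-dimensional graph} is a one-line slicing computation, and the only delicate point is observing that $\lambda(f^{-1}(U_f))=1$ forces $U_f$ to contain a genuine interval of heights inside the range of $f$, which is precisely the input required by the slice inequality.
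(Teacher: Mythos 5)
Your proof is correct and follows essentially the same route the paper takes for the more general Theorem~\ref{t:graph}: the lower bound is obtained by producing a positive-measure set of heights $y$ with $\dim_H f^{-1}(y)=1$ (you use Corollary~\ref{c:occup}, the paper uses Theorem~\ref{t:main}) and then applying the Marstrand-type slicing inequality, which is exactly Lemma~\ref{l:Hsec} with $d=1$. The only difference is that you specialize directly to $K=[0,1]$ rather than proving the general statement and specializing afterwards.
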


The next result was proved by Bayart and Heurteaux, see \cite[Theorem~3]{BH}.

\begin{theorem}[Bayart-Heurteaux] \label{t:BH} If $K\subset \RR^m$ is compact with $\dim_H K>0$ then for the prevalent $f\in C(K,\RR)$
we have
$$\dim_H \graph(f)=\dim_H K+1.$$
\end{theorem}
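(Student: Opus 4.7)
The plan is to establish the matching upper and lower bounds separately. The upper bound $\dim_H \graph(f) \leq \dim_H K + 1$ holds for \emph{every} $f \in C(K,\RR)$: since $K\subset \RR^m$ is compact and $f$ is continuous, $\graph(f) \subset K\times [-M,M]$ for some $M>0$, and by the general product inequality $\dim_H(E\times F)\leq \dim_H E + \dim_P F$ (see \cite{F2}) applied to $F=[-M,M]$ (for which $\dim_P F = 1$) we obtain $\dim_H \graph(f) \leq \dim_H K + 1$.

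For the lower bound, fix any $s<\dim_H K$. Since $\dim_H K>0$, $K$ is uncountable, so Theorem~\ref{t:main} (with $d=1$) supplies, for the prevalent $f\in C(K,\RR)$, a non-empty open set $U_{f,s}\subset \RR$ with $\dim_H f^{-1}(y)\geq s$ for every $y\in U_{f,s}$. Choose a non-degenerate closed interval $I\subset U_{f,s}$ and observe that
\[
\graph(f) \supset E := \bigcup_{y\in I} f^{-1}(y)\times\{y\},
\]
whose vertical slice $E_y = f^{-1}(y)$ at height $y\in I$ has Hausdorff dimension at least $s$. A Marstrand-type slicing lower bound then yields $\dim_H E \geq \dim_H I + s = 1+s$, so $\dim_H\graph(f) \geq 1+s$. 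Since the conclusion of Theorem~\ref{t:main} is uniform in $s$ for the same prevalent $f$, letting $s\nearrow \dim_H K$ gives $\dim_H \graph(f)\geq \dim_H K + 1$, and combined with the upper bound this finishes the proof.

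The only step that is not completely routine is the slicing lower bound, and this is where I expect the main technical obstacle to lie. I would prove it by a Frostman/Fubini construction: for each $s'<s$, a measurable-selection argument provides a Borel subset $I'\subset I$ of positive Lebesgue measure and a family $\{\mu_y\}_{y\in I'}$ of Frostman measures of exponent $s'$ on the fibers $f^{-1}(y)$, with Frostman constants bounded uniformly in $y\in I'$; integrating the product measures $\mu_y\otimes \delta_y$ against Lebesgue measure on $I'$ produces a mass distribution on $E$ of finite $(1+s')$-energy, hence $\dim_H E \geq 1+s'$, and letting $s'\to s$ finishes the job.
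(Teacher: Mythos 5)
Your proof follows the same high-level route as the paper's proof of the more general Theorem~\ref{t:graph}: the upper bound is the product inequality $\dim_H(E\times F)\le\dim_H E+\dim_P F$, and the lower bound invokes Theorem~\ref{t:main} to get a non-empty open set $U_{f,s}$ of $y$'s with $\dim_H f^{-1}(y)\ge s$. Where you genuinely depart is in the slicing step. The paper applies Lemma~\ref{l:Hsec}, which is the Lipschitz co-area inequality (Lemma~\ref{lip}, from \cite[Theorem~7.7]{Ma}) read contrapositively: for $E\subset X\times\RR^d$, almost every vertical slice satisfies $\dim_H E^y\le\max\{0,\dim_H E-d\}$; since $\dim_H E^y\ge s>0$ on the non-empty open (hence positive-Lebesgue-measure) set $U_{f,s}$, necessarily $\dim_H E\ge s+1$, with no measure construction on the graph at all. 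You instead build a measure on $\graph(f)$ directly by a Fubini-type integration of fiberwise Frostman measures. That route can be made to work, but two points need care. First, the measurable-selection step you flag is a genuine technical hurdle: you need a positive-measure $I'\subset I$ and a measurable family $\{\mu_y\}_{y\in I'}$ with a \emph{uniform} Frostman constant, which requires stratifying $I$ by the size of that constant and showing the strata are analytic (or applying a uniformization theorem). Lemma~\ref{l:Hsec} sidesteps this entirely. Second, "finite $(1+s')$-energy" is not quite what a uniform $s'$-Frostman bound on the $\mu_y$ delivers: the integrated measure $\nu=\int_{I'}\mu_y\otimes\delta_y\,d\lambda(y)$ satisfies $\nu(B(p,r))\le Cr^{1+s'}$, which gives $\dim_H\graph(f)\ge 1+s'$ via the mass distribution principle (Lemma~\ref{l:fr}), but the $(1+s')$-energy integral can still diverge at this critical exponent (the naive estimate produces a factor $\int_{I'}\int_{I'}|y-y'|^{-1}\,d\lambda\,d\lambda=\infty$). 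So you should cite the mass distribution principle here, or lower the exponent slightly. With those two fixes, your argument is sound, and — like the paper's — it would in fact prove Theorem~\ref{t:graph} in full generality, since nothing beyond Theorem~\ref{t:main} uses $K\subset\RR^m$.
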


The proof of Theorem~\ref{t:BH} is based on potential theoretic methods, they give a lower estimate for the Hausdorff dimension of $\graph(X+f)$,
where $X\colon K\to \RR$ is a fractional Brownian motion restricted to $K$ and $f\in C(K,\RR)$ is a continuous drift. Note that if $X\colon K\to \RR^d$ is a fractional Brownian motion restricted to some $K\subset [0,1]$ and $f\in C(K,\RR^d)$, then Peres and Sousi \cite{PS} determined the almost sure Hausdorff dimension of $\graph(X+f)$ in terms of $f$ and the Hurst index of $X$. It is not difficult to extend the proof of \cite[Theorem~3]{BH} to vector valued functions, and Theorem~\ref{t:D} handles the case $\dim_H K=0$. These yield the following theorem.

\begin{theorem}\label{t:gr} Let $m,d\in \NN^+$ and let $K\subset \RR^m$ be an uncountable compact set. Then for the prevalent $f\in C(K,\RR^d)$
we have
$$\dim_H \graph(f)=\dim_H K+d.$$
\end{theorem}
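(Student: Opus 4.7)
The plan is to split the desired equality into a universal upper bound $\dim_H\graph(f)\le\dim_H K+d$ valid for every $f\in C(K,\RR^d)$ and a prevalent lower bound. The upper bound is routine: since $f(K)\subset\RR^d$ is compact (so bounded) its upper box dimension is at most $d$, the inclusion $\graph(f)\subset K\times f(K)$ holds, and the standard product inequality $\dim_H(A\times B)\le\dim_H A+\dim_B B$ yields the bound without any genericity assumption.

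For the lower bound in the degenerate case $\dim_H K=0$, Dougherty's Theorem~\ref{t:D} already does the work: prevalently, $f(K)$ contains a non-empty open set $U_f$, and the projection $\graph(f)\cap(K\times U_f)\to U_f$ is a $1$-Lipschitz surjection, so $\dim_H\graph(f)\ge\dim_H U_f=d$. So suppose $\dim_H K>0$. Since shy sets form a $\sigma$-ideal, it is enough to prove for each $\eta>0$ that
\[
S_\eta=\{f\in C(K,\RR^d):\dim_H\graph(f)<\dim_H K+d-\eta\}
\]
is shy. My plan is to adapt the potential-theoretic probe in the proof of Theorem~\ref{t:BH} from scalar to vector-valued paths. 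Choose $H\in(0,1)$ so small that $(1-H)d>d-\eta$, let $X=(X^{(1)},\dots,X^{(d)})$ be a vector of $d$ independent continuous fractional Brownian fields of Hurst exponent $H$ on $\RR^m$, restricted to $K$, and let $\mu_H$ be its law on $C(K,\RR^d)$. Shyness of $S_\eta$ reduces to showing that for every fixed drift $g\in C(K,\RR^d)$,
\[
\dim_H\graph(X+g)\ge\dim_H K+d-\eta\quad\text{for $\mu_H$-a.e.\ }X.
\]

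To prove this almost-sure statement, fix a Frostman measure $\mu_K$ on $K$ of exponent $s_0<\dim_H K$ arbitrarily close to $\dim_H K$, push it forward by $\phi(x)=(x,X(x)+g(x))$ to a measure $\nu$ on the graph, and estimate the expected Riesz $\beta$-energy of $\nu$ for $\beta<s_0+(1-H)d$. Splitting the factor $\bigl(|x-y|^2+|X(x)-X(y)+g(x)-g(y)|^2\bigr)^{-\beta/2}$ into a horizontal part of order $|x-y|^{-s_0}$ (integrable against $\mu_K\otimes\mu_K$ by the Frostman condition) and a vertical part of order $|X(x)-X(y)+g(x)-g(y)|^{-(\beta-s_0)}$, and applying the Gaussian density bound
\[
\mathbb{E}\bigl[\,|X(x)-X(y)+v|^{-\gamma}\,\bigr]\le C\,|x-y|^{-\gamma H}\qquad(\gamma<d,\;v\in\RR^d),
\]
one obtains $\mathbb{E}[I_\beta(\nu)]<\infty$. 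Frostman's theorem then gives $\dim_H\graph(X+g)\ge\beta$ a.s., and letting $\beta\nearrow s_0+(1-H)d$ and $s_0\nearrow\dim_H K$ delivers the claim.

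The main obstacle, and the reason the scalar argument of \cite{BH} must be reworked rather than cited as a black box, is getting the energy estimate uniform in the drift $g$ so that the single measure $\mu_H$ witnesses shyness of every translate $S_\eta+g$. This is precisely what the Gaussian density bound above provides: it is automatically uniform in the shift $v=g(x)-g(y)$, and the $d$ independent Gaussian directions upgrade the scalar factor $(1-H)$ in \cite[Theorem~3]{BH} to the desired $(1-H)d$. Once the vector-valued adaptation of this estimate is in place, the passage from ``$\mu_H$-a.s.\ for every drift $g$'' to ``$S_\eta$ is shy'' is immediate from Definition~\ref{d:shy}.
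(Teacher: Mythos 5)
Your overall plan matches the route the paper sketches for this statement: for the lower bound, extend the Bayart--Heurteaux potential-theoretic probe to $\RR^d$-valued fractional Brownian fields when $\dim_H K>0$, and fall back on Dougherty's Theorem~\ref{t:D} when $\dim_H K=0$; the upper bound is the routine product inequality. The paper's detailed argument, however, is carried out for the more general Theorem~\ref{t:graph} (where the hypothesis $K\subset\RR^m$ is dropped), and it takes a different route: it feeds the fiber-dimension Theorem~\ref{t:main} into a Marstrand-type slicing inequality, Lemma~\ref{l:Hsec}, rather than re-deriving a graph estimate from scratch. The slicing route is shorter and gives the intrinsic metric-space version for free; your route is self-contained modulo \cite{BH} but requires reworking the Gaussian energy estimate by hand.

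There is a genuine gap in that estimate as you have written it. You split $(|x-y|^2+|Z|^2)^{-\beta/2}$ as $|x-y|^{-s_0}\,|Z|^{-(\beta-s_0)}$ and then apply the Gaussian bound $\mathbb{E}\bigl[|Z|^{-\gamma}\bigr]\le C\,|x-y|^{-\gamma H}$ with $\gamma=\beta-s_0$. The resulting integrand is of order $|x-y|^{-s_0-(\beta-s_0)H}$, whose exponent exceeds $s_0$ as soon as $\beta>s_0$; since a Frostman measure of exponent $s_0$ has finite $t$-energy only for $t\le s_0$, this split gives nothing beyond the trivial $\beta\le s_0$, i.e.\ no improvement on $\dim_H\graph(f)\ge\dim_H K$. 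The fix is to choose the interpolation weight differently: split as $|x-y|^{-(\beta-\gamma)}|Z|^{-\gamma}$ with $\gamma$ free, so that after the Gaussian bound the exponent becomes $\beta-\gamma(1-H)$. This is below $s_0$ precisely when $\gamma>(\beta-s_0)/(1-H)$, and the window $\bigl((\beta-s_0)/(1-H),\,d\bigr)$ is non-empty exactly for $\beta<s_0+(1-H)d$, which is the range you want; your choice $\gamma=\beta-s_0$ lies strictly below this window because $1-H<1$. You also need $\gamma<\beta$, which forces $s_0>Hd$; this is harmless since you send $H\to0$ at the end, but it should be said.
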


We will show that Theorem~\ref{t:main} also easily implies the above theorem. Moreover, the condition $K\subset \RR^m$ is superfluous.

\begin{t:graph} Let $K$ be an uncountable compact metric space and let $d\in \NN^+$. Then for the prevalent $f\in C(K,\RR^d)$
we have
$$\dim_H \graph(f)=\dim_H K+d.$$
\end{t:graph}

Much less was known about the prevalent value of the packing dimension of the
graphs. Corollary~\ref{c:Mainp} implies the packing dimension analogue of Theorem~\ref{t:gr}.

\begin{t:graph2}[simplified version] Let $m,d\in \NN^+$ and let $K\subset \RR^m$ be an uncountable compact set.
Then for the prevalent $f\in C(K,\RR^d)$ we have
$$\dim_P \graph(f)=\dim_P K+d.$$
\end{t:graph2}

In Section~\ref{s:gauge} we indicate how to obtain stronger forms of the main
results by replacing large dimension by positive measure with respect to generalized Hausdorff measures.
Finally, in Section~\ref{s:open} we pose some open problems.

\section{Preliminaries}

Let $(X,d)$ be a metric space.
For $A,B \subset X$ let us define $\dist(A,B) = \inf\{d(x,y) : x\in A,~y\in B\}$.
Let $B(x,r)$ and $U(x,r)$ be the closed and open ball of
radius $r$ centered at $x$, respectively. Set $B(A,r) = \{x \in X : \dist(\{x\},A) \le r \}$.
We denote by $\cl A$, $\inter A$ and $\partial A$ the closure, interior and boundary of $A$, respectively.
The diameter of a set $A$ is denoted by $\diam A$.
We use the conventions $\diam \emptyset = 0$ and $\inf \emptyset=\infty$.
For two metric spaces $(X,d_{X})$ and $(Y,d_{Y})$ a map
$f\colon X\to Y$ is \emph{$s$-H\"older} for an $s>0$
if there is a constant $c\in \mathbb{R}$ such that $d_{Y}(f(x_{1}),f(x_{2}))\leq c(d_{X}(x_{1},x_{2}))^{s}$
for all $x_{1},x_{2}\in X$. A map $f\colon X\to Y$ is \emph{Lipschitz} if it is $1$-H\"older, and the
smallest $c$ in the definition is called the Lipschitz constant of $f$ and is denoted by
$\Lip(f)$. We say that $f$ is \emph{bi-Lipschitz}
if it is one-to-one and both $f$ and $f^{-1}$ are Lipschitz.

Let $s \ge 0$. The \emph{$s$-dimensional Hausdorff measure} of a metric space $X$ is
\begin{align*}
\mathcal{H}^{s}(X)&=\lim_{\delta\to 0+}\mathcal{H}^{s}_{\delta}(X)
\mbox{, where}\\
\mathcal{H}^{s}_{\delta}(X)&=\inf \left\{ \sum_{i=1}^\infty (\diam
X_{i})^{s}: X \subset \bigcup_{i=1}^{\infty} X_{i},~
\forall i \diam X_i \le \delta \right\}.
\end{align*}
Let $\dim_H \emptyset=-1$. The \emph{Hausdorff dimension} of a non-empty $X$ is defined as
\[
\dim_{H} X = \inf\{s \ge 0: \mathcal{H}^{s}(X) =0\},
\]
for more information on these concepts see \cite{F} or \cite{Ma}. Now we define the packing dimension.
If $X$ is non-empty and totally bounded then for all
$\delta>0$ let $N_{\delta}(X)$ be the smallest number of closed balls of radius $\delta$ whose union cover $X$.
Then the \emph{upper box dimension} of $X$ is defined as
$$\overline{\dim}_{B} X=\limsup_{\delta \to 0+} \frac{\log N_{\delta}(X)}{\log (1/\delta)}.$$
Let $\overline{\dim}_{B} \emptyset =-1$ and let $\overline{\dim}_{B} X=\infty$ if $X$ is not totally bounded.
The \emph{packing dimension} of $X$ is defined as
$$\dim_P X=\inf \left\{\sup_{i} \overline{\dim}_{B} X_i: X=\bigcup_{i=1}^{\infty} X_i\right\}.$$
Then clearly $\dim_P \emptyset =-1$. Since we do not need the packing measure, it was more convenient for us to define
the packing dimension as the modified upper box dimension, see e.g.\ \cite{F} or \cite{MP} for more on these concepts.
The following fact is an easy consequence of the definitions.

\begin{fact}\label{f:Holder} If $X,Y$ are non-empty metric spaces and $f\colon X\to Y$ is $s$-H\"older then
$$\dim_H f(X)\leq \frac{\dim_H X}{s} \quad \textrm{and} \quad \dim_P f(X)\leq \frac{\dim_P X}{s}.$$
\end{fact}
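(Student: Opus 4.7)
The plan is to derive both inequalities directly from the definitions, using the single key observation that if $f\colon X\to Y$ is $s$-H\"older with constant $c$, then for any $A\subset X$ we have $\diam f(A)\le c(\diam A)^s$. Both statements then reduce to book-keeping with this estimate.

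For the Hausdorff part, I would fix $r>\dim_H X$ and set $t=r/s$. Given any cover $X\subset\bigcup_{i=1}^\infty X_i$ with $\diam X_i\le \delta$, the sets $f(X_i)$ cover $f(X)$ and satisfy $\diam f(X_i)\le c(\diam X_i)^s\le c\delta^s$. Hence
\[
\sum_{i=1}^\infty (\diam f(X_i))^t \le c^t \sum_{i=1}^\infty (\diam X_i)^{st}=c^t\sum_{i=1}^\infty (\diam X_i)^r,
\]
which, after taking infima over admissible covers, yields $\iH^t_{c\delta^s}(f(X))\le c^t\iH^r_\delta(X)$. Letting $\delta\to 0+$ gives $\iH^t(f(X))\le c^t\iH^r(X)=0$, so $\dim_H f(X)\le t=r/s$. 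Letting $r\searrow \dim_H X$ finishes the Hausdorff estimate.

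For the packing part I would first prove the analogous inequality for the upper box dimension on totally bounded sets. A closed $\delta$-ball in $X$ has image of diameter at most $c(2\delta)^s$, hence is contained in a closed ball in $Y$ of radius $\varepsilon:=c(2\delta)^s$; therefore $N_\varepsilon(f(X))\le N_\delta(X)$. Writing $\log(1/\delta)=\tfrac{1}{s}\log(1/\varepsilon)+O(1)$ as $\delta\to 0+$ gives $\overline{\dim}_B f(X)\le \overline{\dim}_B X/s$. (If $X$ fails to be totally bounded the statement is vacuous since $\overline{\dim}_B X=\infty$.) Now given any countable decomposition $X=\bigcup_{i=1}^\infty X_i$, one has $f(X)=\bigcup_{i=1}^\infty f(X_i)$, so
\[
\dim_P f(X)\le \sup_i \overline{\dim}_B f(X_i)\le \frac{\sup_i\overline{\dim}_B X_i}{s},
\]
and taking the infimum over all such decompositions yields $\dim_P f(X)\le \dim_P X/s$.

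There is no real obstacle here: the H\"older condition transfers diameter estimates and ball covers in a perfectly linear way on the logarithmic scale, and the whole argument is just the definition chase outlined above. The only place to be mildly careful is the passage from upper box dimension to packing dimension, where one must use that the decomposition of $X$ induces a decomposition of $f(X)$ of the same cardinality, so the infimum defining $\dim_P$ is controlled termwise.
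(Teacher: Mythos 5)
Your proof is correct, and it is exactly the routine definition chase the authors have in mind: the paper presents this as a Fact with the remark ``an easy consequence of the definitions'' and supplies no proof of its own. Both halves of your argument (the $\iH^t_{c\delta^s}(f(X))\le c^t\iH^r_\delta(X)$ estimate for Hausdorff dimension, and the $N_{c(2\delta)^s}(f(X))\le N_\delta(X)$ estimate pushed through the modified-upper-box definition of packing dimension) are sound, including the reparametrization $\delta\mapsto c(2\delta)^s$ and the observation that a decomposition of $X$ induces one of $f(X)$.
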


Let $K$ be a compact metric space an let $\mu$ be a mass distribution on $K$.
We define
\begin{align*} \dim_H \mu&=\inf \{\dim_H B: B\subset K \textrm{ is Borel and } \mu(B)>0\},\\
\dim_P \mu&=\inf \{\dim_P B: B\subset K \textrm{ is Borel and } \mu(B)>0\}.
\end{align*}

For the following theorem see \cite[Proposition~10.2]{F2} when $K$ is a subset of a Euclidean space.
In fact, the proof of \cite[Proposition~2.2]{F2}
with the covering theorem \cite[Theorem~2.1]{Ma} works in an arbitrary compact metric space.

\begin{theorem}\label{t:limsup} If $\mu$ is a mass distribution on a compact metric space $K$ then
\begin{align*} \dim_H \mu&=\sup \left\{s\geq 0: \limsup_{r\to 0+} \frac{\mu(B(x,r))}{r^{s}}<\infty \textrm{ for $\mu$-a.e. }x\in K\right\},\\
 \dim_P \mu&=\sup \left\{s\geq 0: \liminf_{r\to 0+} \frac{\mu(B(x,r))}{r^{s}}<\infty \textrm{ for $\mu$-a.e. }x\in K\right\}.
\end{align*}
\end{theorem}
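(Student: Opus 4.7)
The plan is to write $S_H$ and $S_P$ for the two suprema on the right and prove the equalities as four separate inequalities. I will use two tools throughout: the right-continuity of $r\mapsto\mu(B(x,r))$ (which makes all sets defined by ball-mass inequalities Borel, via a countable dense set of radii) and Mattila's $5r$-covering lemma \cite[Theorem~2.1]{Ma}, which in any metric space extracts from a given family of balls a disjoint subfamily whose $5$-fold enlargements still cover the union.

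For the Hausdorff dimension, the lower bound $\dim_H\mu\ge S_H$ is the classical mass distribution principle. For $s<S_H$ the Borel sets $F_C=\{x:\mu(B(x,r))\le Cr^s\text{ for all }r\le 1/C\}$ exhaust $K$ up to a $\mu$-null set, so any Borel $B$ with $\mu(B)>0$ meets some $F_C$ in positive measure, yielding $\mathcal H^s(B)\ge\mu(B\cap F_C)/C>0$. The upper bound uses the $5r$-lemma: for $s>S_H$ the set $E=\{x:\limsup_r\mu(B(x,r))/r^s=\infty\}$ has positive $\mu$-measure, and applying Mattila's lemma to the Vitali cover $\{B(x,r):x\in E,\ r<\eta,\ \mu(B(x,r))\ge Mr^s\}$ produces disjoint balls $\{B(x_j,r_j)\}$ whose $5$-fold enlargements cover $E$, giving
\[
\mathcal H^s_{10\eta}(E)\le 10^s\sum_j r_j^s\le\frac{10^s}{M}\sum_j\mu(B(x_j,r_j))\le\frac{10^s\mu(K)}{M}.
\]
Letting $M\to\infty$ (for fixed $\eta$) and then $\eta\to 0$ yields $\mathcal H^s(E)=0$.

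The packing upper bound $\dim_P\mu\le S_P$ is analogous and easier. For $s>S_P$ I would decompose the positive-$\mu$-measure set $\{x:\liminf_r\mu(B(x,r))/r^s=\infty\}$ as $\bigcup_k E_k$ with $E_k=\{x:\mu(B(x,r))\ge kr^s\text{ for all }r\le 1/k\}$; some $E_k$ has positive $\mu$-measure, and any disjoint family of radius-$\delta/2$ balls centered in $E_k$ (with $\delta\le 1/k$) has at most $\mu(K)/(k(\delta/2)^s)$ members, giving $\overline{\dim}_BE_k\le s$ and hence $\dim_P\mu\le\dim_P E_k\le s$.

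The remaining bound $\dim_P\mu\ge S_P$ is where I expect the real difficulty. Fix $s'<s<S_P$ and a Borel $B$ with $\mu(B)>0$. Via the Borel sets $E_M=\{x:\liminf_r\mu(B(x,r))/r^s\le M\}$ and inner regularity I pass to a closed $F\subset B\cap E_M$ with $\mu(F)>0$, on which automatically $\liminf_r\mu(B(x,r))/r^{s'}=0$. To establish $\dim_P F\ge s'$, for any countable cover $F=\bigcup_iA_i$ I would use $\mu^*$-subadditivity to locate some $A_i$ with $\mu^*(A_i)>0$, then apply Mattila's $5r$-lemma to the Vitali cover $\{B(x,r):x\in A_i,\ r\le\delta,\ \mu(B(x,r))\le\varepsilon r^{s'}\}$ and translate the resulting disjoint subfamily into a lower bound on the $\delta$-packing numbers of $A_i$, concluding $\overline{\dim}_B A_i\ge s'$. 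This last bookkeeping is exactly the step where the Euclidean proof of \cite[Proposition~2.2]{F2} invokes Besicovitch's covering theorem; the substitution of Mattila's $5r$-lemma is the only substantive adjustment required to extend the argument to an arbitrary compact metric space.
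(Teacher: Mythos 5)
Your overall strategy is exactly the one the paper intends: it defers to Falconer's \cite[Proposition~2.2]{F2}, declaring that the only change needed outside Euclidean space is to replace the Besicovitch/Vitali covering step by Mattila's $5r$-covering theorem \cite[Theorem~2.1]{Ma}. Your first three inequalities are fine (modulo the minor adjustment that the sets $E_k$ in the packing upper bound should fix the multiplicative constant, say $E_k=\{x:\mu(B(x,r))\ge r^s\text{ for all }r\le 1/k\}$, so that they actually exhaust the set).

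The gap is in the packing lower bound $\dim_P\mu\ge S_P$, and it is precisely at the step you flag as the delicate one. With the Vitali cover $\bigl\{B(x,r):x\in A_i,\ r\le\delta,\ \mu(B(x,r))\le\varepsilon r^{s'}\bigr\}$, the $5r$-lemma gives disjoint balls $B_j=B(x_j,r_j)$ with $A_i\subset\bigcup_j 5B_j$ and $\mu(B_j)\le\varepsilon r_j^{s'}$. This is the wrong inequality for what you need: it upper-bounds $\sum_j\mu(B_j)$ by $\varepsilon\sum_j r_j^{s'}$, while the $5r$-enlargement lower-bounds only $\sum_j\mu(5B_j)\ge\mu^*(A_i)$, and without a doubling hypothesis $\mu(5B_j)$ has nothing to do with $\mu(B_j)$ or $r_j^{s'}$. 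So you get no lower bound at all on $\sum_j r_j^{s'}$, and hence no count. (This is exactly why the Euclidean proof reaches for Besicovitch rather than the $5r$-lemma: Besicovitch yields a disjoint subfamily that covers $A_i$ up to a $\mu$-null set, giving $\sum_j\mu(B_j)\ge\mu^*(A_i)$ directly.) The repair is a pre-dilation of the Vitali cover: since $\liminf_{\rho\to0}\mu(B(x,\rho))/\rho^{s'}=0$ on $A_i$, one may instead take
\[
\bigl\{B(x,r):x\in A_i,\ r\le\delta,\ \mu(B(x,5r))\le(5r)^{s'}\bigr\},
\]
which is still a fine cover. Now the $5r$-lemma gives disjoint $B_j$ centered in $A_i$ with $A_i\subset\bigcup_j 5B_j$ and $\mu(5B_j)\le(5r_j)^{s'}$, hence $\sum_j r_j^{s'}\ge 5^{-s'}\mu^*(A_i)>0$ with all $r_j\le\delta$. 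Grouping the $r_j$ dyadically and comparing with the covering numbers $N_{2^{-k}}(A_i)$ then gives $\overline{\dim}_B A_i\ge s'$ exactly as in your outline. With this one change your argument goes through and matches the route the paper indicates.
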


The next theorem states that we can approximate the dimension of a compact metric space $K$
by the dimension of measures supported within it. For the proof see the theorem above with Frostman's lemma \cite[Theorem~8.17]{Ma} and \cite{IT} in
the case of the Hausdorff and the packing dimension, respectively.
Moreover, we may assume that the measures are Hausdorff and packing measures restricted to a compact subset of $K$, see
\cite{Ho} and \cite{JP}, respectively. See also \cite[Proposition~10.1]{F2} for the Euclidean case.

\begin{theorem}\label{t:frostman} If $K$ is a non-empty compact metric space then
\begin{align*} \dim_H K&=\sup \{\dim_H \mu: \mu \textrm{ is a mass distribution on } K\}, \\
\dim_P K&=\sup \{\dim_P \mu: \mu \textrm{ is a mass distribution on } K\}.
\end{align*}
If $K$ is uncountable then we may assume that the above measures $\mu$ are continuous.
\end{theorem}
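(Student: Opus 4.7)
The plan is to establish the two inequalities separately. The upper bounds $\sup\{\dim_H \mu\}\leq \dim_H K$ and $\sup\{\dim_P\mu\}\leq \dim_P K$ are immediate: any mass distribution $\mu$ satisfies $\mu(K)>0$, so $B=K$ is admissible in the defining infima of $\dim_H \mu$ and $\dim_P \mu$, yielding $\dim_H \mu \leq \dim_H K$ and $\dim_P \mu \leq \dim_P K$.

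For the Hausdorff lower bound, fix $s<\dim_H K$. Frostman's lemma \cite[Theorem~8.17]{Ma} produces a mass distribution $\mu$, supported on a compact subset of $K$, with $\mu(B(x,r))\leq Cr^s$ for every $x$ and every $r>0$. This gives $\limsup_{r\to 0+}\mu(B(x,r))/r^s <\infty$ pointwise, so Theorem~\ref{t:limsup} yields $\dim_H \mu \geq s$. Letting $s\nearrow \dim_H K$ closes the Hausdorff case.

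The packing lower bound is the main technical point. There is no clean Frostman-type statement producing a measure on $K$ itself with a uniform pointwise control of the form $\liminf_{r\to 0+}\mu(B(x,r))/r^s<\infty$; the obstacle is that upper box dimension can be concentrated on sparse sets, and one must first pass to a subset where the lower box behavior is nearly optimal. The plan is to invoke the result of Joyce and Preiss \cite{JP} (anticipated in \cite{IT}): for every $s<\dim_P K$ there exists a compact $K'\subset K$ carrying a mass distribution $\mu$ with $\liminf_{r\to 0+}\mu(B(x,r))/r^s<\infty$ for $\mu$-a.e.\ $x$. Theorem~\ref{t:limsup} then gives $\dim_P \mu \geq s$, and letting $s\nearrow \dim_P K$ finishes the packing case.

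Finally, assume $K$ is uncountable. Whenever the witnessing exponent $s$ above is strictly positive, the local bound on $\mu(B(x,r))$ forces $\mu(\{x\})=0$ for every $x$, so $\mu$ is automatically continuous. In the degenerate case where the dimension in question equals $0$, it suffices to exhibit any continuous mass distribution on $K$; since an uncountable compact metric space contains a topological Cantor set by \cite[Cor.~6.5]{K}, transporting the standard coin-tossing measure via the embedding provides such a $\mu$, trivially satisfying $\dim_H \mu \geq 0$ and $\dim_P \mu \geq 0$.
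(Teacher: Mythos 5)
Your proposal is correct and follows essentially the same approach as the paper, which simply cites Frostman's lemma, Ikeda--Tamashiro and Joyce--Preiss together with Theorem~\ref{t:limsup} and leaves the details to the reader. The only cosmetic difference is that on the packing side you invoke Joyce--Preiss (plus a packing-density argument) where the paper points directly to the Ikeda--Tamashiro Frostman-type lemma, but these are interchangeable routes to the same conclusion.
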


The metric space $(X,d)$ is called \emph{ultrametric} if the triangle inequality is replaced with the stronger inequality
$d(x,y)\leq \max\{d(x,z),d(y,z)\}$ for all $x,y,z\in X$.
\begin{fact} \label{f:ultra} Let $X$ be an ultrametric space.
Then for all $x,y\in X$ and $r>0$ either $B(x,r)\cap B(y,r)=\emptyset$ or $B(x,r)=B(y,r)$.
\end{fact}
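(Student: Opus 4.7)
The plan is to use the strong triangle inequality twice: first to show that any two balls of the same radius that meet must have their centers close, and then to show that closeness of centers forces containment in both directions.

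First I would suppose $B(x,r)\cap B(y,r)\neq \emptyset$ and pick a witness $z$ in the intersection, so $d(x,z)\le r$ and $d(y,z)\le r$. The ultrametric inequality then gives
\[
d(x,y)\le \max\{d(x,z),d(z,y)\}\le r.
\]
So as soon as the balls meet, the centers lie within distance $r$ of each other.

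Next I would show $B(x,r)\subseteq B(y,r)$: for any $w\in B(x,r)$, another application of the ultrametric inequality yields
\[
d(w,y)\le \max\{d(w,x),d(x,y)\}\le r,
\]
hence $w\in B(y,r)$. The reverse inclusion is symmetric, so $B(x,r)=B(y,r)$. There is really no main obstacle here; the whole content is just two invocations of the strong triangle inequality, and the only care needed is to state the dichotomy precisely (either disjoint or equal) rather than the weaker statement that one contains the other.
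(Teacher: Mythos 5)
Your proof is correct and is the standard two-step application of the strong triangle inequality. The paper states this as a well-known \emph{Fact} without proof, so there is no argument to compare against; your write-up supplies exactly the omitted routine verification.
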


Let $X$ be a \emph{complete} metric space. A set is \emph{somewhere dense} if
it is dense in a non-empty open set, and otherwise it is called \emph{nowhere dense}. We say that $A \subset X$ is
\emph{meager} if it is a countable union of nowhere dense sets, and
a set is called \emph{co-meager} if its complement is meager. By
Baire's category theorem a set is co-meager iff it contains a dense
$G_\delta$ set. We say that the \emph{generic} element $x \in X$ has
property $\mathcal{P}$ if $\{x \in X : x \textrm{ has property }
\mathcal{P} \}$ is co-meager. Our main example will be $X=C(K,\RR^d)$. See e.g. \cite{K} for
more on these concepts.

A metric space $X$ is a \emph{Polish space} if it is complete and separable. We say that $A\subset X$
\emph{analytic} if it is a continuous image of a Polish space, and \emph{co-analytic}
if its complement is analytic. A Borel subset of a Polish space is analytic, see
\cite[Theorem~13.7]{K}. Continuous images, countable unions and countable intersections of analytic sets are also analytic
\cite[Proposition~14.4]{K}. For more on these concepts see \cite{K}.

Let $\mu$ be a mass distribution on a Polish space $X$.
Then $\mu$ can be extended to the $\sigma$-algebra of the $\mu$-measurable sets as a complete measure, see \cite[113C]{Fr}.
Analytic and co-analytic sets are $\mu$-measurable \cite[434D~(c)]{Fr}. We denote by $\supp \mu$ the \emph{support of $\mu$},
the minimal closed subset $F$ of $X$ so that $\mu(X\setminus F) =0$. The measure $\mu$ is called \emph{continuous} is $\mu(\{x\})=0$ for all $x\in X$.
For the following classical theorems see \cite[433C]{Fr} and \cite[Theorem~A,~p.~54.]{Ha}, respectively.

\begin{theorem}\label{t:Ulam} If $X$ is a Polish space and $\mu$ is a mass distribution on $X$
then there is a compact set $K\subset X$ with $\mu(K)>0$.
\end{theorem}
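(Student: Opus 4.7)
The plan is to invoke the classical tightness argument: any Borel probability (or finite) measure on a Polish space is inner regular with respect to compact sets, and this immediately yields the claim by taking any $\varepsilon<\mu(X)$.

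First I would normalize and reduce to showing tightness, namely that for every $\varepsilon>0$ there is a compact $K\subset X$ with $\mu(X\setminus K)<\varepsilon$. Given this, choosing $\varepsilon=\mu(X)/2$ (which is positive since $\mu$ is a mass distribution) produces a compact $K$ with $\mu(K)\geq \mu(X)/2>0$, which is what we want.

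To establish tightness, I would use separability and completeness of $X$. Fix a dense sequence $\{x_n\}_{n=1}^\infty\subset X$. For each $k\in \NN^+$ we have
\[
X=\bigcup_{n=1}^\infty B(x_n,1/k),
\]
so by countable additivity and finiteness of $\mu$ there is $N_k\in \NN^+$ with
\[
\mu\left(X\setminus \bigcup_{n=1}^{N_k} B(x_n,1/k)\right)<\frac{\varepsilon}{2^k}.
\]
Set $F=\bigcap_{k=1}^\infty \bigcup_{n=1}^{N_k} B(x_n,1/k)$. Then $F$ is closed (hence complete, as a closed subspace of the complete space $X$) and totally bounded by construction, so $F$ is compact. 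Countable subadditivity gives $\mu(X\setminus F)<\sum_{k=1}^\infty \varepsilon/2^k=\varepsilon$, finishing the tightness step.

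The only potential obstacle is a subtle point: one must verify that $F$ above is actually compact, not merely closed and totally bounded in some weaker sense; this is precisely where completeness of the Polish space $X$ enters, since a totally bounded complete metric space is compact. Once that is noted, the argument is routine and there is no real difficulty. Alternatively, one could simply cite \cite[433C]{Fr} as the paper itself suggests, but the self-contained proof above is short enough to include.
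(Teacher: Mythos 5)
Your proof is correct and complete: it is the standard tightness argument (separability gives finite $\varepsilon/2^k$-covers, their intersection is a closed totally bounded set, completeness of $X$ upgrades this to compactness), and you correctly note the one point where completeness is essential. The paper itself offers no proof and merely cites \cite[433C]{Fr}, which establishes the same inner-regularity fact, so your self-contained argument is an acceptable substitute taking the expected route.
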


\begin{theorem}[Carath\'eodory's extension theorem] Any $\sigma$-finite measure defined on an algebra $\iA$
can be \emph{uniquely} extended to the $\sigma$-algebra generated by $\iA$.
\end{theorem}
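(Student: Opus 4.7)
The plan is to prove this via the standard Carath\'eodory construction in three stages: build an outer measure from $\mu$, extract a $\sigma$-algebra on which the outer measure is countably additive, and then invoke $\sigma$-finiteness to obtain uniqueness. Write $X$ for the underlying set of $\iA$.

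First I would define $\mu^{*}\colon 2^{X}\to [0,\infty]$ by
$$\mu^{*}(E)=\inf\left\{\sum_{n=1}^{\infty}\mu(A_{n}): A_{n}\in\iA,\ E\subset\bigcup_{n=1}^{\infty}A_{n}\right\}.$$
A routine check yields $\mu^{*}(\emptyset)=0$, monotonicity, and countable subadditivity (via a diagonal $\varepsilon/2^{n}$ argument), so $\mu^{*}$ is an outer measure. Next I would verify that $\mu^{*}=\mu$ on $\iA$; the nontrivial direction uses countable additivity of $\mu$ on $\iA$: given $A\subset\bigcup_{n}A_{n}$ with $A,A_{n}\in\iA$, pass to the disjointified family $B_{n}=A\cap A_{n}\setminus\bigcup_{k<n}A_{k}$ inside $\iA$ to get $\mu(A)=\sum_{n}\mu(B_{n})\leq\sum_{n}\mu(A_{n})$.

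Second, define the class $\iM$ of Carath\'eodory measurable sets: $E\in\iM$ iff $\mu^{*}(T)=\mu^{*}(T\cap E)+\mu^{*}(T\setminus E)$ for every $T\subset X$. A standard bookkeeping argument shows $\iM$ is a $\sigma$-algebra and $\mu^{*}\vert_{\iM}$ is a complete measure. The crucial step is to show $\iA\subset\iM$: given $E\in\iA$ and $T\subset X$ with $\mu^{*}(T)<\infty$, pick an $\iA$-cover $T\subset\bigcup_{n}A_{n}$ with $\sum_{n}\mu(A_{n})\leq\mu^{*}(T)+\varepsilon$, then use finite additivity on $\iA$ to write $\mu(A_{n})=\mu(A_{n}\cap E)+\mu(A_{n}\setminus E)$ and sum. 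Since $\sigma(\iA)\subset\iM$, the measure $\mu^{*}\vert_{\sigma(\iA)}$ is the desired extension.

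For uniqueness, suppose $\nu_{1},\nu_{2}$ are measures on $\sigma(\iA)$ both extending $\mu$. Fix $A\in\iA$ with $\mu(A)<\infty$ and consider
$$\iD_{A}=\{E\in\sigma(\iA): \nu_{1}(E\cap A)=\nu_{2}(E\cap A)\}.$$
Then $\iD_{A}$ contains the $\pi$-system $\iA$ and (using finiteness of $\nu_{i}(A)$) is a Dynkin system, so by the $\pi$-$\lambda$ theorem $\iD_{A}=\sigma(\iA)$. By $\sigma$-finiteness write $X=\bigsqcup_{n}A_{n}$ with $A_{n}\in\iA$ and $\mu(A_{n})<\infty$; then for every $E\in\sigma(\iA)$, $\nu_{1}(E)=\sum_{n}\nu_{1}(E\cap A_{n})=\sum_{n}\nu_{2}(E\cap A_{n})=\nu_{2}(E)$.

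The main obstacle is the verification that $\iA\subset\iM$ — everything else is either formal or a textbook application of $\pi$-$\lambda$ — and one should keep in mind that $\sigma$-finiteness is indispensable in the uniqueness step: without it there are standard counterexamples (e.g.\ a counting-type measure on the algebra generated by half-lines on $\RR$) where distinct $\sigma$-additive extensions coexist.
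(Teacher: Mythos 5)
The paper does not prove this theorem; it is stated as a classical background result with a citation to Halmos's \emph{Measure Theory} (Theorem~A, p.~54), and the proof there is precisely the Carath\'eodory outer-measure construction followed by the $\pi$-$\lambda$ uniqueness argument that you give. Your proposal is correct and matches that standard reference, including the correct use of countable additivity of the premeasure in showing $\mu^{*}=\mu$ on $\iA$, the verification $\iA\subset\iM$, and the indispensable role of $\sigma$-finiteness in the uniqueness step.
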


Let $G$ be an abelian Polish group and let $\mu$, $\nu$ be $\sigma$-finite Borel measures on $G$. For a Borel set $A\subset G$ let us define
$$(\mu \ast \nu)(A)= (\mu \times \nu)(\{(x,y)\in G\times G: x+y\in A\}),$$
where $\mu\times \nu$ is the product measure on $G\times G$. Then $\mu\ast \nu$ is a $\sigma$-finite Borel measure on $G$ called the
\emph{convolution of $\mu$ and $\nu$}.

For all $f\in C[0,1]$ let $\iZ(f)=\{x\in [0,1]: f(x)=0\}$. If $x=(x_1,\dots,x_d)\in \RR^d$ then the
\emph{maximum norm} of $x$ is defined as $||x||=\max_{1\leq i\leq d}|x_i|$.
Let $\chi_{A}$ be the characteristic function of the set $A$.
If $A\subset \RR$ then let $\conv (A)$ be the convex hull of $A$.
We denote by $\Pr$, $ \mathbb{E} $ and $\Var$ the probability, expected value and variance, respectively.

\section{Technical lemmas}\label{s:tech}

Our definition of prevalence follows Hunt, Sauer and York \cite{HSY} and differs from Christensen \cite{C}
in which the definition is given for so-called universally measurable sets (without the Borel hulls).
These definitions are equivalent for Borel sets, but they differ in general, see \cite{EV}.
The following theorem states that the definitions are also equivalent for co-analytic sets,
see \cite[Proposition~(i)]{S} for the proof.

\begin{theorem}[Solecki] \label{t:S}
Let $G$ be an abelian Polish group and let $A\subset G$ be a co-analytic set. If there exists a Borel probability measure
$\mu$ on $G$ such that $\mu(A+g)=1$ for all $g\in G$ then $A$ is prevalent.
\end{theorem}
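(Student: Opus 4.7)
The goal is to show that $A^c$ is shy, since then $A$ is prevalent by Definition~\ref{d:shy}. Note that $A^c$ is analytic, being the complement of a co-analytic set, and the hypothesis gives $\mu(A^c + g) = 1 - \mu(A + g) = 0$ for every $g \in G$. To conclude shyness we must exhibit a Borel set $B \supset A^c$ and a Borel probability measure $\nu$ with $\nu(B + g) = 0$ for every $g$; the natural candidate for $\nu$ is $\mu$ itself, and the entire content of the theorem lies in the passage from the analytic set $A^c$ to a Borel hull retaining the translation-invariant null property.

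The plan is to invoke Choquet's capacitability theorem. For each compact $F \subset G$, define the set function
\[
I_F(Y) = \sup_{g \in F}\mu^{\ast}(Y + g), \qquad Y \subset G.
\]
I would first verify that $I_F$ is a Choquet capacity on the paving of compact subsets of $G$: monotonicity is immediate; continuity from below for increasing sequences follows by interchanging the two suprema together with continuity from below of $\mu^{\ast}$; and continuity from above on decreasing compact sequences uses the upper semicontinuity of $g \mapsto \mu(K + g)$ for compact $K$ (a consequence of outer regularity of $\mu$ combined with the fact that $K + g_n \subset U$ eventually, whenever $g_n \to g$ and $U \supset K + g$ is open), together with sequential compactness of $F$ to extract a convergent subsequence of near-optimal translations. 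Since $\mu(A^c + g) = 0$ for every $g$, every compact $K \subset A^c$ satisfies $I_F(K) = 0$. Choquet's theorem applied to the analytic set $A^c$ then produces, for each $n \in \NN$, an open set $U_n \supset A^c$ with $I_F(U_n) \le 1/n$, and the Borel (in fact $G_\delta$) set $B_F := \bigcap_n U_n$ satisfies $A^c \subset B_F$ and $\mu(B_F + g) = 0$ for all $g \in F$.

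To pass from these local hulls $B_F$ to a single global hull $B \supset A^c$ with $\mu(B + g) = 0$ for every $g \in G$, one must overcome the fact that the Polish group $G$ need not be $\sigma$-compact; \emph{this is the main obstacle}, since one cannot simply intersect $B_{F_n}$ along an exhaustion $G = \bigcup_n F_n$. One strategy is to write $A^c = \pi(C)$ for a closed set $C \subset G \times \NN^{\NN}$ and inductively refine Borel hulls along the Suslin scheme $\{A^c_s\}_{s \in \NN^{<\NN}}$ derived from $C$, maintaining at each stage a uniform translation-null estimate strong enough to glue across the tree. Alternatively, one exploits tightness of $\mu$ to replace it by a variant supported on a $\sigma$-compact set, reducing the sup over $G$ to a sup over a $\sigma$-compact set of ``active'' translations. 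Either route requires some standard measurability input — in particular the universal measurability of $g \mapsto \mu(X + g)$ for analytic $X$, which follows from Fubini applied to the analytic (hence universally measurable) set $\{(g,x) : x - g \in X\}$ in $G \times G$. Once a Borel set $B$ with the desired property is produced, the pair $(B, \mu)$ directly witnesses that $A^c$ is shy, completing the proof.
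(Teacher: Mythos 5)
The paper does not prove Theorem~\ref{t:S}; it cites Solecki \cite{S} for the proof, so there is no internal argument to compare against. Evaluating your proposal on its own terms: the reduction to exhibiting a Borel hull $B\supset A^c$ with $\mu(B+g)=0$ for all $g$ is correct, and the Choquet-capacitability step is sound --- for each compact $F\subset G$ it produces a $G_\delta$ hull $B_F\supset A^c$ with $\mu(B_F+g)=0$ for every $g\in F$. But the theorem requires a single Borel hull working for \emph{all} $g\in G$, and that is precisely where your argument stops: you flag the $\sigma$-compactness obstruction yourself and then offer two unexecuted workarounds. Strategy~(2) does not close the gap as stated: even if $\mu$ is carried on a $\sigma$-compact $K$, the translations that can contribute are those $g$ with $(B+g)\cap K\neq\emptyset$, i.e.\ $g\in K-B$, which depends on the unknown hull $B$; and since $A^c$ is merely analytic, $K-A^c$ need not be contained in any $\sigma$-compact set, so the intended reduction is circular. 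Strategy~(1) runs into the usual difficulty that the Suslin operation applied to Borel hulls yields an analytic, not a Borel, set, so a further nontrivial approximation would be needed which you do not supply.

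A clean route, which bypasses $\sigma$-compactness entirely, is a reflection-theorem argument. Let $\Phi(D)$ be the property that $\mu(D+g)=0$ for all $g\in G$. This class is $\Pi^1_1$ on $\Sigma^1_1$: for a $\Sigma^1_1$ set $E\subset Y\times G$ and a Borel probability $\mu$ on $G$, the set $\{(y,r):\mu(E_y)>r\}$ is $\Sigma^1_1$ (see \cite{K}); applying this to the $\Sigma^1_1$ set $\{(y,g,x): x-g\in D_y\}$ shows that $\{y:\Phi(D_y)\}$ is $\Pi^1_1$ for any $\Sigma^1_1$-parametrized family $\{D_y\}$. Since $A^c$ is $\Sigma^1_1$ and $\Phi(A^c)$ holds by hypothesis, the first reflection theorem immediately yields a Borel $B\supset A^c$ with $\Phi(B)$, which is exactly the desired translation-null Borel hull. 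As a minor point, universal measurability of $g\mapsto\mu(X+g)$ for analytic $X$ does not follow from Fubini as you claim; Fubini gives integrability against product measures, whereas the relevant pointwise statement is the $\Sigma^1_1$-ness fact just described.
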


The following lemma is basically \cite[Lemma~2.11]{BBE2}. It is only stated there in the special case $d=1$,
but the proof works verbatim for all $d\in \NN^+$.

\begin{lemma} \label{l:B} Let $K$ be a compact metric space, let $d\in \NN^+$ and $c\in \RR$. Then
$$\Delta=\left\{(f,y)\in C(K,\RR^d)\times \RR^d: \dim_H f^{-1}(y)<c\right\}$$
is a Borel set in $C(K,\RR^d)\times \RR^d$.
\end{lemma}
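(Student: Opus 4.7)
The plan is to express $\Delta$ as a countable combination of explicitly Borel sets, by re-describing the Hausdorff-dimension condition through coverings drawn from a fixed countable family of balls. First, I would use the equivalence
$$\dim_H A < c \iff A = \emptyset \text{ or } \iH^s(A) = 0 \text{ for some } s \in \QQ \cap (0, c),$$
which follows from $\dim_H A = \inf\{s > 0 : \iH^s(A) = 0\}$ together with the fact that $\iH^s(A) = 0$ implies $\iH^t(A) = 0$ for all $t > s$. This reduces the problem to showing that for each fixed rational $s \in (0, c)$ the set
$$\Delta_s = \{(f,y) \in C(K, \RR^d) \times \RR^d : \iH^s(f^{-1}(y)) = 0\}$$
is Borel; the extra empty-fiber case contributes the manifestly open set $\{(f,y) : y \notin f(K)\}$.

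Next, I would fix a countable dense set $\{x_k\}_{k \in \NN} \subset K$, and for each $n \in \NN^+$ define $A_{s, n}$ to be the set of pairs $(f, y)$ for which there exist finitely many indices $k_1, \ldots, k_m \in \NN$ and positive rationals $r_1, \ldots, r_m$ satisfying
$$\sum_{i=1}^m (2 r_i)^s < 1/n \quad \textrm{and} \quad y \notin f\!\left(K \setminus \bigcup_{i=1}^m B(x_{k_i}, r_i)\right).$$
For any fixed tuple the complement $C = K \setminus \bigcup_i B(x_{k_i}, r_i)$ is closed, hence compact in $K$; therefore $f(C)$ is compact, and $\{(f, y) : y \notin f(C)\}$ is open in $C(K, \RR^d) \times \RR^d$, since $\dist(y_0, f_0(C)) > 0$ persists under sup-norm perturbations of $(f_0, y_0)$. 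Consequently $A_{s, n}$ is a countable union of open sets, hence open.

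The key claim is then $\Delta_s = \bigcap_{n \in \NN^+} A_{s, n}$, which exhibits $\Delta_s$ as $G_\delta$ and completes the proof. The inclusion $\supseteq$ is immediate from the definition of $\iH^s$. For $\subseteq$, assuming $\iH^s(f^{-1}(y)) = 0$ and fixing $n$, I would first choose $\eta > 0$ small enough, then a countable cover $\{V_i\}$ of $f^{-1}(y)$ by non-empty sets with $\sum_i (\diam V_i)^s < \eta$, and refine each $V_i$ to a basic open ball: pick $z_i \in V_i$, then $x_{k_i}$ with $d(x_{k_i}, z_i) < 2^{-i} \eta$, and a rational $r_i$ slightly exceeding $\diam V_i + 2 \cdot 2^{-i} \eta$, obtaining $V_i \subset B(x_{k_i}, r_i)$. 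Applying the inequality $(a+b)^s \leq 2^s (a^s + b^s)$ bounds $\sum_i (2 r_i)^s$ in terms of $\sum_i (\diam V_i)^s$ and the summable correction $\sum_i (2^{-i} \eta)^s$, which becomes less than $1/n$ for appropriate $\eta$; compactness of $f^{-1}(y)$ then extracts a finite subcover, placing $(f, y) \in A_{s, n}$.

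The hard part --- really the only nontrivial step --- is this approximation: converting an arbitrary cover with small $s$-sum of diameters into a cover by balls drawn from a fixed countable class while keeping the $s$-sum comparable. Once this is in place, the resulting decomposition
$$\Delta = \{(f,y) : y \notin f(K)\} \cup \bigcup_{s \in \QQ \cap (0, c)} \bigcap_{n \in \NN^+} A_{s, n}$$
is manifestly Borel (in fact $F_{\sigma\delta}$).
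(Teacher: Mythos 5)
The paper itself does not prove this lemma --- it refers to \cite{BBE2}, Lemma~2.11, for the case $d=1$ and notes that the argument there carries over verbatim --- so your proof cannot be compared against an in-paper argument. Your overall strategy is sound and standard: reduce $\dim_H f^{-1}(y)<c$ to $\iH^s(f^{-1}(y))=0$ for some rational $s\in(0,c)$, then detect vanishing $s$-content via finite covers drawn from a countable family of balls with centers in a fixed dense subset of $K$ and rational radii, using that $\{(f,y):f^{-1}(y)\cap C=\emptyset\}$ is open whenever $C\subset K$ is compact.

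There is, however, one step that fails as written and must be repaired. The paper's convention, fixed in the Preliminaries, is that $B(x,r)$ denotes the \emph{closed} ball and $U(x,r)$ the open one. Thus $C=K\setminus\bigcup_i B(x_{k_i},r_i)$ is \emph{open} in $K$, not closed, so $f(C)$ need not be compact, and the claim that $\{(f,y):y\notin f(C)\}$ is open is no longer justified: for non-compact $C$ one can have $y_0\notin f_0(C)$ while $\dist(y_0,f_0(C))=0$. The remedy is to use open balls $U(x_{k_i},r_i)$ throughout the definition of $A_{s,n}$; then $C$ is closed, hence compact, and the openness argument goes through. The $\supseteq$ inclusion is unaffected since $\diam U(x,r)\le 2r$, and your refinement step already yields $V_i\subset U(x_{k_i},r_i)$ with strict inequality, so compactness of $f^{-1}(y)$ still extracts a finite subcover. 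Two smaller points worth tidying: the resulting set is a countable union of $G_\delta$ sets, hence $\Sigma^0_3$ rather than $F_{\sigma\delta}$; and the stated equivalence between $\dim_H A<c$ and ``$A=\emptyset$ or $\iH^s(A)=0$ for some rational $s\in(0,c)$'' fails when $c\le -1$ (where $\Delta=\emptyset$), so that degenerate range of the real parameter $c$ should be dispatched separately.
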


\begin{lemma} \label{l:co} Let $K\subset \RR$ be compact, let $d\in \NN^+$ and $c\in \RR$. Then
\begin{align*} \iA=\{&f\in C(K,\RR^d): \exists \textrm{ an open set } U_f\subset \RR^d \textrm{ such that}\\
&\lambda\left(f^{-1}(U_f)\right)=\lambda (K) \textrm{ and } \dim_H f^{-1}(y)\geq c \textrm{ for all } y\in U_f\}
\end{align*}
is co-analytic in $C(K,\RR^d)$.
\end{lemma}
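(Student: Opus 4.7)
The plan is to eliminate the existential quantifier over open sets in the definition of $\iA$ by picking a canonical closed witness, and then to deduce coanalyticity via a classical projection theorem for measures of analytic sets. Set $B_f := \{y \in \RR^d : \dim_H f^{-1}(y) < c\}$. The first step is to verify that $f \in \iA$ if and only if $\lambda(f^{-1}(\overline{B_f})) = 0$. Indeed, any open witness $U_f$ in the definition of $\iA$ is disjoint from $B_f$, so the closed set $U_f^c$ contains $\overline{B_f}$, which gives
$$\lambda(f^{-1}(\overline{B_f})) \le \lambda(f^{-1}(U_f^c)) = \lambda(K) - \lambda(f^{-1}(U_f)) = 0;$$
conversely, $U_f := \RR^d \setminus \overline{B_f}$ is open, avoids $B_f$, and satisfies $\lambda(f^{-1}(U_f)) = \lambda(K)$ whenever $\lambda(f^{-1}(\overline{B_f})) = 0$. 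This collapses the awkward quantifier over all open sets to the single intrinsic closed set $\overline{B_f}$.

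Next I would show that
$$R = \{(f, x) \in C(K, \RR^d) \times K : f(x) \in \overline{B_f}\}$$
is analytic. Using that $y \in \overline{B_f}$ if and only if for every $n \in \NN^+$ the ball $B(y, 1/n)$ meets $B_f$, I can write
$$R = \bigcap_{n=1}^{\infty} \pi(T_n), \qquad T_n = \{(f, x, y) : ||y - f(x)|| < 1/n \text{ and } \dim_H f^{-1}(y) < c\},$$
with $\pi \colon C(K, \RR^d) \times K \times \RR^d \to C(K, \RR^d) \times K$ the natural projection. The first condition cutting out $T_n$ is open by continuity of evaluation and the second is Borel by Lemma~\ref{l:B}, so each $T_n$ is Borel, $\pi(T_n)$ is analytic, and $R$ is the countable intersection of analytic sets, hence itself analytic.

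Finally, the reformulation gives $\iA^c = \{f : \lambda(R_f) > 0\}$, where $R_f$ denotes the vertical section of $R$ at $f$. Invoking the classical result that for an analytic set $R \subset X \times Y$ in a product of Polish spaces and a finite Borel measure $\mu$ on $Y$, the set $\{x : \mu(R_x) > \alpha\}$ is analytic for every $\alpha \ge 0$, and applying it with $Y = K$, $\mu = \lambda|_K$, and $\alpha = 0$, shows $\iA^c$ is analytic, so $\iA$ is co-analytic. The main obstacle I expect is the reformulation in the first step: once the open-set quantifier is collapsed to the canonical closed set $\overline{B_f}$, the rest is a routine application of descriptive set theory.
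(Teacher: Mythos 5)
Your proof is correct, and it takes a genuinely different route from the paper's. The paper eliminates the existential quantifier over open witnesses by reducing to a countable family $\iU$ of finite unions of basic open sets and writing $\iA$ as a countable Boolean combination of sets $\iA_{n,U}$; each $\iA_{n,U}$ splits into a co-analytic piece $\iB$ (the complement of a projection of the Borel set from Lemma~\ref{l:B}) and an open piece $\iC$ coming from the regularity of Lebesgue measure. You instead observe that $f\in\iA$ admits an open witness if and only if the \emph{canonical} maximal witness $\RR^d\setminus\overline{B_f}$ works, i.e.\ if and only if $\lambda\left(f^{-1}\left(\overline{B_f}\right)\right)=0$, thereby collapsing the quantifier to an intrinsic condition on a single closed set; you then show $R=\{(f,x):f(x)\in\overline{B_f}\}$ is analytic and invoke the theorem on measures of analytic sections (Kechris, \emph{Classical Descriptive Set Theory}, Theorem~29.26) to conclude that $\iA^c=\{f:\lambda(R_f)>0\}$ is analytic. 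Your argument is cleaner and more conceptual in that it identifies a canonical witness and avoids ad hoc decompositions, at the cost of relying on the less elementary section-measure theorem; the paper's argument is more self-contained, using only closure of co-analytic sets under countable unions, countable intersections and complements of projections. One small remark: it is worth stating explicitly that the analytic section $R_f$ is universally measurable, so $\mu^*(R_f)=\mu(R_f)$ and the outer-measure statement of the classical theorem applies verbatim.
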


\begin{proof} Let $\iV$ be a countable basis of $\RR^d$ and let $\iU$ be the family of finite unions
of elements of $\iV$. Clearly $\iU$ is countable and $\iA=\bigcup_{U\in \iU} \bigcap_{n=1}^{\infty} \iA_{n,U}$, where
\begin{align*} \iA_{n,U}=\{&f\in C(K,\RR^d): \lambda(f^{-1}(U)) > \lambda (K)-1/n \\
&\textrm{and }  \dim_H f^{-1}(y)\geq c \textrm{ for all } y\in U\}.
\end{align*}
As co-analytic sets are closed under countable union and countable intersection, it is enough to prove that the $\iA_{n,U}$ are co-analytic.
Fix $n\in \NN^+$ and $U\in \iU$ and let
\begin{align*} \iB&=\{f\in C(K,\RR^d): \dim_H f^{-1}(y)\geq c \textrm{ for all } y\in U\}, \\
\iC&=\{f\in C(K,\RR^d): \lambda\left(f^{-1}(U)\right) > \lambda (K)-1/n\}.
\end{align*}
Since $\iA_{n,U}=\iB\cap \iC$, it is enough to prove that
$\iB$ and $\iC$ are co-analytic.

First we show that $\iB$ is co-analytic. By Lemma~\ref{l:B} the set
$$\Delta=\left\{(f,y)\in C(K,\RR^d)\times \RR^d: \dim_H f^{-1}(y)<c\right\}$$
is Borel. Define $\pr \colon C(K,\RR^d)\times \RR^d \to C(K,\RR^d)$ as $\pr(f,y)=f$. Then
$$\iB=\left( \pr\left(\Delta \cap (C(K,\RR^d)\times U)\right)\right)^c$$
is the complement of the projection of a Borel set. Hence $\iB$ is co-analytic.

Finally, we prove that $\iC$ is Borel. For all $r\in \RR$ let
$$\iC(r)=\{f\in C(K,\RR^d): \lambda\left(f^{-1}(U)\right)>r\}.$$
It is enough to prove that the $\iC(r)$ are open. Fix $r\in \RR$ and assume that
$f\in \iC(r)$, that is, $\lambda(f^{-1}(U))>r$. We need to find an $\varepsilon>0$ such that $U(f,\varepsilon)\subset \iC(r)$.
The regularity of the Lebesgue measure implies that there is a compact set $C\subset f^{-1}(U)$ with
$\lambda(C)>r$. As $f(C)\subset U$ is compact, we can define $\varepsilon=\dist(f(C),\RR^d\setminus U)>0$.
Clearly $g(C)\subset U$ for every $g\in U(f,\varepsilon)$, thus $\lambda(g^{-1}(U))\geq \lambda(C)>r$.
Hence $U(f,\varepsilon)\subset \iC(r)$, and the proof is complete.
\end{proof}

\begin{definition} \label{d:Cantor1} Let $\{ a_n \}_{n\in \NN^+}$ be a sequence of positive integers.
A compact set $K\subset \RR$ is an \emph{$(a_n)$-type fat Cantor set} if $\lambda(K)>0$ and it is of the form
\begin{equation} \label{eq:defK} K=\bigcap _{n=1}^{\infty}\left(\bigcup_{i_1=1}^{a_1}\dots \bigcup_{i_n=1}^{a_n} K_{i_1 \dots i_n} \right),
\end{equation}
where $K_{i_1\dots i_n}\subset K$ are compact sets such that for every
$n\in \mathbb{N}^+$ and for each distinct $(i_1,\dots ,i_{n}),(j_1,\dots ,j_{n})\in \prod_{k=1}^{n} \{1,\dots,a_k\}$
we have

\begin{enumerate}[(i)]
\item \label{002} $\conv(K_{i_1 \dots i_n})\cap \conv(K_{j_1\dots j_n})=\emptyset$,
\item \label{001} $K_{i_{1}\dots i_{n+1}}\subset K_{i_1 \dots i_{n}}$,
\item \label{003} $\lambda(K_{i_1\dots i_n})=\frac{\lambda(K)}{a_1\cdots a_n}$.
\end{enumerate}
We say that the $K_{i_1\dots i_n}$ are the \emph{elementary pieces of $K$}.
\end{definition}

\begin{definition} \label{d:Cantor2}
Let $\{ a_n \}_{n\in \NN^+}$, $\{b_n \}_{n\in \NN^+}$ be sequences of positive integers such that $a_n\geq b_n$ for all $n\in \NN^+$.
A compact set $C\subset \RR$ is an \emph{$(a_n,b_n)$-type Cantor set} if it is of the form
$$C=\bigcap_{n=1}^{\infty}\left(\bigcup_{i_1=1}^{b_1}\dots \bigcup_{i_n=1}^{b_n} C_{i_1 \dots i_n}\right),$$
where $C_{i_1\dots i_n}\subset \RR$ are compact sets and there is an $(a_n)$-type fat Cantor set $K\subset \RR$ of the form
\eqref{eq:defK} such that for all $n\in \NN^+$ and $(i_1,\dots,i_n)\in \prod_{k=1}^{n} \{1,\dots,b_k\}$
\begin{equation} \label{004} \emptyset \neq C_{i_{1}\dots i_{n+1}}\subset C_{i_1 \dots i_{n}}\subset K_{i_{1}\dots i_{n}}.
\end{equation}
The compact set $C\subset \RR$ is an \emph{$(a_n,b_n)$-type compact set} if it satisfies the above definition after replacing \eqref{002} by the weaker
property
\begin{enumerate}[(1)] \item \label{1star} $\conv(K_{i_1 \dots i_n})\cap \conv(K_{j_1\dots j_n})$ is either empty or a singleton.
\end{enumerate}
\end{definition}

\begin{example} The triadic Cantor set $C$ is an $(a_n,b_n)$-type compact set, where $a_n=3$ and $b_n=2$ for all $n$.
Indeed, for each $n$ let $\{K_{i_1\dots i_n}: 1\leq i_1,\dots, i_n\leq 3\}$ be the set of triadic intervals of $[0,1]$ of length
$3^{-n}$ and let $\{C_{i_1\dots i_n}: 1\leq i_1,\dots, i_n\leq 2\}$ be the set of the triadic intervals of
length $3^{-n}$ whose interior intersects $C$. Indexing $K_{i_1\dots i_n}$ and $C_{i_1\dots i_n}$ appropriately witnesses our claim.
\end{example}

For the following well-known lemma see e.g. \cite[Theorem~4.19]{MP}.

\begin{lemma}[Mass distribution principle] \label{l:fr} Let $\mu$ be a mass distribution on a metric space $X$. Assume
that there are $c,s,\delta\in \RR^+$ such that $\mu(B)\leq c (\diam B)^s$ for every Borel set
$B\subset X$ with $\diam B \leq \delta$. Then $\dim_H X\geq s$.
\end{lemma}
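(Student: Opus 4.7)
The plan is to establish the lower bound $\iH^s(X) \geq \mu(X)/c > 0$, which by the definition of Hausdorff dimension immediately yields $\dim_H X \geq s$. To this end, I would fix any $\delta' \in (0,\delta]$ and an arbitrary countable cover $\{X_i\}_{i=1}^\infty$ of $X$ with $\diam X_i \leq \delta'$. Since the hypothesis is formulated for Borel sets, the first step is to pass to the closures $\cl X_i$: these still cover $X$, they are closed (hence Borel), and $\diam \cl X_i = \diam X_i \leq \delta' \leq \delta$, so the standing assumption applies to each of them.

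Then, by countable subadditivity of $\mu$ and the hypothesis,
$$0 < \mu(X) \leq \sum_{i=1}^\infty \mu(\cl X_i) \leq c \sum_{i=1}^\infty (\diam X_i)^s.$$
Taking the infimum over all admissible covers yields $\iH^s_{\delta'}(X) \geq \mu(X)/c$, and since this lower bound is independent of $\delta' \in (0,\delta]$, letting $\delta' \to 0^+$ gives $\iH^s(X) \geq \mu(X)/c > 0$, whence $\dim_H X \geq s$. There is essentially no substantial obstacle here; the only minor subtlety is arranging that the sets in an arbitrary cover are $\mu$-measurable so that the hypothesis can be invoked, and this is precisely what the passage to closures achieves without affecting the diameters.
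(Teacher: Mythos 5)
Your proof is correct and is the standard argument for the mass distribution principle; the paper does not prove this lemma itself but simply cites Mörters--Peres \cite[Theorem~4.19]{MP}, where the same computation appears. Your care in passing from an arbitrary cover to the cover by closures $\cl X_i$ (using that $\diam \cl X_i = \diam X_i$) is exactly the right way to reconcile the ``Borel set'' hypothesis with the fact that the infimum defining $\iH^s_{\delta'}$ ranges over covers by arbitrary sets.
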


\begin{lemma} \label{l:Can} Let $C\subset \RR$ be an $(a_n,b_n)$-type compact set such that for all $n\in \NN^+$ we have
\begin{equation} \label{eq:ab} a_n\geq \left(\frac{a_1\cdots a_{n+1}}{b_1\cdots b_{n+1}}\right)^{n+1}.
\end{equation}
Then $\dim_H C=1$.
\end{lemma}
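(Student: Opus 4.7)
The plan is to apply the mass distribution principle (Lemma~\ref{l:fr}) with the natural equidistributed measure $\mu$ on $C$, normalised so that $\mu(C_{i_1\dots i_n}) = 1/\beta_n$ where $\beta_n := b_1\cdots b_n$. After rescaling, assume $\lambda(K)=1$, so $\lambda(K_{i_1\dots i_n}) = 1/\alpha_n$ with $\alpha_n := a_1\cdots a_n$. I will show that for every $s<1$ there is a constant $c_s$ with $\mu(B)\le c_s r^s$ for every ball $B$ of sufficiently small diameter $r$; Lemma~\ref{l:fr} then gives $\dim_H C \ge s$, and letting $s\nearrow 1$, together with the trivial bound $\dim_H C \le 1$, concludes the proof.

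The basic geometric bound comes from the fact that the convex hulls $\conv(K_{i_1\dots i_n})$ are pairwise almost disjoint intervals each of length at least $\lambda(K_{i_1\dots i_n})=1/\alpha_n$, so a ball of diameter $r$ meets at most $r\alpha_n + O(1)$ of them, yielding
\[
\mu(B) \le \frac{r\alpha_n+O(1)}{\beta_n}\quad\text{for every }n.
\]
The substance of the argument is to convert the hypothesis into size control of $q_n:=\alpha_n/\beta_n$ versus $\beta_n$. Rewriting it as $q_{n+1}\le a_n^{1/(n+1)}\le \alpha_n^{1/(n+1)} = (q_n\beta_n)^{1/(n+1)}$ and taking logs with $e_n=\log q_n$, $f_n=\log\beta_n$ gives the recursion $e_{n+1}\le(e_n+f_n)/(n+1)$. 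The $n=1$ case rearranges to $b_1b_2\ge a_2\sqrt{a_1}$, hence $f_2\ge (\log a_1)/2\ge e_2$; a simple induction then yields $e_n\le f_n$ for all $n\ge 2$. Plugging this back into the recursion refines the bound to $e_{n+1}/f_{n+1}\le 2/(n+1)$, equivalently $\alpha_n\le \beta_n^{1+2/n}$.

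Fix $s<1$ and choose $N=N(s)$ so that for $n\ge N$ both $s(1+2/n)\le 1$ and $1/(n+1)<1-s$; then $\alpha_n^s\le\beta_n$ for $n\ge N$. Given small $r$, let $n\ge N$ be the unique integer with $r\in(1/\alpha_{n+1},1/\alpha_n]$ and apply the geometric bound at level $n+1$. If $r<2/\alpha_{n+1}$ it becomes $\mu(B)=O(1/\beta_{n+1})\le O(\alpha_{n+1}^{-s})\le O(r^s)$. If $r\ge 2/\alpha_{n+1}$ it becomes $\mu(B)=O(rq_{n+1})\le O(r)\,\alpha_n^{1/(n+1)}$, and using $r\le 1/\alpha_n$ this is $O(r^s)\,\alpha_n^{1/(n+1)-(1-s)}=O(r^s)$ since the exponent of $\alpha_n$ is negative. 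Either way $\mu(B)\le c_s r^s$, as required.

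The hard part will be the tight two-step control of $q_n$: the peculiar exponent $n+1$ in the hypothesis is precisely what is needed for the invariant $e_n\le f_n$ to be self-reinforcing under the recursion $e_{n+1}\le(e_n+f_n)/(n+1)$, and then for the ratio $e_n/f_n$ to decay like $1/n$. Any substantially weaker form of the hypothesis (say $a_n\ge q_{n+1}^{100}$) would permit $q_n$ to grow polynomially in $\beta_n$ and the estimate $\alpha_n^s\le\beta_n$ would break down. A secondary technicality is ensuring the base case and the division $e_{n+1}/f_{n+1}$ are legitimate for degenerate initial data (e.g.\ $a_1=1$ or $b_1=1$), which may require relabelling the first few levels.
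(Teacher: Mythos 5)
Your proposal is correct and takes essentially the same route as the paper. Both construct the measure with $\mu(C_{i_1\dots i_n})=1/(b_1\cdots b_n)$, bound $\mu(B)$ for a small ball $B$ by counting how many of the almost-disjoint intervals $\conv(K_{i_1\dots i_n})$ it can meet, and conclude via the mass distribution principle; the only difference is bookkeeping in how hypothesis~\eqref{eq:ab} is converted into the decay $\mu(B)\lesssim(\diam B)^{1-o(1)}$, where you extract the clean intermediate bound $\alpha_n\le\beta_n^{1+2/n}$ and split into two cases on the size of $r\alpha_{n+1}$, while the paper folds everything into a single inline computation parametrised by $t\approx\diam B\cdot\alpha_n$ with $1\le t\le a_n-1$.
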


\begin{proof} Let $C_{i_1 \dots i_n}$ be the compact sets corresponding to Definition~\ref{d:Cantor2}.
Let $K\subset \RR$ be a compact set with elementary pieces $K_{i_1\dots i_n}$
associated to $C$. By considering similar copies of $C$ and $K$ we may assume that $\lambda(K)=1$.

For all $n\in \NN^+$ let $\iI_n=\prod_{k=1}^{n} \{1,\dots,b_k\}$.
We may suppose that $C_{i_1\dots i_n}\subset C$ for all $n\in \NN^+$ and
$(i_1,\dots,i_n)\in \iI_n$, otherwise we intersect them with $C$. Now we construct
a Borel probability measure $\mu$ supported on $C$ such that for all $n\in \NN^+$ and
$(i_1,\dots,i_n)\in \iI_n$ we have
\begin{equation} \label{defmu} \mu(C_{i_1 \dots i_n})=\frac{1}{b_1\cdots b_n}. \end{equation}
Choose $x_{i_1\dots i_n}\in C_{i_1\dots i_n}$ for all $n\in \NN^+$ and
$(i_1,\dots,i_n)\in \iI_n$. Define the probability measures
$$\mu_n=\sum_{(i_1,\dots,i_n)\in \iI_n} (b_1\dots b_n)^{-1}\delta_{x_{i_1\dots i_n}},$$
where $\delta_{x}$ denotes the Dirac measure concentrated on $\{x\}$. Let $F_n$ be the distribution function of $\mu_n$.
The definitions of $C$ and $\mu_n$ easily yield that $F_n$ converges (uniformly) to a continuous distribution function $F$.
Let $\mu$ be the Borel probability measure associated with $F$.
Then $\mu_n$ converges weakly to $\mu$ by \cite[Theorem~12.7]{MP}, so \cite[Theorem~12.6]{MP} yields that for all $n\in \NN^+$ and
$(i_1,\dots,i_n)\in \iI_n$ we have
$$\mu(C_{i_1\dots i_n})\geq \limsup_{k\to \infty} \mu_k(C_{i_1\dots i_n})=\frac{1}{b_1\cdots b_n}.$$
As $\mu$ is continuous, we have $\sum_{(i_1,\dots,i_n)\in \iI_n} \mu(C_{i_1\dots i_n})\leq 1$.
These imply that \eqref{defmu} holds and $\mu$ is supported on $C$.

Fix an arbitrary $k\in \NN^+$ and a Borel set $B\subset C$ with $\diam B\leq (a_1\cdots a_k)^{-1}$.
We can choose $n>k$ and $t\in \{1,\dots,a_n-1\}$ such that
\begin{equation} \label{eq:t+1}
\frac{t}{a_1\cdots a_n}\leq \diam B\leq \frac{t+1}{a_1\cdots a_{n}}.
\end{equation}
Property \eqref{003} yields that for all $n\in \NN^+$ and $(i_1,\dots,i_n)\in \prod_{k=1}^{n}\{1,\dots ,a_k\}$
$$\diam (\conv(K_{i_1\dots i_n}))\geq \lambda(K_{i_1\dots i_n})=\frac{1}{a_1\cdots a_n},$$
thus property \eqref{1star} and \eqref{eq:t+1} yield that $B$ can intersect at most $t+3$ sets of the form $K_{i_1\dots i_n}$.
Since $C_{i_1\dots i_n}\subset K_{i_1\dots i_n}$ for all $n\in \NN^+$ and $(i_1,\dots,i_n)\in \iI_n$,
we obtain that $B$ can intersect at most $t+3$ sets of the form
$C_{i_1\dots i_n}$. Therefore
\begin{equation} \label{eq:t+2} \mu(B)\leq \frac{t+3}{b_1\dots b_n}.\end{equation}
Inequalities \eqref{eq:ab} and $t+1\leq a_n$ yield
\begin{equation} \label{eq:an-1} \left(\frac{a_1\cdots a_n}{b_1\cdots b_n}\right)^n\leq a_{n-1}\leq \frac{a_1\cdots a_n}{t+1}. \end{equation}
Inequalities \eqref{eq:t+2}, \eqref{eq:an-1}, \eqref{eq:t+1} and $n>k$ with $\diam B\leq 1$ imply
\begin{align*}
\mu(B)&\leq \frac{t+3}{b_1\dots b_n} \\
&\leq \frac{4t}{a_1\cdots a_n}\cdot \frac{a_1\cdots a_n}{b_1 \cdots b_n} \\
&\leq 4(\diam B)  \left(\frac{a_1\cdots a_n}{t+1}\right)^{1/n} \\
&\leq 4(\diam B) (\diam B)^{-1/n} \\
&\leq 4(\diam B)^{1-1/k}.
\end{align*}
Thus Lemma~\ref{l:fr} yields that $\dim_H K\geq 1-1/k$.
As $k\in \NN^+$ was arbitrary, we obtain that $\dim_H K=1$. The proof is complete.
\end{proof}

\begin{lemma} \label{l:aa} Let $C\subset \RR$ be compact with $\lambda(C)>0$,
and let $\{a_n \}_{n\in \NN^+}$ be an arbitrary sequence of positive integers.
Then for every $\varepsilon>0$ there is an $(a_n)$-type fat Cantor set $K\subset C$ such that $\lambda(K)\geq \lambda(C)-\varepsilon$.
\end{lemma}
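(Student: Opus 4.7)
The plan is to build the elementary pieces $K_{i_1\dots i_n}$ by induction on the level $n$, arranging that the total Lebesgue measure decreases only by a geometrically small amount at each stage. Fix $\varepsilon\in(0,\lambda(C))$ (otherwise any fat Cantor set of small positive measure inside $C$ works), and set $\eta_n=\varepsilon\cdot 2^{-n}$.

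The key tool is the following splitting sub-lemma: given a compact set $A\subset\RR$ with $\lambda(A)=\alpha$, an integer $a\geq 2$, and a target $\beta>0$ with $a\beta<\alpha$, one can find compact sets $A_1,\dots,A_a\subset A$ with pairwise disjoint convex hulls and $\lambda(A_i)=\beta$ for every $i$. To see this, set $\delta=(\alpha-a\beta)/(a-1)>0$, consider the continuous distribution function $F(t)=\lambda(A\cap(-\infty,t])$, which attains every value in $[0,\alpha]$, and for $i=1,\dots,a-1$ pick $t_i^{-}\leq t_i^{+}$ with $F(t_i^{-})=i\beta+(i-1)\delta$ and $F(t_i^{+})=i\beta+i\delta$. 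The identity $F(t_i^{+})-F(t_i^{-})=\delta>0$ forces $t_i^{-}<t_i^{+}$. Setting $A_i=A\cap[t_{i-1}^{+},t_i^{-}]$ (with $t_0^{+}=-\infty$ and $t_a^{-}=+\infty$) gives $\lambda(A_i)=\beta$, and the strict gap yields $\max A_i\leq t_i^{-}<t_i^{+}\leq \min A_{i+1}$, so the convex hulls are pairwise disjoint.

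Now I carry out the induction. Start from $C$ as the single level-$0$ piece of measure $\beta_0=\lambda(C)$. Having built compact $\tilde K_{i_1\dots i_n}\subset C$ of common measure $\beta_n$ with pairwise disjoint convex hulls and satisfying $a_1\cdots a_n\beta_n=\lambda(C)-\sum_{k=1}^n\eta_k$, define
$$\beta_{n+1}=\frac{\beta_n-\eta_{n+1}/(a_1\cdots a_n)}{a_{n+1}}.$$
The relation $a_{n+1}\beta_{n+1}<\beta_n$ lets the sub-lemma split each $\tilde K_{i_1\dots i_n}$ into compact children $\tilde K_{i_1\dots i_n j}$ of measure $\beta_{n+1}$ with pairwise disjoint convex hulls, and a routine calculation confirms the invariant $a_1\cdots a_{n+1}\beta_{n+1}=\lambda(C)-\sum_{k=1}^{n+1}\eta_k\geq \lambda(C)-\varepsilon>0$, so the construction never degenerates.

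Finally, set $K=\bigcap_{n=1}^{\infty}\bigcup_{(i_1,\dots,i_n)}\tilde K_{i_1\dots i_n}$ and $K_{i_1\dots i_n}=K\cap \tilde K_{i_1\dots i_n}$. Monotone convergence for the nested compact sets of finite measure gives $\lambda(K)=\lim_n(\lambda(C)-\sum_{k=1}^n\eta_k)\geq \lambda(C)-\varepsilon$. Properties (i) and (ii) of Definition~\ref{d:Cantor1} are inherited from the $\tilde K_{i_1\dots i_n}$. For (iii), note that $K\cap \tilde K_{i_1\dots i_n}$ equals the intersection of the nested sequence $\bigcup_{(i_{n+1},\dots,i_k)}\tilde K_{i_1\dots i_k}$ as $k\geq n$ increases, whose measures equal $a_{n+1}\cdots a_k\beta_k=(\lambda(C)-\sum_{j=1}^k\eta_j)/(a_1\cdots a_n)$ and therefore tend to $\lambda(K)/(a_1\cdots a_n)$. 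The only delicate point is the splitting sub-lemma, where one must simultaneously realize prescribed equal measures and strictly disjoint convex hulls; the gap $\delta$ between the cut levels of $F$ is precisely what makes this possible.
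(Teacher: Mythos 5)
Your proof is correct but follows a genuinely different route from the paper's. The paper first constructs an $(a_n)$-type fat Cantor set $D\subset[0,1]$ with $\lambda(D)\geq 1-\varepsilon$ (which is standard since $[0,1]$ is an interval), and then transports it into $C$ via the non-decreasing surjection $\phi\colon C\to[0,1]$, $\phi(x)=\lambda((-\infty,x]\cap C)$, using that $\lambda\circ\phi^{-1}=\lambda|_{[0,1]}$ (Carath\'eodory) and that $\phi$ preserves order, so $K=\phi^{-1}(D)$ with elementary pieces $K_{i_1\dots i_n}=\phi^{-1}(D_{i_1\dots i_n})$ inherits all three properties of Definition~\ref{d:Cantor1} for free. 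You instead build the elementary pieces directly inside $C$ by induction, proving a splitting sub-lemma along the way. The pullback argument is shorter and cleaner because it isolates the only nontrivial point (measure preservation under $\phi$) in one line; your direct construction is more self-contained and makes the geometry explicit, at the cost of having to verify the splitting and the limit computation for property \eqref{003} by hand. Both approaches handle the final step (replacing $\tilde K_{i_1\dots i_n}$ by $K\cap\tilde K_{i_1\dots i_n}$ to satisfy $K_{i_1\dots i_n}\subset K$) in essentially the same way. One small point: your splitting sub-lemma as stated requires $a\geq 2$ (the quantity $\delta=(\alpha-a\beta)/(a-1)$ is undefined when $a=1$), while the $a_n$ are allowed to equal $1$; you should note that the $a=1$ case is handled trivially by taking $A_1=A\cap(-\infty,t]$ with $F(t)=\beta$.
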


\begin{proof}
Let $\varepsilon>0$. It is straightforward to construct an $(a_n)$-type fat Cantor set
$D\subset [0,1]$ with elementary pieces $D_{i_1\dots i_n}\subset D$ such that $\lambda(D)\geq 1-\varepsilon$. By considering a similar copy of $C$
we may assume that $\lambda(C)=1$. Let $\phi \colon C\to [0,1]$ be the onto map defined as
$$\phi(x)=\lambda((-\infty,x]\cap C).$$
For every Borel set $B\subset [0,1]$ we have
\begin{equation} \label{eB} \lambda(\phi^{-1}(B))=\lambda(B),
\end{equation}
since \eqref{eB} holds for all intervals in $[0,1]$ by the definition of $\phi$, thus Carath\'eodory's extension theorem
yields that the Borel probability measures $\lambda \circ \phi^{-1}$ and $\lambda|_{[0,1]}$ coincide.

Let us define $K=\phi^{-1}(D)\subset C$ and $K_{i_1\dots i_n}=\phi^{-1}(D_{i_1\dots i_n})$ for all $n\in \NN^+$ and
$(i_1,\dots,i_n)\in \prod_{k=1}^{n}\{1,\dots ,a_k\}$. Applying
that $\phi$ preserves the order $\leq$ and \eqref{eB} yields that
$K$ is an $(a_n)$-type fat Cantor set with elementary pieces
$K_{i_1\dots i_n}$ such that $\lambda(K)=\lambda(D)\geq 1-\varepsilon=\lambda(C)-\varepsilon$.
\end{proof}

\begin{corollary} \label{c:Cantor} Let $K\subset \RR$ be a compact set with $\lambda(K)>0$ and let
$\{a_n \}_{n\in \NN^+}$ be an arbitrary sequence of positive integers. Then there exist $(a_n)$-type fat Cantor sets
$K_i\subset K$ such that $\lambda(\bigcup_{i=1}^{\infty} K_i)=\lambda(K)$.
\end{corollary}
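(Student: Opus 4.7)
My plan is a straightforward exhaustion argument that iterates Lemma~\ref{l:aa}. The point of that lemma is that inside any compact set of positive Lebesgue measure we can find a single $(a_n)$-type fat Cantor set that captures arbitrarily close to the full measure. So the strategy is to peel off such a Cantor set, pass to a compact subset of what remains, peel off another, and keep going, arranging that the measure of the remainder halves at each step.

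Formally, set $R_1=K$ and construct $K_i$ inductively. If $\lambda(R_i)=0$, stop, and set $K_j=K_1$ for all $j\geq i$; then $\bigcup_i K_i$ differs from $\bigcup_{j<i}K_j$ by at most $R_i$, a null set, so $\lambda(\bigcup_i K_i)=\lambda(K)$. Otherwise, since $R_i=K\setminus\bigcup_{j<i}K_j$ is Borel, inner regularity of Lebesgue measure gives a compact set $C_i\subset R_i$ with $\lambda(C_i)\geq\lambda(R_i)/2$. Apply Lemma~\ref{l:aa} to $C_i$ with $\varepsilon=\lambda(R_i)/4$ to obtain an $(a_n)$-type fat Cantor set $K_i\subset C_i\subset R_i$ satisfying
\[
\lambda(K_i)\geq \lambda(C_i)-\lambda(R_i)/4 \geq \lambda(R_i)/4.
\]

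Then $R_{i+1}=R_i\setminus K_i$ satisfies $\lambda(R_{i+1})\leq (3/4)\lambda(R_i)$, so $\lambda(R_i)\to 0$. Since the $K_j$ for $j<i$ together with $R_i$ partition $K$, we have $\lambda(\bigcup_{j<i}K_j)=\lambda(K)-\lambda(R_i)$, which tends to $\lambda(K)$. Hence $\lambda(\bigcup_{i=1}^{\infty}K_i)=\lambda(K)$, as required.

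There is no real obstacle here: each step is a direct application of Lemma~\ref{l:aa} together with inner regularity of Lebesgue measure, and the only thing to watch is that we are extracting Cantor sets from \emph{compact} subsets (not from the Borel set $R_i$ itself), which is why the intermediate step of taking $C_i\subset R_i$ compact is needed before invoking Lemma~\ref{l:aa}.
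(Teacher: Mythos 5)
Your argument is correct, but it is more complicated than the situation calls for, and it does not match what the paper almost certainly has in mind (the corollary is stated without proof, as an immediate consequence of Lemma~\ref{l:aa}). The exhaustion step --- peeling off a Cantor set, taking a compact subset of the Borel remainder, and iterating --- is all unnecessary here because Lemma~\ref{l:aa} already captures an arbitrarily large fraction of the measure in a \emph{single} Cantor set. One can simply apply Lemma~\ref{l:aa} to $K$ itself with $\varepsilon = 1/i$ for each $i\in\NN^+$, obtaining $(a_n)$-type fat Cantor sets $K_i\subset K$ with $\lambda(K_i)\geq \lambda(K)-1/i$; then $\lambda\bigl(\bigcup_{i=1}^\infty K_i\bigr) \geq \sup_i \lambda(K_i) = \lambda(K)$, and the reverse inequality is trivial since $\bigcup_i K_i\subset K$. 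Note that the $K_i$ need not be disjoint, and the statement does not ask for disjointness, so the whole machinery of removing what has been covered, appealing to inner regularity, and tracking the geometric decay of the remainder buys nothing. Your version would be the natural argument if Lemma~\ref{l:aa} only guaranteed a Cantor set of some fixed positive measure, or if disjoint pieces were required --- but neither is the case, and the direct two-line argument is what the authors intend by leaving the corollary unproved.
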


\begin{lemma} \label{l:D} Let $G,H$ be abelian Polish groups and let $\Phi \colon G\to H$ be a continuous onto homomorphism.
If $S\subset H$ is prevalent then so is $\Phi^{-1}(S)\subset G$.
\end{lemma}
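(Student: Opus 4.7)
The plan is to unpack the definition of prevalence on $H$, pull back the witness set via $\Phi$, and construct a witness measure on $G$ by lifting the one on $H$. By assumption $H\setminus S$ is shy, so I would fix a Borel hull $B\subset H$ with $H\setminus S\subset B$ together with a Borel probability measure $\nu$ on $H$ such that $\nu(B+h)=0$ for every $h\in H$. Continuity of $\Phi$ makes $\Phi^{-1}(B)\subset G$ Borel, and obviously
\[
G\setminus \Phi^{-1}(S)=\Phi^{-1}(H\setminus S)\subset \Phi^{-1}(B),
\]
so $\Phi^{-1}(B)$ is a legitimate Borel hull of the complement of $\Phi^{-1}(S)$. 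What remains is to exhibit a Borel probability measure $\mu$ on $G$ that annihilates every translate of $\Phi^{-1}(B)$.

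The engine of the whole argument is the algebraic identity that holds because $\Phi$ is a homomorphism:
\[
\Phi^{-1}(B)+g=\Phi^{-1}\bigl(B+\Phi(g)\bigr)\qquad \text{for every } g\in G.
\]
Consequently, any Borel probability $\mu$ on $G$ whose pushforward satisfies $\Phi_{*}\mu=\nu$ will automatically work, since
\[
\mu\bigl(\Phi^{-1}(B)+g\bigr)=\mu\bigl(\Phi^{-1}(B+\Phi(g))\bigr)=\nu\bigl(B+\Phi(g)\bigr)=0
\]
for every $g\in G$. Thus the problem collapses to one of lifting the measure $\nu$ from $H$ to $G$ along $\Phi$.

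I would construct such a lift by fixing a Borel section $\sigma\colon H\to G$ of $\Phi$, i.e. a Borel map with $\Phi\circ\sigma=\id_{H}$, and then setting $\mu=\sigma_{*}\nu$, so that $\Phi_{*}\mu=(\Phi\circ\sigma)_{*}\nu=\nu$ by construction. The only real obstacle is the existence of $\sigma$: because $\Phi$ is a continuous surjective homomorphism between Polish groups, the open mapping theorem of Pettis/Banach forces $\Phi$ to be open, which makes the closed-valued multifunction $h\mapsto \Phi^{-1}(h)$ lower semicontinuous, and then the Kuratowski--Ryll-Nardzewski selection theorem supplies a Borel selector. (Alternatively, as $\Phi$ has closed graph, Jankov--von Neumann uniformization yields a universally measurable section $\sigma$; this $\sigma$ is $\nu$-measurable, and $\mu=\sigma_{*}\nu$ is still a well-defined Borel probability on $G$.) Once $\sigma$ is in hand, the one-line computation above completes the proof.
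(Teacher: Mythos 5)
The paper does not reproduce a proof here; it simply cites Dougherty's Proposition~8. Your argument is correct and is essentially the standard one: the identity $\Phi^{-1}(B)+g=\Phi^{-1}(B+\Phi(g))$ reduces everything to lifting the witness measure $\nu$ along $\Phi$, and a Borel (or universally measurable) section $\sigma$ with $\Phi\circ\sigma=\id_H$ does exactly that. Both of your routes to $\sigma$ are sound; note that the Jankov--von Neumann route is actually slightly leaner, since it needs only that the flipped graph $\{(h,g):\Phi(g)=h\}$ is closed (so analytic) with full projection, and does not require the open mapping theorem at all, whereas the lower-semicontinuity/KRN route does use openness of $\Phi$. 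Either way $\sigma_*\nu$ is a well-defined Borel probability on $G$ with $\Phi_*(\sigma_*\nu)=\nu$, and the computation $\mu(\Phi^{-1}(B)+g)=\nu(B+\Phi(g))=0$ finishes the argument.
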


For the proof of the above lemma see \cite[Proposition~8.]{D}. The following corollary follows from Lemma~\ref{l:D} and the fact that
Tietze's extension theorem holds in $\RR^d$.

\begin{corollary} \label{c:hereditary} Let $K_1\subset K_2$ be compact metric spaces and let $d\in \NN^+$.
Define
$$R\colon C(K_2,\RR^d)\to C(K_1,\RR^d), \quad R(f)=f|_{K_1}.$$
If $\iA \subset C(K_1,\RR^d)$ is prevalent then so is $R^{-1}(\iA)\subset C(K_2,\RR^d)$.
\end{corollary}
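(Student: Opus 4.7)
The plan is to verify that the restriction map $R$ satisfies the hypotheses of Lemma~\ref{l:D}, so that the conclusion follows immediately. Concretely, I will check three things in order: that $R$ is a homomorphism between abelian Polish groups, that $R$ is continuous, and that $R$ is surjective. Each of these is essentially routine, but the surjectivity step is the one that requires an external input, namely Tietze's extension theorem, and is the only part that is not purely formal.

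First, $C(K_2,\RR^d)$ and $C(K_1,\RR^d)$ are Banach spaces with the supremum norm, hence abelian Polish groups under addition. The map $R(f)=f|_{K_1}$ is $\RR$-linear (in particular a group homomorphism), and since $\|R(f)\|_\infty = \sup_{x\in K_1}\|f(x)\|\le \sup_{x\in K_2}\|f(x)\|=\|f\|_\infty$, it is a bounded linear operator, hence continuous.

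Next, I would verify surjectivity. Given $g\in C(K_1,\RR^d)$ with components $g_1,\dots,g_d$, the set $K_1$ is closed in $K_2$ (compact in a Hausdorff space), so by Tietze's extension theorem each real-valued $g_i$ extends to a continuous $\widetilde g_i\in C(K_2,\RR)$. Assembling these coordinate-wise gives $\widetilde g\in C(K_2,\RR^d)$ with $R(\widetilde g)=g$. Thus $R$ is a continuous onto homomorphism.

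Finally, since $\iA\subset C(K_1,\RR^d)$ is prevalent by hypothesis, Lemma~\ref{l:D} applied with $G=C(K_2,\RR^d)$, $H=C(K_1,\RR^d)$ and $\Phi=R$ yields that $R^{-1}(\iA)\subset C(K_2,\RR^d)$ is prevalent, which is the conclusion. The only potential obstacle is ensuring Tietze applies in the vector-valued setting, but as noted this reduces to the scalar case componentwise, so there is no real difficulty.
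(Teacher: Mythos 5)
Your proof is correct and follows exactly the paper's intended route: the paper simply cites Lemma~\ref{l:D} together with Tietze's extension theorem, and you have spelled out the verification that $R$ is a continuous surjective homomorphism (with surjectivity obtained componentwise from Tietze) so that Lemma~\ref{l:D} applies. There is nothing to add.
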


\begin{lemma} \label{l:occup}
Let $K$ be a compact metric space and let $\mu$ be a mass distribution on $K$.
Let $K_n\subset K$ be compact sets with
$\mu(K)=\mu(\bigcup_{n=1}^{\infty} K_n)$. If $\Delta$ is an upward closed family of subsets of $K$ and
for all $n\in \NN^+$ the
\begin{align*}\iA_n=\{&f\in C(K_n,\RR^d): \exists \textrm{ an open set } U_f\subset \RR^d \textrm{ such that}\\
&\mu(f^{-1}(U_f))=\mu(K_n) \textrm{ and } f^{-1}(y)\in \Delta \textrm{ for all } y\in U_f\}
\end{align*}
are prevalent then so is
\begin{align*} \iA=\{&f\in C(K,\RR^d): \exists \textrm{ an open set } U_f\subset \RR^d \textrm{ such that}\\
&\mu(f^{-1}(U_f))=\mu(K) \textrm{ and } f^{-1}(y)\in \Delta \textrm{ for all } y\in U_f\}.
\end{align*}
\end{lemma}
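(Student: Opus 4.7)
The plan is to reduce the statement on $K$ to the given prevalence statements on the pieces $K_n$ via the restriction maps. Let $R_n\colon C(K,\RR^d)\to C(K_n,\RR^d)$ be defined by $R_n(f)=f|_{K_n}$. Since Tietze's extension theorem applies coordinatewise to $\RR^d$-valued continuous maps, each $R_n$ is a continuous surjective homomorphism of abelian Polish groups. Hence Corollary~\ref{c:hereditary} (the instance of Lemma~\ref{l:D} tailored to restriction maps) yields that $R_n^{-1}(\iA_n)$ is prevalent in $C(K,\RR^d)$ for every $n\in \NN^+$. Because shy sets form a $\sigma$-ideal, prevalent sets are closed under countable intersection, so
\[
\iB=\bigcap_{n=1}^{\infty} R_n^{-1}(\iA_n)
\]
is prevalent in $C(K,\RR^d)$. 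It therefore suffices to verify the inclusion $\iB\subset \iA$.

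Fix $f\in \iB$. For each $n\in \NN^+$ the restriction $f|_{K_n}$ lies in $\iA_n$, so I may pick an open set $U_{f,n}\subset \RR^d$ with $\mu\bigl((f|_{K_n})^{-1}(U_{f,n})\bigr)=\mu(K_n)$ such that $(f|_{K_n})^{-1}(y)=f^{-1}(y)\cap K_n\in \Delta$ for every $y\in U_{f,n}$. I would then set
\[
U_f=\bigcup_{n=1}^{\infty} U_{f,n},
\]
which is open in $\RR^d$. For any $y\in U_f$ there exists $n$ with $y\in U_{f,n}$, giving $f^{-1}(y)\cap K_n\in \Delta$; since $\Delta$ is upward closed and $f^{-1}(y)\cap K_n\subset f^{-1}(y)$, we conclude $f^{-1}(y)\in \Delta$. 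For the measure condition, note that
\[
K_n\cap f^{-1}(U_f)\supset K_n\cap f^{-1}(U_{f,n})=(f|_{K_n})^{-1}(U_{f,n}),
\]
so $\mu(K_n\setminus f^{-1}(U_f))=0$ for every $n$; taking the union in $n$ and invoking the hypothesis $\mu(K)=\mu(\bigcup_n K_n)$ gives $\mu(K\setminus f^{-1}(U_f))=0$, that is, $\mu(f^{-1}(U_f))=\mu(K)$. Thus $f\in \iA$, as required.

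There is no real obstacle here: the proof combines three standard ingredients (restriction is a surjective continuous homomorphism by Tietze, prevalence passes through such maps by Lemma~\ref{l:D}, and prevalence is stable under countable intersection). The only subtlety is arranging a single open set $U_f$ so that both conclusions survive on all of $K$, and the choice $U_f=\bigcup_n U_{f,n}$ works precisely because $\Delta$ is upward closed — this is what allows the condition \emph{$f^{-1}(y)\cap K_n\in \Delta$} to transfer to the full level set $f^{-1}(y)$ in $K$.
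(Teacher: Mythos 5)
Your proof is correct and follows essentially the same route as the paper: restriction maps $R_n$, Corollary~\ref{c:hereditary} to pull back prevalence, stability of prevalence under countable intersection, and the choice $U_f=\bigcup_n U_{f,n}$. The verification that $U_f$ works — upward closure of $\Delta$ for the fiber condition and the inclusion $(f|_{K_n})^{-1}(U_{f,n})\subset K_n\cap f^{-1}(U_f)$ for the measure condition — also matches the paper's argument.
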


We will apply the above lemma for families of the form $\Delta=\{A: \dim_H A\geq c\}$ and
$\Delta=\{A: \dim_H A\geq c_1 \textrm{ and } \dim_P A\geq c_2\}$.

\begin{proof}
For all $n\in \NN^+$ let
$$R_n\colon C(K,\RR^d)\to C(K_n,\RR^d),\quad R_{n}(f)=f|_{K_n}.$$
Corollary~\ref{c:hereditary} implies that the $R^{-1}_n(\iA_n)$ are prevalent in $C(K,\RR^d)$.
As a countable intersection of prevalent sets, $\bigcap_{n=1}^{\infty} R^{-1}_n(\iA_n)$ is also prevalent in $C(K,\RR^d)$. Thus it is enough to prove that
$\bigcap_{n=1}^{\infty} R^{-1}_n(\iA_n)\subset \iA$. For all
$f\in \bigcap_{n=1}^{\infty} R^{-1}_n(\iA_n)$ let $U_f=\bigcup_{n=1}^{\infty} U_{f|_{K_n}}$.
Then $U_f\subset \RR^d$ is open and for all $y\in U_f$ there is an $n\in \NN^+$ such that $y\in U_{f|_{K_n}}$. Then $f^{-1}(y) \supset (f|_{K_n})^{-1}(y)\in \Delta$ implies that $f^{-1}(y)\in \Delta$.

Finally, we need to show that $\mu(f^{-1}(U_f))=\mu(K)$. By $\mu(K)=\mu(\bigcup_{n=1}^{\infty}K_n)$ it is enough to prove
that $\mu( f^{-1}(U_f)\cap K_n)=\mu(K_n)$ for an arbitrary fixed $n\in \NN^+$. The definitions of $U_f$ and $\iA_n$ yield that
$$\mu\left(f^{-1}(U_f)\cap K_n\right)\geq \mu\left((f|_{K_n})^{-1}(U_{f|_{K_n}})\right)=\mu(K_n),$$
and the proof is complete.
\end{proof}

\begin{lemma} \label{l:Che} Let $u,v\in \NN^{+}$ and $0<p\leq 1/v$. Assume that $\xi_1,\dots,\xi_u$ are independent random variables such that $\Prob(\xi_i=j)=p$
for all $i\in \{1,\dots,u\}$ and $j\in \{1,\dots,v\}$. Then
$$\Prob\left(\# \{i: \xi_i=j\}< up/2 \textrm{ for some } j\in \{1,\dots,v\} \right)\leq \frac{4v}{up}.$$
\end{lemma}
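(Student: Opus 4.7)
\medskip

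\noindent\textbf{Proof proposal.} The plan is to apply Chebyshev's inequality to each ``occupancy count'' separately and then union bound over $j\in\{1,\dots,v\}$. The hypothesis $p\le 1/v$ is only used to guarantee that the described joint law of $\xi_1,\dots,\xi_u$ is consistent (so that $\Pr(\xi_i=j)=p$ for each of the $v$ values makes sense; the remaining mass $1-vp$ may be thought of as placed on an auxiliary symbol).

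First, for each fixed $j\in\{1,\dots,v\}$ introduce
\[
N_j=\#\{i\in\{1,\dots,u\}:\xi_i=j\}=\sum_{i=1}^{u}\chi_{\{\xi_i=j\}}.
\]
Since the $\xi_i$ are independent and each indicator $\chi_{\{\xi_i=j\}}$ is Bernoulli with parameter $p$, we have $\mathbb{E}(N_j)=up$ and
\[
\Var(N_j)=\sum_{i=1}^{u}p(1-p)\le up.
\]
Chebyshev's inequality, applied with deviation $up/2$, then yields
\[
\Prob\!\left(|N_j-up|\ge up/2\right)\le \frac{\Var(N_j)}{(up/2)^2}\le \frac{up}{(up/2)^2}=\frac{4}{up}.
\]
In particular $\Prob(N_j<up/2)\le 4/(up)$.

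Finally, a union bound over the $v$ possible values of $j$ gives
\[
\Prob\!\left(\exists\,j\in\{1,\dots,v\}\ \text{with}\ N_j<up/2\right)\le \sum_{j=1}^{v}\frac{4}{up}=\frac{4v}{up},
\]
which is exactly the claimed inequality. The argument is entirely routine; the only thing to be careful about is the variance computation (using independence to add variances of the indicators) and keeping track of the factor $4$ from squaring $up/2$. There is no real obstacle.
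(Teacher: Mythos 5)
Your proof is correct and follows essentially the same route as the paper: Chebyshev's inequality applied to the Bernoulli sum $N_j$ (using independence to bound the variance by $up$), followed by a union bound over $j\in\{1,\dots,v\}$. The only cosmetic difference is that the paper bounds $\Var(X_i)=p-p^2<p$ and uses the strict-inequality form of Chebyshev, which changes nothing.
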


\begin{proof} Let us fix $j\in \{1,\dots,v\}$ arbitrarily and for all $i\in \{1,\dots,u\}$ let $X_i=1$ if $\xi_i=j$, and let $X_i=0$ otherwise.
Set $X=\sum_{i=1}^{u} X_i$. Then $\mathbb{E}(X_i)=p$ and $\Var(X_i)=p-p^2<p$ for all $i\in \{1,\dots,u\}$, thus $\mathbb{E}(X)=up$ and the independence of $X_i$ yields
$\Var(X)=\sum_{i=1}^{u}\Var(X_i)<up$. Then Chebyshev's inequality \cite[(5.32)]{Bi} implies
\begin{align*} \Prob(\#\{i: \xi_i=j\}<up/2)&=\Prob(X<up/2) \\
&\leq \Prob\left(|X-\mathbb{E}(X)|>\mathbb{E}(X)/2\right) \\
&\leq \Var(X)/( \mathbb{E} (X)/2)^2 \leq \frac{4}{up}.
\end{align*}
Hence $$\Prob \left(\#\{i: \xi_i=j\}<up/2 \textrm{ for some } j\in \{1,\dots,v\} \right)\leq \frac{4v}{up},$$
and this concludes the proof.
\end{proof}

\begin{lemma} \label{l:pxy} If $X,Y$ are independent $\RR^d$-valued
random variables and $r>0$ then
$$\Pr(|X-Y|\leq r)\leq \sup_{y\in \RR^d} \Pr(|X-y|\leq r).$$
\end{lemma}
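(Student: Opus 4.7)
The plan is to write out $\Pr(|X-Y|\le r)$ as a double integral against the joint distribution, exploit independence to factor it, and bound the inner integral pointwise by the claimed supremum. Concretely, let $\mu_X,\mu_Y$ denote the Borel probability distributions of $X$ and $Y$ on $\RR^d$. Since $X$ and $Y$ are independent, the pair $(X,Y)$ has distribution $\mu_X\times\mu_Y$, so by Fubini's theorem
\[
\Pr(|X-Y|\le r)=\int_{\RR^d}\!\int_{\RR^d}\chi_{\{|x-y|\le r\}}\,d\mu_X(x)\,d\mu_Y(y)=\int_{\RR^d}\Pr(|X-y|\le r)\,d\mu_Y(y).
\]

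Next, for each fixed $y\in\RR^d$ we have the trivial pointwise bound $\Pr(|X-y|\le r)\le\sup_{z\in\RR^d}\Pr(|X-z|\le r)$. Integrating this inequality against the probability measure $\mu_Y$ and using $\mu_Y(\RR^d)=1$ gives
\[
\Pr(|X-Y|\le r)\le\sup_{z\in\RR^d}\Pr(|X-z|\le r)\cdot\mu_Y(\RR^d)=\sup_{y\in\RR^d}\Pr(|X-y|\le r),
\]
which is precisely the desired inequality.

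The only technical point worth checking is measurability: the map $y\mapsto\Pr(|X-y|\le r)=\mu_X(B(y,r))$ is upper semicontinuous (since $B(y,r)$ is closed and shrinking balls intersect down to the closed ball), hence Borel, so Fubini applies without difficulty. No supremum has to be asserted to be measurable because it only appears on the right-hand side as an upper bound for the integrand, not inside the integral. This is why I do not see any real obstacle; the statement is essentially a one-line consequence of independence plus the fact that a probability-weighted average is dominated by the supremum of the averaged quantity.
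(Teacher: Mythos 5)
Your proof is correct and essentially identical to the paper's: both factor the joint law as $\mu_X\times\mu_Y$ by independence, apply Fubini to obtain $\int\Pr(|X-y|\le r)\,d\mu_Y(y)$, and then bound the integrand pointwise by the supremum. The added remark on measurability is harmless but unnecessary, since the jointly Borel indicator $\chi_{\{|x-y|\le r\}}$ already suffices for Fubini.
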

\begin{proof}
Let $\mu_X$, $\mu_Y$ and $\mu_{X,Y}$ be the distribution measure of $X$, $Y$ and $(X,Y)$, respectively.
The independence of $X$ and $Y$ yields $\mu_{X,Y}=\mu_X \times \mu_Y$, thus
\begin{align*}\label{eq:Pr} \Pr(|X-Y|\leq r)&=\iint_{\RR^{2d}} \chi_{\{(x,y):\, |x-y|\leq r\}} \,\mathrm{d} \mu_{X,Y}(x,y)  \\
&=\int_{\RR^d} \int_{\RR^d} \chi_{\{(x,y):\, |x-y|\leq r\}} \,\mathrm{d} \mu_X(x)\,\mathrm{d} \mu_Y(y) \\
&=\int_{\RR^d} \Pr(|X-y|\leq r) \,\mathrm{d} \mu_Y(y) \\
&\leq \sup_{y\in \RR^d} \Pr(|X-y|\leq r).
\end{align*}
The proof is complete.
\end{proof}

\section{Dimensions of fibers of prevalent continuous maps}

\subsection{The real case}\label{ss:real} First we prove the Main Theorem for $K\subset \RR$ and $\mu=\lambda$.

\begin{theorem} \label{t:real} Let $K\subset \RR$ be a compact set with $\lambda(K)>0$ and let $d\in \NN^+$.
Then for the prevalent $f\in C(K,\RR^d)$ there exists an open set $U_{f}\subset \RR^d$ such that $\lambda(f^{-1}(U_f))=\lambda(K)$ and
for all $y\in U_{f}$ we have
$$\dim_{H} f^{-1} (y)=1.$$
\end{theorem}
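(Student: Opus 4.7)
The plan is to invoke Solecki's Theorem~\ref{t:S}: by Lemma~\ref{l:co} the target set $\iA$ is co-analytic, so it suffices to exhibit a single Borel probability measure $\nu$ on $C(K,\RR^d)$ with $\nu(\iA+g)=1$ for every $g\in C(K,\RR^d)$. Equivalently, $\nu$-a.e.\ $f$ should have the property that $F=f-g$ admits an open $U_F\subset\RR^d$ with $\lambda(F^{-1}(U_F))=\lambda(K)$ and $\dim_H F^{-1}(y)=1$ for all $y\in U_F$ (the upper bound $\dim_H F^{-1}(y)\leq 1$ is automatic since $F^{-1}(y)\subset K\subset\RR$). By Corollary~\ref{c:Cantor} and Lemma~\ref{l:occup} applied with $\Delta=\{A:\dim_H A\geq 1\}$, this in turn reduces to the case where $K$ itself is an $(a_n)$-type fat Cantor set, for a sequence $(a_n)$ I may prescribe.

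I choose $(a_n)$ growing very fast, and an auxiliary sequence $(b_n)$ with $b_n\leq a_n$ satisfying inequality~\eqref{eq:ab}, so that Lemma~\ref{l:Can} will yield Hausdorff dimension $1$ as soon as an $(a_n,b_n)$-type compact set can be exhibited inside a fiber. The random function $f$ is built along the tree of elementary pieces: to each node $(i_1,\dots,i_n)$ attach an independent uniform random vector $\xi_{i_1\dots i_n}\in[-L_n,L_n]^d$, and define $f$ on $K_{i_1\dots i_n}$ to be essentially the partial sum $S_n=\sum_{k=1}^n\xi_{i_1\dots i_k}$, interpolated linearly between adjacent pieces of the same level in the natural order of $\RR$. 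Choosing $L_n$ summable makes $f$ a.s.\ continuous, and $\nu$ is its distribution.

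Now fix $g\in C(K,\RR^d)$, set $F=f-g$, and call a child $(i_1,\dots,i_n,j)$ \emph{$y$-good} if $F(K_{i_1\dots i_n,j})\subset B(y,r_n)$, where $r_n\to 0$ is chosen to dominate both the oscillation of $g$ on level-$(n+1)$ pieces and the random tail $\sum_{k>n+1}L_k$. Conditional on the history, each such event has probability at least some $p_n$, and Lemma~\ref{l:pxy} (applied with $X=\xi_{i_1\dots i_n,j}$ and $Y$ absorbing $g$ together with the history) gives a lower bound for $p_n$ depending only on $r_n,L_n,d$ and \emph{uniform} in $g$. Lemma~\ref{l:Che} with $u=a_{n+1}$ then yields that at least $b_{n+1}:=\lfloor a_{n+1}p_n/2\rfloor$ children are $y$-good with probability $\geq 1-4/(a_{n+1}p_n)$; choosing $a_{n+1}$ large enough, Borel--Cantelli forces this to hold at every sufficiently high level, simultaneously for every $y$ in an $r_n$-net covering a fixed bounded region and for every node encountered.

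Pruning the tree at each level to retain only $y$-good children exhibits an $(a_n,b_n)$-type compact set contained in $F^{-1}(\overline{B(y,2r_n)})$, and letting $n\to\infty$ along a diagonal pins it down inside $F^{-1}(y')$ for some $y'$ close to $y$; Lemma~\ref{l:Can} then gives $\dim_H F^{-1}(y')=1$. The set of such $y'$ contains an open set $U_F$ by the stability of the construction under small perturbations of $y$ within $r_n$ at each scale. The measure-full statement $\lambda(F^{-1}(U_F))=\lambda(K)$ follows from a Fubini--Tonelli computation: the expectation $\mathbb{E}[\lambda(K\setminus F^{-1}(U_F))]$ is bounded by a sum over levels of probabilities that a node fails to have $b_n$ good children for any $y$ in the net near the image of that node, which is summable by the very Chebyshev bound used above. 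The main obstacle is the multi-parameter bookkeeping: choosing $(a_n),(b_n),(L_n),(r_n)$ so that simultaneously $f$ is a.s.\ continuous, $b_n$ meets the inequality~\eqref{eq:ab}, the union bound over the $y$-net at each scale has summable tail, and the expected bad $\lambda$-measure is summable. Once these parameters are locked in, the remainder is standard, if intricate, stochastic tree combinatorics.
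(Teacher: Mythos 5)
Your high-level strategy agrees with the paper's: reduce via Lemma~\ref{l:co} and Theorem~\ref{t:S} to exhibiting one witnessing measure; use Corollary~\ref{c:Cantor} and Lemma~\ref{l:occup} to assume $K$ is an $(a_n)$-type fat Cantor set; build a random piecewise-constant function indexed by the tree of elementary pieces; run a branching/Chebyshev argument (Lemma~\ref{l:Che}) to find $(a_n,b_n)$-type compact subsets of fibers and invoke Lemma~\ref{l:Can}. Your implementation differs in minor ways (one continuous uniform random vector per node, versus the paper's difference $X_{i_1\dots i_n}-Y_{i_1\dots i_n}$ of two discrete variables in $\{-2^{-n},2^{-n}\}^d$, a choice designed to mesh exactly with the dyadic ball covering~\eqref{eq:cov}; and your ``$y$-good'' condition is whole-child containment $F(K_{i_1\dots i_n j})\subset B(y,r_n)$, versus the paper's fractional-measure condition on a tracked subset $A$). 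These design differences are defensible and could likely be made to work with care.

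However, there are two genuine gaps. First, a small but real error: you invoke Lemma~\ref{l:pxy} to obtain a \emph{lower} bound $p_n$ on the probability of a good event, but that lemma is an \emph{upper} bound: $\Pr(|X-Y|\leq r)\leq\sup_y\Pr(|X-y|\leq r)$. In the paper it is used only in the absolute-continuity lemma (to bound $\Pr(|h(x)-h(z)|\leq r)$ from above). For your lower bound you would simply use the explicit density of the uniform variable; no lemma is needed, but the citation as written is wrong.

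Second and more seriously, your treatment of $\lambda(F^{-1}(U_F))=\lambda(K)$ is not adequate. You propose to bound $\mathbb{E}[\lambda(K\setminus F^{-1}(U_F))]$ by a sum over levels of Chebyshev failure probabilities; but that sum is a fixed positive number, and a finite bound on an expectation only gives, via Markov's inequality, a probabilistic estimate that $\lambda(K\setminus F^{-1}(U_F))$ is \emph{small}, not that it is $0$. The theorem needs it to be exactly $0$ on an event of probability $\geq 1-\varepsilon$. The paper handles this differently and this is in fact the most delicate part of the proof: it first proves, by a second-moment computation using Lemma~\ref{l:pxy} and the doubling structure of the pieces, that almost surely the occupation measure $\lambda\circ h^{-1}$ is absolutely continuous with respect to $\lambda^d$; then, for any realization in the good event $\iF$, a Lebesgue-density-point argument shows that if $\lambda(h^{-1}(U_h))<\lambda(K)$ one could produce a heavy ball that must belong to $\iT_{n+2}$ and hence lie inside $U_h$, a contradiction. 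Nothing in your sketch replaces this; you also never define $U_F$ explicitly, only assert it ``contains an open set by the stability of the construction,'' which does not supply the open set $U_f$ the theorem demands. Without the absolute-continuity/density-point machinery (or a worked-out alternative), the measure-full conclusion is not established.
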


In the special case $K=[0,1]$ we obtain the following:

\begin{corollary} \label{c:occup} For the prevalent $f\in C[0,1]$ there is an open set $U_f\subset \RR$ such that $\lambda(f^{-1}(U_f))=1$ (hence $U_f$
is dense in $f([0,1])$) and for all $y\in U_f$ we have
$$\dim_H f^{-1}(y)=1.$$
\end{corollary}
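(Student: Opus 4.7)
The corollary is the specialization $K=[0,1]$ of Theorem~\ref{t:real}: the hypothesis $\lambda(K)>0$ is satisfied, and the density claim is immediate, for if $U_f$ were not dense in $f([0,1])$ then some open $V\subset\RR$ would meet $f([0,1])$ while missing $U_f$, making $f^{-1}(V)$ a nonempty open, hence positively measured, subset of $[0,1]$ disjoint from $f^{-1}(U_f)$, contradicting $\lambda(f^{-1}(U_f))=1$. So all the substance lies in Theorem~\ref{t:real}, which I now address.

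The plan is to produce a Borel probability measure $\mu$ on $C(K,\RR^d)$ witnessing prevalence of the co-analytic set $\iA$ of Lemma~\ref{l:co}. By Solecki's Theorem~\ref{t:S} it suffices to show $f+g\in\iA$ for $\mu$-a.e.\ $f$, for each fixed $g\in C(K,\RR^d)$. By Corollary~\ref{c:Cantor} combined with Lemma~\ref{l:occup} applied to the upward closed family $\Delta=\{A\subset K:\dim_H A=1\}$, one may further replace $K$ by an $(a_n)$-type fat Cantor subset with $(a_n)$ as fast-growing as required; so henceforth assume $K$ itself has this form with elementary pieces $K_{i_1\cdots i_n}$. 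For $\mu$ I take the law of the random continuous function $f=\sum_{n=1}^\infty f_n$, where $f_n\equiv c_{i_1\cdots i_n}$ on $K_{i_1\cdots i_n}$ and the $c_{i_1\cdots i_n}$ are independent uniform samples from $[-M_n,M_n]^d$ with $(M_n)$ summable, and set $F=f+g$.

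For the fibre lower bound, I would build an $(a_n,b_n)$-type Cantor set inside $F^{-1}(y)$ and invoke Lemma~\ref{l:Can}. Call $K_{i_1\cdots i_n}$ \emph{$y$-good} if $\|S_{i_1\cdots i_n}+g_{i_1\cdots i_n}-y\|<\delta_n$, where $S_{i_1\cdots i_n}=\sum_{k=1}^n c_{i_1\cdots i_k}$, $g_{i_1\cdots i_n}$ is the value of $g$ at a chosen point of the piece, and $\delta_n$ dominates $\sum_{k>n}M_k$ plus the oscillation of $g$ on the piece. Conditionally on the parent being $y$-good, each child is $y$-good with probability $p_n$ of order $(\delta_n/M_n)^d$; tuning the parameters so that $a_np_n\to\infty$ rapidly, Lemma~\ref{l:Che} ensures at least $b_n:=\lfloor a_np_n/2\rfloor$ good children with overwhelming probability, and the parameters are chosen so that \eqref{eq:ab} holds. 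Summing failure probabilities over all tree nodes keeps an $(a_n,b_n)$-subtree alive almost surely; each infinite branch represents a point of $K$ where $F=y$, so $F^{-1}(y)$ contains an $(a_n,b_n)$-type Cantor set of Hausdorff dimension~$1$.

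To exhibit an open $U_F$ with $\lambda(F^{-1}(U_F))=\lambda(K)$, I would apply Fubini on $K\times C(K,\RR^d)$: for fixed $x\in K$ the value $F(x)=g(x)+\sum_n c_{i_1(x)\cdots i_n(x)}$ is an absolutely continuous random variable, and Lemma~\ref{l:pxy} feeds its anti-concentration into the conditional survival estimates once one takes $y=F(x)$ itself. This yields that $\mu$-a.s., $\lambda$-a.e.\ $x$ maps to a full-dimensional fibre, and $U_F$ may be taken as the interior of the random set of such good values. The main obstacle, and presumably why the authors flag this as the subtlest proof of the paper, is the simultaneous tuning of $(a_n),(b_n),(M_n),(\delta_n)$: they must respect continuity $\sum M_n<\infty$, tail/oscillation control $\delta_n\gtrsim\sum_{k>n}M_k$, Chebyshev survival ($a_np_n$ very large), and the Cantor dimension inequality \eqref{eq:ab}, while remaining robust enough for the Fubini step to produce an \emph{open} set of good $y$-values with the prescribed $\lambda\circ F^{-1}$-measure.
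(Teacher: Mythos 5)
Your reduction to Theorem~\ref{t:real} and your handling of the density claim are both correct, and your high-level plan for Theorem~\ref{t:real} (co-analyticity via Lemma~\ref{l:co}, Solecki's Theorem~\ref{t:S}, a translation-random perturbation, and an $(a_n,b_n)$-type Cantor subset of the fibre via Lemmas~\ref{l:Che} and~\ref{l:Can}) matches the skeleton of the paper's argument. However, your random construction is not the paper's, and the substitute you propose contains two genuine gaps.

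First, the parameter regime you impose is inconsistent. You take $c_{i_1\cdots i_n}$ uniform on $[-M_n,M_n]^d$ and require $\delta_n\gtrsim\sum_{k>n}M_k+\osc_n(g)$. With this choice $\delta_n\ge M_{n+1}$, so conditionally on the parent piece being $y$-good the target ball $B\bigl(y-S_{i_1\cdots i_n}-g_{i_1\cdots i_{n+1}},\,\delta_{n+1}\bigr)$ for the next coefficient $c_{i_1\cdots i_{n+1}}$ has centre as far as $\delta_n+\osc_n>M_{n+1}+\delta_{n+1}$ from the origin, hence can miss $[-M_{n+1},M_{n+1}]^d$ entirely. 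Then the conditional probability $p_n$ of a child being good is not of order $(\delta_{n+1}/M_{n+1})^d$; it can be $0$. So Lemma~\ref{l:Che} has nothing to work with, and the subtree cannot be kept alive. This is not merely ``tuning that remains to be done'': the two constraints you name (completeness of the good structure along the branch through $x$, and a uniform lower bound on the conditional survival probability) pull in opposite directions, and with a single tolerance sequence $\delta_n$ they cannot both hold. The paper avoids this exactly by using $f_n=X_{i_1\cdots i_n}-Y_{i_1\cdots i_n}$ with \emph{two} independent symmetric factors: in Statement~\ref{st:1} the variable $Y_{\sigma i}$ cancels the inherited offset $y(i)$ while $X_{\sigma i}$ independently redraws the position in $S_{m+n+1}$, so the conditional success probability is the fixed constant $1/s^2$ no matter where the running sum sits. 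It also tracks the conditional Lebesgue measure of the surviving set inside each piece ($r_n$), rather than a tolerance on the running sum, which is what makes Statement~\ref{st:1} go through.

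Second, the Fubini step does not produce the required open set. Your argument gives that, almost surely, $\lambda$-a.e.\ $x$ maps to a value $y=F(x)$ with $\dim_H F^{-1}(y)=1$, i.e.\ the set of such $y$ has full occupation measure. But ``take $U_F$ to be the interior of that set'' is not legitimate: the interior of a set of full $\lambda\circ F^{-1}$-measure need not have full $\lambda\circ F^{-1}$-measure. The paper instead constructs $U_h$ explicitly as a union of open balls indexed by $\iT_k$ (so it is open by definition), and then proves $\lambda(h^{-1}(U_h))=\lambda(K)$ via an absolute-continuity lemma for $\lambda\circ h^{-1}$ (using Lemma~\ref{l:pxy} and the fat-Cantor structure) followed by a Lebesgue density argument. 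That two-step argument is not a cosmetic variant of yours; it is what actually delivers openness together with full occupation measure, and nothing in your sketch replaces it. A smaller omission: the oscillation of $g$ over an $n$-th level piece is controlled in the paper by choosing the starting level $m$ in a $g$-dependent way (covering $g(K)$ by $2^m$ balls) while the sequence $(a_n)$ stays fixed so that the law $\mu$ does not depend on $g$; your sketch lets the oscillation enter $\delta_n$ without saying how it is uniformized over $g$.
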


\begin{proof}[Proof of Theorem~\ref{t:real}]
Consider
\begin{align*} \iA=\{&f\in C(K,\RR^d): \exists \textrm{ an open set } U_f\subset \RR^d \textrm{ such that}\\
&\lambda(f^{-1}(U_f))=\lambda (K) \textrm{ and } \dim_H f^{-1}(y)=1 \textrm{ for all } y\in U_f\}.
\end{align*}
Lemma~\ref{l:co} with $c=1$ yields that $\iA$ is co-analytic. By Theorem~\ref{t:S} it is enough to show that there exists a Borel probability measure
$\mu$ on $C(K,\RR^d)$ such that $\mu(\iA-g)=1$ for all $g\in C(K,\RR^d)$.

Now we construct the measure $\mu$. Let us endow $\RR^d$ with the maximum norm, which we simply denote by $|\cdot|$.
Let $s=2^d$ and let $S_n=\{-2^{-n},2^{-n}\}^{d}$ for all $n\in \NN^+$, then $\#S_n=s$. Clearly for all $z\in \RR^d$ and $n\in \NN$
we obtain that
\begin{equation} \label{eq:cov} B(z, 2^{-n})=\bigcup_{y\in  S_{n+1}} B(z + y, 2^{-(n+1)}).
\end{equation}
For all $n\in \NN^+$ let us define the positive integers $a_n$ and $b_n$ by
$$a_n=(2s)^{4^{n}} \quad \textrm{and} \quad b_n=(2s)^{-(n+3)}a_n,$$
easy calculations show that there is an $n_0\in \NN^+$ such that for all $n\geq n_0$ we have
\begin{equation} \label{eq:a_n} a_n\geq \max\left\{(2s)^{8n}(a_1\cdots a_{n-1}),
\left(\frac{a_1\cdots a_{n+1}}{b_1\cdots b_{n+1}}\right)^{n+1} \right\}.
\end{equation}
For all $n\in \NN^{+}$ let
$$\iI_n=\prod_{i=1}^{n} \{1,\dots,a_i\}.$$
Let us recall Definition~\ref{d:Cantor1}.
Corollary~\ref{c:Cantor} implies that there exist $(a_n)$-type fat Cantor sets $K_i\subset K$ such that
$\lambda(\bigcup_{i=1}^{\infty}K_i)=\lambda(K)$. Therefore, by Lemma~\ref{l:occup} we may assume that $K$ is an $(a_n)$-type fat Cantor set with elementary
pieces $K_{i_1\dots i_n}$. By considering a similar copy of $K$ we may suppose that $\lambda(K)=1$.
Then for all $n\in \NN^+$ and $(i_1,\dots,i_n)\in \iI_n$ we have
$$\lambda (K_{i_1\dots i_n})=\frac{1}{a_1\cdots a_n}.$$
For all Borel sets $A,B\subset K$ with $\lambda(B)>0$ let us use the notation
$$\lambda(A\,|\, B)=\frac{\lambda(A\cap B)}{\lambda(B)}.$$
For all $n\in \NN^{+}$ and $(i_1,\dots,i_n)\in \iI_n$ let us define countably many independent random variables
$X_{i_1\dots i_n}$ and $Y_{i_1\dots i_n}$ such that for all $y\in S_n$ we have
\begin{equation} \label{eq:XY} \Pr(X_{i_1\dots i_n}=y)=\Pr(Y_{i_1\dots i_n}=y)=1/s.
\end{equation}
For every $n\in \NN^+$ and $x\in K$ there exists a unique $(i_1,\dots,i_n)\in \iI_n$ for which $x\in K_{i_1\dots i_n}$. Then
let us define the random function $f_n\in C(K,\RR^d)$ as
$$f_n(x)=X_{i_1\dots i_n}-Y_{i_1\dots i_n}.$$
Note that the dependence of the right hand side on $x$ is simply that the indices depend on $x$.
Let $\mathbb{P}_n$ be the probability measure on $C(K,\RR^d)$
corresponding to this method of randomly choosing $f_n$, and let $\iR_n\subset C(K,\RR^d)$ be its finite support.
Clearly for all $f_n\in \iR_n$ and $x\in K$ we have
\begin{equation} \label{fnx} |f_n(x)|\leq 2^{1-n}.
\end{equation}
Thus $\sum_{n=1}^{\infty} f_{n}$ always converges uniformly. Let
$\mathbb{P}=\prod_{n=1}^{\infty} \mathbb{P}_n$ be a probability measure on the Borel subsets of $\iR=\prod_{n=1}^{\infty}\iR_n$
and let
$$\pi\colon \iR \to C(K,\RR^d), \quad  \pi((f_n))=\sum_{n=1}^{\infty} f_n.$$
Let us define
$$\mu=\mathbb{P}\circ \pi^{-1}.$$

Now we prove that $\mu(\iA-g)=1$ for all $g\in C(K,\RR^d)$. More precisely, we will show that for each $g\in C(K,\RR^d)$
the stochastic process $W=\sum_{n=1}^{\infty} f_n$ satisfies $g+W\in \iA$ almost surely. Let $g\in C(K,\RR^d)$ and $\varepsilon>0$ be arbitrarily fixed,
it is enough to show that $\mu(\iA-g)\geq 1-\varepsilon$. As $g(K)$ is compact, we can fix an integer $m>n_0$ such that
$2^m>1/\varepsilon$ and $g(K)$ can be covered by $2^m$ closed balls of radius $1$, it is sufficient to prove that
\begin{equation} \label{eq:iA-g} \mu(\iA-g)=\mathbb{P}(\pi^{-1}(\iA-g))\geq 1-2^{-m}.
\end{equation}
For all $n\in \NN$ consider
\begin{align*}  h_n&=g+\sum_{i=1}^{m+n} f_i, \\
r_n&=(2s)^{-(m+n+2)}, \\
p_{n}&=\frac{(2s)^{m+n+5}}{a_{m+n+1}}.
\end{align*}
\begin{statement}  \label{st:1} Let $n\in \NN$ and
assume that $z\in \RR^d$ and for all $i\in \{1,\dots,m+n\}$ the functions
$f_{i}\in \iR_{i}$ and $\sigma \in \iI_{m+n}$ are fixed. Let $A\subset h_n^{-1}(B(z,2^{-(m+n)}))$
with
$$\lambda(A\, |\,  K_{\sigma})\geq r_n.$$
For all $y \in S_{m+n+1}$ let us define the random set $I(y)\subset \{1,\dots,a_{m+n+1}\}$ as
$$I(y)=\left\{i: \lambda\left(A\cap h_{n+1}^{-1}\left(B\left(z+y,2^{-(m+n+1)}\right)\right)\, \big| \,
K_{\sigma i}\right)\geq r_{n+1}\right\},$$
where $\sigma i$ is the concatenation of $\sigma$ and $i$. Then
$$\mathbb{P}_{m+n+1}\left(\#I(y)<b_{m+n+1} \textrm{ for some } y\in S_{m+n+1}\right)\leq p_{n}.$$
\end{statement}
\begin{proof}[Proof of Statement \ref{st:1}]
Let us define $I\subset \{1,\dots,a_{m+n+1}\}$ as
$$I=\{i: \lambda(A\, |\,  K_{\sigma i})\geq r_n/2\}.$$
First we prove that
\begin{equation} \label{eq:numI} \#I\geq \frac{r_n a_{m+n+1}}{2}. \end{equation}
Our assumption and the definition of $I$ imply that
\begin{align*}
r_n \lambda(K_{\sigma}) &\leq \lambda (A\cap K_{\sigma})=\sum_{i=1}^{a_{m+n+1}}\lambda(A\cap K_{\sigma i}) \\
&\leq \sum_{i=1}^{a_{m+n+1}} \frac{r_n}{2} \lambda(K_{\sigma i})+\sum_{i\in I} \lambda(K_{\sigma i}) \\
&= \frac{r_n}{2} \lambda(K_{\sigma})+(\#I) \frac{\lambda(K_{\sigma})}{a_{m+n+1}}, \\
\end{align*}
which easily yields \eqref{eq:numI}. Then $A\subset h_n^{-1}(B(z,2^{-(m+n)}))$ and \eqref{eq:cov} imply that
$$A\subset \bigcup_{y\in S_{m+n+1}} h_n^{-1}\left(B\left(z+y,2^{-(m+n+1)}\right)\right).$$
Thus the definition of $I$ and $r_{n+1}=r_{n}/(2s)$ yield that for all $i\in I$ there exists
$y(i)\in S_{m+n+1}$ such that
\begin{equation} \label{eq:nuh}
\lambda\left(A\cap h_n^{-1}\left(B\left(z+y(i),2^{-(m+n+1)}\right)\right)\, \big| \,  K_{\sigma i}\right)\geq r_{n+1}.
\end{equation}
Let $S_{m+n+1}=\{y_j: 1\leq j\leq s\}$. Define for all $i\in I$ independent random variables
\begin{equation} \label{eq:xi} \xi_i= \begin{cases}
j \textrm{ if } X_{\sigma i}=y_j \textrm{ and }  Y_{\sigma i}=y(i), \\
0 \textrm{ otherwise}.
\end{cases}
\end{equation}
For all $j\in \{1,\dots, s\}$ let us define the random set
$$I_j=\{i\in I: \xi_i=j\}.$$
Now we show that for all $j\in \{1,\dots, s\}$ we have
\begin{equation} \label{eq:2Ij} I_j\subset I(y_j).
\end{equation}
Assume that $i\in I_j$ and $x\in K_{\sigma i}$, then
$$h_{n+1}(x)=h_{n}(x)+f_{n+1}(x)=h_n(x)+ X_{\sigma i}-Y_{\sigma i}=h_n(x)+y_j-y(i),$$
therefore
\begin{equation} \label{eq:hh}   h_n^{-1}\left(B\left(z+y(i),2^{-(m+n+1)}\right)\right)\cap K_{\sigma i} \subset h_{n+1}^{-1}\left(B\left(z+y_j,2^{-(m+n+1)}\right)\right).
\end{equation}
Formulae \eqref{eq:hh} and \eqref{eq:nuh} imply
\begin{equation} \label{eq:zzz}
\lambda \left(A\cap h_{n+1}^{-1}\left(B\left(z+y_j,2^{-(m+n+1)}\right)\right)\, \big| \,  K_{\sigma i}\right)\geq r_{n+1},
\end{equation}
thus $i\in I(y_j)$, so \eqref{eq:2Ij} holds. The definitions yield that

\begin{equation} \label{eq:tedious}
\frac{r_n a_{m+n+1}}{4s^2}=b_{m+n+1} \quad \textrm{and} \quad \frac{8s^3}{r_n a_{m+n+1}}=p_{n}.
\end{equation}

Clearly, we have $\Prob (\xi_i=j)=1/s^2$ for all $i\in I$ and $j\in \{1,\dots, s\}$.
We apply Lemma~\ref{l:Che} for $\xi_i$ with $u=\#I$, $v=s$ and $p=1/s^2$. Then \eqref{eq:numI} and the first part of
\eqref{eq:tedious} yield that $b_{m+n+1}\leq up/2$. Therefore \eqref{eq:2Ij}, Lemma~\ref{l:Che}, \eqref{eq:numI}
and the second part of \eqref{eq:tedious} imply that
\begin{align*}
\mathbb{P}_{m+n+1}&\left(\#I(y)< b_{m+n+1} \textrm{ for some } y\in S_{m+n+1} \right) \\
&\leq \Prob\left(\#I_j< b_{m+n+1} \textrm{ for some } j\in \{1,\dots, s\} \right) \\
&\leq \Prob \left(\#\{i\in I: \xi_i=j\}<up/2 \textrm{ for some } j\in \{1,\dots,v\}  \right) \\
&\leq \frac{4v}{up}\leq \frac{8s^3}{r_n a_{m+n+1}}=p_{n}.
\end{align*}
The proof of the statement is complete.
\end{proof}

Now we return to the proof of Theorem~\ref{t:real}. For all $k\in \NN$ let $\iY_{k,k}=\iJ_{k,k}=\{ \emptyset\}$. For all natural numbers
$k<n$ let
$$\iY_{k,n}=\prod_{i=k+1}^{n} S_{m+i} \quad \textrm{and} \quad \iJ_{k,n}=\prod_{i=k+1}^{n} \{1,\dots,b_{m+i}\}.
$$
Let $(f_1,\dots,f_m)\in  \iR_1\times \dots \times \iR_m$ be arbitrary. By the definition of $m$ we can cover $B(g(K),2)$
by $2^m$ closed balls of radius $3$, and so by
$2^ms^{m+2}<(2s)^{m+2}$ closed balls of radius $2^{-m}$. Let $T_m\subset \RR^d$ be the set of their centers, that is,
$\#T_m\leq (2s)^{m+2}$ and $B(g(K),2) \subset \bigcup_{y\in T_m} B(y,2^{-m})$. Let
$$\iT_0=\{y_0\in T_m: \lambda(h_0^{-1}(B(y_0,2^{-m})))\geq r_0\}.$$
As \eqref{fnx} yields $h_0=g+\sum_{i=1}^{m} f_i\in B(g,2)$, we have $h_0(K)\subset \bigcup_{y\in T_m} B(y,2^{-m})$. Therefore
$r_0=(2s)^{-(m+2)}$ and $\#T_m\leq (2s)^{m+2}$ imply that $\iT_0\neq \emptyset$.
Let us fix $(f_1,\dots,f_m)$ and for all $y_0\in \iT_0$ fix $\sigma_{y_0}\in \iI_m$ such that
$$\lambda(h_0^{-1}(B(y_0,2^{-m}))\,| \, K_{\sigma_{y_0}})\geq r_0.$$

Let $n\in \NN$ and suppose that $f_i\in \iR_i$ are fixed for all $i\in \{1,\dots,m+n\}$. Assume by induction that
for all $k\in \{0,\dots,n\}$ the sets $\iT_k\subset T_m\times \prod_{i=1}^{k} S_{m+i}$ and for all
$(y_0,\dots,y_k)\in \iT_k$, $(y_{k+1},\dots,y_n)\in \iY_{k,n}$ and $(j_{k+1},\dots,j_n)\in \iJ_{k,n}$ the index sets
\begin{equation} \label{eq:sigma} \sigma=\sigma_{y_0\dots y_k}(y_{k+1},\dots,y_n;j_{k+1},\dots,j_n)\in \iI_{m+n}
\end{equation}
are already defined (we use the convention that $\sigma_{y_0\dots y_n}(\emptyset;\emptyset)=\sigma_{y_0\dots y_n}$)
such that
\begin{align} \label{eq:hn+1}
\begin{split}
&\lambda(A_k^{n}(y_0,\dots ,y_n) \, |\,   K_{\sigma})\geq r_{n}, \textrm{ where}  \\
&A_k^{n}(y_0,\dots ,y_n)=\bigcap_{i=k}^{n} h_{i}^{-1}\left(B\left((y_0+\dots+y_i),2^{-(m+i)}\right)\right).
\end{split}
\end{align}
Let us consider the functions $f_{m+n+1}\in \iR_{m+n+1}$ for which for every $\sigma$ from \eqref{eq:sigma} and
for every $y_{n+1}\in S_{m+n+1}$ and $j_{n+1}\in \{1,\dots,b_{m+n+1}\}$ we can define
$i$ as the $j_{n+1}$st smallest element of $\{1,\dots,a_{m+n+1}\}$ that satisfies
\begin{equation*}
\lambda\left(A_k^{n}(y_0,\dots, y_n)\cap h_{n+1}^{-1}\left(B\left((y_0+\dots+ y_{n+1}),2^{-(m+n+1)}\right)\right)\, \big| \,  K_{\sigma i} \right)\geq r_{n+1},
\end{equation*}
and let us define
\begin{equation*} \label{eq:deffi} \sigma_{y_0\dots y_k}(y_{k+1},\dots,y_{n+1};j_{k+1},\dots,j_{n+1})=\sigma i\in \iI_{m+n+1}.
\end{equation*}
Statement~\ref{st:1} implies that the $\mathbb{P}_{m+n+1}$-probability that $f_{m+n+1}\in \iR_{m+n+1}$ does not have this property is at most
\begin{align} \label{eq:sumk}
\begin{split}
\sum_{k=0}^{n}(\#\iT_k)(\#\iY_{k,n})(\#\iJ_{k,n})p_{n}&\leq \sum_{k=0}^{n} (2s)^{m+2}s^{n} (b_{m+k+1}\cdots b_{m+n})p_n  \\
&\leq (2s)^{m+n+2}(b_{1}\cdots b_{m+n})p_n  \\
&\leq (2s)^{7(m+n+1)}(a_{1}\cdots a_{m+n})(a_{m+n+1})^{-1}\\
&\leq 2^{-(m+n+1)},
\end{split}
\end{align}
where we used $n+1\leq 2^n$, $b_n \le a_n$ and \eqref{eq:a_n}.
Let us fix $f_{m+n+1}$ with the above property and define $\iT_{n+1}\subset T_m\times \prod_{i=1}^{n+1} S_{m+i}$ as
$$\iT_{n+1}=\left\{(y_0,\dots,y_{n+1}): \lambda\left(h_{n+1}^{-1}\left(B\left((y_0+\dots+y_{n+1}),2^{-(m+n+1)}\right)\right)\right)\geq r_{n+1}\right\},$$
and for all $(y_0,\dots,y_{n+1})\in \iT_{n+1}$ let us fix $\sigma_{y_0\dots y_{n+1}}\in \iI_{m+n+1}$ such that
$$\lambda\left(h_{n+1}^{-1}\left(B\left((y_0+\dots+y_{n+1}),2^{-(m+n+1)}\right)\right)\, \big| \, K_{\sigma_{y_0\dots y_{n+1}}} \right)\geq r_{n+1}.$$
Let $\iF$ be the set of sequences $(f_i)\in \iR$ which can be defined by this process.
Then \eqref{eq:sumk} yields
\begin{equation} \label{eq:bbb} \mathbb{P}(\iF)\geq 1-\sum_{n=0}^{\infty} 2^{-(m+n+1)}=1-2^{-m}. \end{equation}
For all $(f_i)\in \iR$ let $h=g+\sum_{i=1}^{\infty} f_i$ and for all $(f_i)\in \iF$ let
$$U_h=\bigcup_{k=0}^{\infty} \bigcup_{(y_0,\dots,y_k)\in \iT_k} U\big((y_0+\dots+y_k),2^{-(m+k)}\big).$$

Now we are ready to prove \eqref{eq:iA-g}. By \eqref{eq:bbb} it is enough to show that for $\mathbb{P}$-almost every $(f_i)\in \iF$
we have $(f_i)\in \pi^{-1}(\iA-g)$. Therefore it is sufficient to prove that $\dim_H h^{-1}(y)=1$ for every $(f_i)\in \iF$ and $y\in U_h$,
and $\lambda(h^{-1}(U_h))=\lambda(K)$ for $\mathbb{P}$-almost every $(f_i)\in \iF$. Therefore
the following three lemmas will complete the proof of
Theorem~\ref{t:real}.

\begin{lemma} For all $(f_i)\in \iF$ and $y\in U_h$ we have $\dim_H h^{-1}(y)=1$.
\end{lemma}

\begin{proof}
Fix $k\in \NN$ and $(y_0,\dots,y_k)\in \iT_k$ such that
$y\in U((y_0+\dots+y_k),2^{-(m+k)})$. Then \eqref{eq:cov} yields that for all $n\in \NN^+$ we can fix $y_{k+n}\in S_{m+k+n}$ such that
\begin{equation} \label{eq:yin} y\in B\big((y_0+\dots +y_{k+n}),2^{-(m+k+n)}\big).
\end{equation}
For all $n\in \NN^+$ and $(j_1,\dots, j_n)\in \iJ_{k,k+n}=\prod_{i=1}^{n}\{1,\dots,b_{m+k+i}\}$ let
$$\sigma(j_1,\dots,j_n)=\sigma_{y_0\dots y_k}(y_{k+1},\dots,y_{k+n};j_1,\dots,j_n)\in \iI_{m+k+n}$$
and
$$C_{j_1 \dots j_n}=K_{\sigma(j_1,\dots,j_n)}\cap A_k^{k+n}(y_0,\dots, y_{k+n}),$$
where we recall \eqref{eq:sigma} and \eqref{eq:hn+1}. Let us define
\begin{equation*}C=\bigcap_{n=1}^{\infty} \left(\bigcup_{j_1=1}^{b_{m+k+1}}\dots \bigcup_{j_n=1}^{b_{m+k+n}} C_{j_1 \dots j_{n}}\right).
\end{equation*}
Then \eqref{eq:hn+1} yields that $h_{k+n}(C_{j_1 \dots j_n})\subset  B((y_0+\dots+y_{k+n}),2^{-(m+k+n)})$ for all $n\in \NN^+$ and
$(j_1,\dots,j_n)\in \iJ_{k,k+n}$. Thus \eqref{eq:yin} and uniform convergence imply $h(x)=y$ for all $x\in C$, so
\begin{equation} \label{eq:sups} C\subset h^{-1}(y).
\end{equation}

Let us recall Definitions~\ref{d:Cantor1} and \ref{d:Cantor2}.
Let $c_n=a_{m+k+n}$ and $d_n=b_{m+k+n}$ for all $n\in \NN^+$.
As $K$ is an $(a_n)$-type fat Cantor set, the $(m+k)$th level elementary pieces of $K$ are
$(c_n)$-type fat Cantor sets. Inequality~\eqref{eq:hn+1} yields that $C_{j_1 \dots j_n}\neq \emptyset$ for all $n\in \NN^+$ and
$(j_1,\dots,j_n)\in \prod_{i=1}^{n}\{1,\dots,d_i\}$,
thus $K$ witnesses that $C$ is a $(c_n,d_n)$-type Cantor set. Then \eqref{eq:a_n} easily implies that
$$c_n\geq \left(\frac{c_{1}\cdots
c_{n+1}}{d_{1}\cdots d_{n+1}}\right)^{n+1},$$
therefore Lemma~\ref{l:Can} yields that $\dim_H C=1$. Hence \eqref{eq:sups} implies that
\begin{equation} \label{eq:Hh} \dim_H h^{-1}(y)\geq \dim_H C=1,
\end{equation}
which completes the proof.
\end{proof}

\begin{lemma} Let $(f_i)\in \iF$ be fixed such that
$\lambda\circ h^{-1}$ is absolutely continuous with respect to $\lambda^d$. Then $\lambda(h^{-1}(U_h))=\lambda(K)$.
\end{lemma}

\begin{proof}  There exists a measurable function
$\varphi\colon \RR^d\to [0,\infty)$ such that for every Borel set $B\subset \RR^d$ we have
\begin{equation} \label{eq:rdens} \lambda(h^{-1}(B))=\int_{B} \varphi(y) \, \mathrm{d} \lambda^d(y).
\end{equation}
Assume to the contrary that $\lambda(h^{-1}(h(K)\setminus U_h))>0$. Then there exist a measurable set
$E\subset h(K)\setminus U_h$ and $c>0$ such that $\lambda^d(E)>0$ and $\varphi|_{E}\geq c$.
By Lebesgue's density theorem \cite[261D]{Fr} we can fix a
density point $z\in E$. As $z$ is a density point of $E$, there is an $n\in \NN^+$ such that $r=2^{-(m+n)}$ satisfies $r\leq c$ and
if $B\subset B(z,2r)$ is a ball with radius at least $r/4$ then
\begin{equation} \label{eq:ID} \lambda^d(B\cap E)\geq \lambda^d(B)/2.
\end{equation}
As $z\in E\subset h(K)\subset B(g(K),2)$, there exists
$(y_0,\dots,y_n)\in T_{m}\times S_{m+1}\times \dots \times S_{m+n}$ with $z\in B((y_0+\dots +y_n),r)$. Let
$y=y_0+\dots+y_n$ and $D=B(y,r/2)$, then we have $B(D,r/2)=B(y,r)\subset B(z,2r)$. Then \eqref{eq:rdens}, the definition of $c$ and \eqref{eq:ID} yield
\begin{equation} \label{eq:h1I}
\lambda(h^{-1}(D))=\int_{D} \varphi(y) \, \mathrm{d} \lambda^d(y) \geq c \lambda^d(D\cap E)\geq (c/2)\lambda^d(D).
\end{equation}
Let $\iB=\{B(y+y_{n+1}+y_{n+2},r/4): y_{n+i}\in S_{m+n+i}\}$, clearly $\#\iB=s^{2}$.
If $x\in K$ and $h(x)\in D$ then \eqref{fnx} and the definition of $D$ imply
$$h_{n+2}(x)\in B(h(x),2^{-(m+n+1)})\subset B(D,r/2)=B(y,r)=\bigcup \iB.$$
Hence there exists a $B\in \iB$ such that
$\lambda(h_{n+2}^{-1}(B))\geq s^{-2} \lambda(h^{-1}(D))$. Then \eqref{eq:h1I},
$\lambda^d(D)=s^{-(m+n)}$ and $c\geq 2^{-(m+n)}$ yield
$$\lambda(h_{n+2}^{-1}(B))\geq \frac{\lambda(h^{-1}(D))}{s^2}\geq  \frac{c\lambda^d(D)}{2s^2}>(2s)^{-(m+n+4)}=r_{n+2}.$$
Let $y_0+\dots+y_{n+2}$ be the center of $B$. The definition of $\iT_{n+2}$ and
$\lambda(h_{n+2}^{-1}(B))> r_{n+2}$ imply that $(y_0,\dots,y_{n+2})\in \iT_{n+2}$.
Hence the definition of $U_h$ yields that $B\subset U_h$. Then \eqref{eq:ID} implies that $\lambda^d(B\cap E)\geq \lambda^d(B)/2>0$,
thus $E\cap B\neq \emptyset$, so $E\cap U_h\neq \emptyset$.
This contradicts the definition of $E$, thus $\lambda(h^{-1}(U_h))=\lambda(K)$.
\end{proof}

\begin{lemma} For $\mathbb{P}$-almost every $(f_i)\in \iR$ the measure $\lambda\circ h^{-1}$ is absolutely continuous with respect to
$\lambda^d$.
\end{lemma}

\begin{proof}
For all $(f_i)\in \iR$ and $n\in \NN^+$ let $F_n=\sum_{i=n}^{\infty} f_i$.
For all distinct $x,z\in K$ let $i(x,z)$ be the minimal number $i$ such that
$x$ and $z$ are in different $i$th level elementary pieces of $K$. Fix $n\in \NN^+$ and $x,z\in K$ with $i(x,z)=n$ and also fix $r>0$.
Pick indexes $\{i_k\}_{k\geq 1}$ such that
$x\in K_{i_1\dots i_k}$ for all $k\in \NN^+$ and define
$$X_{n}(x)=\sum_{k=n}^{\infty} X_{i_1\dots i_k} \quad \textrm{and} \quad Y_{n}(x)=\sum_{k=n}^{\infty} Y_{i_1\dots i_k},$$
where we recall \eqref{eq:XY}. Then clearly $X_{n}(x)$ is uniformly distributed in
$B(\mathbf{0},2^{1-n})$, therefore for all $y\in \RR^d$ we have
\begin{equation} \label{eq:unif} \mathbb{P}(|X_n(x)-y|\leq r)\leq (r2^{n-1})^d\leq (r2^{n})^d.
\end{equation}
Then $i(x,z)=n$ implies that $X_{n}(x)$ and $Y_n(x)-g(x)+F_{n}(z)+g(z)$ are independent. Since $f_i(x)=f_i(z)$ for all $i<n$, the difference of
the above two variables equals $h(x)-h(z)$. Thus Lemma~\ref{l:pxy} and \eqref{eq:unif} imply
\begin{align} \label{eq:Phx}
\begin{split}
\mathbb{P}(|h(x)-h(z)|\leq r)&= \mathbb{P}(|X_{n}(x)-(Y_n(x)-g(x)+F_{n}(z)+g(z))|\leq r)  \\
&\leq\sup_{y\in \RR^d} \mathbb{P}(|X_n(x)-y|\leq r) \leq (r2^{n})^d.
\end{split}
\end{align}
Let $A_n=\{(x,z)\in K\times K: i(x,z)=n\}$ for all $n\in \NN^+$, then
\begin{equation} \label{eq:aprod} (\lambda\times \lambda)(A_n)=\frac{a_1\cdots a_n(a_n-1)}{(a_1\cdots a_n)^2}\leq \frac{1}{a_1\cdots a_{n-1}},
\end{equation}
where $a_1\cdots a_0=1$ by convention. Let us use the notation $\lambda_h=\lambda\circ h^{-1}$ and
define the random function $q\colon \RR^d\to [0,\infty]$ as
$$q(y)=\liminf_{r\to 0+} \frac{\lambda_h(B(y,r))}{\lambda^d(B(y,r))}.$$
By \cite[Theorem~2.12]{Ma} it is enough to show that, almost surely,
$q(y)<\infty$ for $\lambda_h$ almost every $y\in \RR^d$.
Therefore it is enough to prove that the following expected value is finite.
Applying Fatou's lemma, the substitution formula $\int_{\RR^d} \psi \, \mathrm{d} \lambda_h = \int_K \psi \circ h \, \mathrm{d} \lambda$, Fubini's theorem, \eqref{eq:Phx}, \eqref{eq:aprod} and
$a_n\geq s^{2n}=2^{2dn}$ yield
\begin{align*} \mathbb{E} \int_{\RR^d} q(y) \, \mathrm{d} \lambda_h(y)&\leq  \liminf_{r\to 0+} \frac{1}{(2r)^d}  \mathbb{E}  \int_{\RR^d} \lambda_h(B(y,r))\, \mathrm{d} \lambda_h(y) \\
&=  \liminf_{r\to 0+} \frac{1}{(2r)^d} \int_{K} \int_{K} \mathbb{P}(|h(x)-h(z)|\leq r) \,\mathrm{d} \lambda(x) \, \mathrm{d} \lambda(z) \\
&=  \liminf_{r\to 0+} \frac{1}{(2r)^d} \sum_{n=1}^{\infty} \iint_{A_n}
\mathbb{P}(|h(x)-h(z)|\leq r) \,\mathrm{d} \lambda(x) \, \mathrm{d} \lambda(z) \\
&\leq \liminf_{r\to 0+} \frac{1}{(2r)^d} \sum_{n=1}^{\infty} \frac{(r2^{n})^d}{a_1\cdots a_{n-1}}\\
&= \sum_{n=0}^{\infty} \frac{2^{dn}}{a_1\cdots a_{n}}\leq \sum_{n=0}^{\infty} 2^{-dn}<\infty,
\end{align*}
and the proof is complete.
\end{proof}

Therefore the proof of Theorem~\ref{t:real} is also complete.
\end{proof}

\subsection{The ultrametric case}\label{ss:ultra}

Before turning to the general case, we prove the Main Theorem for ultrametric spaces.
The key step is the following lemma, where the construction of the map $h$ is based on the proof of \cite[Theorem~2.1]{KMZ}.

\begin{lemma} \label{l:Holder}
Let $(K,d)$ be a compact ultrametric space. If $\mu$ is a continuous mass distribution on $K$, then there is a map
$h\colon K\to [0,1]$ and there are compact sets $K_n\subset K$ such that if $h_n=h|_{K_n}$ and $D_n=h(K_n)$ then for all $n\in \NN^+$
we have
\begin{enumerate}[(i)]
\item \label{e1} $\mu(\bigcup_{n=1}^{\infty} K_n)=\mu(K)$ and $\lambda(D_n)>0$;
\item \label{e2} the maps $h_n\colon K_n\to D_n$ are homeomorphisms;
\item \label{e3} for every Borel set $B\subset D_n$ we have $\mu(h_n^{-1}(B))=\lambda(B)$;
\item \label{e4} for every non-empty $B\subset D_n$ we have $\dim_H h_n^{-1}(B)\geq \dim_H \mu \cdot\dim_H B$;
\item \label{e5} for every non-empty $B\subset D_n$ we have $\dim_P h_n^{-1}(B)\geq \dim_P \mu \cdot\dim_H B$.
\end{enumerate}
\end{lemma}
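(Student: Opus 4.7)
My plan is to construct $h$ as the natural measure-respecting coding of the ball-partition tree of $K$, and then use Frostman measures together with Theorem~\ref{t:limsup} to transfer the local dimension behaviour of $\mu$ into the fiber bounds in (iv) and (v). After normalizing so that $\mu(K)=1$, for each $n\in\NN^+$ Fact~\ref{f:ultra} lets me partition $K$ into finitely many closed balls of diameter at most $2^{-n}$; call this partition $\mathcal{P}_n$, with $\mathcal{P}_{n+1}$ refining $\mathcal{P}_n$. To each $B\in\mathcal{P}_n$ I assign inductively a closed interval $I_B\subset[0,1]$ of length $\mu(B)$, with $I_K=[0,1]$ and with the intervals of the children of $B$ partitioning $I_B$ in some chosen linear order. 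For $x\in K$ let $B^{(n)}(x)\in\mathcal{P}_n$ be the ball containing $x$; since $\mu$ is continuous and $\mesh(\mathcal{P}_n)\to 0$, we have $\mu(B^{(n)}(x))\to 0$, so the nested closed intervals $I_{B^{(n)}(x)}$ shrink to a single point, which I define to be $h(x)$. Then $h$ is continuous, and $h(x)=h(y)$ with $x\ne y$ forces both chains $\{I_{B^{(n)}(x)}\}_n$ and $\{I_{B^{(n)}(y)}\}_n$ to become eventually the edge children of a common parent interval, collapsing to a shared endpoint; since each $I_B$ admits at most two such edge branches, the bad set $N$ of all such $x$ is countable and $\mu(N)=0$.

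The pushforward $\mu\circ h^{-1}$ agrees with Lebesgue measure on each interval $I_B$ (both evaluate to $\mu(B)$), so they agree on the algebra generated by these intervals and, by Carath\'eodory's extension theorem, on all Borel subsets of $[0,1]$. Injectivity of $h$ outside the $\mu$-null set $N$ then yields (iii) for any compact $K_n\subset K\setminus N$ and any Borel $B\subset D_n=h(K_n)$. To build the $K_n$, fix enumerations of the rationals $s_j\in[0,\dim_H\mu)$ and $t_j\in[0,\dim_P\mu)$. By Theorem~\ref{t:limsup} the sets
\begin{align*}
A^H_{j,M} &= \{x\in K:\mu(B(x,r))\le Mr^{s_j}\ \text{for all}\ 0<r\le 1/M\}, \\
A^P_{j,M} &= \{x\in K:\liminf_{r\to 0+}\mu(B(x,r))/r^{t_j}\le M\}
\end{align*}
each have $\mu$-measure tending to $\mu(K)$ as $M\to\infty$. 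For each $n$, choose $M^H_{n,j}$ and $M^P_{n,j}$ so that $A^H_{j,M^H_{n,j}}$ and $A^P_{j,M^P_{n,j}}$ both have $\mu$-measure at least $\mu(K)-2^{-n-j-1}$, and let $K_n$ be a compact subset of $(K\setminus N)\cap\bigcap_{j}(A^H_{j,M^H_{n,j}}\cap A^P_{j,M^P_{n,j}})$ of $\mu$-measure at least $\mu(K)-2^{-n}$. Then $\mu(\bigcup_n K_n)=\mu(K)$, $h_n=h|_{K_n}$ is a continuous injection from a compact set, hence a homeomorphism onto $D_n$, and $\lambda(D_n)=\mu(K_n)>0$ by (iii); properties (i) and (ii) follow.

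For (iv) and (v), fix a non-empty $B\subset D_n$ and $s<\dim_H B$. Frostman's lemma supplies a mass distribution $\nu$ on $B$ with $\nu(U(y,r))\le Cr^s$. Pull $\nu$ back through the homeomorphism $h_n$ to obtain a mass distribution $\nu'=\nu\circ h_n$ on $h_n^{-1}(B)\subset K_n$. In the ultrametric space, every $B(x,r)$ with $2^{-k}\le r<2^{-k+1}$ is contained in $B^{(k)}(x)\in\mathcal{P}_k$, so $h_n(B(x,r)\cap K_n)\subset I_{B^{(k)}(x)}$, which is an interval of length $\mu(B^{(k)}(x))\le\mu(B(x,2r))$; hence $\nu'(B(x,r))\le C\mu(B(x,2r))^s$ for every $x\in K_n$ and small $r>0$. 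Combined with the uniform bound $\mu(B(x,2r))\le M^H_{n,j}(2r)^{s_j}$ on $K_n$ this gives $\nu'(B(x,r))\le C'r^{s_js}$, so the mass distribution principle (Lemma~\ref{l:fr}) yields $\dim_H h_n^{-1}(B)\ge s_js$; letting $s_j\to\dim_H\mu$ and $s\to\dim_H B$ produces (iv). For (v) the same estimate combined with the uniform $\liminf$ bound on $\mu$ gives $\liminf_{r\to 0+}\nu'(B(x,r))/r^{t_js}<\infty$ for \emph{every} $x\in K_n$, so Theorem~\ref{t:limsup} gives $\dim_P\nu'\ge t_js$, Theorem~\ref{t:frostman} gives $\dim_P h_n^{-1}(B)\ge t_js$, and sending $t_j\to\dim_P\mu$ and $s\to\dim_H B$ yields (v).

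The packing estimate will be the delicate point. Because $\dim_P\mu$ is controlled only through a $\liminf$-type condition, $h$ is not H\"older with any exponent close to $\dim_P\mu$, so I cannot invoke Fact~\ref{f:Holder} directly. The Frostman pullback $\nu'$ circumvents this: a pointwise-uniform $\liminf$ bound for $\mu$ on $K_n$ yields a pointwise-uniform $\liminf$ bound for $\nu'$, after which the $\liminf$ characterization of $\dim_P\nu'$ in Theorem~\ref{t:limsup} completes the argument. Arranging the countable family of scale-uniform conditions inside $K_n$ so that the exponents $s_j,t_j$ can simultaneously approach $\dim_H\mu$ and $\dim_P\mu$, while still preserving $\mu(\bigcup_n K_n)=\mu(K)$ and compactness, is the main bookkeeping hurdle.
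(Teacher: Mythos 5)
Your construction of $h$ is essentially the paper's: coding the ball-partition tree by nested intervals of length $\mu(\text{ball})$ is the same thing as the paper's $h(x)=\mu((-\infty,x))$ built from the $1$-monotone linear order of \cite{KMZ}, and your argument that the non-injectivity set $N$ is countable mirrors the paper's proof that $\{y:\#h^{-1}(y)\ge 2\}$ is countable. The genuine divergence is in (iv) and (v). The paper shows directly that, on a suitable full-measure set, $h$ is $s$-H\"older on countably many pieces (for the Hausdorff part) and, via an explicit box-counting/covering estimate, that $\dim_H h(A)\le (\dim_P A)/s$ for $A\subset Z$ (for the packing part); both bounds apply to \emph{forward images} of \emph{arbitrary} sets. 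You instead pull back a Frostman measure from $B$ through $h_n$ and run the mass distribution principle (for (iv)) and the $\liminf$-characterization in Theorem~\ref{t:limsup} (for (v)). This is a genuine and arguably cleaner unification of (iv) and (v), and the packing argument in particular is more transparent than the paper's covering computation.

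However, there is a real gap: Frostman's lemma (Theorem~\ref{t:frostman}, or \cite[Theorem~8.17]{Ma}) produces a mass distribution on $B$ only when $B$ is compact, Borel, or more generally Souslin, whereas the lemma asserts (iv) and (v) \emph{for every non-empty} $B\subset D_n$. For non-Souslin $B$ there need not be a mass distribution $\nu$ with $\nu(B)>0$ and $\nu(U(y,r))\le Cr^s$, and replacing $B$ by a Borel hull $B'\supset B$ of the same Hausdorff dimension only gives $\dim_H h_n^{-1}(B')\ge\dim_H\mu\cdot\dim_H B$, which bounds the dimension of a \emph{superset} of $h_n^{-1}(B)$ and hence is the wrong direction. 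The paper's forward-image formulation ($\dim_H h(A)\le(\dim_H A)/s$ and $\dim_H h(A)\le(\dim_P A)/s$ for all $A$, then applied to $A=h_n^{-1}(B)$) sidesteps this because H\"older and covering bounds on images are valid for arbitrary sets. Your argument as written therefore proves the lemma only for Souslin $B$; to close it one should either restate the estimates in the forward-image form, or first prove (iv)/(v) for compact $B$ and argue the general case separately. Finally, a small correction: for $2^{-k}\le r<2^{-k+1}$ one has $B^{(k)}(x)\subset B(x,r)\subset B^{(k-1)}(x)$, not $B(x,r)\subset B^{(k)}(x)$; the displayed estimate still works because $\mu(B^{(k-1)}(x))\le\mu(B(x,2r))$, but the containment should be corrected.
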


\begin{remark} We may assume that the compact sets in the above lemma satisfy $K_n\subset K_{n+1}$ for all $n\geq 1$, but we will not use this property later.
\end{remark}

\begin{proof} We may assume that $\mu(K)=1$. By \cite[Lemma~2.3]{KMZ} we obtain that $K$ is a
$1$-monotone metric space, that is, there exists a linear order $\prec$ on $K$ such that $\diam [a,b]=d(a,b)$ for all $a,b\in K$, where $[a,b]$
denotes the closed interval $\{x\in K: a\preceq x\preceq b\}$.
It is easy to show (see also in \cite{NZ}) that each open interval $(a,b)=\{x\in K: a\prec x\prec b\}$ and every open half-line $(-\infty,x)=\{z\in K: z\prec x\}$, $(x, \infty)=\{z\in K: x\prec z\}$ is open in $K$, so every interval is Borel.
Let us define
$$h\colon K\to [0,1], \quad h(x)=\mu((-\infty,x)).$$
Then for every Borel set $B\subset [0,1]$ we have
\begin{equation} \label{eq:mert} \mu(h^{-1}(B))=\lambda(B),
\end{equation}
since it holds for intervals in $[0,1]$ by the definition of $h$, so Carath\'eodory's extension theorem yields that the Borel
probability measures
$\mu\circ h^{-1}$ and $\lambda|_{[0,1]}$ coincide.

Now we show that $h(K)=[0,1]$. The continuity of $\mu$ implies that $h$ is continuous, so $h(K)$ is compact.
Since $K$ is compact and the intervals of the form $(-\infty, b)$ and $(a,\infty)$ are open, there exists a minimal element $x_{-}$
and a maximal element $x_{+}$ in $(K,\prec)$. Then $h(x_{-})=\mu(\emptyset)=0$, and the continuity of $\mu$ yields
$h(x_{+})=\mu((-\infty,x_{+}))=\mu(K\setminus \{x_{+}\})=\mu(K)=1$.
Hence $\{0,1\}\subset h(K)$ and $h(K)$ is a subset of $[h(x_{-}),h(x_{+})]=[0,1]$.
Thus it is enough to prove that there are no $u,v\in h(K)$ with $u<v$ and $(u,v)\cap h(K)=\emptyset$.
Assume to the contrary that there exist such $u$ and $v$. Let $S$ be a countable dense subset of $K$ and let
$S_1=\{s\in S:h(s)\leq u\}$ and $S_2=\{t\in S:h(t)\geq v\}$. Then $(u,v)\cap h(K)=\emptyset$ implies that $S=S_1\cup S_2$.
Since $S_1$ and $S_2$ are countable and $h(x)=\mu((-\infty,x))$, we obtain $\mu(\bigcup_{s\in S_1} (-\infty,s))\leq u$ and $\mu(\bigcap_{t\in S_2} (-\infty,t))\geq v$. Let $E=\bigcap_{t\in S_2} (-\infty,t)\setminus \bigcup_{s\in S_1} (-\infty,s)$, then we have $\mu(E)\geq v-u>0$. But $E$ can contain at most two points: If $a,b,c\in E$ and $a\prec b\prec c$, then $(a,c)$ would be a non-empty open set not containing any point of the dense set $S=S_1\cup S_2$. Then the continuity of $\mu$ implies that $\mu(E)=0$, which is a contradiction. Thus $h(K)=[0,1]$.

We prove that $Y=\{y\in [0,1]: \#h^{-1}(y)\geq 2\}$ is countable. For all $n\in \NN^+$ let
$$Y_n=\{y\in [0,1]: \diam h^{-1}(y)\geq 2/n\}.$$
As $Y=\bigcup_{n=1}^{\infty} Y_n$, it is enough to show that $Y_n$ is finite for all
$n\in \NN^+$. Let us fix $n$ and for all $y\in Y_n$ pick $a_y,b_y\in h^{-1}(y)$ such that $a_y\prec b_y$ and $d(a_y,b_y)>1/n$. Let us say that a set is $1/n$-separated if the distance between every pair of its points is at least $1/n$. Since a compact metric space does not have an infinite $1/n$-separated subspace, it is enough to prove that $A_n=\{a_y: y\in Y_n\}$ is $1/n$-separated. Let $y,w\in Y_n$ such that $y<w$. Then the definition of $h$ yields $a_y\prec b_y\preceq a_w$. Thus the definition of the order $\prec$ implies
$$1/n<d(a_y,b_y)=\diam [a_y,b_y]\leq \diam [a_y,a_w]=d(a_y,a_w),$$
so $A_n$ is $1/n$-separated.

Let us define
\begin{align*} X&=\left\{x\in K: \limsup_{r\to 0+} \frac{\mu(B(x,r))}{r^{s}}<\infty \textrm{ for all } s<\dim_H \mu\right\}, \\
Z&=\left\{x\in K: \liminf_{r\to 0+} \frac{\mu(B(x,r))}{r^{s}}<\infty \textrm{ for all } s<\dim_P \mu\right\}.
\end{align*}
Theorem~\ref{t:limsup} yields that $\mu(X\cap Z)=1$. Since $Y$ is countable and $\mu$ is continuous, \eqref{eq:mert} implies that $\mu(h^{-1}(Y))=0$.
Therefore we can choose compact sets $K_n\subset (X\cap Z)\setminus h^{-1}(Y)$ such that $\mu(K_n)>0$ for all $n\in \NN^+$ and
$\mu(\bigcup_{n=1}^{\infty} K_n)=1$. Clearly, the $h_n$ are one-to-one, so \eqref{e2} holds. Then \eqref{eq:mert} yields \eqref{e3} and $\lambda(D_n)>0$, thus \eqref{e1} is satisfied.

Now we prove \eqref{e4}. We may assume that $\dim_H \mu>0$, otherwise we are done.
Let us fix $0<s<\dim_H \mu$. As $K_n\subset X$ for all $n\in \NN^+$, it is enough to show that for each
non-empty $A\subset X$ we have
\begin{equation}\label{eq:HhA} \dim_H h(A)\leq \frac{\dim_H A}{s},
\end{equation}
then letting $s\nearrow \dim_H \mu$ yields \eqref{e4}.
Fix a non-empty $A\subset X$ and for all $i\in \NN^+$ let
$$X_i=\{x\in X: \mu(B(x,r))\leq ir^s \textrm{ for all } r\geq 0\}.$$
The definition of $X$ clearly implies that $X=\bigcup_{i=1}^{\infty} X_i$.
The definitions of $h$ and the linear order $\prec$
yield that for all $x,z\in X_i$ with $x \preceq z$ we have
$$|h(x)-h(z)|=\mu([x,z))\leq \mu(B(x,d(x,z)))\leq i(d(x,z))^s,$$
so the $h|_{X_i}$ are $s$-H\"older.
By Fact~\ref{f:Holder} we have $\dim_H h(A\cap X_i)\leq (\dim_H (A\cap X_i))/s$ for all $i\in \NN^+$.
Therefore $A\subset X=\bigcup_{i=1}^{\infty} X_i$ and the countable stability of the Hausdorff dimension imply that
\begin{align*} \dim_H h(A)=\sup_{i\in \NN^+} \dim_H h(A\cap X_i)\leq \sup_{i\in \NN^+} \frac{\dim_H (A\cap X_i)}{s}=\frac{\dim_H A}{s},
\end{align*}
thus \eqref{eq:HhA} holds. Hence \eqref{e4} is satisfied.

Finally, we show \eqref{e5}. We may assume that $\dim_P \mu>0$, otherwise we are done.
Let us fix an arbitrary $0<s<\dim_P \mu$. As $K_n\subset Z$ for all $n\in \NN^+$,
it is enough to show that for each fixed
non-empty $A\subset Z$ we have $\dim_H h(A)\leq (\dim_P A)/s$,
then letting $s\nearrow \dim_P \mu$ yields \eqref{e5}. We may suppose that $\dim_P A<\infty$, otherwise we are done.
It is enough to show that for each fixed $t>\dim_P A$ we have
\begin{equation} \label{eq:Hha2} \dim_H h(A)\leq t/s, \end{equation}
then letting $t\searrow \dim_P A$ finishes the proof. The definition of the packing dimension implies that
there are sets $A_i$ such that $A=\bigcup_{i=1}^{\infty} A_i$ and $\overline{\dim}_{B} A_i<t$ for all $i\in \NN^+$.
For all $j\in \NN^+$ let
$$Z_j=\{x\in Z: \mu(B(x,2^{-n}))\leq j2^{-ns} \textrm{ for infinitely many } n\in \NN^+\}.$$
The definition of $Z$ implies that $Z=\bigcup_{j=1}^{\infty} Z_j$.  Let us fix
$i,j\in \NN^+$ and let $D=A_i\cap Z_j$. Now we show that
\begin{equation} \label{eq:Hha3} \dim_H h(D)\leq t/s.
\end{equation}
Since $\overline{\dim}_{B} D<t$, we can fix $u$ such that $\overline{\dim}_{B} D<u<t$. For all $n\in \NN^+$ let
\begin{align*} \iN_n&=\{B(x,2^{-n}): x\in D\}, \\
\iS_n&=\{B(x,2^{-n}): x\in D \textrm{ and } \mu(B(x,2^{-n}))\leq j2^{-ns}\}.
\end{align*}
Then clearly $\iS_n\subset \iN_n$. Fact~\ref{f:ultra} and the compactness of $K$ yield that $\iN_n$ consists of finitely many pairwise disjoint balls, so $\#\iN_n=N_{2^{-n}}(D)$ for all $n\in \NN^+$, where we recall that $N_{2^{-n}}(D)$
is the smallest number of closed balls with radius $2^{-n}$ whose union cover $D$.
Thus $\overline{\dim}_{B} D<u$ and the definition of the
upper box dimension yield that for all $n\in \NN^+$ we have
\begin{equation} \label{eq:n1} \#\iS_n\leq N_{2^{-n}}(D)\leq c_1 2^{nu},
\end{equation}
where $c_1\in \RR^+$.
Let $S\in \iS_n$ for some $n\in \NN^+$. For all $x,z\in S$ with $x \preceq z$ the definition of $\prec$ implies
that $[x,z)\subset B(x,d(x,z))\subset B(x,2^{-n})$. Thus the definition of $h$ and $x\in Z_j$ yield
$$|h(x)-h(z)|=\mu([x,z))\leq \mu(B(x,2^{-n}))\leq c_2 2^{-ns},$$
where $c_2=j$. Therefore for all $n\in \NN^+$ and $S\in \iS_n$ we obtain that
\begin{equation} \label{eq:n2}  \diam h(S) \leq c_2 2^{-ns}.
\end{equation}
Since $D\subset Z_j$, we have $D\subset \bigcup_{n=N}^{\infty} \bigcup_{S\in \iS_n} S$ for all $N\in \NN^+$.
Thus for all $N\in \NN^+$ we have
\begin{equation} \label{eq:n3} h(D)\subset \bigcup_{n=N}^{\infty} \bigcup_{S\in \iS_n} h(S).
\end{equation}
For all $N\in \NN^+$ let $\delta_N=j2^{-Ns}$. Then \eqref{eq:n3}, \eqref{eq:n1}, \eqref{eq:n2} and $u<t$ imply
\begin{align*} \iH^{t/s}_{\delta_N}(h(D))&\leq  \sum_{n=N}^{\infty} \sum_{S\in \iS_n} (\diam h(S))^{t/s} \\
&\leq \sum_{n=N}^{\infty} c_1 2^{nu} (c_2 2^{-ns})^{t/s}=c_3 2^{N(u-t)},
\end{align*}
where $c_3=c_1 {c_2}^{t/s}(1-2^{u-t})^{-1}$. Thus $u<t$ yields that $\lim_{N\to \infty} \iH^{t/s}_{\delta_N}(h(D))=0$, so
$\iH^{t/s}(h(D))=0$. Therefore $\dim_H h(D)\leq t/s$, thus \eqref{eq:Hha3} holds.

As $A=\bigcup_{i=1}^{\infty} \bigcup_{j=1}^{\infty} (A_i\cap Z_j)$,
the countable stability of the Hausdorff dimension and \eqref{eq:Hha3} imply
$$\dim_H h(A)=\sup_{i,j\in \NN^+} \dim_H h(A_i\cap Z_j)\leq t/s,$$
thus \eqref{eq:Hha2} holds. This implies \eqref{e5}, and the proof is complete.
\end{proof}

\begin{theorem} \label{t:mainu}
Let $K$ be a compact ultrametric space, and let $d\in \NN^+$. If $\mu$ is a continuous mass distribution on $K$, then for the prevalent $f\in C(K,\RR^d)$ there is an open set $U_{f}\subset \RR^d$ such that $\mu(f^{-1}(U_f))=\mu(K)$ and for all $y\in U_{f}$ we have
$$\dim_H f^{-1}(y)\geq \dim_H \mu \quad \textrm{and} \quad \dim_P f^{-1}(y)\geq \dim_P \mu.$$
\end{theorem}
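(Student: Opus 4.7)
The plan is to reduce to the real case (Theorem~\ref{t:real}) by transferring across the measure-preserving homeomorphism provided by Lemma~\ref{l:Holder}, and then to assemble the local conclusions using Lemma~\ref{l:occup}. First I would apply Lemma~\ref{l:Holder} to $(K,\mu)$ to obtain a map $h\colon K\to[0,1]$ and compact sets $K_n\subset K$ with $\mu(\bigcup_n K_n)=\mu(K)$, such that each restriction $h_n=h|_{K_n}$ is a homeomorphism onto a compact $D_n\subset[0,1]$ with $\lambda(D_n)>0$, satisfying the measure identity \eqref{e3} and the dimension distortion estimates \eqref{e4} and \eqref{e5}. Note that \eqref{e3} applied to $B=D_n$ gives $\mu(K_n)=\lambda(D_n)$.

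Fix $n$ and consider the linear composition operator
$$\Phi_n\colon C(D_n,\RR^d)\to C(K_n,\RR^d),\qquad \Phi_n(g)=g\circ h_n.$$
Since $h_n$ is a homeomorphism of compact metric spaces, $\Phi_n$ is a linear isometric isomorphism of Banach spaces, hence a continuous onto homomorphism of abelian Polish groups whose inverse has the same form. Applying Theorem~\ref{t:real} to $D_n\subset\RR$ (which has positive Lebesgue measure), the set
\begin{align*}
\iG_n=\{g\in C(D_n,\RR^d):\ &\exists\text{ open }V_g\subset\RR^d,\ \lambda(g^{-1}(V_g))=\lambda(D_n),\\
&\dim_H g^{-1}(y)=1\text{ for all }y\in V_g\}
\end{align*}
is prevalent in $C(D_n,\RR^d)$. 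Then Lemma~\ref{l:D} applied to $\Phi_n^{-1}\colon C(K_n,\RR^d)\to C(D_n,\RR^d)$ shows that $\Phi_n(\iG_n)=(\Phi_n^{-1})^{-1}(\iG_n)$ is prevalent in $C(K_n,\RR^d)$.

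Next I would verify that $\Phi_n(\iG_n)$ is contained in the natural local set
\begin{align*}
\iA_n=\{f\in C(K_n,\RR^d):\ &\exists\text{ open }U_f,\ \mu(f^{-1}(U_f))=\mu(K_n),\\
&\dim_H f^{-1}(y)\geq\dim_H\mu\text{ and }\dim_P f^{-1}(y)\geq\dim_P\mu\ \forall y\in U_f\}.
\end{align*}
Given $f=g\circ h_n$ with $g\in\iG_n$, the choice $U_f=V_g$ together with the identity $f^{-1}(B)=h_n^{-1}(g^{-1}(B))$ and property \eqref{e3} yields $\mu(f^{-1}(U_f))=\lambda(g^{-1}(V_g))=\lambda(D_n)=\mu(K_n)$; and for each $y\in U_f$ the fiber $g^{-1}(y)\subset D_n$ satisfies $\dim_H g^{-1}(y)=1$, so \eqref{e4} and \eqref{e5} produce the required lower bounds $\dim_H f^{-1}(y)\geq\dim_H\mu$ and $\dim_P f^{-1}(y)\geq\dim_P\mu$. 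Since supersets of prevalent sets are prevalent (shy sets form a $\sigma$-ideal), $\iA_n$ is itself prevalent.

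Finally, the family $\Delta=\{A\subset K:\dim_H A\geq\dim_H\mu\text{ and }\dim_P A\geq\dim_P\mu\}$ is upward closed by the monotonicity of both dimensions, and the $K_n$ satisfy $\mu(\bigcup_n K_n)=\mu(K)$, so Lemma~\ref{l:occup} promotes the local prevalence of the $\iA_n$ to the prevalence of the analogously defined set $\iA\subset C(K,\RR^d)$, which is exactly the conclusion of the theorem. The only conceptual step is the transfer of prevalence across the isometric isomorphism $\Phi_n$, which is immediate from Lemma~\ref{l:D} applied to $\Phi_n^{-1}$; the remainder is bookkeeping built on the construction in Lemma~\ref{l:Holder}, so I do not expect a serious obstacle.
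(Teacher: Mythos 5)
Your proposal is correct and is essentially the same argument as the paper's: apply Lemma~\ref{l:Holder} to produce the pieces $K_n$ and the measure-preserving homeomorphisms $h_n\colon K_n\to D_n$, transfer the prevalent set given by Theorem~\ref{t:real} on $C(D_n,\RR^d)$ across the composition isomorphism via Lemma~\ref{l:D}, use properties~\eqref{e3}--\eqref{e5} to land inside $\iA_n$, and glue via Lemma~\ref{l:occup}. The only cosmetic difference is that you work with $\Phi_n(g)=g\circ h_n$ where the paper works with its inverse $H_n(f)=f\circ h_n^{-1}$, which changes nothing.
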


\begin{proof}
For all $n\in \NN^+$ let us choose compact sets $K_n\subset K$ and $D_n\subset \RR$ and homeomorphisms
$h_n\colon K_n\to D_n$ according to Lemma \ref{l:Holder}. As $\mu(\bigcup_{n=1}^{\infty} K_n)=\mu(K)$,
Lemma~\ref{l:occup} yields that it is enough to prove that the sets
\begin{align*} \iA_n=\{&f\in C(K_n,\RR^d): \exists \textrm{ an open set } U_{f}\subset \RR^d \textrm{ such that } \mu(f^{-1}(U_f))=\mu(K_n) \\
&\textrm{and }\dim_H f^{-1}(y)\geq \dim_H \mu \textrm{ and } \dim_P f^{-1}(y)\geq \dim_P \mu \textrm{ for all } y\in U_{f}\}
\end{align*}
are prevalent in $ C(K_n,\RR^d)$. As $\lambda(D_n)>0$ by \eqref{e1}, Theorem~\ref{t:real} implies that
\begin{align*} \iB_n=\{&f\in C(D_n,\RR^d): \exists \textrm{ an open set } U_{f}\subset \RR^d \textrm{ such that } \\
&\lambda(f^{-1}(U_f))=\lambda(D_n) \textrm{ and } \dim_H f^{-1}(y)=1 \textrm{ for all } y\in U_{f}\}
\end{align*}
are prevalent in $ C(D_n,\RR^d)$. Fix $n\in \NN^+$ and define
$$H_n\colon C(K_n,\RR^d)\to C(D_n,\RR^d), \quad  H_n(f)=f\circ h_n^{-1}.$$
Then $H_n$ is a continuous group isomorphism, so Lemma~\ref{l:D} yields that $H_n^{-1}(\iB_n)$ is prevalent in $C(K_n,\RR^d)$.
Therefore it is enough to prove that $H_n^{-1}(\iB_n)\subset \iA_n$. Let us fix $f\in H_n^{-1}(\iB_n)$, we need to prove that $f\in \iA_n$.
Let $g=H_n(f)=f\circ h_n^{-1} \in \iB_n$, then there exists a non-empty open set $U_{g}\subset \RR^d$ such that $\lambda(g^{-1}(U_g))=\lambda(D_n)$ and
$\dim_H g^{-1}(y)=1$ for all $y\in U_{g}$. Let $U_{f}=U_{g}$. Applying \eqref{e3} twice with $K_n=h_n^{-1}(D_n)$ implies that
\begin{equation*} \label{eq:mugg} \mu(f^{-1}(U_f))=\mu(h_n^{-1}(g^{-1}(U_g)))=\lambda(g^{-1}(U_g))=\lambda(D_n)=\mu(K_n).
\end{equation*}
Let us fix $y\in U_{f}$. Then \eqref{e4} and \eqref{e5} yield that
\begin{align*} \label{twoeq}
\dim_H f^{-1}(y)&= \dim_H h_n^{-1} (g^{-1}(y))\geq \dim_H \mu \cdot \dim_H g^{-1} (y)=\dim_H \mu, \\
\dim_P f^{-1}(y)&= \dim_P h_n^{-1} (g^{-1}(y))\geq \dim_P \mu \cdot \dim_H g^{-1} (y)=\dim_P \mu.
\end{align*}
These imply that $f\in \iA_n$, and the proof is complete.
\end{proof}

\subsection{The Main Theorem} \label{ss:main} We prove the Main Theorem after some preparation.

\begin{definition}\label{d:nonex} Let $X$ be a metric space. For all $r>0$ let $N(r)$ be the minimal number such that every closed
ball $B(x,r)$ can be covered by $N(r)$ closed balls of radius $r/2$. Then $X$ is called \emph{doubling} if $\sup\{N(r): r>0\}<\infty$. We say that $X$ is \emph{non-exploding} if
$$\lim_{r\to 0+} \frac{\log N(r)}{\log r}=0.$$
\end{definition}

Every subspace of $\RR^m$ is doubling, and every doubling space is non-exploding.

\begin{definition} Let $X,Y$ be metric spaces. A map $f\colon X\to Y$ is called \emph{nearly Lipschitz} if $f$ is $s$-H\"older for all $s<1$. We say that
$f$ is \emph{nearly bi-Lipschitz} if it is one-to-one and both $f$ and $f^{-1}$ are nearly Lipschitz. The spaces $X$ and $Y$ are
\emph{nearly Lipschitz equivalent} if there exists a nearly bi-Lipschitz onto map $f\colon X\to Y$.
\end{definition}

\begin{fact} \label{f:nH} If $X,Y$ are metric spaces and $f\colon X\to Y$ is a nearly bi-Lipschitz map
then $\dim_H f(A)=\dim_H A$ and $\dim_P f(A)=\dim_P A$ for all $A\subset X$.
\end{fact}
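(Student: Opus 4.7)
The plan is to apply Fact~\ref{f:Holder} to $f$ and to $f^{-1}$ separately, and then let the H\"older exponent tend to $1$. Since $f$ is nearly Lipschitz, for every $s<1$ it is $s$-H\"older, so Fact~\ref{f:Holder} gives
\[
\dim_H f(A) \leq \frac{\dim_H A}{s} \quad \textrm{and} \quad \dim_P f(A) \leq \frac{\dim_P A}{s}
\]
for all $A\subset X$. Letting $s\nearrow 1$ yields the upper bounds $\dim_H f(A)\leq \dim_H A$ and $\dim_P f(A)\leq \dim_P A$.

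For the reverse inequalities, I would use that $f$ is one-to-one and $f^{-1}\colon f(X)\to X$ is also nearly Lipschitz. Applying Fact~\ref{f:Holder} to $f^{-1}$ with exponent $s<1$ on the set $f(A)\subset f(X)$ gives
\[
\dim_H A = \dim_H f^{-1}(f(A)) \leq \frac{\dim_H f(A)}{s},
\]
and analogously $\dim_P A \leq \dim_P f(A)/s$. Sending $s\nearrow 1$ yields $\dim_H A\leq \dim_H f(A)$ and $\dim_P A\leq \dim_P f(A)$.

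Combining the two directions gives the claimed equalities. There is no genuine obstacle here; the only minor point to notice is that one must apply the H\"older estimate for $f^{-1}$ to the subset $f(A)$ of its domain $f(X)$, which is fine because nearly Lipschitz maps restrict to nearly Lipschitz maps on any subset.
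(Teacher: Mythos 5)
Your proof is correct and is exactly the argument the paper has in mind when it states ``The above fact follows from Fact~\ref{f:Holder}'': apply the H\"older bound to $f|_A$ and to $f^{-1}|_{f(A)}$ for each $s<1$ and let $s\nearrow 1$. The only thing you leave implicit is the trivial case $A=\emptyset$ (where Fact~\ref{f:Holder} requires non-empty spaces, but both sides equal $-1$ by convention), which is negligible.
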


The above fact follows from Fact~\ref{f:Holder}. For the following theorem see \cite{Z}.

\begin{theorem}[Zindulka] \label{t:Z} Let $K$ be a non-exploding compact metric space. If $\mu$ is a mass distribution on $K$,
then there exists a compact set $C\subset K$ such that $\mu(C)>0$ and $C$ is nearly bi-Lipschitz equivalent to an ultrametric space.
\end{theorem}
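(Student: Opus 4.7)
The plan is to build $C$ as the limit set of a Moran-type hierarchical construction indexed by a finite-branching tree, and then take the ultrametric equivalent to be the end-space of that tree with its natural tree ultrametric. Fix decreasing scales $r_n=a^n$, tolerances $\eta_n\downarrow 0$, and mass thresholds $\theta_n\downarrow 0$. Inductively build nested compact sets $K\supset E_0\supset E_1\supset\cdots$ of the form $E_n=\bigsqcup_{\sigma\in T_n} C_\sigma$, where $T_n$ is a finite tree, $\diam C_\sigma\le r_n$, sibling pieces are separated by at least $\delta_n:=r_n^{1+\eta_n}$, and $\mu(C_\sigma)\ge \theta_n\,\mu(C_{\mathrm{parent}})$. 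To pass from level $n$ to $n+1$, cover each $C_\sigma$ by at most $N(r_{n+1})$ closed balls of radius $r_{n+1}$ (possible by definition of the doubling function), intersect with $C_\sigma$, erode each resulting piece by removing the $\delta_{n+1}$-neighborhood of the others, and discard pieces of mass below the threshold. Set $C:=\bigcap_n E_n$.

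The heart of the argument is showing $\mu(C)>0$. Loss at level $n+1$ splits into boundary loss (mass in the $\delta_{n+1}$-corridors between children) and threshold loss (children too small to keep). A Fubini/random-translation argument over net placements inside each $C_\sigma$ yields boundary loss of at most $cN(r_{n+1})(\delta_{n+1}/r_{n+1})\mu(C_\sigma)=cN(r_{n+1})r_{n+1}^{\eta_{n+1}}\mu(C_\sigma)$, and the threshold loss is at most $N(r_{n+1})\theta_{n+1}\mu(C_\sigma)$. The non-exploding hypothesis yields $N(r_n)\le r_n^{-\varepsilon_n}$ with $\varepsilon_n\downarrow 0$, so one can choose $\eta_n$ and $\theta_n$ tending to zero slowly enough that $\sum_n N(r_n)(r_n^{\eta_n}+\theta_n)<\infty$. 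Telescoping the resulting multiplicative losses then gives $\mu(C)\ge c\,\mu(K)>0$, and $C$ is closed as an intersection of compact sets.

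Finally, equip the tree $T=\bigcup_n T_n$ with the canonical ultrametric $\rho(\xi,\eta):=r_n$ where $n$ is the level of the last common ancestor of $\xi\ne\eta$; its completion $U$ is compact and ultrametric. The map $\phi\colon U\to C$ sending $\xi$ to the unique point of $\bigcap_n C_{\xi|n}$ is a bijection, and for ends $\xi,\eta$ that first separate at level $n$ the construction gives
\[
r_{n+1}^{1+\eta_{n+1}}=\delta_{n+1}\le d(\phi(\xi),\phi(\eta))\le r_n=\rho(\xi,\eta).
\]
Since $\eta_n\to 0$ and $r_{n+1}/r_n=a$ is bounded, for every fixed $s<1$ both inequalities imply $s$-H\"older estimates for $\phi$ and $\phi^{-1}$ whenever $n$ is sufficiently large; compactness of $U$ and $C$ upgrades these small-scale bounds to global $s$-H\"older bounds, proving $\phi$ is nearly bi-Lipschitz. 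The main obstacle lives in the mass accounting: the boundary erosion demands $\eta_n\to 0$ slowly (so that $r_n^{\eta_n}$ counteracts the factor $N(r_n)$), while the $s$-H\"older conclusion for $\phi^{-1}$ demands $\eta_n\to 0$ fast enough that the distortion exponent $1+\eta_{n+1}$ eventually falls below $1/s$. Reconciling the two is possible precisely because the non-exploding hypothesis forces $N(r)=r^{-o(1)}$, leaving the slack needed to make both constraints compatible.
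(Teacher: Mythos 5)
Note first that the paper does not actually prove this theorem; it cites Zindulka's preprint \cite{Z}, so there is no in-text argument to compare against. Your overall strategy — a nested Moran/Cantor scheme indexed by a finite-branching tree, sibling separation $\delta_n=r_n^{1+\eta_n}$ with $\eta_n\downarrow 0$, a mass threshold $\theta_n$, the tree ultrametric $\rho$ on the branch space, and the non-exploding hypothesis used to make the per-level multiplicative mass loss summable — is the natural route and matches the general shape of Zindulka's construction. Your ultrametric endgame is clean: the two-sided bound $r_{n+1}^{1+\eta_{n+1}}\le d(\phi(\xi),\phi(\eta))\le \diam C_{\xi|n}\lesssim r_n=\rho(\xi,\eta)$ does give $s$-H\"older estimates for both $\phi$ and $\phi^{-1}$ for every $s<1$ once $\eta_n\to 0$, and compactness upgrades the small-scale bounds to global ones.

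The gap is in the mass accounting, which is the actual content of the theorem. As written, the erosion step does not produce $\delta_{n+1}$-separated pieces: the sets $A_i=B_i\cap C_\sigma$ from a ball cover overlap, and when two balls nearly coincide, $A_i\setminus B(\bigcup_{j\ne i}A_j,\delta_{n+1})$ can be empty. You must first disjointify (e.g.\ a Voronoi assignment to a net) and only then erode. More fundamentally, a ``random-translation'' argument is unavailable in an abstract compact metric space — there is no group acting and no natural measure on the family of nets. The correct device is a random \emph{radius}: fix a net $\{c_i\}$ and choose a random threshold $\rho\in[r_{n+1}/2,r_{n+1}]$; the annuli $\{x:\rho-\delta_{n+1}<d(x,c_i)<\rho+\delta_{n+1}\}$ are pairwise disjoint as $\rho$ ranges over an interval of length $r_{n+1}/2$, so on average their $\mu$-mass is $O\bigl((\delta_{n+1}/r_{n+1})\mu(C_\sigma)\bigr)$ per center, giving your claimed bound after summing over the $O(N)$ centers. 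As it stands, the estimate $cN(r_{n+1})(\delta_{n+1}/r_{n+1})\mu(C_\sigma)$ is asserted, not derived, and this is precisely where the construction can collapse (one must also arrange that at least one child of each retained node survives the threshold, or the tree terminates and $\mu(C)=0$). Finally, the tension you flag at the end is misdiagnosed: the H\"older bound for $\phi^{-1}$ only requires $\eta_n\to 0$, with no rate, since for any fixed $s<1$ one has $1+\eta_{n+1}<1/s$ eventually. The genuine constraint lives solely on the summability side, where one must choose $\eta_n$ strictly above the non-exploding exponent $\varepsilon_n$ of $K$ while still tending to $0$ — possible exactly because $\varepsilon_n\to 0$.
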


\begin{theorem}[Main Theorem] \label{t:Main} Let $K$ be a non-exploding compact metric space, and let $d\in \NN^+$.
If $\mu$ is a continuous mass distribution on $K$, then for the prevalent $f\in C(K,\RR^d)$ there is an open set $U_f\subset \RR^d$ such that $\mu(f^{-1}(U_f))=\mu(K)$ and for all $y\in U_f$ we have
$$\dim_H f^{-1}(y)\geq \dim_H \mu \quad \textrm{and} \quad \dim_P f^{-1}(y)\geq \dim_P \mu.$$
\end{theorem}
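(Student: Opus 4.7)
The plan is to deduce the theorem from the ultrametric case (Theorem~\ref{t:mainu}) by combining Zindulka's decomposition (Theorem~\ref{t:Z}) with the transfer machinery already developed, namely Lemma~\ref{l:occup} and Lemma~\ref{l:D}.

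First I would use Theorem~\ref{t:Z} repeatedly to produce countably many pairwise disjoint compact sets $C_n\subset K$, each nearly bi-Lipschitz equivalent to a compact ultrametric space $U_n$, such that $\mu\left(\bigcup_{n=1}^\infty C_n\right)=\mu(K)$. Starting from $(K,\mu)$, Theorem~\ref{t:Z} yields a compact $C_1\subset K$ with $\mu(C_1)>0$ nearly bi-Lipschitz to an ultrametric space. Given $C_1,\dots,C_k$, if $\mu\bigl(K\setminus\bigcup_{i\le k}C_i\bigr)>0$, inner regularity produces a compact $L$ of positive measure in the complement; $L$ inherits the non-exploding property from $K$, so Theorem~\ref{t:Z} gives the next piece $C_{k+1}\subset L$. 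A maximality argument (maximizing $\mu\bigl(\bigcup_n C_n\bigr)$ over all such countable disjoint families) guarantees that the union can be arranged to exhaust $K$ modulo a $\mu$-null set.

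For each $n$, let $\phi_n\colon C_n\to U_n$ be the nearly bi-Lipschitz homeomorphism and set $\nu_n=\mu|_{C_n}\circ\phi_n^{-1}$, a continuous mass distribution on $U_n$. Since $\phi_n$ preserves Hausdorff and packing dimension by Fact~\ref{f:nH} and since Borel subsets of $C_n$ correspond bijectively to Borel subsets of $U_n$, the defining infima give
$$\dim_H\nu_n=\dim_H\mu|_{C_n}\ge\dim_H\mu\quad\textrm{and}\quad\dim_P\nu_n=\dim_P\mu|_{C_n}\ge\dim_P\mu.$$
Apply Theorem~\ref{t:mainu} to $(U_n,\nu_n)$ to obtain a prevalent set $\iB_n\subset C(U_n,\RR^d)$ of maps $g$ admitting an open $V_g\subset\RR^d$ with $\nu_n(g^{-1}(V_g))=\nu_n(U_n)$ and $\dim_H g^{-1}(y)\ge\dim_H\nu_n$, $\dim_P g^{-1}(y)\ge\dim_P\nu_n$ for $y\in V_g$. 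The map $H_n\colon C(C_n,\RR^d)\to C(U_n,\RR^d)$, $H_n(f)=f\circ\phi_n^{-1}$, is a continuous group isomorphism, so by Lemma~\ref{l:D} the set $H_n^{-1}(\iB_n)$ is prevalent in $C(C_n,\RR^d)$. For $f\in H_n^{-1}(\iB_n)$, writing $g=H_n(f)$ and $U_f=V_g$, one has $f^{-1}(y)=\phi_n^{-1}(g^{-1}(y))$, so Fact~\ref{f:nH} transfers both fiber dimensions and $\mu(f^{-1}(U_f))=\nu_n(g^{-1}(V_g))=\mu(C_n)$. Finally, applying Lemma~\ref{l:occup} to the upward closed family $\Delta=\{A:\dim_H A\ge\dim_H\mu\textrm{ and }\dim_P A\ge\dim_P\mu\}$ assembles the pieces into the required prevalent subset of $C(K,\RR^d)$.

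The main obstacle I anticipate is the decomposition step: one must check that compact subspaces of a non-exploding space remain non-exploding so that Theorem~\ref{t:Z} can be reapplied at each stage, and one must run the maximality argument carefully to guarantee exhaustion up to a $\mu$-null set. Once the $C_n$ are in hand, everything else is a direct diagram chase through Lemmas~\ref{l:occup} and~\ref{l:D}, Fact~\ref{f:nH}, and Theorem~\ref{t:mainu}.
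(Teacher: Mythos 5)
Your proposal follows essentially the same route as the paper: apply Zindulka's Theorem~\ref{t:Z} to produce countably many compact pieces $K_n$ with $\mu(\bigcup_n K_n)=\mu(K)$, each nearly bi-Lipschitz equivalent to a compact ultrametric space; push the restricted measure forward, invoke Theorem~\ref{t:mainu} there, pull back via the isomorphism $H_n(f)=f\circ h_n^{-1}$ using Lemma~\ref{l:D} and Fact~\ref{f:nH}, and assemble with Lemma~\ref{l:occup}. The only cosmetic differences are that you make the exhaustion argument explicit (pairwise disjointness and the maximality step, which the paper leaves implicit) and flag the hereditary nature of the non-exploding property, both of which are correct and resolvable.
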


In the special case $K=[0,1]^m$ we obtain the following:

\begin{corollary} \label{c:occup2}  Let $m,d\in \NN^+$.
Then for the prevalent $f\in C([0,1]^m,\RR^{d})$
there is an open set $U_{f}\subset \RR^d$ such that
$\lambda^m(f^{-1}(U_f))=1$ (hence $U_f$ is dense in $f([0,1]^m)$ and for all $y\in U_{f}$ we have
we have
$$\dim_{H} f^{-1} (y)=m.$$
\end{corollary}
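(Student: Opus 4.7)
The proof should follow essentially immediately from the Main Theorem (Theorem~\ref{t:Main}) applied with $K=[0,1]^m$ and $\mu=\lambda^m$, so my plan is to verify the three hypotheses of that theorem and then read off the conclusion.

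First I would check that $[0,1]^m$ is a non-exploding compact metric space in the sense of Definition~\ref{d:nonex}. Since $[0,1]^m\subset\RR^m$, the number $N(r)$ of balls of radius $r/2$ needed to cover a ball of radius $r$ is bounded by a constant depending only on $m$, so $[0,1]^m$ is doubling, and the excerpt notes that every doubling space is non-exploding. Next, $\mu=\lambda^m$ restricted to $[0,1]^m$ is clearly a continuous mass distribution: it has total mass $1$ and vanishes on singletons.

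The key computational input is $\dim_H\lambda^m=m$. I would deduce this directly from the definition $\dim_H\mu=\inf\{\dim_H B:B\subset[0,1]^m\textrm{ Borel},\ \mu(B)>0\}$ given in the Preliminaries: any Borel set of positive Lebesgue measure has Hausdorff dimension $m$ (e.g.\ because $\lambda^m$ agrees up to a constant with $\iH^m$ on Borel subsets of $\RR^m$), and $B\subset[0,1]^m$ forces $\dim_H B\le m$. Alternatively, Theorem~\ref{t:limsup} works equally well, since $\lambda^m(B(x,r))\le(2r)^m$ for every $x$ and $r>0$.

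With these inputs in place, Theorem~\ref{t:Main} yields for the prevalent $f\in C([0,1]^m,\RR^d)$ an open set $U_f\subset\RR^d$ satisfying $\lambda^m(f^{-1}(U_f))=\lambda^m([0,1]^m)=1$ and $\dim_H f^{-1}(y)\ge\dim_H\lambda^m=m$ for every $y\in U_f$; the matching upper bound $\dim_H f^{-1}(y)\le\dim_H[0,1]^m=m$ is trivial. To see that $U_f$ is dense in $f([0,1]^m)$, I would argue by contradiction: if some non-empty relatively open $V\subset f([0,1]^m)$ were disjoint from $U_f$, write $V=W\cap f([0,1]^m)$ with $W\subset\RR^d$ open; then $f^{-1}(W)$ is a non-empty open subset of $[0,1]^m$, hence of positive Lebesgue measure, while $f^{-1}(W)\cap f^{-1}(U_f)=f^{-1}(W\cap U_f\cap f([0,1]^m))=f^{-1}(V\cap U_f)=\emptyset$, contradicting $\lambda^m(f^{-1}(U_f))=1$. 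There is no real obstacle at this stage; all the substance has already been absorbed into Theorem~\ref{t:Main} (and ultimately into Theorem~\ref{t:real} via the ultrametric reduction using Zindulka's theorem).
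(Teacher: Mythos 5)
Your proposal is correct and follows exactly the paper's approach: Corollary~\ref{c:occup2} is stated in the paper as an immediate specialization of the Main Theorem (Theorem~\ref{t:Main}) to $K=[0,1]^m$ and $\mu=\lambda^m$, with no separate proof given, and you have supplied precisely the verifications (doubling hence non-exploding, $\lambda^m$ a continuous mass distribution with $\dim_H\lambda^m=m$, trivial upper bound $\dim_H f^{-1}(y)\le m$, and the density of $U_f$) that the paper leaves implicit.
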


\begin{proof}[Proof of the Main Theorem]
By Theorem~\ref{t:Z} there exist compact sets $K_n\subset K$ such that $\mu(K_n)>0$ for all
$n\in \NN^+$ and $\mu(\bigcup_{n=1}^{\infty}K_n)=\mu(K)$,
and there are compact ultrametric spaces $C_n$ and nearly bi-Lipschitz onto maps
$h_n\colon K_n\to C_n$. Define the measures $\mu_n=\mu|_{K_n}$ on $K_n$ and $\nu_n=\mu\circ h_n^{-1}$ on $C_n$ for all $n\in \NN^+$.
Since $\nu_n(C_n)=\mu(K_n)>0$, the measures $\mu_n$ and $\nu_n$ are continuous mass distributions.
Then $\dim_H \mu_n\geq \dim_H \mu$ and $\dim_P \mu_n\geq \dim_P \mu$ by the definition,
thus Lemma~\ref{l:occup} yields that it is enough to prove that the sets
\begin{align*} \iA_n=\{&f\in C(K_n,\RR^d): \exists \textrm{ an open set } U_{f}\subset \RR^d \textrm{ such that } \mu_n(f^{-1}(U_f))=\mu_n(K_n) \\
&\textrm{and } \dim_H f^{-1}(y)\geq \dim_H \mu_n \textrm{ and } \dim_P f^{-1}(y)\geq \dim_P \mu_n \textrm{ for all } y\in U_{f}\}
\end{align*}
are prevalent in $C(K_n,\RR^d)$. Theorem~\ref{t:mainu} implies that the sets
\begin{align*} \iB_n=\{&f\in C(C_n,\RR^d): \exists \textrm{ an open set } U_{f}\subset \RR^d \textrm{ such that } \nu_n(f^{-1}(U_f))=\nu_n(C_n) \\
&\textrm{and } \dim_H f^{-1}(y)\geq \dim_H \nu_n \textrm{ and } \dim_P f^{-1}(y)\geq \dim_P \nu_n\textrm{ for all } y\in U_{f}\}
\end{align*}
are prevalent in $C(C_n,\RR^d)$. Fix $n\in \NN^+$ and define $H_n\colon C(K_n,\RR^d)\to C(C_n,\RR^d)$ as $H_n(f)=f\circ h_n^{-1}$.
Then $H_n$ is a continuous isomorphism, so Lemma~\ref{l:D} yields that $H_n^{-1}(\iB_n)$ is prevalent in $C(K_n,\RR^d)$.
Since $h_n$ is nearly bi-Lipschitz, by Fact~\ref{f:nH} we have $\dim_H h_n^{-1}(B)=\dim_H B$ and
$\dim_P h_n^{-1}(B)=\dim_P B$ for all $B\subset C_n$.
Hence $\mu\circ h_n^{-1}=\nu_n$ yields that $\dim_H \mu_n=\dim_H \nu_n$ and $\dim_P \mu_n=\dim_P \nu_n$.
These easily imply that $H_n^{-1}(\iB_n)=\iA_n$, so $\iA_n$ is prevalent. The proof is complete.
\end{proof}

\begin{corollary}\label{c:Mainp}
Let $K$ be an uncountable, non-exploding compact metric space and let $d\in \NN^+$. Then for the prevalent
$f\in C(K,\RR^d)$ for all $s<\dim_P K$ there is a non-empty open set $U_{f,s}\subset \RR^d$ such that for all $y\in U_{f,s}$
we have
$$\dim_P f^{-1} (y)\geq s.$$
In particular, we have
$$\sup\{\dim_P f^{-1}(y): y\in \RR^d\}=\dim_P K.$$
\end{corollary}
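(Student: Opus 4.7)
The plan is to deduce this from the Main Theorem by approximating $\dim_P K$ from below by continuous mass distributions and then intersecting countably many prevalent sets.

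First, I will fix a sequence $s_n \nearrow \dim_P K$ (e.g.\ a countable dense sequence in $[0,\dim_P K)$). Since $K$ is uncountable, Theorem~\ref{t:frostman} lets me pick, for each $n$, a continuous mass distribution $\mu_n$ on $K$ with $\dim_P \mu_n > s_n$. Because $K$ is non-exploding, the Main Theorem applies to each pair $(K,\mu_n)$ and yields that the set
\begin{align*}
\iA_n = \{&f\in C(K,\RR^d): \exists\text{ an open set }U_{f,n}\subset \RR^d \text{ with }\\
&\mu_n(f^{-1}(U_{f,n}))=\mu_n(K) \text{ and }\dim_P f^{-1}(y)\geq \dim_P \mu_n >s_n \text{ for all }y\in U_{f,n}\}
\end{align*}
is prevalent in $C(K,\RR^d)$. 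Note that $U_{f,n}$ is automatically non-empty, since $\mu_n(K)>0$ forces $f^{-1}(U_{f,n})\neq\emptyset$.

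Next, I will use the fact (due to Christensen) that shy sets form a $\sigma$-ideal, so prevalent sets are closed under countable intersection. Hence $\iA := \bigcap_{n=1}^{\infty} \iA_n$ is prevalent in $C(K,\RR^d)$. Given any $f\in \iA$ and any $s<\dim_P K$, I choose $n$ with $s_n > s$ and set $U_{f,s} := U_{f,n}$; then $U_{f,s}$ is a non-empty open subset of $\RR^d$ and every $y\in U_{f,s}$ satisfies $\dim_P f^{-1}(y)\geq \dim_P \mu_n > s$, which is the first conclusion.

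For the ``in particular'' part, the first conclusion gives $\sup\{\dim_P f^{-1}(y):y\in\RR^d\}\geq s$ for every $s<\dim_P K$, so the supremum is $\geq\dim_P K$; the reverse inequality is immediate from $f^{-1}(y)\subset K$. The main (and essentially only) obstacle is the passage from a single exponent $s$ to \emph{all} $s<\dim_P K$ simultaneously, which is handled by the $\sigma$-ideal property of shy sets together with the countable approximating sequence $(s_n)$; the genuine dimension-theoretic content is entirely absorbed into the Main Theorem and Theorem~\ref{t:frostman}.
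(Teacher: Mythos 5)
Your proof is correct and follows essentially the same route as the paper: fix a sequence $s_n\nearrow\dim_P K$, use Theorem~\ref{t:frostman} to find continuous mass distributions $\mu_n$ with $\dim_P\mu_n>s_n$, invoke the Main Theorem for each $\mu_n$, and take the countable intersection of the resulting prevalent sets. The only (cosmetic) difference is that the paper first isolates the degenerate case $\dim_P K=0$ before passing to the sequence, while you absorb it into the vacuity of the quantifier and the trivial upper bound $\dim_P f^{-1}(y)\le\dim_P K$.
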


\begin{proof} Fix $s=0$ if $\dim_P K=0$ and $s\in (0,\dim_P K)$ if $\dim_P K>0$.
Theorem~\ref{t:frostman} implies that there is a continuous mass distribution
$\mu$ on $K$ with $\dim_P \mu\geq s$. Applying the Main Theorem for $\mu$ yields that
\begin{align*} \iA(s)=\{&f\in C(K,\RR^d): \exists \textrm{ a non-empty open set } U_{f}\subset \RR^d \\
&\textrm{such that } \dim_P f^{-1}(y)\geq s \textrm{ for all } y\in U_{f}\}
\end{align*}
is prevalent in $C(K,\RR^d)$. If $\dim_P K=0$ then we are done, otherwise
choose a sequence $s_n\nearrow \dim_P K$, then $\bigcap_{n=1}^{\infty} \iA(s_n)$ is the desired prevalent set.
\end{proof}

In the case of Hausdorff dimension we generalize the above two corollaries.
For the following theorem see \cite[Theorem~1.4]{MN}.

\begin{theorem}[Mendel-Naor] \label{t:Naor} If $K$ is a compact metric space and $s<\dim_H K$ then
there exists a compact set $C\subset K$ such that $\dim_H C>s$ and $C$ is bi-Lipschitz equivalent to an ultrametric space.
\end{theorem}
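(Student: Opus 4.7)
The plan is to prove the theorem by first extracting a Frostman-type mass distribution witnessing the dimension, and then building a tree-like hierarchical decomposition of $K$ whose branches yield the required ultrametric subset. Since $s<\dim_H K$, Theorem~\ref{t:frostman} (or Frostman's lemma directly) gives a continuous mass distribution $\mu$ on $K$ and some $t$ with $s<t<\dim_H K$ such that $\mu(B(x,r))\leq r^t$ for all $x\in K$ and all sufficiently small $r$. The goal is then to exhibit a compact $C\subset K$ of positive $\mathcal H^{t'}$-measure (for some $t'>s$) that admits a bi-Lipschitz embedding into an ultrametric space; by Lemma~\ref{l:fr} this will force $\dim_H C\geq t'>s$.

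My first step would be to fix a small geometric ratio $\lambda\in(0,1)$ and the scales $r_n=\lambda^n$, and to construct a nested sequence of Borel partitions $\mathcal P_n$ of (most of) $K$, where each cell of $\mathcal P_n$ has diameter at most $r_n$ and is contained in a unique cell of $\mathcal P_{n-1}$. The natural way is to choose an $r_n$-net $N_n$ and form Voronoi-like cells, refining them so that the partitions nest. On this tree of cells one gets a canonical ultrametric $\rho$ defined by declaring $\rho(x,y)=r_n$ where $n$ is the largest index for which $x,y$ lie in the same cell. The identity map $(K,\rho)\to(K,d)$ is automatically Lipschitz; the real task is to pass to a subset on which the inverse is also Lipschitz, i.e.\ on which cells at each scale are well-separated in the original metric.

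The main step is therefore a pruning argument. I would define, for each cell $Q\in\mathcal P_n$, its "boundary shell" $\partial_\varepsilon Q=\{x\in Q:\dist(x,K\setminus Q)<\varepsilon r_n\}$ for a small parameter $\varepsilon$, and set $C=\bigcap_n \{x:x\notin\partial_\varepsilon Q\text{ for the cell }Q\ni x\text{ at every level}\}$. On $C$, points in distinct cells at level $n$ are automatically at distance $\geq \varepsilon r_n$, yielding $d\geq \varepsilon\rho$, which gives the bi-Lipschitz equivalence with the ultrametric completion. The Frostman estimate $\mu(B(x,r))\leq r^t$ bounds the $\mu$-mass of each boundary shell in terms of the number of $r_n$-balls needed to cover $\partial_\varepsilon Q$; summing over cells and over $n$ must be made small enough (by choosing $\varepsilon$ and $\lambda$ appropriately, depending on the gap $t-s$) to guarantee $\mu(C)>0$. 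A Mass Distribution Principle argument applied to $\mu|_C$ with exponent close to $t$ then gives $\dim_H C>s$, since throwing away a $\mu$-null-ish set does not drop the exponent much.

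The hard part is the quantitative pruning estimate: bounding the measure of the cumulative boundary region requires controlling how many cells can meet a given ball, which in a general compact metric space has no uniform doubling bound. To circumvent this, I would either (a) replace the deterministic cells by \emph{randomized} hierarchical partitions and take expectations, showing that in expectation the boundary mass is $O(\varepsilon^c)\mu(K)$ uniformly in $n$ and then choosing a realization by a first-moment argument, or (b) work at the level of an entropy/packing-number bound derived from the Frostman condition itself (the Frostman exponent $t>0$ gives a replacement for doubling after one restricts to a compact set of finite $t$-dimensional upper density). This is precisely where the deep content of Mendel--Naor lies—producing the ultrametric skeleton with a bi-Lipschitz constant depending only on $t-s$—and I would expect this probabilistic-cum-combinatorial construction to be the principal obstacle. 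Once it is in place, compactness of $C$ follows by taking the closure (which stays inside the complement of countably many open shells up to null sets), and bi-Lipschitz equivalence with an ultrametric space follows from the defining property of $\rho$.
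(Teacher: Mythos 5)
The paper does not prove this theorem; it is cited verbatim from Mendel and Naor \cite[Theorem~1.4]{MN}, and the authors explicitly flag it as ``a deep theorem.'' So the comparison is between your sketch and the actual Mendel--Naor argument.

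Your sketch correctly identifies the high-level architecture (Frostman measure, nested hierarchical partitions at geometric scales, the induced tree ultrametric $\rho$, and pruning of boundary shells so that separation holds and the identity map becomes bi-Lipschitz), and, to your credit, you explicitly name the step you cannot fill: bounding the cumulative $\mu$-mass of the boundary shells without a doubling assumption. That is precisely the gap, and it is not a small one --- it is the entire content of the theorem. Your two proposed escape routes do not close it as stated. Route (a), randomized padded partitions plus a first-moment argument, is the standard tool for proving such separation statements, but the expected padding bound one gets from Calinescu--Karloff--Rabani/Fakcharoenphol--Rao--Talwar-type random partitions degrades with the doubling or Assouad dimension of the space, which here can be infinite; a naive expectation-and-union bound over $n$ therefore diverges. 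Route (b), deriving a surrogate doubling bound from the Frostman condition $\mu(B(x,r))\le r^t$, gives an upper bound on the number of $r$-separated points in a ball of comparable radius but gives no control on how many cells of the partition a shell can meet, nor on the mass of the shell itself --- the Frostman bound controls balls, not annuli, and a cell boundary at scale $r_n$ need not be coverable by few balls. Mendel and Naor's proof circumvents exactly this by proving a quantitatively stronger ``ultrametric skeleton'' theorem: they construct, via a delicate weighted random fragmentation and a bootstrapping argument on the exponent, a compact $S\subset K$ carrying a measure $\nu$ with $\nu(B(x,r))\lesssim\mu(B(x,Cr))^{1-\varepsilon}$, and $S$ $O(1/\varepsilon)$-bi-Lipschitz equivalent to an ultrametric space; the dimension statement then follows by a Frostman-type computation. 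So your outline sets up the right question and correctly locates where it gets hard, but it does not constitute a proof --- the missing piece is the theorem.
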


\begin{theorem}\label{t:main}
Let $K$ be an uncountable compact metric space and let $d\in \NN^+$. Then for the prevalent
$f\in C(K,\RR^d)$ for all $s<\dim_H K$ there is a non-empty open set $U_{f,s}\subset \RR^d$ such that for all $y\in U_{f,s}$
we have
$$\dim_H f^{-1}(y)\geq s.$$
In particular, we have
$$\sup\{\dim_H f^{-1}(y): y\in \RR^d\}=\dim_H K.$$
\end{theorem}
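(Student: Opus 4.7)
The plan is to mirror the proof of Corollary~\ref{c:Mainp}, replacing the appeal to the Main Theorem (Theorem~\ref{t:Main}) by its ultrametric precursor Theorem~\ref{t:mainu}, and replacing Zindulka's Theorem~\ref{t:Z} by the Mendel--Naor Theorem~\ref{t:Naor}. The key advantage is that Theorem~\ref{t:Naor} holds for \emph{arbitrary} compact metric spaces, which removes the non-exploding hypothesis present in the Main Theorem.

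First, fix $s$ with $s<\dim_H K$. If $s<0$ then Theorem~\ref{t:D} already produces a non-empty open $U_{f,s}\subset \RR^d$ on which $f^{-1}(y)\neq \emptyset$, so $\dim_H f^{-1}(y)\geq 0>s$; hence I may assume $s\geq 0$. By Theorem~\ref{t:Naor} pick a compact $C\subset K$ with $\dim_H C>s$ that is bi-Lipschitz equivalent to a compact ultrametric space $(U,d_U)$ via some bijection $\phi\colon U\to C$. Since $C$ is uncountable, Theorem~\ref{t:frostman} yields a continuous mass distribution $\nu$ on $C$ with $\dim_H \nu>s$; its pullback $\mu=\nu\circ\phi$ is a continuous mass distribution on $U$, and since bi-Lipschitz maps preserve Hausdorff dimension (Fact~\ref{f:nH}) we have $\dim_H \mu=\dim_H \nu>s$.

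Applying Theorem~\ref{t:mainu} to $(U,\mu)$ yields that the set
\begin{align*}
\iB(s)=\{&g\in C(U,\RR^d): \exists \textrm{ an open } V_g\subset \RR^d \textrm{ with } \mu(g^{-1}(V_g))=\mu(U)\\
&\textrm{and } \dim_H g^{-1}(y)\geq \dim_H \mu>s \textrm{ for all } y\in V_g\}
\end{align*}
is prevalent in $C(U,\RR^d)$; note that each such $V_g$ is automatically non-empty since $\mu(U)>0$. The map $H\colon C(C,\RR^d)\to C(U,\RR^d)$ defined by $H(f)=f\circ \phi$ is a continuous surjective group homomorphism (even an isomorphism), so Lemma~\ref{l:D} shows that $H^{-1}(\iB(s))$ is prevalent in $C(C,\RR^d)$. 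For $f\in H^{-1}(\iB(s))$ and $y\in V_{H(f)}$ we have $f^{-1}(y)\cap C=\phi\bigl((H(f))^{-1}(y)\bigr)$, so bi-Lipschitz invariance gives $\dim_H (f^{-1}(y)\cap C)\geq \dim_H \mu>s$. Corollary~\ref{c:hereditary} applied to the restriction $C(K,\RR^d)\to C(C,\RR^d)$, $f\mapsto f|_C$, lifts this to a prevalent subset of $C(K,\RR^d)$, and since $f^{-1}(y)\supset (f|_C)^{-1}(y)$ the same open set $U_{f,s}:=V_{H(f|_C)}$ witnesses the conclusion for $f$.

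To obtain the conclusion simultaneously for all $s<\dim_H K$, pick a sequence $s_n\nearrow \dim_H K$ and intersect the prevalent sets constructed above; this is legitimate because prevalent sets form a $\sigma$-ideal by Christensen's theorem. The supremum statement then follows: the first clause yields $\sup\{\dim_H f^{-1}(y):y\in \RR^d\}\geq \dim_H K$, while $f^{-1}(y)\subset K$ gives the reverse inequality. The substantive content is entirely in the two inputs (Theorem~\ref{t:mainu} and Theorem~\ref{t:Naor}); the main point where one must be careful, but which is routine, is shepherding the various prevalence transfers (bi-Lipschitz pullback, restriction to a compact subset, countable intersection) through the right isomorphisms and inclusions.
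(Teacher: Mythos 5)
Your proof is correct and follows essentially the same route as the paper: apply the Mendel--Naor theorem to extract a bi-Lipschitz copy of an ultrametric space of dimension exceeding $s$, push the prevalence result from Theorem~\ref{t:mainu} back through this identification, and intersect over a sequence $s_n\nearrow\dim_H K$. The only differences are cosmetic: the paper composes the bi-Lipschitz pullback and the restriction into a single map $R(f)=f|_C\circ h^{-1}$ and applies Lemma~\ref{l:D} once, whereas you do the two transfers separately (via Lemma~\ref{l:D} and Corollary~\ref{c:hereditary}), and the paper invokes Frostman directly on the ultrametric space rather than on $C$ followed by a pullback of the measure.
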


\begin{proof} By Theorem~\ref{t:D} we may assume $\dim_H K>0$. Fix $s\in(0,\dim_H K)$ and let
\begin{align*} \iA=\{&f\in C(K,\RR^d): \exists \textrm{ a non-empty open set } U_{f}\subset \RR^d \\
&\textrm{such that } \dim_H f^{-1}(y)\geq s \textrm{ for all } y\in U_{f}\}.
\end{align*}
It is enough to prove that $\iA=\iA(s)$ is prevalent, since we can choose a sequence $s_n\nearrow\dim_H K$ and
$\bigcap_{n=1}^{\infty} \iA(s_n)$ will be a prevalent set in $C(K,\RR^d)$ satisfying the theorem.
By Theorem~\ref{t:Naor} there is a compact set $C\subset K$ such that $\dim_H C>s$ and there exist a compact ultrametric space
$D$ and a bi-Lipschitz onto map $h\colon C\to D$. By Fact~\ref{f:nH} we have $\dim_H D>s$,
so Theorem~\ref{t:frostman} yields that there exists a continuous mass distribution $\mu$ on $D$ such that $\dim_H \mu\geq s$.
Therefore Theorem~\ref{t:mainu} implies that
\begin{align*}\iB=\{&g\in C(D,\RR^d): \exists \textrm{ a non-empty open set } U_{g}\subset \RR^d \\
&\textrm{such that } \dim_H g^{-1}(y)\geq s  \textrm{ for all } y\in U_{g}\}
\end{align*}
is prevalent in $C(D,\RR^d)$. Consider
\begin{equation*} R\colon C(K,\RR^d)\to C(D,\RR^d), \quad R(f)=f|_{C}\circ h^{-1}.
\end{equation*}
As $h$ is a homeomorphism, $R$ is a composition of two continuous onto homomorphisms, so
$R$ is a continuous onto homomorphism. Thus Lemma~\ref{l:D} implies that $R^{-1}(\iB)\subset C(K,\RR^d)$ is prevalent, so it is enough to prove that $R^{-1}(\iB)\subset \iA$. Let us fix $f\in R^{-1}(\iB)$, we need to prove that $f\in \iA$.
Let $g=R(f)=f|_{C}\circ h^{-1}\in \iB$, then there exists a non-empty open set $U_{g}\subset \RR^d$
such that $\dim_H g^{-1}(y)\geq s$ for all $y\in U_{g}$. Let $U_{f}=U_{g}$ and fix $y\in U_{f}$.
Then $h^{-1}(g^{-1}(y))=(f|_{C})^{-1}(y)\subset f^{-1}(y)$,
so applying Fact~\ref{f:nH} for the bi-Lipschitz map $h$ yields that
$$\dim_H f^{-1}(y)\geq \dim_H h^{-1} (g^{-1}(y))=\dim_H g^{-1} (y)\geq s.$$
Hence $f\in \iA$, and the proof is complete.
\end{proof}

\subsection{Fibers of maximal dimension} \label{ss:max} First we prove that one cannot replace supremum with
maximum in the second claims of Corollary~\ref{c:Mainp} and Theorem~\ref{t:main}. For the following well-known lemmas see e.g. \cite[Lemma~4]{Z} and
\cite{BDE}, respectively. Note that Lemma~\ref{l:iK} is stated in \cite{BDE} only in the case $K=[0,1]$,
but the proof works verbatim for all $K$.

\begin{lemma} \label{l:nshy} Let $G$ be an abelian Polish group and let $A\subset G$. If for all compact set $K\subset G$ there exists a $g\in G$ such that $K+g\subset A$ then $A$ is non-shy.
 \end{lemma}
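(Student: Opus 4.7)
The plan is to prove the contrapositive: assume $A$ is shy and exhibit a compact set $K \subset G$ witnessing that the hypothesis fails, i.e., such that $K+g \not\subset A$ for every $g \in G$.

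First I would unpack the definition of shyness. If $A$ is shy, then there exists a Borel set $B \supset A$ and a Borel probability measure $\mu$ on $G$ such that $\mu(B+x) = 0$ for every $x \in G$. Since $G$ is abelian, by the substitution $x \mapsto -x$ this is equivalent to $\mu(B-x) = 0$ for every $x \in G$.

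Next, I would invoke the fact that every Borel probability measure on a Polish space is inner regular (tight), so there exists a compact set $K \subset G$ with $\mu(K) > 0$. I claim this $K$ is the desired witness. Suppose for contradiction that there exists $g \in G$ with $K + g \subset A \subset B$. Then $K \subset B - g$, which gives $\mu(K) \le \mu(B-g) = 0$, contradicting $\mu(K) > 0$. Hence no translate of $K$ is contained in $A$, which contradicts the assumed hypothesis on $A$. Therefore $A$ cannot be shy.

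There is no real obstacle here; the argument is essentially one application of inner regularity combined with the translation-invariance built into the definition of shyness. The only mild care needed is keeping track of signs when passing between $B+x$ and $B-x$, and remembering that we only need a compact $K$ with $\mu(K)>0$ (not, for instance, $\mu(K)=1$), which makes the inner regularity step trivial.
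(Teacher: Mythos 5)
Your proof is correct. Note that the paper does not include its own proof of this lemma --- it simply cites \cite[Lemma~4]{Z} (and that reference is listed as ``in preparation''), so there is no in-paper argument to compare against. Your argument is the standard one: inner regularity of Borel probability measures on Polish spaces (which in this paper is Theorem~\ref{t:Ulam}) produces a compact $K$ with $\mu(K)>0$, and then if $K+g\subset A\subset B$ one gets $K\subset B+(-g)$, hence $\mu(K)\le\mu(B+(-g))=0$, a contradiction. The sign bookkeeping you flag is handled exactly as you say, since $-g\in G$ and the definition quantifies over all $x\in G$; there is no gap.
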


\begin{lemma} \label{l:iK} Let $K\subset [0,1]$ and $\iK\subset C(K,\RR)$ be compact sets. Then there is a strictly increasing function $h\in C[0,1]$
such that $h(0)=0$ and for all $f\in \iK$ and $x,z\in K$, $x\neq z$ we have
$$|f(x)-f(z)|<h(|x-z|).$$
\end{lemma}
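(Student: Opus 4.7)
The plan is to take the joint modulus of continuity of $\iK$ and smooth it into a strictly increasing continuous function that strictly dominates it. Since $\iK\subset C(K,\RR)$ is compact in the uniform norm, the Arzel\`a--Ascoli theorem gives that $\iK$ is uniformly bounded and equicontinuous on $K$. I would first define
\[
\omega(\delta)=\sup\{\,|f(x)-f(z)|:f\in\iK,\ x,z\in K,\ |x-z|\le\delta\,\}
\]
for $\delta\in[0,1]$. This $\omega$ is bounded, non-decreasing, satisfies $\omega(0)=0$, and tends to $0$ as $\delta\to0^+$ by equicontinuity. The obstacle is that $\omega$ by itself is only non-decreasing: it can be flat on intervals (blocking strict monotonicity), can have countably many jumps (blocking continuity), and we need a function that dominates it \emph{strictly}.

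To handle strict monotonicity and strict domination in one stroke, I would form the auxiliary function
\[
\tilde h(\delta)=\omega(\delta)+\delta \quad\text{on } [0,2],
\]
where $\omega$ is extended by $\omega(\delta):=\omega(1)$ for $\delta\in[1,2]$ (this extension is natural since $\diam K\le 1$). Then $\tilde h$ is bounded, strictly increasing, with $\tilde h(0)=0$ and $\tilde h(\delta)>\omega(\delta)$ for $\delta>0$, but is still potentially discontinuous at countably many points. To smooth out the jumps without losing these properties, define
\[
h(\delta)=\begin{cases}0 & \text{if } \delta=0,\\[4pt] \dfrac{1}{\delta}\displaystyle\int_\delta^{2\delta}\tilde h(s)\,ds & \text{if } \delta\in(0,1].\end{cases}
\]
The substitution $s=\delta u$ rewrites this as $h(\delta)=\int_1^2 \tilde h(\delta u)\,du$, which makes every verification transparent.

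The checks are then routine. Continuity on $(0,1]$ follows because $\tilde h$ is bounded, so $\delta\mapsto\int_\delta^{2\delta}\tilde h(s)\,ds$ is locally Lipschitz; continuity at $0$ follows from dominated convergence, since $\tilde h(\delta u)\to 0$ as $\delta\to 0^+$ for each $u\in[1,2]$ while staying bounded by $\tilde h(2)=\omega(1)+2$. Strict monotonicity follows because, for $\delta_1<\delta_2$ and every $u\in[1,2]$, $\tilde h(\delta_1 u)<\tilde h(\delta_2 u)$ by strict monotonicity of $\tilde h$; integrating in $u$ preserves the strict inequality. Strict domination follows because $\tilde h$ is non-decreasing and $u\ge 1$, so $\tilde h(\delta u)\ge\tilde h(\delta)=\omega(\delta)+\delta$ and hence $h(\delta)\ge\omega(\delta)+\delta>\omega(\delta)$.

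Combining these, for any $f\in\iK$ and distinct $x,z\in K$, setting $\delta=|x-z|>0$ yields
\[
|f(x)-f(z)|\le\omega(\delta)<h(\delta)=h(|x-z|),
\]
which is exactly the claim. I expect the only genuine obstacle to be the triple requirement of continuity, strict monotonicity, and strict \emph{strict} domination (rather than $\le$); the additive $+\,\delta$ inserted before averaging is precisely the device that reconciles all three simultaneously.
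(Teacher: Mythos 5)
Your proof is correct. The paper itself does not give a proof of this lemma but cites \cite{BDE} for it; the route you take---forming the joint modulus of continuity $\omega$, using Arzel\`a--Ascoli for equicontinuity so that $\omega(\delta)\to 0$, adding $\delta$ to force strict monotonicity and strict domination, and then smoothing by the averaging $h(\delta)=\int_1^2 \tilde h(\delta u)\,du$---is the natural and essentially standard argument for lemmas of this type, and every verification (continuity via dominated convergence at $0$ and boundedness of $\tilde h$ elsewhere, strict monotonicity and strict domination via $\tilde h(\delta u)\ge\tilde h(\delta)=\omega(\delta)+\delta$) is carried out cleanly.
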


\begin{theorem} \label{t:ex} There is a compact set $K\subset \RR$ such that $\dim_H K = \dim_P K = 1$ and
$$\iA = \{f\in C(K,\RR): \dim_H f^{-1}(y) \le \dim_P f^{-1}(y)<1 \textrm{ for all } y\in \RR\}$$
is non-shy in $C(K,\RR)$.
\end{theorem}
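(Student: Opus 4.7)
\emph{Strategy.} The plan is to verify non-shyness via Lemma~\ref{l:nshy}: it suffices to show that for every compact $\iK\subset C(K,\RR)$ there exists $g\in C(K,\RR)$ with $\iK+g\subset\iA$. Lemma~\ref{l:iK} provides a strictly increasing continuous $h\colon[0,1]\to[0,\infty)$ with $h(0)=0$ such that $|f(x)-f(z)|<h(|x-z|)$ for all $f\in\iK$ and distinct $x,z\in K$; the function $g$ will be constructed depending on $h$. The key observation is that if $A:=(f+g)^{-1}(y)$, then any $x,z\in A$ satisfy $|g(x)-g(z)|=|f(z)-f(x)|<h(|x-z|)$, so $g$ must be ``$h$-incompatible'' with every subset of $K$ of packing dimension $1$.

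\emph{Construction of $K$.} Take $K\subset[0,1]$ to be a Cantor-type set of Lebesgue measure zero of the form $K=\bigcap_{n=1}^\infty F_n$, where $F_n$ is a disjoint union of $N_n$ closed intervals $I_n^i$ of length $\ell_n$, arranged hierarchically so that each $I_{n-1}^j$ contains $N_n/N_{n-1}$ level-$n$ children. Parameters are chosen so that $\log N_n/\log(1/\ell_n)\to 1$, giving $\dim_H K=\dim_P K=1$; a concrete choice is $N_n=2^n$ and $\ell_n=2^{-n}/n^2$.

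\emph{Construction of $g$ and the fiber estimate.} We design $g$ as a continuous staircase adapted to $h$: on each level-$n$ piece $K\cap I_n^i$ the function $g$ is approximately a constant $v_{n,i}$ (with internal oscillation negligible), while the values $\{v_{n,i}\}_i$ are spread across a bounded interval of $\RR$ so that values on distinct pieces differ by more than $2h(\ell_n)$, say by arranging $g$ as a distribution function of a continuous probability measure on $K$ suitably rescaled. The fiber condition $|g(x)-g(z)|<h(|x-z|)$ then forces, for each $y$, the number $m_n$ of level-$n$ pieces meeting $A=(f+g)^{-1}(y)$ to be controlled by the number of $v_{n,i}$ lying within $h(\ell_n)$ of $y$. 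Along a subsequence of levels $n_k$ chosen adaptively to the decay of $h$, one shows that $m_{n_k}$ is uniformly bounded in $y\in\RR$ and $f\in\iK$. Interpolating the resulting covering estimates between scales $\ell_{n_{k+1}}$ and $\ell_{n_k}$ (using that $N_\delta(A)$ is non-increasing in $\delta$) then yields $\overline{\dim}_B A=0$, and hence $\dim_P A<1$, which is exactly what is required.

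\emph{Main obstacle.} The chief technical difficulty is that $h$ may decay arbitrarily slowly, while $\dim_H K=1$ forces $N_n$ to grow super-polynomially; consequently the product $N_n\cdot h(\ell_n)$ may diverge for every subsequence of levels, obstructing uniform separation of the $N_n$ values $v_{n,i}$ inside the bounded range of $g$. The plan is to sidestep this by extracting a sparse subsequence $n_k$ at which the construction of $g$ is fine-tuned to $h$, and to use the monotonicity of box-counting across scales to transfer the uniform bound on $m_{n_k}$ to a uniform bound on $N_\delta(A)$ at every scale $\delta$. Ensuring that such a subsequence exists for \emph{every} admissible $h$ — including moduli with no Hölder exponent — while preserving the continuity and boundedness of $g$ and producing the strict inequality $\dim_P A<1$ (as opposed to $\le 1$, which would not be enough to contradict Theorem~\ref{t:Main}) is the technical heart of the argument.
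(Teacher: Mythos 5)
Your high-level framework (reduce to Lemma~\ref{l:nshy}, produce a translating function $g$ built from the modulus $h$ of Lemma~\ref{l:iK}, and use $|g(x)-g(z)|=|f(z)-f(x)|<h(|x-z|)$ on a common fiber) matches the paper's, but the construction of $K$ is not merely technically incomplete --- it is \emph{structurally impossible}, and this is precisely the obstacle that the paper's choice of $K$ is designed to avoid.

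Concretely: for the homogeneous Cantor set you propose (say $N_n=2^n$, $\ell_n=2^{-n}/n^2$), the natural equidistributed measure $\mu$ is a continuous mass distribution on $K$ with $\dim_H\mu=\dim_H K=1$ (a routine density computation along the scales $\ell_n$). Then Corollary~\ref{c:MainHP} (or already the Main Theorem~\ref{t:Main} applied with this $\mu$) shows that for the \emph{prevalent} $f\in C(K,\RR)$ there is a non-empty open $U_f$ with $\dim_H f^{-1}(y)=1$ for all $y\in U_f\cap f(K)\ne\emptyset$. Hence the prevalent $f$ lies in $\iA^c$, so $\iA$ is \emph{shy}, not non-shy. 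The ``main obstacle'' you identify cannot be surmounted by a cleverer staircase $g$ or a sparse subsequence $n_k$: no choice of $g$ can make $\iK+g\subset\iA$, because $\iA$ itself fails to be non-shy for your $K$. The defect is in the set, not in the function. More generally, \emph{any} $K$ supporting a continuous mass distribution of full Hausdorff or packing dimension is disqualified by the same argument, and all uniform Cantor-type sets of the kind you describe do support one.

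The paper's construction sidesteps this entirely by making $K$ deliberately inhomogeneous. It takes $K=\bigcup_{n\ge1}K_n\cup\{0\}$ with $K_n\subset[0,1/n]$ compact, $\dim_H K_n=\dim_P K_n=1-1/n$. Countable stability gives $\dim_H K=\dim_P K=1$, yet no continuous mass distribution on $K$ can have dimension $1$ (any such $\mu$ puts zero mass on $\{0\}$, hence positive mass on some $K_n$, forcing $\dim\mu\le 1-1/n<1$), so the Main Theorem yields nothing dangerous. The argument that $\iK+g\subset\iA$ then becomes almost trivial: define $\iB=\{f:f^{-1}(f(0))=\{0\}\}$. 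If $f\in\iB$ and $y\ne f(0)$, then $f^{-1}(y)$ is a compact set avoiding $0$, so it meets only finitely many $K_n$ and $\dim_P f^{-1}(y)\le\max_n\dim_P K_n<1$; so $\iB\subset\iA$. To get $\iK+g\subset\iB$ it suffices to take $g=h|_K$: since $h$ is strictly increasing with $h(0)=0$ and $|f(x)-f(0)|<h(x)$ for $x\ne0$, one has $(f+g)(x)>(f+g)(0)$ for all $x\in K\setminus\{0\}$, so $(f+g)^{-1}((f+g)(0))=\{0\}$. No box-counting or covering estimate is needed. The lesson is that the whole difficulty you wrestle with at the level of $g$ should instead be resolved at the level of $K$: choose $K$ so that the conclusion ``$\dim_P f^{-1}(y)<1$ for all $y$'' is forced by topology (compactness of fibers that avoid $\{0\}$) rather than by a delicate metric estimate.
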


\begin{proof} For all $n\in \NN^+$ let $K_n\subset [0,1/n]$ be compact sets such that $\dim_H K_n=\dim_P K_n=1-1/n$ and let $K=\bigcup_{n=1}^{\infty} K_n \cup \{0\}$. Then $K\subset [0,1]$ is compact and $\dim_H K=\dim_P K=1$. Define
$$\iB=\{f\in C(K,\RR): f^{-1}(f(0))=\{0\}\}.$$
Now we show that $\iB\subset \iA$, that is, $\dim_H f^{-1}(y) \le \dim_P f^{-1}(y)<1$ for every $f\in \iB$ and $y\in \RR$.
The first inequality clearly holds, so it is enough to prove that $\dim_P f^{-1}(y)<1$.
Let us fix $f\in \iB$, by definition $f^{-1}(f(0))=\{0\}$. For all
$y\in \RR\setminus \{f(0)\}$ the level set $f^{-1}(y)\subset K\setminus \{0\}$ is compact,
thus it can be covered by finitely many sets $K_n$. Therefore the
countable stability of packing dimension yields that $\dim_P f^{-1}(y)<1$.

Finally, it is enough to show that $\iB$ is non-shy in $C(K,\RR)$. Let $\iK\subset C(K,\RR)$ be an arbitrary compact set,
by Lemma~\ref{l:nshy} it is enough to prove that there exists a $g\in C(K,\RR)$ with
$\iK+g\subset \iB$. By Lemma~\ref{l:iK}
there is a strictly increasing function $h\in C[0,1]$ such that $h(0)=0$ and $|f(x)-f(0)|<h(x)$
for all $f\in \iK$ and $x\in K\setminus \{0\}$. Let $g=h|_{K}$, then for all $f\in \iK$ and $x\in K\setminus \{0\}$ we have
$$f(x)+g(x)=f(x)+h(x)>f(0)=f(0)+h(0)=f(0)+g(0),$$
thus $f+g\in \iB$. This completes the proof.
\end{proof}

The Main Theorem easily implies that if $K$ is `large in its dimension' then Corollary~\ref{c:Mainp} and
Theorem~\ref{t:main} can be strengthened as follows.

\begin{corollary} \label{c:MainHP} Let $d\in \NN^+$ and let $\dim$ be one of $\dim_H$ or $\dim_P$.
Assume that $K$ is a non-exploding compact metric space and there is a continuous mass distribution $\mu$ on $K$ such that
$\dim \mu=\dim K$. Then for the prevalent $f\in C(K,\RR^d)$ there is an open set $U_f\subset \RR^d$ such that
$\mu(f^{-1}(U_f))=\mu(K)$ and for all $y\in U_f$ we have
$$\dim f^{-1}(y)=\dim K.$$
\end{corollary}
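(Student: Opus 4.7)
The plan is to derive this corollary as an immediate consequence of the Main Theorem (Theorem~\ref{t:Main}), together with the trivial monotonicity of Hausdorff and packing dimension. Since the hypothesis supplies a continuous mass distribution $\mu$ on a non-exploding compact metric space $K$, the Main Theorem applies directly and provides the desired open set $U_f$.

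More precisely, I would fix the dimension $\dim$ (either $\dim_H$ or $\dim_P$) and apply Theorem~\ref{t:Main} to the given $\mu$. This yields, for the prevalent $f\in C(K,\RR^d)$, an open set $U_f\subset \RR^d$ satisfying $\mu(f^{-1}(U_f))=\mu(K)$ and the two inequalities
\[
\dim_H f^{-1}(y)\geq \dim_H \mu \quad \textrm{and} \quad \dim_P f^{-1}(y)\geq \dim_P \mu
\]
for every $y\in U_f$. In particular, for the specific $\dim$ under consideration, the hypothesis $\dim \mu = \dim K$ gives $\dim f^{-1}(y) \geq \dim K$ for all $y\in U_f$.

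For the reverse inequality, I would simply observe that $f^{-1}(y)\subset K$ and that both the Hausdorff and the packing dimension are monotone with respect to set inclusion; this immediately yields $\dim f^{-1}(y)\leq \dim K$. Combining the two bounds gives the equality $\dim f^{-1}(y)=\dim K$ for every $y\in U_f$, as required. There is no real obstacle here: the corollary is essentially a bookkeeping step that names the most useful corollary of the Main Theorem for spaces that are ``dimensionally saturated'' by some measure, and the only nontrivial input is Theorem~\ref{t:Main} itself.
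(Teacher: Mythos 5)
Your proof is correct and matches the paper's (implicit) argument exactly: the paper states this corollary without proof as an immediate consequence of the Main Theorem, and filling in the details amounts precisely to what you wrote — apply Theorem~\ref{t:Main} to $\mu$ to obtain $\dim f^{-1}(y)\geq \dim\mu=\dim K$ on $U_f$, and use monotonicity of $\dim_H$, $\dim_P$ under inclusion $f^{-1}(y)\subset K$ for the reverse bound.
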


\begin{remark}
Note that the compact set $K$ in Theorem~\ref{t:ex} can be decomposed as $K=\bigcup_{n=1}^{\infty} A_n$ such that $\dim_H A_n<\dim_H K$.
Assume that $K$ is a non-exploding compact metric space such that $\dim_H K<\infty$
and such a decomposition does not exist. We sketch that there is a mass distribution $\mu$ on $K$ with $\dim_H \mu=\dim_H K$,
so Corollary~\ref{c:MainHP} holds for $K$ in the case of Hausdorff dimension. Indeed, \cite[Theorem~2]{R}
(see also the more general \cite[Theorem~6.4]{SS}) implies that there is a gauge
function (see Section~\ref{s:gauge} for the definition) $\varphi \colon [0,\infty)\to [0,\infty)$ such that
$$\liminf_{r\to 0+} \frac{\log \varphi(r)}{\log r}=\dim_H K \quad \textrm{and} \quad \iH^{\varphi}(K)>0,$$
where $\iH^{\varphi}$ denotes the $\varphi$-Hausdorff measure (see Section~\ref{s:gauge} for the definition).
As $\dim_H K<\infty$, we may assume that $\varphi$ is of finite order,
that is, $\varphi(2r)\leq c\varphi(r)$ for some $c\in \RR^+$ and for all $r\in [0,\infty)$. By \cite{Ho} there is a
compact set $C\subset K$ such that $0<\iH^{\varphi}(C)<\infty$. Then
$\mu=\iH^{\varphi}|_{C}$ is a mass distribution on $K$ with $\dim_H \mu=\dim_H K$.
\end{remark}

\subsection{The homogeneous case} \label{ss:hom}

Let us now consider continuous mass distributions $\mu$ on $K$ such that $\supp \mu=K$.
Then the larger $\dim_H \mu$ or $\dim_P \mu$ can be, the more homogeneous $K$ is. The Main Theorem yields the following corollary.

\begin{corollary} \label{c:Main2} Let $d\in \NN^+$ and let $K$ be a non-exploding compact metric space. Assume that
there is a continuous mass distribution $\mu$ on $K$ such that $\supp \mu=K$. Then for the prevalent $f\in C(K,\RR^d)$ there exists an open set
$U_f\subset \RR^d$ such that $\mu(f^{-1}(U_f))=\mu(K)$ (hence $U_f$ is dense in $f(K)$) and for all $y\in U_f$
we have
$$\dim_H f^{-1}(y)\geq \dim_H \mu \quad \textrm{and} \quad \dim_P f^{-1}(y)\geq \dim_P \mu.$$
\end{corollary}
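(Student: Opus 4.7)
The statement is almost immediate from the Main Theorem (Theorem~\ref{t:Main}); the only new content is the parenthetical assertion that $U_f$ is dense in $f(K)$. My plan is therefore to invoke Theorem~\ref{t:Main} and then deduce denseness from the hypothesis $\supp\mu=K$ by a short continuity argument.

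First I would apply Theorem~\ref{t:Main} to the non-exploding compact metric space $K$ and the continuous mass distribution $\mu$, which directly yields a prevalent set $\iA\subset C(K,\RR^d)$ such that for every $f\in\iA$ there is an open set $U_f\subset\RR^d$ with $\mu(f^{-1}(U_f))=\mu(K)$ and with $\dim_H f^{-1}(y)\geq\dim_H\mu$ and $\dim_P f^{-1}(y)\geq\dim_P\mu$ for every $y\in U_f$. All the dimension and measure statements of the corollary are therefore in hand; only the additional topological statement remains.

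To deduce that $U_f$ is dense in $f(K)$, I would argue as follows. The equality $\mu(f^{-1}(U_f))=\mu(K)$ means that $K\setminus f^{-1}(U_f)$ is a $\mu$-null set. Since $\supp\mu=K$, every non-empty relatively open subset of $K$ has positive $\mu$-measure, so $K\setminus f^{-1}(U_f)$ has empty interior in $K$; equivalently, the open set $f^{-1}(U_f)$ is dense in $K$. Now fix any $y\in f(K)$ and write $y=f(x)$ for some $x\in K$. By density there is a sequence $x_n\in f^{-1}(U_f)$ with $x_n\to x$, and continuity of $f$ gives $U_f\ni f(x_n)\to f(x)=y$, so $y\in\cl U_f$. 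Hence $U_f$ is dense in $f(K)$, completing the proof.

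There is essentially no obstacle here: Theorem~\ref{t:Main} does all the heavy lifting, and the support hypothesis converts the measure-theoretic fullness condition into a topological density condition via the one-line observation that a $\mu$-null set cannot have non-empty interior when $\supp\mu=K$.
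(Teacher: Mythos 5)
Your proof is correct and takes exactly the route the paper intends: the paper introduces Corollary~\ref{c:Main2} with the single remark ``The Main Theorem yields the following corollary'' and gives no further argument, so all you have done is spell out the (routine but worth recording) step from $\mu(f^{-1}(U_f))=\mu(K)$ together with $\supp\mu=K$ to the density of $U_f$ in $f(K)$. The density argument via $\mu$-nullity of $K\setminus f^{-1}(U_f)$, density of $f^{-1}(U_f)$ in $K$, and continuity of $f$ is sound and complete.
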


If $K\subset \RR^m$ is a self-similar set satisfying the open set condition, we can say more. Recall that
$\iP^s$ denotes the $s$-dimensional packing measure.

\begin{corollary} \label{c:HP} Let $m,d\in \NN^+$ and let $K\subset \RR^m$ be a self-similar set satisfying the open set
condition. It is well-known that $\dim_H K=\dim_P K=s$ and $\iH^s(K),\iP^s(K)\in \RR^+$.
Then for the prevalent $f\in C(K,\RR^d)$ there exists an open set $U_f\subset \RR^d$ such that
$\iH^s(f^{-1}(U_f))=\iH^s(K)$ (hence $U_f$ is dense in $f(K)$) and
$$\dim_H f^{-1}(y)=s \textrm{ for all } y\in U_f.$$
Similarly, for the prevalent $f\in C(K,\RR^d)$ there exists an open set $V_f\subset \RR^d$ such that
$\iP^s(f^{-1}(V_f))=\iP^s(K)$ (hence $V_f$ is dense in $f(K)$) and
$$\dim_P f^{-1}(y)=s \textrm{ for all } y\in V_f.$$
\end{corollary}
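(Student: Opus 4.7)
The plan is to apply Corollary~\ref{c:MainHP} twice: once with $\dim = \dim_H$ and the normalized Hausdorff measure, and once with $\dim = \dim_P$ and the normalized packing measure. Since $K \subset \RR^m$ is doubling it is non-exploding, so the ambient hypothesis of Corollary~\ref{c:MainHP} is met. We may assume $s > 0$, as otherwise $K$ is a single point and the claim is trivial.

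I will take $\mu = \iH^s|_K / \iH^s(K)$ and $\nu = \iP^s|_K / \iP^s(K)$, which are continuous mass distributions on $K$ since $s > 0$ makes singletons null. The computation $\dim_H \mu = s$ will be immediate: for any Borel $B \subset K$ with $\mu(B) > 0$, the finiteness of $\iH^s(K)$ yields $0 < \iH^s(B) < \infty$, hence $\dim_H B \geq s$; combined with $\dim_H \mu \leq \dim_H K = s$ this gives equality. The analogous argument with $\iP^s$ and $\dim_P$ gives $\dim_P \nu = s$. Corollary~\ref{c:MainHP} applied to $\mu$ then produces a prevalent set of $f \in C(K,\RR^d)$ for which some open $U_f \subset \RR^d$ satisfies $\iH^s(f^{-1}(U_f)) = \iH^s(K)$ and $\dim_H f^{-1}(y) = s$ for every $y \in U_f$, and applied to $\nu$ produces the analogous prevalent set with $\iP^s$ and $V_f$. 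The intersection is prevalent since shy sets form a $\sigma$-ideal, giving both conclusions simultaneously.

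The last step is the density of $U_f$ in $f(K)$, the argument for $V_f$ being identical after swapping measures. If $U_f$ failed to be dense in $f(K)$, there would exist $y_0 \in f(K)$ and $r > 0$ with $U(y_0, r) \cap f(K) \cap U_f = \emptyset$, so $W = f^{-1}(U(y_0, r))$ would be a non-empty relatively open subset of $K$ disjoint from $f^{-1}(U_f)$. Any such subset contains an iterated image $\phi_{i_1} \circ \cdots \circ \phi_{i_n}(K)$ of the generating similitudes, whose $\iH^s$-measure equals $(r_{i_1} \cdots r_{i_n})^s \iH^s(K) > 0$; hence $\iH^s(W) > 0$, contradicting $\iH^s(f^{-1}(U_f)) = \iH^s(K)$. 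The corresponding positivity for $\iP^s$ on non-empty relatively open subsets of $K$ follows identically by scaling. I expect the only real subtlety of the argument to be exactly this density step together with the two dimension computations, since the invocation of Corollary~\ref{c:MainHP} itself is then routine.
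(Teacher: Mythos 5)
Your proof is correct and follows essentially the same route as the paper. The paper applies Theorem~\ref{t:Main} directly with $\mu=\iH^s|_K$ and $\mu=\iP^s|_K$, leaving implicit the fact that $\dim_H(\iH^s|_K)=s$ and $\dim_P(\iP^s|_K)=s$, whereas you pass through Corollary~\ref{c:MainHP} and spell out those two dimension computations explicitly; this is a cosmetic difference since Corollary~\ref{c:MainHP} is itself an immediate consequence of Theorem~\ref{t:Main}. Your density argument (a non-empty relatively open subset of $K$ contains a cylinder $\phi_{i_1}\circ\cdots\circ\phi_{i_n}(K)$, hence has positive $\iH^s$- and $\iP^s$-measure by scaling) is the same as the paper's observation that such a subset contains a similar copy of $K$.
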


\begin{proof} For $\dim_H K=\dim_P K=s$ and $\iH^s(K),\iP^s(K)\in \RR^+$ see \cite[Theorem~2.7]{F2}. Applying the
Main Theorem for the mass distributions $\mu=\iH^s$ and $\mu=\iP^s$ concludes the proof, we need to
show only that $U_f$ and $V_f$ are dense in $f(K)$. Let $U$ be a non-empty open set in $K$, then it is enough to
prove that $\iH^s(U)>0$ and $\iP^s(U)>0$. Since $K$ is self-similar, $U$ contains a similar copy of $K$
with some similarity ratio $r>0$.
Then the scaling property of the Hausdorff and the packing measures yields that $\iH^s(U)\geq r^s\iH^s(K)>0$ and $\iP^s(U)\geq r^s\iP^s(K)>0$.
\end{proof}

Finally, we prove two characterization theorems.

\begin{theorem} \label{t:chr} If $K$ is a compact metric space and $d\in \NN^+$ then the following
statements are equivalent:
\begin{enumerate}[(i)]
\item  $\dim_{H} f^{-1}(y)=\dim_{H}K$ for the prevalent $f\in C(K,\RR^d)$ and for the generic $y\in f(K)$;
\item  $\dim_H U=\dim_H K$ for every non-empty open set $U\subset K$.
\end{enumerate}
\end{theorem}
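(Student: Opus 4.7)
The plan is to prove the two implications separately: in each direction the work reduces either to an invocation of Theorem~\ref{t:main} on a suitable subspace of $K$, or to the elementary observation that a nonempty open set in a separable topological group is non-shy.

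For $(ii) \Rightarrow (i)$, I would fix a countable base $\{U_n\}_{n\in\NN^+}$ of the topology of $K$ and a sequence $s_m \nearrow \dim_H K$. Since $U_n \subset \overline{U_n} \subset K$ and by hypothesis $\dim_H U_n = \dim_H K$, we have $\dim_H \overline{U_n} = \dim_H K$. Applying Theorem~\ref{t:main} to the compact space $\overline{U_n}$ and lifting to $C(K,\RR^d)$ via Corollary~\ref{c:hereditary} gives that
\[
\iA_{n,m} = \{f \in C(K,\RR^d) : \exists \text{ nonempty open } W \subset \RR^d \text{ with } \dim_H (f|_{\overline{U_n}})^{-1}(y) \ge s_m \ \forall y \in W\}
\]
is prevalent, and so is $\iA = \bigcap_{n,m} \iA_{n,m}$. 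For $f \in \iA$ and any nonempty relatively open $W \subset f(K)$, I would pick $x_0 \in f^{-1}(W)$ and, using continuity of $f$ at $x_0$, a basic $U_n \ni x_0$ so small that $f(\overline{U_n})$ lies in the open subset of $\RR^d$ defining $W$; the nonempty open set guaranteed by $\iA_{n,m}$ then sits inside both $W$ and $A_m := \{y \in f(K) : \dim_H f^{-1}(y) \ge s_m\}$. Hence each $A_m$ has dense interior in the Baire space $f(K)$, so the intersection $\bigcap_m A_m = \{y \in f(K) : \dim_H f^{-1}(y) = \dim_H K\}$ is comeager.

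For $(i) \Rightarrow (ii)$, I would argue by contrapositive: assume there is a nonempty open $U \subset K$ with $\dim_H U < \dim_H K$ and set $V = K \setminus U$. Consider
\[
\iB = \{f \in C(K,\RR^d) : f(K) \setminus f(V) \neq \emptyset\}.
\]
Urysohn's lemma produces $\psi \in C(K,\RR)$ with $\psi|_V = 0$ and $\psi(x_0) = 1$ for some $x_0 \in U$, so $\psi e \in \iB$ for any nonzero $e \in \RR^d$, showing $\iB \neq \emptyset$; compactness of $f(V)$ shows $\iB$ is open, since $\dist(f(x_0), f(V)) > 0$ for $f \in \iB$ is preserved under small uniform perturbations. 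A nonempty open subset of a separable topological group is non-shy (its translates by a countable dense subset cover the group, so its measure under any translate-invariantly-null Borel probability would contradict $\mu(C(K,\RR^d)) = 1$), so $\iB$ is non-shy. For every $f \in \iB$, the set $f(K) \setminus f(V)$ is nonempty and open in $f(K)$, and every $y$ in it has $f^{-1}(y) \subset U$, whence $\dim_H f^{-1}(y) \le \dim_H U < \dim_H K$. Consequently $\{y \in f(K) : \dim_H f^{-1}(y) = \dim_H K\}$ misses a nonempty open subset of $f(K)$, so is not dense and a fortiori not comeager. Writing $\iG$ for the set of $f$ for which (i) holds, we get $\iB \subset C(K,\RR^d) \setminus \iG$, and the non-shyness of $\iB$ forces $\iG$ to be non-prevalent, contradicting (i).

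The main subtlety is in $(i) \Rightarrow (ii)$: a naive probe-measure attempt to prove that $\{f : f(K) = f(V)\}$ is shy runs into the obstruction that $f(V)$ can have positive $d$-dimensional Lebesgue measure for typical functions, leaving a positive-measure set of bad parameters for any natural one-parameter perturbation supported in $U$. The remedy is to avoid probe constructions altogether by noting that $\iB$, which is already enough to witness failure of (i), is open.
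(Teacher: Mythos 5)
Your proof is correct and follows essentially the same route as the paper: for $(ii)\Rightarrow(i)$ you apply Theorem~\ref{t:main} to the closures of basic open sets, lift via Corollary~\ref{c:hereditary}, intersect, and argue density of the good $y$'s in the Baire space $f(K)$; for $(i)\Rightarrow(ii)$ you exhibit a nonempty open (hence non-shy) set of $f$ whose image has a nonempty open subset of bad $y$'s. The only cosmetic difference is that the paper realizes this open set as a ball $B(g,\varepsilon)$ around a Tietze extension while you work directly with the open set $\iB=\{f:f(K)\setminus f(V)\neq\emptyset\}$, and the paper also dispatches the trivial case $\dim_H K=0$ explicitly before choosing $s_m\nearrow\dim_H K$.
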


\begin{proof}
$(ii) \Rightarrow (i)$: We may assume that $\dim_H K>0$, otherwise the statement is trivial.
Choose a positive sequence $s_i\nearrow \dim_H K$ and let $\iV=\{V_n: n\in \NN^+\}$ be a countable basis of $K$ consisting of non-empty open sets.
For all $n\in \NN^+$ let $K_n=\cl V_n$, then $\dim_H K_n=\dim_H K$. For all $i,n\in \NN^+$ consider
\begin{align*} \iA_{i,n}=\{&f\in C(K_n,\RR^d): \exists \textrm{ a non-empty open set } U_{f,s_i}\subset \RR^d \\
&\textrm{such that } \dim_H f^{-1}(y)\geq s_i \textrm{ for all } y\in U_{f,s_i}\}.
\end{align*}
Theorem~\ref{t:main} implies that the $\iA_{i,n}$ are prevalent. For all $n\in \NN^+$ let us define
$$R_n\colon C(K,\RR^d)\to C(K_n,\RR^d), \quad R_n(f)=f|_{K_n}.$$
Corollary~\ref{c:hereditary} yields that the $R^{-1}_n(\iA_{i,n})$ are prevalent in $C(K,\RR^d)$ for all $i,n\in \NN^+$.
As a countable intersection of prevalent sets,
$\iA=\bigcap_{i=1}^{\infty} \bigcap_{n=1}^{\infty} R^{-1}_n(\iA_{i,n})$ is also prevalent in $C(K,\RR^d)$. For all $f\in \iA$ let
$$U_f=\bigcap_{i=1}^{\infty} \left(\bigcup_{n=1}^{\infty} U_{f|_{K_n},s_i}\right).$$
As a countable intersection of dense open sets, $U_f$ is co-meager in $f(K)$. Let us fix $f\in \iA$ and $y\in U_f$, it is enough to prove
that $\dim_H f^{-1}(y)=\dim_H K$. For all $i\in \NN^+$ there is an $n\in \NN^+$ such that $y\in U_{f|_{K_n},s_i}$, therefore
$$\dim_H f^{-1}(y)\geq \dim_H (f|_{K_n})^{-1}(y)\geq s_i.$$
As $s_i\nearrow \dim_H K$, we obtain that $\dim_H f^{-1}(y)=\dim_H K$. Hence $(i)$ holds.

$(i) \Rightarrow (ii)$: Assume to the contrary that there exist $x\in K$ and $r>0$ such that $\dim_H U(x,r)<\dim_H K$. Tietze's extension theorem implies that there is a $g\in C(K,\RR^d)$ such that $g(K\setminus U(x,r))$ and $g(B(x,r/2))$ are distinct points in $\RR^d$. Then there exist an $\varepsilon>0$ and an
open set $U\subset \RR^d$ such that $f(B(x,r/2))\subset U$ and $f(K\setminus U(x,r))\subset \RR^d\setminus U$ for all $f\in B(g,\varepsilon)$. Clearly, $B(g,\varepsilon)$ is non-shy in $C(K,\RR^d)$. If  $f\in B(g,\varepsilon)$ then $U\cap f(K)$ is a non-empty open set in $f(K)$ such that for every
$y\in U$ we have $\dim_H f^{-1}(y)\leq \dim_H U(x,r)<\dim_H K$, which contradicts $(i)$.
\end{proof}

\begin{theorem} If $K$ is a non-exploding compact metric space and $d\in \NN^+$ then the following
statements are equivalent:
\begin{enumerate}[(i)]
\item $\dim_{P} f^{-1}(y)=\dim_{P}K$ for the prevalent $f\in C(K,\RR^d)$ and for the generic $y\in f(K)$;
\item  $\dim_P U=\dim_P K$ for every non-empty open set $U\subset K$.
\end{enumerate}
\end{theorem}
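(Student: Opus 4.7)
The plan is to follow the proof of Theorem~\ref{t:chr} essentially verbatim, replacing every appeal to Theorem~\ref{t:main} by Corollary~\ref{c:Mainp}. This substitution is legitimate because the non-exploding hypothesis is inherited by compact subspaces: if a ball $B_K(x,r)$ is covered by $N(r)$ balls of radius $r/2$ in $K$, then intersecting with a compact $K' \subset K$ and re-centering each non-empty intersection in $K'$ gives a covering of $B_{K'}(x,r)$ by at most $N(r)$ balls of radius $r$ in $K'$, so relative to $K'$ the covering number grows at most by a bounded multiplicative factor and the $\liminf$ in Definition~\ref{d:nonex} is unaffected.

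For $(ii) \Rightarrow (i)$, I would first dispose of the trivial case $\dim_P K = 0$. Then fix a positive sequence $s_i \nearrow \dim_P K$ and a countable basis $\iV = \{V_n : n \in \NN^+\}$ of non-empty open subsets of $K$, and set $K_n = \cl V_n$. Hypothesis $(ii)$ forces $\dim_P K_n \geq \dim_P V_n = \dim_P K > 0$, so each $K_n$ is uncountable and non-exploding, and Corollary~\ref{c:Mainp} applies. It produces prevalent sets
\begin{align*}
\iA_{i,n} = \{&f \in C(K_n,\RR^d) : \exists \text{ a non-empty open } U_{f,s_i} \subset \RR^d \\
&\text{such that } \dim_P f^{-1}(y) \geq s_i \text{ for all } y \in U_{f,s_i}\}.
\end{align*}
Pulling back via the restriction maps $R_n(f) = f|_{K_n}$ through Corollary~\ref{c:hereditary} and intersecting countably yields a prevalent $\iA \subset C(K,\RR^d)$. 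For $f \in \iA$ set $U_f = \bigcap_{i=1}^{\infty} \bigcup_{n=1}^{\infty} U_{f|_{K_n}, s_i}$, which is a countable intersection of dense open subsets of $f(K)$ and hence co-meager in $f(K)$. For any $y \in U_f$ and any $i$, some $n$ satisfies $y \in U_{f|_{K_n}, s_i}$, so $\dim_P f^{-1}(y) \geq \dim_P (f|_{K_n})^{-1}(y) \geq s_i$; letting $i \to \infty$ yields equality with $\dim_P K$.

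For $(i) \Rightarrow (ii)$, I would argue by contradiction just as in Theorem~\ref{t:chr}. Suppose $\dim_P U(x,r) < \dim_P K$ for some $x \in K$ and $r>0$. Tietze's extension theorem provides $g \in C(K,\RR^d)$ taking distinct constant values on $B(x,r/2)$ and on $K \setminus U(x,r)$, and one can then choose $\varepsilon > 0$ together with an open set $U \subset \RR^d$ so that $f(B(x,r/2)) \subset U$ and $f(K \setminus U(x,r)) \subset \RR^d \setminus U$ for every $f \in B(g,\varepsilon)$; in particular $f^{-1}(U) \subset U(x,r)$. The open ball $B(g,\varepsilon)$ is non-shy, so it meets the prevalent set supplied by $(i)$; pick such an $f$. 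The set $U \cap f(K)$ is non-empty open in $f(K)$, yet monotonicity of packing dimension forces $\dim_P f^{-1}(y) \leq \dim_P U(x,r) < \dim_P K$ for every $y \in U$, contradicting the generic value $\dim_P K$ on $f(K)$.

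The main obstacle, such as it is, is the conceptual bookkeeping in $(ii) \Rightarrow (i)$: one must verify that the countable intersection step genuinely yields a co-meager set of good $y$'s and that $K_n$ remains within the scope of Corollary~\ref{c:Mainp} (non-exploding and uncountable). Both points are routine once one records that non-exploding descends to compact subspaces and that $(ii)$ ensures full packing dimension on every closed neighbourhood $\cl V_n$. The rest is a transcription of the Hausdorff argument.
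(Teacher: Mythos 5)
Your proposal is correct and follows exactly the same approach as the paper, which dispatches this theorem in a single line by instructing the reader to repeat the proof of Theorem~\ref{t:chr} with packing dimension in place of Hausdorff dimension and Corollary~\ref{c:Mainp} in place of Theorem~\ref{t:main}. The one point you flesh out that the paper leaves tacit is the verification that the non-exploding hypothesis passes to the compact subspaces $K_n=\cl V_n$, which is indeed needed to apply Corollary~\ref{c:Mainp}.
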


\begin{proof} Repeat the proof of Theorem~\ref{t:chr}, only replace Hausdorff dimension with packing dimension, and apply Corollary~\ref{c:Mainp} instead of Theorem~\ref{t:main}.
\end{proof}

\section{Positively many level sets can be singletons}\label{s:sing}

In this section we only consider the space $C[0,1]$, our main goal is to prove Theorem~\ref{t:nonshy}.
The heart of the proof is the Theorem~\ref{t:sing}, which generalizes a result of Antunovi\'c et al.\
\cite[Proposition~3.3]{ABPR}, see also Theorem~\ref{t:ABPR}. First we need an easy lemma. Recall that $\iZ(f)=\{x\in [0,1]: f(x)=0\}$.

\begin{lemma} \label{l:Bor} The set $\iA=\{f\in C[0,1]: \iZ(f) \textrm{ is a singleton}\}$ is Borel.
\end{lemma}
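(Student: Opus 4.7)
The plan is to exhibit $\iA$ explicitly as the difference of a closed set and an $F_\sigma$ set, hence Borel (in fact $G_\delta$). The key observation is that for every $f\in C[0,1]$ the zero set $\iZ(f)$ is a compact subset of $[0,1]$, so "singleton" simply means "non-empty and of diameter $0$".

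First I would show that $\iC=\{f\in C[0,1]:\iZ(f)\ne\emptyset\}$ is closed. The map $\Phi\colon C[0,1]\to\RR$, $\Phi(f)=\min_{x\in [0,1]}|f(x)|$, is continuous (in fact $1$-Lipschitz with respect to the maximum norm), and $\iC=\Phi^{-1}(\{0\})$.

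Next I would show that $\iD=\{f\in C[0,1]:\#\iZ(f)\ge 2\}$ is $F_\sigma$, by writing $\iD=\bigcup_{n=1}^{\infty}\iD_n$, where
\[
\iD_n=\{f\in C[0,1]:\exists\, x_1,x_2\in[0,1],\ |x_1-x_2|\ge 1/n,\ f(x_1)=f(x_2)=0\}.
\]
Each $\iD_n$ is closed: if $f_k\to f$ uniformly and witnesses $x_1^{(k)},x_2^{(k)}$ exist, then by compactness of $[0,1]$ we may pass to a subsequence so that $x_j^{(k)}\to x_j$, and uniform convergence gives $f(x_j)=0$ and $|x_1-x_2|\ge 1/n$. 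This is the only step with any content, and it is straightforward.

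Finally, $\iA=\iC\setminus\iD$ is the difference of a closed set and an $F_\sigma$ set, hence Borel (indeed $G_\delta$, since closed sets in a metric space are themselves $G_\delta$). There is no real obstacle; the only thing to be careful about is the use of compactness of $[0,1]$ in the closedness of $\iD_n$, which is what makes the ``there exist two far-apart zeros'' condition closed rather than merely $F_\sigma$.
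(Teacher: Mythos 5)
Your proof is correct and takes essentially the same approach as the paper: write $\iA$ as the difference of the closed set $\{f : \iZ(f)\ne\emptyset\}$ and the $F_\sigma$ set $\{f:\#\iZ(f)\ge 2\}$, the latter being a countable union of closed sets parametrizing ``two zeros separated by a definite gap.'' The paper parametrizes by pairs of disjoint rational closed subintervals while you parametrize by the minimum separation $1/n$ and use compactness of $[0,1]$ to close up the limit; these are minor variants of the same idea.
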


\begin{proof} Clearly $\iA=\iB_1\setminus \iB_{2}$, where $\iB_i=\{f\in C[0,1]: \#\iZ(f)\geq i\}$. Since
$\iB_1$ is closed, it is enough to prove that $\iB_2$ is Borel. If $\iI$ denotes
the pairs of disjoint closed rational subintervals of $[0,1]$, then $\iB_2=\bigcup_{(I_1,I_2)\in \iI} \iB_{I_1,I_2}$, where
$$\iB_{I_1,I_2}=\{f\in C[0,1]: 0\in f(I_1)\cap f(I_2)\}.$$
Clearly $\iB_{I_1,I_2}$ are closed, therefore $\iB_2$ is $F_{\sigma}$, thus Borel.
\end{proof}

\begin{theorem} \label{t:sing} Let $\mu$ be a Borel probability measure on $C[0,1]$. Then there exists a function $g\in C[0,1]$ such that
$$\mu(\{f\in C[0,1]: \iZ(f-g) \textrm{ is a singleton}\})>0.$$
Consequently, the set $\iA=\{f\in C[0,1]: \iZ(f) \textrm{ is a singleton}\}$ is non-shy.
\end{theorem}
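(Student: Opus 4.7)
The plan is to adapt the strategy of Theorem \ref{t:ABPR} (Antunovi\'c--Burdzy--Peres--Ruscher) to an arbitrary Borel probability measure $\mu$, replacing the specific probabilistic features of Brownian motion with the uniform strict modulus of continuity afforded by a compactness argument. First I would invoke Theorem \ref{t:Ulam} to fix a compact set $\iK \subset C[0,1]$ with $\mu(\iK) > 0$, and then apply Lemma \ref{l:iK} to obtain a strictly increasing continuous function $h \in C[0,1]$ with $h(0) = 0$ satisfying $|f(x) - f(y)| < h(|x-y|)$ for all $f \in \iK$ and $x \neq y$ in $[0,1]$. The \emph{strict} inequality furnished by Lemma \ref{l:iK} is essential for what follows.

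The heart of the argument is the construction of a function $g$ whose local growth strictly dominates $h$ at a chosen reference point $x_0$. For parameters $x_0 \in (0,1)$ and $c \in \RR$, I would consider the two-sided amplified-modulus function
\[
g_{x_0, c}(x) = c + 2\, \mathrm{sgn}(x - x_0)\, h(|x - x_0|),
\]
suitably extended outside a neighborhood of $x_0$ so that $|g_{x_0,c}(x)|$ exceeds $\sup_{f \in \iK} \|f\|_\infty$ once $x$ lies outside a small buffer (thereby forcing $\iZ(f - g_{x_0,c})$ to remain inside that buffer). For any $f \in \iK$ with $f(x_0) = c$, the strict modulus bound immediately yields
\[
(f - g_{x_0, c})(x) < h(|x - x_0|) - 2h(|x - x_0|) = -h(|x - x_0|) < 0
\]
for $x > x_0$, and symmetrically $(f - g_{x_0, c})(x) > 0$ for $x < x_0$; consequently $\iZ(f - g_{x_0, c}) = \{x_0\}$, a singleton.

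The central difficulty -- and where I expect the main obstacle to lie -- is ensuring that the condition $f(x_0) = c$ holds on a set of positive $\mu$-measure, which typically fails for atom-free (continuous) measures. My plan is to permit a small mismatch $|f(x_0) - c| \leq \varepsilon$ and absorb it into the amplification factor: the strict modulus still confines $\iZ(f - g_{x_0, c})$ to the small interval $[x_0 - h^{-1}(\varepsilon), x_0 + h^{-1}(\varepsilon)]$, and a further analysis -- exploiting the net monotone decrease of $f - g_{x_0,c}$ against the strict modulus bound on $f$'s wobble inside this buffer -- should control the zero set there and show it is a singleton for a positive $\mu$-fraction of $f \in \iK$. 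An alternative route is to integrate over the parametrized family $\{g_{x_0, c}\}_{(x_0, c)}$ via Fubini, combined with a measurable selector that pairs each $f$ with a ``good'' value $c(f)$, and then extract a single pair $(x_0, c)$ for which $\mu\{f: \iZ(f - g_{x_0, c}) \textrm{ is a singleton}\} > 0$.

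For the ``consequently'' statement, Lemma \ref{l:Bor} provides the Borel measurability of $\iA = \{f \in C[0,1] : \iZ(f) \textrm{ is a singleton}\}$, and then non-shyness is immediate from the first statement: for every Borel probability $\mu$ the $g$ produced above satisfies $\mu(\iA + g) = \mu\{f : \iZ(f - g) \textrm{ is a singleton}\} > 0$, which is precisely the negation of $\iA$ being Haar null. The hard part throughout is reconciling the clean equality $f(x_0) = c$ -- on which the strict-modulus computation depends -- with the generic absence of atoms for $\mu$; engineering a robust workaround is where the most delicate work will be needed.
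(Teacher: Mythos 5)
You correctly identify the two key tools (Theorem~\ref{t:Ulam} and Lemma~\ref{l:iK}) and handle the easy sub-case exactly as the paper does: if some point $(x_0, y_0)$ has $\mu$-positively many $f\in\iK$ with $f(x_0)=y_0$, then $g(x) = y_0 + h(|x-x_0|)$ (or your amplified variant) forces $\iZ(f-g)=\{x_0\}$ for those $f$ by the strict modulus bound. You also correctly pinpoint the central obstacle: for an atomless $\mu$ every such set of functions has measure zero, so one must engineer a $g$ that works without pinning $f(x_0)$ exactly.

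However, your proposed workaround for that hard case does not go through, and this is where the paper's real work lies. The idea of allowing $|f(x_0)-c|\leq\varepsilon$ and arguing ``net monotone decrease'' of $f-g_{x_0,c}$ inside the buffer interval is quantitatively false: the strict inequality $|f(x)-f(z)|<h(|x-z|)$ is not uniform near zero, and $g_{x_0,c}$ need not dominate the oscillation of $f$ on subintervals away from $x_0$. Concretely, for $x_0<x<x'$ one has $(f-g)(x')-(f-g)(x) = (f(x')-f(x)) - 2\bigl(h(x'-x_0)-h(x-x_0)\bigr)$, and $|f(x')-f(x)|<h(x'-x)$ gives no lower bound of the form $h(x'-x_0)-h(x-x_0)\gtrsim h(x'-x)$ (e.g.\ for concave $h$ the increment of $h$ near $x'\approx x$ far from $x_0$ is much smaller than $h(x'-x)$). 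So $f-g$ can oscillate and produce a Cantor set of zeros in your buffer, and the singleton conclusion fails. The Fubini idea over $(x_0,c)$ faces the same issue, since for each fixed pair the good event still has measure zero. The paper's actual Case~2 argument is structurally different: it abandons the single-point construction entirely, builds a devil's-staircase-type $g$ adapted to a nested family of sets $C_n$ and graphs $\Gamma_n$, shows by a compactness/limiting argument that for positively many $f$ the zero set $\iZ(f-g)$ has an \emph{isolated} point lying in $g^{-1}(C)$, and then trims $g$ to $\widehat{g}$ to convert an isolated zero into a unique one. That multi-scale Cantor construction is precisely what replaces the failed $\varepsilon$-perturbation, and it is the essential content of the proof that your proposal leaves open.
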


\begin{proof} It is enough to prove the first statement, because the definition of shyness readily implies the second one.
Lemma~\ref{l:Bor} implies that $\iA$ is Borel, so the set $\{f\in C[0,1]: \iZ(f-g) \textrm{ is a singleton}\}=\iA+g$ is also Borel for each
$g\in C[0,1]$.

By Theorem~\ref{t:Ulam}
we may assume by shifting, restricting and normalizing $\mu$ that there is a compact set $\iK\subset C[0,1]$ such that
$\mu(\iK)=1$ and $f(x)\in [0,1]$ for all $f\in \iK$ and $x\in [0,1]$. For each compact set $\Gamma \subset [0,1]^2$ let us
define the compact set
$$\pi(\Gamma)=\{f\in \iK: \exists (x,y)\in \Gamma \textrm{ such that } f(x)=y\}.$$
First assume that there exists a point $(x_0,y_0)\in [0,1]^2$ with $\mu(\pi(\{(x_0,y_0)\}))>0$. By Lemma~\ref{l:iK} there is a strictly increasing
function $h\in C[0,1]$ such that $h(0)=0$ and
for all $f\in \iK$ and $x,z\in [0,1]$, $x\neq z$ we have
\begin{equation} \label{eq:fh} |f(x)-f(z)|<h(|x-z|).
\end{equation}
Let us define $g\in C[0,1]$ as $g(x)=y_0+h(|x-x_0|)$. Then clearly $\iZ(f-g)=\{x_0\}$ for all $f\in \pi(\{(x_0,y_0)\})$, thus
$$\mu(\{f\in \iK: \iZ(f-g) \textrm{ is a singleton}\})\geq \mu(\pi(\{(x_0,y_0)\}))>0,$$
which concludes the proof.

Therefore we may assume that $\mu(\pi(\Delta))=0$ for every finite set $\Delta\subset [0,1]^2$.
We prove that it is enough to find a function $g\in C[0,1]$ such that
\begin{equation}\label{eq:isolated}
\mu(\{f\in \iK: \iZ(f-g) \textrm{ has an isolated point}\})>0.
\end{equation}
Indeed, by \eqref{eq:isolated} we may assume that there are rational numbers $0\leq a<b\leq 1$ with
\begin{equation} \label{eq:gafa} \mu(\{f\in \iK: \iZ(f-g)\cap [a,b] \textrm{ is a singleton},\, g(a)\leq f(a),\, f(b)\leq g(b)\})>0,
\end{equation}
since the other three cases with reversed inequalities are similar. Define $\widehat{g}\in C[0,1]$ as
$$\widehat{g}(x)=\begin{cases} g(a)-h(a-x) & \textrm{ if } x\in [0,a), \\
g(x) & \textrm{ if } x\in [a,b], \\
g(b)+h(x-b) & \textrm{ if } x\in(b,1].
\end{cases}$$
Then for all $f\in \iK$ we have $\widehat{g}(x)<f(x)$ if $x\in [0,a)$ and $f(x)<\widehat{g}(x)$ if $x\in (b,1]$. Thus
$\iZ(f-\widehat{g})=\iZ(f-g)\cap [a,b]$ for all $f\in \iK$, so \eqref{eq:gafa} implies that
$$\mu(\{f\in C[0,1]: \iZ(f-\widehat{g}) \textrm{ is a singleton}\})>0,$$
and we are done.

Now we show \eqref{eq:isolated}. First we define the function $g\in C[0,1]$. Let $\{\alpha_n\}_{n\in \NN}$ be a sequence of positive reals such that
\begin{equation} \label{eq:alfa} \alpha_0=1/2 \textrm{ and } \alpha_{n+1} \leq \alpha_{n}/2 \textrm{ for all } n\in \NN,
\end{equation}
the exact values will be given later. For all $n\in \NN^+$ and $(k_1,\dots, k_n)\in \{-1,1\}^n$ let
$$z_{k_1\dots k_n}=\frac12+\sum_{i=1}^n k_i \alpha_i.$$
Consider
$$Z=\{0\}\cup \left\{z_{k_1\dots k_n}: (k_1,\dots, k_n)\in \{-1,1\}^n, \, n\in \NN^+\right\}.$$
Let $g(0)=0$, and for all $n\in \NN^+$ and $(k_1,\dots, k_n)\in \{-1,1\}^n$ let
\begin{equation} \label{eq:gdef} g(z_{k_1\dots k_n})=\frac12+\sum_{i=1}^n \frac{k_i}{2^{i+1}}.
\end{equation}
For all $x\in [0,1]$ let
\begin{equation} \label{eq:gdef2} g(x)=\sup\{g(z): z\in Z, \, z\leq x\}.
\end{equation}
Then \eqref{eq:alfa} and \eqref{eq:gdef} easily imply that $g|_{Z}$ is well-defined and non-decreasing, so $g$ is also
well-defined and non-decreasing. Therefore the definitions yield
$g([0,1])\subset [g(0),g(1)]=[0,1]$. As a monotone function can have only jump discontinuities and
$g(Z)$ is dense in $[0,1]$ by \eqref{eq:gdef}, we obtain that $g\colon [0,1]\to [0,1]$ is continuous.

We prove that if the numbers $\alpha_n$ are small enough then $g$ satisfies \eqref{eq:isolated}.
For all $n\in \NN^+$ and $i\in \{1,\dots,2^n\}$ let $\phi(i,n)$ be the $i$th element of $\{-1,1\}^n$ with respect to the lexicographical ordering. Note that $\phi(i,n)$ precedes $\phi(j,n)$ with respect to this ordering iff $z_{\phi(i,n)} < z_{\phi(j,n)}$ with respect to the usual ordering of the real numbers.
Let $C_0=[0,1]$ and $\Gamma_0=\{1/2\}\times [0,1]$. For the definition of $h$ recall \eqref{eq:fh}.
Assume by induction that $\alpha_n$, $C_n$ and $\Gamma_n$ are already defined for some $n\in \NN$ and let
\begin{align}
C_{n+1}&=C_{n}\setminus \bigcup_{i=0}^{2^{n+1}} U\left(\frac{i}{2^{n+1}},h(4\alpha_{n+1})\right), \label{eq:cn}  \\
\Gamma_{n+1}&=\bigcup_{i=1}^{2^{n+1}} \left(\left\{z_{\phi(i,n+1)}\right\}\times \left(\left[\frac{i-1}{2^{n+1}},\frac{i}{2^{n+1}}\right]\cap C_{n+1} \right)\right) \label{eq:gamma}.
\end{align}
We show that if $\alpha_{n+1}\in (0,\alpha_n/2]$ is small enough then
\begin{equation} \label{eq:mucap} \mu(\pi(\Gamma_{n+1})\cap \pi(\Gamma_{n}))\geq \mu(\pi(\Gamma_{n}))-\frac{1}{2^{n+2}}.
\end{equation}
For all $m\in \NN^+$ let $C_{n+1}^{m}$ and $\Gamma_{n+1}^{m}$ be the sets $C_{n+1}$ and $\Gamma_{n+1}$ defined
by the value $\alpha_{n+1}=1/m$. For \eqref{eq:mucap} it is enough to prove that
\begin{equation} \label{eq:mun1} \lim_{m\to \infty} \mu(\pi(\Gamma_{n+1}^{m})\cap \pi(\Gamma_n))= \mu(\pi(\Gamma_n)). \end{equation}
Observe that $\Gamma_n$ consists of finitely many vertical line segments and let
$$\Delta_n=\{(x,y)\in \Gamma_{n}:2^{n+1}y\in \NN \textrm{ or } (x,y) \textrm{ is an endpoint of a segment of } \Gamma_n\}.$$
As $\Delta_n$ is finite, $\mu(\pi(\Delta_n))=0$ by our assumption. Since $h(4/m)\to 0$ as
$m\to \infty$, it is easy to see that for every given $(x,y)\in \Gamma_n\setminus \Delta_n$ there is
a constant $c=c(x,y)>0$ such that if $m$ is large
enough then $\Gamma_{n+1}^{m}$ contains either the vertical line segment $\{x-1/m\}\times [y-c,y+c]$ or
$\{x+1/m\}\times [y-c,y+c]$. Thus for every $f\in \pi(\Gamma_n)\setminus \pi(\Delta_n)$ there exists an
$M(f) \in \NN^+$ such that for all $m\geq M(f)$ we have $f\in \pi(\Gamma_{n+1}^{m})$.
For all $m\in \NN^+$ define
$$\iA_{m}=\{f\in (\pi(\Gamma_{n+1}^{m})\cap \pi(\Gamma_n))\setminus \pi(\Delta_n): M(f)\leq m\},$$
then clearly for all  $m\in \NN^+$ we have $\iA_m\subset \pi(\Gamma_{n+1}^m)\cap \pi(\Gamma_n)$.
 The definition of $M(f)$ implies that $\iA_m\subset \iA_{m+1}$
and the existence of $M(f)$ yields that $\bigcup_{m=1}^{\infty} \iA_m=\pi(\Gamma_n)\setminus \pi(\Delta_n)$.
Therefore $\mu(\pi(\Delta_n))=0$ implies that
$$\liminf_{m\to \infty}\mu(\pi(\Gamma_{n+1}^{m})\cap \pi(\Gamma_n))\geq \lim_{m\to \infty} \mu(\iA_m)=\mu(\pi(\Gamma_n)\setminus \pi(\Delta_n))=
\mu(\pi(\Gamma_n)),$$
so \eqref{eq:mun1} is satisfied. Thus $\alpha_n$, $C_n$ and $\Gamma_n$ can be defined for all $n\in \NN$ such that \eqref{eq:cn},
\eqref{eq:gamma} and \eqref{eq:mucap} hold. Then \eqref{eq:mucap} and $\mu(\pi(\Gamma_0))=1$ imply that
\begin{equation} \label{eq:mucap2}
\mu\left(\bigcap_{n=1}^{\infty}\pi(\Gamma_n)\right)\geq 1-\sum_{n=1}^{\infty} \frac{1}{2^{n+1}}=\frac 12>0.
\end{equation}
Let us consider
$$C=\bigcap_{n=1}^{\infty} C_n \quad \textrm{and} \quad K=g^{-1}(C).$$

We show that for all $f\in \iK$
\begin{equation} \label{eq:numz} \iZ(f-g)\cap K\subset \{\textrm{isolated points of } \iZ(f-g)\}.
\end{equation}
For each $n\in \NN^+$ and $i\in \{1,\dots,2^n\}$ define the open interval
$$ I_{i,n}=\left(\frac{i-1}{2^n}, \frac{i}{2^n}\right),$$
first we prove that
\begin{equation} \label{eq:diamg} \diam g^{-1}(I_{i,n}) = 2\sum_{k=n+1}^{\infty} \alpha_k\leq 4\alpha_{n+1}.
\end{equation}
The inequality is clearly implied by \eqref{eq:alfa}, so we only need to check the equality. Note that it is important that the $I_{i,n}$ are open,
otherwise the constant pieces of the graph would make the pre-image much bigger.
If $y = \frac{2i-1}{2^{n+1}}$ is the midpoint of $I_{i,n}$ then it is easy to see that $g^{-1} (y) = \{ z_{k_1\dots k_n} \}$ for some $(k_1,\dots, k_n)\in \{-1,1\}^n$. Then $I_{i,n} = U (y, \sum_{k=n+1}^\infty \frac{1}{2^{k+1}})$, and one can check using the definition of $g$ that this corresponds to $g^{-1}(I_{i,n}) = U (z_{k_1\dots k_n}, \sum_{k=n+1}^\infty \alpha_{k})$, which proves \eqref{eq:diamg}.

Let $x\in \iZ(f-g)\cap K$, it is enough to show that if $z\in [0,1]$ with $0<|x-z|\leq 4\alpha_1$ then $f(z)\neq g(z)$. As $\alpha_n\searrow 0$,
there exists a unique number $n\in \NN^+$ such that
\begin{equation} \label{eq:xz<} 4\alpha_{n+1}< |x-z|\leq  4\alpha_{n}.\end{equation}
Since $x\in K$ implies that $g(x)\in C_n$, there exists an $i\in \{1,\dots,2^n\}$ such that
\begin{equation} \label{eq:distg} g(x)\in I_{i,n} \textrm{ and } \dist(\{g(x)\},\partial I_{i,n})\geq h(4\alpha_n).
\end{equation}
Then \eqref{eq:diamg} and \eqref{eq:xz<} imply that $|x-z|>\diam g^{-1}(I_{i,n})$, so $x\in g^{-1}(I_{i,n})$ yields that
$z\notin  g^{-1}(I_{i,n})$. Therefore $g(z)\notin I_{i,n}$ and \eqref{eq:distg} imply that
\begin{equation}\label{eq:gxgz} |g(x)-g(z)|\geq \dist(\{g(x)\},\partial I_{i,n})\geq h(4\alpha_n). \end{equation}
The monotonicity of $h$ and \eqref{eq:xz<} yield that
\begin{equation} \label{eq:hxz}  h(|x-z|)\leq h(4\alpha_n). \end{equation}
Therefore $f(x)=g(x)$, the triangle inequality, \eqref{eq:gxgz}, \eqref{eq:fh}
and \eqref{eq:hxz} imply that
\begin{align*} |f(z)-g(z)|&=|(g(x)-g(z))-(f(x)-f(z))| \\
&\geq |g(x)-g(z)|-|f(x)-f(z)| \\
&>h(4\alpha_n)-h(|x-z|)\geq 0,
\end{align*}
thus $f(z)\neq g(z)$. Hence \eqref{eq:numz} holds.

By \eqref{eq:mucap2} and \eqref{eq:numz} it is enough to show that
\begin{equation} \label{eq:gamfg} \bigcap_{n=1}^{\infty}\pi(\Gamma_n)\subset \{f\in \iK: \iZ(f-g)\cap K\neq \emptyset\}.
\end{equation}
Let us fix $f\in \bigcap_{n=1}^{\infty}\pi(\Gamma_n)$, we need to find an $x\in K$ such that $f(x)=g(x)$.
For every $n\in \NN^+$ we can select a point $(x_n,f(x_n))\in \Gamma_n$.
We can choose a convergent subsequence $\lim_{k\to \infty} x_{n_k}=x$ for some $x\in [0,1]$.
Then \eqref{eq:gamma} yields that for every $k\in \NN^+$ we have
$x_{n_k}=z_{\phi(i_k,n_k)}$ and $f(x_{n_k})\in I_{i_k,n_k}$ for some $i_k\in \{1,\dots, 2^{n_k}\}$.
The definition of $g$ implies that $g(x_{n_k})$ is the midpoint of
$I_{i_k,n_k}$, so $|f(x_{n_k})-g(x_{n_k})|\leq 2^{-n_k}\leq 2^{-k}$ for all $k\in \NN^+$. Therefore
$$f(x)=\lim_{k\to \infty} f(x_{n_k})=\lim_{k\to \infty} g(x_{n_k})=g(x).$$
Then \eqref{eq:gamma} yields $f(x_{n_k})\in C_{n_k}$ for all $k\in \NN^+$. By \eqref{eq:cn} we have $C_{n+1}\subset C_n$ for all $n\in \NN^+$, so
$f(x)\in \bigcap_{k=1}^{\infty} C_{n_k}=\bigcap_{n=1}^{\infty} C_n=C$.
Thus $x=g^{-1}(f(x))\in g^{-1}(C)=K$, hence \eqref{eq:gamfg} holds.
The proof is complete.
\end{proof}

Now we turn to the question concerning positively many level sets.

\begin{lemma} \label{l:Bnonshy} Let $\Delta$ be a family of subsets of $[0,1]$ and consider
\begin{align*} \iA&=\{f\in C[0,1]: f^{-1}(0)\in \Delta\}, \\
\iB&=\{f\in C[0,1]: \exists^{\lambda} y\in \RR \textrm{ such that } f^{-1}(y)\in \Delta\}.
\end{align*}
If $\iA$ is a non-shy Borel set then $\iB$ is a non-shy Borel set, too.
\end{lemma}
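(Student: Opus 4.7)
The plan is to handle Borelness and non-shyness separately. For Borelness, I would use the continuous shift map $S\colon \RR\times C[0,1]\to C[0,1]$, $S(y,f)=f-y$ (identifying $y$ with the constant function), to observe that $\widetilde{\iA}:=S^{-1}(\iA)=\{(y,f):f^{-1}(y)\in \Delta\}$ is Borel in $\RR\times C[0,1]$. By Fubini the sections $\widetilde{\iA}_f=\{y\in\RR:f^{-1}(y)\in \Delta\}$ are Borel for every $f$, and $f\mapsto \lambda(\widetilde{\iA}_f\cap [-n,n])$ is a Borel function of $f$. Hence
\[
\iB=\{f\in C[0,1]:\lambda(\widetilde{\iA}_f)>0\}=\bigcup_{n=1}^{\infty}\{f:\lambda(\widetilde{\iA}_f\cap [-n,n])>0\}
\]
is a countable union of Borel sets, thus Borel.

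For non-shyness I would argue by contradiction using a convolution trick. Suppose $\iB$ is shy, witnessed by a Borel probability measure $\mu$ on $C[0,1]$ with $\mu(\iB+g)=0$ for every $g\in C[0,1]$. Let $\nu$ be the Borel probability measure on $C[0,1]$ obtained by pushing forward the Lebesgue measure on $[0,1]$ along the inclusion $c\mapsto c\cdot \mathbf{1}$ of constants. Then $\mu\ast\nu$ is a Borel probability measure on $C[0,1]$, and I would show $(\mu\ast\nu)(\iA+g)=0$ for every $g$, which contradicts the non-shyness of $\iA$.

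The computation is a direct application of Fubini. For any $g\in C[0,1]$,
\begin{align*}
(\mu\ast\nu)(\iA+g)
&=\int_{C[0,1]}\int_{0}^{1}\chi_{\iA}(f+c-g)\,dc\,d\mu(f)\\
&=\int_{C[0,1]}\lambda\bigl(\{c\in[0,1]:(f-g)^{-1}(-c)\in\Delta\}\bigr)\,d\mu(f)\\
&=\int_{C[0,1]}\lambda\bigl(\widetilde{\iA}_{f-g}\cap[-1,0]\bigr)\,d\mu(f).
\end{align*}
The integrand is bounded by $1$, and it is positive only when $\lambda(\widetilde{\iA}_{f-g})>0$, i.e.\ only when $f-g\in \iB$, equivalently $f\in \iB+g$. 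Therefore the integral is at most $\mu(\iB+g)=0$, giving the desired contradiction.

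I do not expect a serious obstacle: the main things to double-check are that $\mu\ast\nu$ is indeed a Borel probability measure (immediate, since convolution of Borel probability measures on a Polish group is Borel probability) and that the Fubini manipulation with the substitution $y=-c$ is legitimate (it is, since $\widetilde{\iA}$ is Borel in $\RR\times C[0,1]$). The proof does not require any structural assumption on $\Delta$ beyond what is implicit in the Borelness of $\iA$.
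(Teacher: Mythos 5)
Your proof is correct and follows essentially the same strategy as the paper's: establish Borelness of $\iB$ via measurability of $f\mapsto\lambda(\{y:f^{-1}(y)\in\Delta\})$, then argue by contradiction with a Fubini computation for a convolution of the hypothetical shyness witness $\mu$ with Lebesgue measure on constant functions. Your one small improvement is to take $\nu$ as the (finite) pushforward of Lebesgue measure on $[0,1]$ rather than the $\sigma$-finite Lebesgue measure on all constants; this makes $\mu\ast\nu$ a Borel probability measure outright, so you avoid the paper's final restrict-and-normalize step.
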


\begin{proof} Let $\iL$ be the one-dimensional Lebesgue measure defined on the constant functions of $C[0,1]$, then
clearly
\begin{equation} \label{eq:BB} \iB=\{f\in C[0,1]: \iL(\iA-f)>0\}.
\end{equation}
As $\iA$ is a Borel set, the function $f \mapsto \iL(\iA-f)$ is Borel measurable by \cite[417A]{Fr},
so $\iB$ is Borel. Assume to the contrary that $\iB$ is shy. Then there is a Borel probability measure $\mu$ on $C[0,1]$
such that $\mu(\iB-g)=0$ for all $g\in C[0,1]$, so \eqref{eq:BB} yields
$$\mu(\{f\in C[0,1]: \iL(\iA+g-f)>0\})=\mu(\iB-g)=0.$$
Therefore we obtain by Fubini's theorem that for all $g\in C[0,1]$
$$(\mu \ast \iL) (\iA+g)=\int_{C[0,1]} \iL(\iA+g-f) \, \mathrm{d} \mu(f)=0.$$
Although $\mu \ast \iL$ is only $\sigma$-finite,
by restricting and normalizing it we obtain a probability measure witnessing the shyness of $\iA$.
This is a contradiction, thus the proof is complete.
\end{proof}

\begin{theorem} \label{t:nonshy}
The set
$$\iB=\{f\in C[0,1]:  \exists^{\lambda} y\in \RR \textrm{ such that } f^{-1}(y) \textrm{ is a singleton}\}$$
is non-shy in $C[0,1]$.
\end{theorem}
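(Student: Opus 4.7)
The plan is to combine Theorem~\ref{t:sing} with Lemma~\ref{l:Bnonshy} in an essentially immediate way; all the real work has already been done. Specifically, let $\Delta$ denote the family of singleton subsets of $[0,1]$, so that
\[
\iA=\{f\in C[0,1]:f^{-1}(0)\in \Delta\}=\{f\in C[0,1]:\iZ(f) \text{ is a singleton}\}.
\]
Theorem~\ref{t:sing} tells us precisely that $\iA$ is non-shy in $C[0,1]$, and Lemma~\ref{l:Bor} guarantees that $\iA$ is Borel (being the difference $\iB_1\setminus \iB_2$ of a closed set and an $F_\sigma$ set consisting of functions with at least one, respectively at least two, zeros).

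With the Borel non-shy set $\iA$ in hand, I would then apply Lemma~\ref{l:Bnonshy} directly with this choice of $\Delta$. That lemma produces, from the non-shy Borel set $\iA$, the non-shy Borel set
\[
\iB=\{f\in C[0,1]: \exists^\lambda y\in \RR\text{ such that }f^{-1}(y)\in \Delta\},
\]
which is exactly the set appearing in the statement of Theorem~\ref{t:nonshy}. So modulo invoking these two prior results, the proof consists of a single sentence.

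There is essentially no obstacle, since Theorem~\ref{t:sing} has already resolved the hard measure-theoretic construction (producing the function $g$ whose translates witness that a singleton zero set occurs with positive probability under any given Borel probability $\mu$), and Lemma~\ref{l:Bnonshy} has already handled the Fubini-type passage from ``$0$ is a good level'' to ``positively many levels are good.'' The only thing worth being slightly careful about is to spell out why $\iA$ is Borel so that Lemma~\ref{l:Bnonshy} applies — but that is handled by Lemma~\ref{l:Bor}. Hence the write-up should simply be a one-paragraph invocation of these three results.
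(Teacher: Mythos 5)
Your proposal is correct and is essentially identical to the paper's own proof, which also invokes Lemma~\ref{l:Bor} for Borelness, Theorem~\ref{t:sing} for non-shyness of $\iA$, and then applies Lemma~\ref{l:Bnonshy} with $\Delta$ the family of singletons. (Incidentally, you've also tacitly corrected a reference typo in the paper, which cites ``Theorem~\ref{t:nonshy}'' where it plainly means Theorem~\ref{t:sing}.)
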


\begin{proof} Lemma~\ref{l:Bor} and Theorem~\ref{t:nonshy} yield that $\iA=\{f\in C[0,1]: \#f^{-1}(0)=1\}$
is a non-shy Borel set. Lemma~\ref{l:Bnonshy} with $\Delta=\{ \{x\} : x \in [0,1]\}$
implies that $\iB$ is also a non-shy Borel set.
\end{proof}

\begin{remark} Similar arguments yield that for all $n\in \NN^+$ the sets
\begin{align*} \iA_n&=\{f\in C[0,1]: \#f^{-1}(0)=n\}, \\
\iB_n&=\{f\in C[0,1]:  \exists^{\lambda} y\in \RR \textrm{ with } \#f^{-1}(y)=n\}
\end{align*}
are non-shy, the details are left to the reader. The sets $\iA_n$ are pairwise disjoint.
\end{remark}

\section{All non-extremal level sets can be of maximal dimension} \label{s:maxlevel}

For the prevalent $f\in C[0,1]$ the sets $f^{-1}(\min f)$ and $f^{-1}(\max f)$ are singletons, see e.g. \cite{BDE}. The aim of this section is to prove that all other non-empty level sets can be of Hausdorff dimension one. This is a complementary result to Theorem~\ref{t:nonshy}.

\begin{theorem}  \label{t:maxlevel} The set
$$\iC=\{f\in C[0,1]: \dim_H f^{-1}(y)=1 \textrm{ for all } y\in  (\min f, \max f)\}$$
is non-shy in $C[0,1]$.
\end{theorem}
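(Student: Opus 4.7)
My plan is to invoke Lemma~\ref{l:nshy}: given an arbitrary compact set $\iK\subset C[0,1]$, I will exhibit a single $g\in C[0,1]$ (depending on $\iK$) with $\iK+g\subset \iC$. Apply Lemma~\ref{l:iK} to get a strictly increasing $h\in C[0,1]$ with $h(0)=0$ dominating the modulus of every $f\in\iK$. The function $g$ will be built so wildly oscillating at every scale that even after a perturbation of modulus $h$, each non-extremal level set of $f+g$ is forced to contain a rich Cantor-type subset of Hausdorff dimension $1$.

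For the construction I pick integer sequences $(a_n),(b_n)$ with $b_n\le a_n$, satisfying the growth condition of Lemma~\ref{l:Can}, together with positive reals $(\alpha_n)$ such that $\sum_n \alpha_n<\infty$, $\alpha_n>n\cdot h\!\left(1/(a_1\cdots a_n)\right)$, and $a_{n+1}\alpha_{n+1}\ge 10 b_{n+1}\alpha_n$. These are mutually compatible by first taking $a_n$ growing extremely fast relative to $b_n$, then shrinking $\alpha_n$ to satisfy the $h$-bound. Define $g=\lim g_n$ inductively: $g_0$ is affine on $[0,1]$, and $g_{n+1}$ is obtained from $g_n$ by replacing the restriction of $g_n$ to each $n$-th level elementary interval $I_{i_1\dots i_n}$ with a continuous piecewise linear zig-zag of $a_{n+1}$ teeth over the $a_{n+1}$ children $I_{i_1\dots i_n i_{n+1}}$. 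The peaks of the teeth are assigned central heights $v_{i_1\dots i_n i_{n+1}}$ inside the interval $[v_{i_1\dots i_n}-\alpha_{n+1},\,v_{i_1\dots i_n}+\alpha_{n+1}]$, arranged in a zig-zag pattern so that the $a_{n+1}$ height intervals of width $\alpha_{n+1}/5$ centered at $v_{i_1\dots i_n i_{n+1}}$ collectively cover $[v_{i_1\dots i_n}-\alpha_{n+1}/2,\,v_{i_1\dots i_n}+\alpha_{n+1}/2]$ with multiplicity at least $3b_{n+1}$. Summability of $\alpha_n$ gives uniform convergence, so $g\in C[0,1]$.

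Fix $f\in\iK$ and $y\in(\min(f+g),\max(f+g))$; I must show $\dim_H(f+g)^{-1}(y)=1$. Since $y$ is strictly between the extremes of $f+g$, pick $n_0$ large and an $n_0$-level interval $J_0=I_{i_1\dots i_{n_0}}$ such that $y$ lies in $[v_{i_1\dots i_{n_0}}-\alpha_{n_0}/3,\,v_{i_1\dots i_{n_0}}+\alpha_{n_0}/3]$ (this is where the non-extremality enters). Inductively, given a selected $n$-th level interval $J_n$ with $y$ well-centered in the range, use the multiplicity-$3b_{n+1}$ covering of heights to find at least $3b_{n+1}$ children whose nominal height interval contains $y$. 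Since $\osc(f|_{J_n})< h(1/(a_1\cdots a_{n+1}))<\alpha_{n+1}/n$, the perturbation by $f$ can spoil at most a few of these candidates, leaving $\ge b_{n+1}$ children on each of which $f+g$ crosses the value $y$. By the intermediate value theorem each such child contributes a point of $(f+g)^{-1}(y)$, and choosing these $b_{n+1}$ children inductively produces an $(a_n,b_n)$-type compact set $C_y\subset(f+g)^{-1}(y)$. Lemma~\ref{l:Can} then yields $\dim_H(f+g)^{-1}(y)\ge\dim_H C_y=1$.

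The main obstacle is the uniformity over all non-extremal $y$: a generic or random construction would only guarantee large level sets on a typical set of values. The resolution is the deterministic \emph{multiplicative over-covering} at each level, which converts any single crossing of $y$ by the parent zig-zag into roughly $3b_{n+1}$ crossings by the children, leaving a safety margin to absorb the $f$-perturbation bounded by $h$. The other delicate point is handling values $y$ near $\min(f+g)$ or $\max(f+g)$: these may fail to satisfy the well-centered condition at small levels, but they must satisfy it at some sufficiently deep level $n_0$ (depending on $y$ and $f$), from which the Cantor extraction can proceed; this is exactly why we need $y$ to be strictly non-extremal rather than merely in the image.
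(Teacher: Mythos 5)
Your proposal follows the same high-level scheme as the paper: apply Lemma~\ref{l:nshy} to a given compact $\iK\subset C[0,1]$, use Lemma~\ref{l:iK} to dominate the moduli of $\iK$ by a fixed $h$, build a rapidly oscillating $g$, and for each non-extremal $y$ extract an $(a_n,b_n)$-type compact set inside $(f+g)^{-1}(y)$, finishing with Lemma~\ref{l:Can}. The place where the two proofs genuinely differ is the inductive step that turns one level-$n$ interval into $b_{n+1}$ viable children. The paper keeps $g$ as a sum $g=\sum_n g_n$ of independent sawteeth of amplitude $2^{-n}$ and period $2p_n$ and \emph{localizes}: the $b_{m+n+1}$ children are selected inside the small ball $U(x_{i_1\dots i_n},p_{m+n}/16)$, where both $f$ (via $h$) and the partial sum $G_{m+n}$ (via its Lipschitz constant, bounded by roughly $2^{2-(m+n)}/p_{m+n}$) move by at most $2^{-(m+n+2)}$, which is exactly half the amplitude $2^{-(m+n+1)}$ of the next sawtooth; thus every chosen child carries a crossing. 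Your nested-refinement $g$ with peak heights tiling a central band with multiplicity $\geq 3b_{n+1}$ replaces this localization by over-covering, trading the Lipschitz bookkeeping for $G_{m+n}$ for a more intricate design of the tooth heights. That is a legitimate alternative route, and the observation that over-covering gives the uniformity in $y$ that a generic or random construction would not is exactly the right instinct.

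Several points in the sketch, however, are not yet closed, and the stated parameter constraints do not by themselves close them. First, the well-centering invariant must be shown to propagate: after selecting a child whose nominal height interval (radius $\alpha_{n+1}/10$) contains the effective target, at the next level the covered band has radius $\alpha_{n+2}/2$, so one needs something like $\alpha_{n+2}\geq\alpha_{n+1}/5$; this is compatible with $\sum_n\alpha_n<\infty$ (a geometric sequence with ratio in $(1/5,1)$ works), but it is not implied by your listed conditions, and in particular not by $a_{n+1}\alpha_{n+1}\geq 10b_{n+1}\alpha_n$. Second, the estimate $\osc(f|_{J_n})<h(1/(a_1\cdots a_{n+1}))$ is not correct: $J_n$ has length $1/(a_1\cdots a_n)$, so the honest bound is $\osc(f|_{J_n})\leq h(1/(a_1\cdots a_n))<\alpha_n/n$, which is weaker and must then be compared with $\alpha_{n+1}$ using the decay constraint just mentioned. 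Third, the initial step must center $y$ relative to $f+g$ (as the paper does, applying the intermediate value theorem to $f+G_m$), not relative to the $g$-height $v_{i_1\dots i_{n_0}}$ alone; the additive shift by $f$ is of fixed size and cannot be absorbed into a margin $\alpha_{n_0}/3$ that tends to zero. None of these issues looks fatal, but as written the induction does not close, whereas the paper's localization step avoids all of them at once by a single Lipschitz estimate.
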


\begin{proof} Let $\iK\subset C[0,1]$ be an arbitrarily fixed compact set. By Lemma~\ref{l:nshy} it is enough to construct a $g\in C[0,1]$ such that
$\iK+g\subset \iC$. First we construct $g$. By Lemma~\ref{l:iK} there is a strictly increasing function $h\in C[0,1]$
such that $h(0)=0$ and for all $f\in \iK$ and $x,z\in [0,1]$ we have
$$|f(x)-f(z)|\leq h(|x-z|).$$
For all $n\in \NN^+$ fix positive integers $a_n>b_n$ such that
\begin{equation} \label{eq:aibi} a_n\geq \max \left\{\frac{1}{h^{-1}(2^{-(n+2)})},2^{5n^2}\right\} \quad \textrm{and} \quad b_n=\left\lceil\frac{a_n}{32}\right\rceil,
\end{equation}
where $\lceil \cdot \rceil$ denotes the upper integer part. For all $n\in \NN^+$ let
$$p_n=\frac{1}{a_1\cdots a_n}.$$
Define $g_n\in C[0,1]$ for all $n\in \NN^+$ as
\begin{equation*} g_n(x)= \begin{cases}
(-1)^i2^{-n} & \textrm{ if } x=ip_n \textrm{ and } 0\leq i\leq a_1 \cdots a_n,  \\
\textrm{affine} & \textrm{ on } [(i-1)p_n,ip_n] \textrm{ for all } 1\leq i\leq a_1 \cdots a_n.
\end{cases}
\end{equation*}
Let us define $g\in C[0,1]$ and $G_n\in C[0,1]$ for all $n\in \NN^+$ as
$$g=\sum_{i=1}^{\infty} g_i \quad \textrm{and} \quad G_n=\sum_{i=1}^{n} g_i.$$

Now we prove that $\iK+g\subset \iC$.
Let us fix $f\in \iK$ and $y\in (\min (f+g),\max (f+g))$, we need to prove that $\dim_H (f+g)^{-1}(y)=1$. As $f+G_n$ uniformly
converges to $f+g$, the intermediate value theorem implies that there is an $m\in \NN^+$ and $x_{\emptyset}\in [0,1]$ such that $(f+G_m)(x_{\emptyset})=y$.
For all $n\in \NN$ let
\begin{align*} \iI_n&=\prod_{i=1}^{n} \{1,\dots, b_{m+i}\} \quad \textrm{and} \\
\iJ_{n}&=\{[(i-1)p_{m+n},ip_{m+n}]: 1\leq i\leq a_1\cdots a_{m+n}\},
\end{align*}
where we use the convention $\iI_0=\{\emptyset\}$ and $(i_1,\dots,i_0)=\emptyset$. Let $I_{\emptyset}\in \iJ_0$ such that $x_{\emptyset}\in I_{\emptyset}$. We construct for each $n\in \NN$ and $(i_1,\dots,i_n)\in \iI_n$ an interval $I_{i_1\dots i_n}\in \iJ_n$ and a point $x_{i_1\dots i_n}\in I_{i_1\dots i_n}$ such that for all $(i_1,\dots, i_{n+1})\in \iI_{n+1}$ we have
\begin{enumerate}
\item \label{eq22} $I_{i_1\dots i_{n+1}}\subset I_{i_1\dots i_n}$,
\item \label{eq11} $(f+G_{m+n})(x_{i_1\dots i_n})=y$.
\end{enumerate}
By definition \eqref{eq11} holds for $x_{\emptyset}$ and $I_{\emptyset}$.
Assume by induction that for some fixed $n\in \NN$ for each $(i_1,\dots,i_n)\in \iI_n$ the interval $I_{i_1\dots i_n}$ and the point $x_{i_1\dots i_n}\in I_{i_1\dots i_n}$ are defined. Let us fix $(i_1,\dots,i_n)\in \iI_n$. We can choose $b_{m+n+1}$ distinct elements of $\iJ_{n+1}$ which are subsets of  $I_{i_1\dots i_n}\cap U(x_{i_1\dots i_n}, p_{m+n}/16)$,
let us enumerate them as $I_{i_1\dots i_{n+1}}$ $(1\leq i_{n+1}\leq b_{m+n+1})$, then \eqref{eq22} holds. Fix $i_{n+1}\in \{1,\dots, b_{m+n+1}\}$ and define $u_{i_1\dots i_{n+1}},v_{i_1\dots i_{n+1}}\in I_{i_1\dots,i_{n+1}}$ such that
$$g_{m+n+1}(u_{i_1\dots i_{n+1}})=-2^{-(m+n+1)} \quad \textrm{and} \quad g_{m+n+1}(v_{i_1\dots i_{n+1}})=2^{-(m+n+1)}.$$
It is enough to prove that
\begin{equation} \label{eq:yeq} (f+G_{m+n+1})(u_{i_1\dots i_{n+1}})\leq y \leq (f+G_{m+n+1})(v_{i_1\dots i_{n+1}}),
\end{equation}
then by the intermediate value theorem we can choose an $x_{i_1\dots i_{n+1}}\in I_{i_1\dots i_{n+1}}$ satisfying \eqref{eq11}.
We prove the second inequality of \eqref{eq:yeq} only, the proof of the first one is analogous. As $(f+G_{m+n})(x_{i_1\dots i_n})=y$ and $g_{m+n+1}(v_{i_1\dots i_{n+1}})=2^{-(m+n+1)}$, it is enough to prove that
\begin{equation} \label{eq:gm} |(f+G_{m+n})(x_{i_1\dots i_n})-(f+G_{m+n})(v_{i_1\dots i_{n+1}})|\leq 2^{-(m+n+1)}.
\end{equation}
Since $x_{i_1\dots i_n},v_{i_1\dots i_{n+1}}\in I_{i_1\dots i_{n}}$, the definition of $h$, $p_{m+n}$ and $a_{m+n}$ imply that
\begin{equation} \label{eq:fxi} |f(x_{i_1\dots i_n})-f(v_{i_1\dots i_{n+1}})|\leq h(p_{m+n})\leq h(1/a_{m+n})\leq 2^{-(m+n+2)}.
\end{equation}
It is easy to show that for all $i\in \NN^+$ the function $g_i$ is Lipschitz and $\Lip(g_i)=2^{1-i}p^{-1}_i=2^{1-i}a_1\cdots a_i$. Thus
$G_{m+n}$ is Lipschitz and $a_{m+n}\geq 2^{m+n}$ implies that
\begin{align*} \Lip(G_{m+n})&\leq \sum_{i=1}^{m+n} \Lip(g_i)=\sum_{i=1}^{m+n} 2^{1-i}a_1\cdots a_i \\
&\leq 2a_1\cdots a_{m+n-1}+2^{1-(m+n)}a_1\cdots a_{m+n} \\
&\leq 2^{2-(m+n)}a_1\cdots a_{m+n}=\frac{2^{2-(m+n)}}{p_{m+n}}.
\end{align*}
Therefore $I_{i_1\dots i_{n}} \subset U(x_{i_1\dots i_n}, p_{m+n}/16)$ yields that
\begin{align} \label{eq:gm2}
\begin{split}
|G_{m+n}(x_{i_1\dots i_n})-G_{m+n}(v_{i_1\dots i_{n+1}})|&\leq \Lip(G_{m+n})|x_{i_1\dots i_n}-v_{i_1\dots i_{n+1}}| \\
&\leq \frac{2^{2-(m+n)}}{p_{m+n}} \cdot \frac{p_{m+n}}{16}=2^{-(m+n+2)}.
\end{split}
\end{align}
Equations \eqref{eq:fxi} and \eqref{eq:gm2} imply \eqref{eq:gm}, and the induction is complete.
For all $n\in \NN^+$ let $c_n=a_{m+n}$ and $d_n=b_{m+n}$. Set
$$C=\bigcap_{n=1}^{\infty} \left( \bigcup_{i_1=1}^{d_1} \dots \bigcup_{i_n=1}^{d_n} I_{i_1\cdots i_n}\right).$$
Then $C$ is a $(c_n,d_n)$-type compact set, see Definition~\ref{d:Cantor2}.
Then \eqref{eq:aibi} implies that $a_i\geq 2^{5i^2}$ and $32\geq a_i/b_i$, so for all $n\in \NN^+$ we have
$$c_n\geq 2^{5(m+n)^2}\geq 2^{5(n+1)^2}\geq  \left(\frac{c_1\cdots c_{n+1}}{d_1\cdots d_{n+1}}\right)^{n+1}.$$
Therefore Lemma~\ref{l:Can} implies that $\dim_H C=1$.

Finally, in order to prove $\dim_H (f+g)^{-1}(y)=1$, it is enough to show that $C\subset (f+g)^{-1}(y)$. Let us fix $x\in C$, we prove that $(f+g)(x)=y$.
For all $n\in \NN^+$ pick indices $i_n\in \{1,\dots, d_n\}$ such that $x\in I_{i_1\dots i_n}$,
then clearly $\lim_{n\to \infty} x_{i_1\dots i_n}=x$. As $f+G_{m+n}$ converges uniformly to $f+g$, property \eqref{eq11} implies that
$$(f+g)(x)=\lim_{n\to \infty} (f+G_{m+n})(x_{i_1\dots i_{n}})=y,$$
and the proof is complete.
\end{proof}

Theorems~\ref{t:nonshy} and \ref{t:maxlevel} yield the following.

\begin{corollary} The sets
\begin{align*} \iB&=\{f\in C[0,1]:  \exists^{\lambda} y\in \RR \textrm{ such that } f^{-1}(y) \textrm{ is a singleton}\}, \\
\iC&=\{f\in C[0,1]: \dim_H f^{-1}(y)=1 \textrm{ for all } y\in  (\min f, \max f)\}
\end{align*}
are disjoint non-shy sets in $C[0,1]$, so they are neither shy nor prevalent.
\end{corollary}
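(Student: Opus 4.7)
The plan is to invoke Lemma~\ref{l:nshy}: fix an arbitrary compact set $\iK \subset C[0,1]$ and explicitly construct a single drift $g \in C[0,1]$ such that $\iK + g \subset \iC$. By Lemma~\ref{l:iK} applied to $\iK$, I obtain a strictly increasing modulus $h \in C[0,1]$ with $h(0)=0$ such that $|f(x)-f(z)| \le h(|x-z|)$ for every $f \in \iK$ and $x,z \in [0,1]$. The drift $g$ will be built as a uniformly convergent sum $g = \sum_{n=1}^{\infty} g_n$, where each $g_n$ is a piecewise affine zigzag of amplitude $2^{-n}$ on a grid of mesh $p_n = (a_1 \cdots a_n)^{-1}$, for a very fast-growing sequence $a_n$ with companion branching sequence $b_n \approx a_n/32$. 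The growth condition will be something like $a_n \ge 2^{5n^2}$ together with $a_n \ge 1/h^{-1}(2^{-(n+2)})$, tuned so that the pair $(c_n,d_n) := (a_{m+n}, b_{m+n})$ eventually satisfies the hypothesis of Lemma~\ref{l:Can}, regardless of the shift $m$.

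The core of the argument is, given $f \in \iK$ and $y \in (\min(f+g),\max(f+g))$, to exhibit a Cantor-type set of Hausdorff dimension $1$ inside the level set $(f+g)^{-1}(y)$. Since $f + G_n \to f + g$ uniformly, with $G_n = \sum_{i \le n} g_i$, the intermediate value theorem furnishes, for some $m$, a root $x_{\emptyset}$ of $(f+G_m)(x) = y$. I then propagate this root along a $(b_{m+n})$-branching tree by induction: in each interval $I_{i_1 \dots i_n}$ at level $n$, near its designated root $x_{i_1 \dots i_n}$, I choose $b_{m+n+1}$ disjoint children from the finer grid inside a controlled neighbourhood (say $U(x_{i_1\dots i_n}, p_{m+n}/16)$). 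Inside each child interval I use the zigzag of $g_{m+n+1}$, which attains both $-2^{-(m+n+1)}$ and $+2^{-(m+n+1)}$ on every grid edge, to flank the value $y$ and invoke the intermediate value theorem to produce the next-level root.

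The main technical obstacle will be the oscillation estimate that makes this flanking possible: I must show that the total variation of $f + G_{m+n}$ on a child interval is strictly less than the zigzag amplitude $2^{-(m+n+1)}$ added or subtracted by $g_{m+n+1}$. The contribution from $f$ is bounded by $h(p_{m+n}) \le 2^{-(m+n+2)}$ thanks to the choice $a_{m+n} \ge 1/h^{-1}(2^{-(m+n+2)})$. The contribution from $G_{m+n}$ is controlled via $\Lip(g_i) = 2^{1-i}/p_i$, which telescopes to $\Lip(G_{m+n}) \lesssim 2^{2-(m+n)}/p_{m+n}$; on a child interval of length at most $p_{m+n}/16$ this again yields $\le 2^{-(m+n+2)}$. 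Summing these two bounds gives strictly less than $2^{-(m+n+1)}$, exactly as needed.

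Finally, taking $C = \bigcap_n \bigcup_{(i_1,\dots,i_n)} I_{i_1 \dots i_n}$ produces a $(c_n,d_n)$-type compact set in the sense of Definition~\ref{d:Cantor2}. The choice of $a_n$ will ensure that inequality \eqref{eq:ab} of Lemma~\ref{l:Can} holds, so $\dim_H C = 1$. Since each $x \in C$ is a limit of the tree's roots $x_{i_1 \dots i_n}$ and $f + G_{m+n} \to f + g$ uniformly, property $(f+G_{m+n})(x_{i_1 \dots i_n}) = y$ passes to the limit to give $C \subset (f+g)^{-1}(y)$, and hence $\dim_H (f+g)^{-1}(y) = 1$. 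This establishes $\iK + g \subset \iC$ and completes the proof via Lemma~\ref{l:nshy}.
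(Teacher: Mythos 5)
Your proposal is a faithful and correct reconstruction of the paper's proof of Theorem~\ref{t:maxlevel} (non-shyness of $\iC$): the drift $g = \sum g_n$ built from zigzags on a nested grid, the oscillation estimates via $\Lip(G_{m+n})$ and the modulus $h$, the inductive branching to produce a $(c_n,d_n)$-type Cantor set inside the level set, and the appeal to Lemmas~\ref{l:nshy}, \ref{l:iK}, and \ref{l:Can} are all exactly as in the paper. However, the statement you were asked to prove is the Corollary, which asserts more than Theorem~\ref{t:maxlevel}. The Corollary requires three ingredients: (1) $\iC$ is non-shy (which you prove), (2) $\iB$ is non-shy (Theorem~\ref{t:nonshy}, proved by a completely different route via Theorem~\ref{t:sing} and Lemma~\ref{l:Bnonshy}, constructing a probability measure $\mu$ and a drift giving positively many singleton level sets), and (3) $\iB \cap \iC = \emptyset$ together with the logical deduction that disjoint non-shy sets are neither shy nor prevalent.

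Concretely, what is missing: you never touch the non-shyness of $\iB$, which cannot be obtained by the $\iK+g$ method you use (one cannot arrange that every translate of a compact set lands in a set defined by a ``positively many $y$'' condition in that way; the paper instead exhibits, for each Borel probability measure $\mu$ on $C[0,1]$, a drift $g$ with $\mu(\iA+g)>0$ where $\iA$ is the Borel set of functions with singleton zero set, then uses convolution with Lebesgue measure on constants). You also omit the disjointness argument: if $f\in\iC$ then $\dim_H f^{-1}(y)=1$ for all $y\in(\min f,\max f)$, so $f^{-1}(y)$ can only be a singleton for $y\in\{\min f,\max f\}$, a two-point set of Lebesgue measure zero, hence $f\notin\iB$; and the closing step that two disjoint non-shy sets are each non-prevalent (if one were prevalent its complement, which contains the other, would be shy). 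As it stands, your argument proves Theorem~\ref{t:maxlevel} but not the Corollary.
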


\section{Dimensions of graphs of prevalent continuous maps} \label{s:graph}

By product of two metric spaces $(X,d_X)$ and $(Y,d_Y)$ we will always mean the $l^2$-product, that is,
$$d_{X\times Y}((x_{1},y_{1}),(x_{2},y_{2}))= \sqrt{d^{2}_{X}(x_1,x_2)+d^{2}_{Y}(y_1,y_2)}.$$
For $E\subset X\times Y$ and $y\in Y$ let $E^y= \{ x \in X : (x,y) \in E\}$.

The following lemma is basically \cite[Theorem~7.7]{Ma}.
It is only stated there in the special case $X=A \subset \mathbb{R}^m$, but the proof works verbatim for all metric spaces $X$.

\begin{lemma} \label{lip}
Let $X$ be a metric space and let $d\in \NN^+$. If $f\colon X \to \mathbb{R}^{d}$ is Lipschitz and $t\geq d$ then
 \begin{equation*} \int_{\mathbb{R}^d} ^{\star} \mathcal{H}^{t-d}(f^{-1}(y))\, \mathrm{d} \lambda^{d}(y)\leq c(d)
 \Lip(f)^{d} \mathcal{H}^{t}(X), \end{equation*}
where $\int ^{\star}$ denotes the upper integral and $c(d)$ is a
finite constant depending on $d$ only.
\end{lemma}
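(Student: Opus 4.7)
The plan is to follow Mattila's proof of \cite[Theorem~7.7]{Ma} essentially verbatim, and to point out that the Euclidean structure of the domain is never used: what is needed is only the Lipschitz control of the image diameters together with the volume estimate $\lambda^d(B)\le c(d)(\diam B)^d$ for balls in $\RR^d$. The strategy is to take an economical Hausdorff cover of $X$, push it forward to the fibers $f^{-1}(y)$, and then integrate.

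First I would fix $\delta,\varepsilon>0$ and choose a cover $\{X_i\}_{i=1}^{\infty}$ of $X$ with $\diam X_i\le \delta$ and $\sum_i (\diam X_i)^t \le \iH^t_\delta(X)+\varepsilon$. For every $y\in \RR^d$ the sets $X_i\cap f^{-1}(y)$, which are nonempty exactly when $y\in f(X_i)$, cover $f^{-1}(y)$ and each has diameter at most $\delta$, so
\[
\iH^{t-d}_\delta(f^{-1}(y))\le \sum_{i=1}^{\infty} \chi_{f(X_i)}(y)\,(\diam X_i)^{t-d}.
\]
Because $\diam f(X_i)\le \Lip(f)\diam X_i$ and $f(X_i)\subset \RR^d$ is contained in a ball of that diameter, we have $\lambda^d(f(X_i))\le c(d)\Lip(f)^d(\diam X_i)^d$. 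Integrating the displayed inequality with respect to $\lambda^d$, interchanging sum and upper integral (which is legal for a countable sum of nonnegative integrands), and substituting this bound yields
\[
\int_{\RR^d}^{\star}\iH^{t-d}_\delta(f^{-1}(y))\,\mathrm{d}\lambda^d(y)\le c(d)\Lip(f)^d\bigl(\iH^t_\delta(X)+\varepsilon\bigr).
\]

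Letting $\varepsilon\to 0$ and then $\delta\to 0+$, and applying monotone convergence (since $\iH^{t-d}_\delta(f^{-1}(y))\nearrow \iH^{t-d}(f^{-1}(y))$ as $\delta\searrow 0$) in the form valid for upper integrals, gives the claimed inequality. There is no genuine obstacle here, precisely because $X$ enters the argument only through the Hausdorff cover $\{X_i\}$ and because the estimate $\lambda^d(f(X_i))\lesssim (\diam f(X_i))^d$ is purely a fact about balls in the target $\RR^d$; Mattila's hypothesis $X\subset \RR^m$ plays no role. The only points worth double-checking are the exchange of sum and upper integral and the passage to the limit in $\delta$, both of which are routine.
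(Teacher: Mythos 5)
Your proposal is correct and is exactly what the paper does: the paper simply cites Mattila's Theorem~7.7 and remarks that the proof works verbatim for an arbitrary metric space $X$, which is precisely the observation you make and then verify by reproducing Mattila's covering argument. Your attention to the two technical points (countable subadditivity of the upper integral, and monotone convergence for upper integrals as $\delta\searrow 0$) is well placed and both hold for the reasons you sketch.
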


Let $E\subset X\times \RR^d$ and define $f\colon E\to \RR^d$ as $f(x,y)=y$. Applying Lemma~\ref{lip} for
$f$ yields the following lemma.

\begin{lemma} \label{l:Hsec} Let $X$ be a metric space and let $d\in \NN^+$. If $E\subset X\times \RR^d$ then
for $\lambda^d$-almost every $y\in \RR^d$ we have
$$\dim_H (E^y)\leq \max\{0,\dim_H E-d\}.$$
\end{lemma}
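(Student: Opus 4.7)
The plan is to apply Lemma~\ref{lip} directly to the second-coordinate projection. Define $f\colon E\to \RR^d$ by $f(x,y)=y$. Under the $\ell^2$ product metric on $X\times \RR^d$ one has $d_{X\times\RR^d}((x_1,y_1),(x_2,y_2))\geq |y_1-y_2|$, so $\Lip(f)\leq 1$. Moreover, for every $y\in \RR^d$ the fiber $f^{-1}(y)=E^y\times\{y\}$ is isometric to $E^y\subset X$, hence $\iH^{t-d}(f^{-1}(y))=\iH^{t-d}(E^y)$ for every $t\geq d$.

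Set $s=\dim_H E$ and fix an arbitrary $t$ with $t>\max\{d,s\}$. Then $t\geq d$ and $\iH^t(E)=0$, so Lemma~\ref{lip} yields
$$\int_{\RR^d}^{\star} \iH^{t-d}(E^y)\,\mathrm{d}\lambda^d(y)\;\leq\; c(d)\,\Lip(f)^d\,\iH^t(E)\;=\;0.$$
Since the upper integral vanishes, $\iH^{t-d}(E^y)=0$ for every $y$ outside some $\lambda^d$-null set $N_t$, and in particular $\dim_H E^y\leq t-d$ for such $y$. Now choose a sequence $t_n\searrow \max\{d,s\}$ with each $t_n>\max\{d,s\}$, and let $N=\bigcup_{n=1}^{\infty} N_{t_n}$; then $\lambda^d(N)=0$ and for every $y\in \RR^d\setminus N$
$$\dim_H E^y \;\leq\; \inf_n (t_n-d) \;=\; \max\{d,s\}-d \;=\; \max\{0,s-d\},$$
which is exactly the asserted bound (the empty-fiber case is covered by the convention $\dim_H\emptyset=-1\leq 0$).

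There is no real obstacle here; the only mild subtlety is the handling of $s<d$. In that regime one still needs $t\geq d$ in order to invoke Lemma~\ref{lip}, but taking $t_n\searrow d$ yields $\dim_H E^y\leq 0$, which matches $\max\{0,s-d\}=0$. A minor bookkeeping point is that the upper integral vanishing need not be accompanied by Borel measurability of $y\mapsto \iH^{t-d}(E^y)$; this is harmless because $\int^\star\varphi\,\mathrm{d}\mu=0$ still forces $\{\varphi>0\}$ to have outer $\mu$-measure zero, hence to be contained in a genuine $\lambda^d$-null Borel set, which is all that is needed for the almost-everywhere conclusion.
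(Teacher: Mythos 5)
Your proof is correct and takes exactly the same route as the paper, which simply states that applying Lemma~\ref{lip} to the projection $f(x,y)=y$ on $E$ yields the result; you have merely filled in the (straightforward) details about the Lipschitz constant, the isometry of fibers, and the limiting argument in $t$.
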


Recently Orponen \cite[Cor.~1.2]{O} has shown that Lemma~\ref{lip} does not remain true if we replace Hausdorff measures by packing measures. The analogous version of Lemma~\ref{l:Hsec} holds, see the
proof of \cite[Lemma~5]{F1} with the natural modifications.

\begin{lemma} \label{l:Psec} Let $X$ be a metric space and let $d\in \NN^+$. If $E\subset X\times \RR^d$ then
for $\lambda^d$-almost every $y\in \RR^d$ we have
$$\dim_P (E^y)\leq \max\{0,\dim_P E-d\}.$$
\end{lemma}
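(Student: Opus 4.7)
The plan is to imitate the Hausdorff argument behind Lemma~\ref{l:Hsec} but to replace the integralgeometric estimate (Lemma~\ref{lip}) by a direct covering--Fubini argument on upper box counts, then to exploit the fact that $\dim_P$ is the modified upper box dimension. First I would reduce to box dimension: fix $s > \dim_P E$ (taken rational, for a diagonal argument at the end). By the definition of packing dimension there is a decomposition $E = \bigcup_i E_i$ with $\overline{\dim}_B E_i < s$ for every $i\in\NN^+$; since $E^y = \bigcup_i E_i^y$ and $\dim_P$ is countably stable, it is enough to show that for each such $i$ and each rational $\varepsilon>0$ the set of $y\in\RR^d$ with $\overline{\dim}_B E_i^y > \max\{0,s-d\}+\varepsilon$ is $\lambda^d$-null. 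Intersecting the countably many exceptional sets (over $i$, over $\varepsilon$, and over a sequence $s_n\searrow\dim_P E$) then yields the desired bound $\dim_P E^y \le \max\{0,\dim_P E-d\}$ simultaneously for $\lambda^d$-almost every $y$.

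So fix $A\subset X\times\RR^d$ with $\overline{\dim}_B A < s$; note $A$ is then totally bounded, in particular bounded in the $\RR^d$ factor. Pick $s'\in(\overline{\dim}_B A, s)$, so $N_\delta(A) \le \delta^{-s'}$ for $\delta\le\delta_0$. Given such $\delta$, cover $A$ by $N=N_\delta(A)$ closed balls $B((x_j,y_j),\delta)$. Because the metric on $X\times\RR^d$ is the $l^2$-product, membership of $(x,y)$ in one of these balls forces both $|y-y_j|\le\delta$ and $d_X(x,x_j)\le\delta$; hence
$$N_\delta(A^y) \;\le\; M_\delta(y) \;:=\; \#\{j:\, |y-y_j|\le \delta\}.$$
The majorant $M_\delta$ is a finite sum of indicators of $\RR^d$-balls, in particular Borel measurable, and Fubini gives
$$\int_{\RR^d} M_\delta(y)\, d\lambda^d(y) \;=\; \sum_{j=1}^{N}\lambda^d(B(y_j,\delta)) \;\le\; c_d\, \delta^{d-s'}.$$

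If $s>d$, set $t=s-d+\varepsilon$ and $\delta_k=2^{-k}$; then $s-s'+\varepsilon>0$, so
$$\sum_{k\ge k_0} \int M_{\delta_k}(y)\, \delta_k^{\,t}\, d\lambda^d(y) \;\le\; c_d\sum_{k\ge k_0}\delta_k^{\,s-s'+\varepsilon} < \infty.$$
By monotone convergence the sum inside the integral is finite for $\lambda^d$-a.e.~$y$, forcing $N_{\delta_k}(A^y)\le M_{\delta_k}(y)=o(\delta_k^{-t})$; a standard interpolation between consecutive $\delta_k$'s then gives $\overline{\dim}_B A^y \le t$ a.e., as required. If instead $s\le d$, the summability of $\sum_k \int M_{\delta_k}\, d\lambda^d \le c_d\sum_k \delta_k^{d-s'}$ together with Borel--Cantelli forces $M_{\delta_k}(y)=0$ for all large $k$ (a.e.~$y$), whence $A^y=\emptyset$ and $\dim_P A^y = -1 \le \max\{0,s-d\}$.

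The principal subtlety, and the only genuine deviation from the Hausdorff case, is that $y\mapsto N_\delta(A^y)$ need not be $\lambda^d$-measurable for an arbitrary $A\subset X\times\RR^d$, so Lemma~\ref{lip} has no direct analogue here (indeed, by Orponen's result mentioned above, none could exist at the level of packing measures). The measurable majorant $M_\delta(y)$, read off directly from the chosen cover of $A$, sidesteps this difficulty and is what legitimises the Fubini step; everything else is routine once packing dimension is rewritten through the modified upper box dimension.
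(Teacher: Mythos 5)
Your proof is correct. The paper gives no argument of its own for this lemma, deferring instead to the proof of \cite[Lemma~5]{F1} ``with the natural modifications''; the box-counting-plus-Fubini scheme you carry out, routed through the definition of $\dim_P$ as the modified upper box dimension and using the explicit measurable majorant $M_\delta$ in place of the (possibly non-measurable) map $y\mapsto N_\delta(E^y)$, is precisely that argument adapted to the present metric-space setting, so your proposal realizes the intended proof.
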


For the following lemma see \cite[Theorem~8.10]{Ma}. It is only stated there for subsets of Euclidean spaces,
but the same proof works here as well.

\begin{lemma} \label{l:box}
If $X,Y$ are non-empty metric spaces then
\begin{align*} \dim_{H}(X\times Y) &\leq \dim_{H}X+\dim_{P} Y, \\
\dim_P(X\times Y)&\leq \dim_P X+\dim_P Y.
\end{align*}
\end{lemma}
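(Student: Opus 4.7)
The plan is to reduce both inequalities to the special case of products with a set of finite upper box dimension, using that $\dim_P$ is defined as the modified upper box dimension via countable decompositions. Concretely, write $Y=\bigcup_{j=1}^\infty Y_j$ with $\overline{\dim}_B Y_j$ arbitrarily close to $\dim_P Y$ (and similarly for $X$ in the second inequality). Since $\dim_H$ and $\dim_P$ are both countably stable, it suffices to prove the two auxiliary statements
\begin{align*}
\dim_H(X\times Y') &\leq \dim_H X + \overline{\dim}_B Y', \\
\overline{\dim}_B(X'\times Y') &\leq \overline{\dim}_B X' + \overline{\dim}_B Y',
\end{align*}
whenever $X'$, $Y'$ are totally bounded. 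Edge cases where $\dim_P Y=\infty$ or $Y$ is not totally bounded are handled by noting that either the claimed bound is vacuous or such a decomposition exists by the definition of $\dim_P$.

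The box-counting inequality is routine: any $\delta$-cover of $X'$ by $N_\delta(X')$ closed balls combined with a $\delta$-cover of $Y'$ by $N_\delta(Y')$ closed balls yields a covering of $X'\times Y'$ by $N_\delta(X')\cdot N_\delta(Y')$ product-sets, each contained in an $l^2$-ball of radius $\delta\sqrt{2}$. Hence $N_{\delta\sqrt 2}(X'\times Y')\leq N_\delta(X')N_\delta(Y')$, and taking $\limsup$ of $\log/\log(1/\delta)$ gives the claim. The interesting case is the Hausdorff--packing inequality. Fix $s>\dim_H X$ and $t>\overline{\dim}_B Y'$; I will show $\mathcal H^{s+t}(X\times Y')=0$. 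Choose $\delta_0>0$ so small that $N_\eta(Y')\leq \eta^{-t}$ for every $\eta\in(0,\delta_0]$. Given $\varepsilon>0$, since $\mathcal H^s(X)=0$, pick a cover $\{A_k\}$ of $X$ with $\diam A_k\leq \delta_0$ and $\sum_k(\diam A_k)^s<\varepsilon$. For each $k$, cover $Y'$ by at most $(\diam A_k)^{-t}$ sets $B_{k,l}$ with $\diam B_{k,l}\leq 2\diam A_k$. Then $\{A_k\times B_{k,l}\}$ covers $X\times Y'$ with $\diam(A_k\times B_{k,l})\leq \sqrt{5}\,\diam A_k$ in the $l^2$-metric, so
$$\sum_{k,l}(\diam(A_k\times B_{k,l}))^{s+t}\leq 5^{(s+t)/2}\sum_k(\diam A_k)^{-t}(\diam A_k)^{s+t}<5^{(s+t)/2}\varepsilon.$$
Letting $\varepsilon\to 0$ (forcing $\delta_0\to 0$) gives $\mathcal H^{s+t}(X\times Y')=0$, hence $\dim_H(X\times Y')\leq s+t$. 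Letting $s\searrow \dim_H X$ and $t\searrow\overline{\dim}_B Y'$, then assembling over the decomposition of $Y$, completes the proof.

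The main obstacle, such as it is, is bookkeeping: one must verify that the box-counting bound $N_\eta(Y')\leq \eta^{-t}$ is available \emph{uniformly} on a full range $\eta\in(0,\delta_0]$ rather than along a sequence, so that it can be applied simultaneously to every diameter $\diam A_k$ appearing in a single Hausdorff cover of $X$; this is exactly why $t$ is chosen strictly greater than $\overline{\dim}_B Y'$. The remaining subtlety is the constant $\sqrt 5$ from the $l^2$-product metric, which is harmless since it only affects the mesh of the cover, not the total sum of $(\diam)^{s+t}$.
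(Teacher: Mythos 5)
Your argument is correct and is the standard proof behind the paper's citation to Mattila's Theorem~8.10 (the paper gives no proof, only the reference with the remark that the Euclidean argument carries over verbatim): decompose via countable stability and the definition of $\dim_P$ as modified upper box dimension, then cross a Hausdorff cover of one factor with a uniform box-counting cover of the other. Two small points of bookkeeping to tidy in a final write-up: the parenthetical ``forcing $\delta_0\to 0$'' should be read as two independent limits (fix $\delta_0$, send $\varepsilon\to 0$ to obtain $\mathcal{H}^{s+t}_{\sqrt{5}\,\delta_0}(X\times Y')=0$, then let $\delta_0\to 0$), and sets $A_k$ of diameter zero make $(\diam A_k)^{-t}$ meaningless, so one should either enlarge them to have positive diameter or note that the corresponding fibers form a countable union of copies of $Y'$, each with $\dim_H\le\overline{\dim}_B Y'<s+t$, hence contributing nothing to $\mathcal{H}^{s+t}$.
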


\begin{theorem} \label{t:graph} Let $K$ be an uncountable compact metric space and let $d\in \NN^+$.
Then for the prevalent $f\in C(K,\RR^d)$ we have
$$\dim_H \graph(f)=\dim_H K+d.$$
\end{theorem}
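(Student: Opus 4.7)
The plan is to establish the equality by verifying both inequalities: a universal upper bound for every continuous $f$, and a prevalent lower bound obtained by combining Theorems~\ref{t:main} and~\ref{t:D} with the slicing inequality of Lemma~\ref{l:Hsec}.

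For the upper bound, note that $\graph(f)\subset K\times f(K)$ for every $f\in C(K,\RR^d)$. Since $f(K)$ is a bounded subset of $\RR^d$ we have $\dim_P f(K)\leq d$. Applying Lemma~\ref{l:box} yields
\[
\dim_H \graph(f)\leq \dim_H(K\times f(K))\leq \dim_H K+\dim_P f(K)\leq \dim_H K+d,
\]
valid for every $f\in C(K,\RR^d)$, and in particular for the prevalent one.

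For the lower bound I would split according to whether $\dim_H K$ vanishes. If $\dim_H K=0$, then Theorem~\ref{t:D} supplies, for the prevalent $f$, a non-empty open set $U_f\subset f(K)$. The projection $\pi\colon \graph(f)\to\RR^d$, $\pi(x,y)=y$, is $1$-Lipschitz with image containing $U_f$, so
\[
\dim_H \graph(f)\geq \dim_H U_f=d=\dim_H K+d.
\]
If $\dim_H K>0$, I would apply Theorem~\ref{t:main}: for the prevalent $f$ and every $s\in(0,\dim_H K)$ there is a non-empty open set $U_{f,s}\subset\RR^d$ with $\dim_H f^{-1}(y)\geq s$ for every $y\in U_{f,s}$. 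Fixing such an $s$ and setting $E=\graph(f)\cap(K\times U_{f,s})\subset K\times\RR^d$, every $y\in U_{f,s}$ satisfies $E^y=f^{-1}(y)$ and hence $\dim_H E^y\geq s$. Since $\lambda^d(U_{f,s})>0$, the contrapositive of Lemma~\ref{l:Hsec} forces $\dim_H\graph(f)\geq \dim_H E\geq s+d$. Letting $s\nearrow \dim_H K$ finishes the lower bound.

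No serious obstacle is anticipated; the substantive content is carried by Theorem~\ref{t:main} and the slicing lemma. The only delicate point is the split at $\dim_H K=0$, where Theorem~\ref{t:main} becomes vacuous and Theorem~\ref{t:D} must be invoked instead to yield the lower bound $d$.
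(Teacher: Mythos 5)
Your proposal is correct and follows essentially the same approach as the paper: the upper bound via Lemma~\ref{l:box}, the case split at $\dim_H K=0$ handled by Theorem~\ref{t:D} through the Lipschitz projection, and the case $\dim_H K>0$ handled by Theorem~\ref{t:main} combined with the co-area/slicing Lemma~\ref{l:Hsec}. The minor cosmetic differences (using $K\times f(K)$ rather than $K\times\RR^d$ for the upper bound, and intersecting the graph with $K\times U_{f,s}$ before applying Lemma~\ref{l:Hsec}) are immaterial.
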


\begin{proof} Lemma~\ref{l:box} and $\dim_{P} \RR^d=d$ yield that for all $f\in C(K,\RR^d)$
$$\dim_H \graph(f)\leq \dim_H (K\times \RR^d)\leq \dim_H K+d,$$
so it is enough to prove the opposite inequality for the prevalent $f$.

If $\dim_H K=0$ then Theorem~\ref{t:D} yields that for the prevalent $f\in C(K,\RR^d)$ we have $\inter f(K)\neq \emptyset$,
so $\dim_H f(K)=d$. As $f(K)$ is a Lipschitz image of $\graph(f)$ and Hausdorff dimension cannot increase under a Lipschitz map,
we obtain
$$\dim_H \graph(f)\geq \dim_H f(K)=d=\dim_H K+d,$$
and we are done. Hence we may assume that $\dim_H K>0$. Consider
\begin{align*} \iA=\{&f\in C(K,\mathbb{R}^d): \textrm{for all } s<\dim_H K \textrm{ there exists a non-empty} \\
&\textrm{open set } U_{f,s}\subset \mathbb{R}^d \textrm{ such that } \dim_H f^{-1}(y)>s \textrm{ for all } y\in U_{f,s}\}.
\end{align*}
Let $f\in \iA$ and $s\in (0,\dim_H K)$ be arbitrarily given. Since Theorem~\ref{t:main} yields that $\iA$ is prevalent in $C(K,\mathbb{R}^d)$,
it is enough to show that $\dim_H \graph (f)\geq s+d$. Let $E=\graph(f)\subset K\times \RR^d$, then for all $y\in U_{f,s}$ we have $\dim_H E^y=\dim_H f^{-1}(y)\geq s$. As $\lambda^d(U_{f,s})>0$ and $s>0$, Lemma~\ref{l:Hsec} implies that $\dim_H E\geq s+d$.
The proof is complete.
\end{proof}

\begin{theorem} \label{t:graph2} Let $K$ be an uncountable, non-exploding compact metric space and let $d\in \NN^+$.
Then for the prevalent $f\in C(K,\RR^d)$ we have
$$\dim_P \graph(f)=\dim_P K+d.$$
\end{theorem}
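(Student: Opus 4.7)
The proof will follow the same two-sided template as Theorem~\ref{t:graph}, only substituting the packing-dimension tools for the Hausdorff ones.

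For the upper bound, I will invoke Lemma~\ref{l:box} together with $\dim_P \RR^d = d$, which gives $\dim_P \graph(f) \le \dim_P(K\times \RR^d) \le \dim_P K + d$ for every $f\in C(K,\RR^d)$, without any prevalence assumption.

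For the lower bound I split on whether $\dim_P K=0$. If $\dim_P K=0$, then Theorem~\ref{t:D} tells us that for the prevalent $f\in C(K,\RR^d)$ we have $\inter f(K)\neq \emptyset$, so $\dim_P f(K)=d$. Since the projection $\graph(f)\to \RR^d$ onto the second coordinate is Lipschitz (with Lipschitz constant $1$), Fact~\ref{f:Holder} yields $\dim_P \graph(f) \ge \dim_P f(K) = d = \dim_P K + d$. If $\dim_P K>0$, fix $s\in (0,\dim_P K)$. By Corollary~\ref{c:Mainp} the set
\begin{align*}
\iA_s = \{&f\in C(K,\RR^d) : \exists\text{ a non-empty open } U_{f,s}\subset \RR^d \\
&\text{such that } \dim_P f^{-1}(y)\ge s \text{ for all } y\in U_{f,s}\}
\end{align*}
is prevalent. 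For any $f\in \iA_s$, set $E=\graph(f)\subset K\times \RR^d$. Then $E^y=f^{-1}(y)$, so $\dim_P E^y\ge s$ for every $y\in U_{f,s}$. Because $\lambda^d(U_{f,s})>0$ and $s>0$, Lemma~\ref{l:Psec} forces $\dim_P E\ge s+d$; otherwise the conclusion $\dim_P E^y\le \max\{0,\dim_P E - d\}<s$ would hold for $\lambda^d$-a.e.\ $y$, contradicting the positive-measure set of exceptions in $U_{f,s}$. Letting $s$ run through a sequence $s_n\nearrow \dim_P K$ and intersecting the corresponding prevalent sets $\iA_{s_n}$ (countable intersections of prevalent sets are prevalent) gives a prevalent set on which $\dim_P \graph(f)\ge \dim_P K + d$.

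The main conceptual obstacle is genuinely packed into Lemma~\ref{l:Psec}: unlike the Hausdorff case, one cannot simply invoke a Fubini-type inequality via packing measure (Orponen's example rules this out, as the excerpt notes), so the slicing estimate must be used in its dimensional form only, and that is already granted to us. Beyond this, the argument is a mechanical transcription of the proof of Theorem~\ref{t:graph} with $\dim_H$ replaced by $\dim_P$, Theorem~\ref{t:main} replaced by Corollary~\ref{c:Mainp}, and Lemma~\ref{l:Hsec} replaced by Lemma~\ref{l:Psec}; the non-exploding hypothesis on $K$ enters only through Corollary~\ref{c:Mainp}.
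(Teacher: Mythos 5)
Your proof is correct and takes exactly the route the paper uses: the paper's proof of Theorem~\ref{t:graph2} literally says to repeat the proof of Theorem~\ref{t:graph} replacing Hausdorff dimension by packing dimension, Theorem~\ref{t:main} by Corollary~\ref{c:Mainp}, and Lemma~\ref{l:Hsec} by Lemma~\ref{l:Psec}, which is precisely the transcription you carried out, including the same two-case split on whether $\dim_P K=0$.
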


\begin{proof} We can repeat the proof of Theorem~\ref{t:graph}, only replace Hausdorff dimension with packing dimension, and apply Corollary~\ref{c:Mainp} and Lemma~\ref{l:Psec} instead of Theorem~\ref{t:main} and Lemma~\ref{l:Hsec}, respectively.
\end{proof}

\section{Finer results with generalized Hausdorff measures} \label{s:gauge}

In this section we indicate how to obtain sharper versions of the main results.
Since the proofs were quite technical already, we decided not to include these stronger forms in
the main body of the paper, only give a brief sketch in this separate section.

A function $\varphi \colon [0,\infty)\to [0,\infty)$ is defined to be a \emph{gauge function} if it is non-decreasing
and $\varphi(0)=0$. For a metric space $X$ let
\begin{align*}
\mathcal{H}^{\varphi}(X)&=\lim_{\delta\to 0+}\mathcal{H}^{\varphi}_{\delta}(X)
\mbox{, where}\\
\mathcal{H}^{\varphi}_{\delta}(X)&=\inf \left\{ \sum_{i=1}^\infty \varphi(\diam
A_{i}): X \subset \bigcup_{i=1}^{\infty} A_{i},~
\forall i \, \diam A_i \le \delta \right\}.
\end{align*}
We call $\mathcal{H}^{\varphi}$ the \emph{$\varphi$-Hausdorff measure}, which extends the concept of
classical Hausdorff measure. There are examples when this finer notion of
measure is needed, this is the case when we want
to measure the level sets of the linear Brownian motion or the range of a $d$-dimensional Brownian motion.
For more information see \cite{MP} and \cite{Ro}.

Let $\iG$ be the set of gauge functions and for all $s>0$ let
$$\iG(s)=\left\{\varphi \in \iG: \lim_{r\to 0+} \frac{\varphi(r)}{r^{s}}=\infty\right\}.$$
Now we show how to generalize Theorem~\ref{t:real}, Theorem~\ref{t:maxlevel} and Theorem~\ref{t:FH}.
First we need to extend Lemma~\ref{l:Can}.

\begin{lemma} \label{l:Can2} Let $\varphi\in \iG(1)$ be a gauge function. Let us define the non-decreasing function
$\Phi\colon [1,\infty)\to [1,\infty)$ as
\begin{equation} \label{eq:psi} \Phi(x)=\sup\{r\in \RR^+: r\varphi(1/r)\leq x\}+1,
\end{equation}
where $\sup \emptyset =0$ by convention.
Let $C\subset \RR$ be an $(a_n,b_n)$-type compact set such that for all $n\in \NN^+$
\begin{equation*} a_n\geq \Phi\left(\frac{a_1\cdots a_{n+1}}{b_1\cdots b_{n+1}}\right).
\end{equation*}
Then $\iH^{\varphi}(C)>0$.
\end{lemma}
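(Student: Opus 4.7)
The plan is to adapt the proof of Lemma~\ref{l:Can} almost verbatim, keeping the same measure but replacing the inductive chain of inequalities by a single clean use of the $\Phi$-hypothesis. First I would construct the same Borel probability measure $\mu$ supported on $C$ satisfying $\mu(C_{i_1\dots i_n})=1/(b_1\cdots b_n)$ via the distribution-function argument used in the proof of Lemma~\ref{l:Can}. The goal is then to establish $\mu(B)\leq 4\varphi(\diam B)$ for every Borel set $B\subset C$ of sufficiently small diameter; once this is done, the gauge-function version of the mass distribution principle (an immediate modification of Lemma~\ref{l:fr}, in which $(\diam B_i)^s$ is replaced throughout by $\varphi(\diam B_i)$) yields $\iH^\varphi(C)\geq \mu(C)/4 = 1/4>0$.

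To prove the bound, fix $B\subset C$ with $\diam B < 1/a_1$. As in the original proof, choose $n\geq 2$ and $t\in\{1,\dots,a_n-1\}$ so that $t/(a_1\cdots a_n)\leq \diam B\leq (t+1)/(a_1\cdots a_n)$; the restriction $\diam B<1/a_1$ forces $n\geq 2$. The same geometric counting of how many elementary pieces $C_{i_1\dots i_n}$ can meet a set of diameter at most $(t+1)/(a_1\cdots a_n)$ gives $\mu(B)\leq (t+3)/(b_1\cdots b_n) \leq 4t/(b_1\cdots b_n) \leq 4 y_n\diam B$, where $y_n := (a_1\cdots a_n)/(b_1\cdots b_n)$.

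The heart of the argument is now a one-line application of the hypothesis at index $n-1$: $a_{n-1}\geq \Phi(y_n)$ means $a_{n-1}-1\geq \sup\{r>0 : r\varphi(1/r)\leq y_n\}$, so $r\varphi(1/r)>y_n$ for every $r\geq a_{n-1}$. Since $\diam B\leq (t+1)/(a_1\cdots a_n)\leq 1/(a_1\cdots a_{n-1})\leq 1/a_{n-1}$, the value $r = 1/\diam B$ is admissible, and one obtains $\varphi(\diam B)/\diam B = r\varphi(1/r) > y_n$. Combining with the previous estimate yields $\mu(B)\leq 4 y_n \diam B < 4\varphi(\diam B)$, as required. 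The only point needing care is verifying $1/\diam B\geq a_{n-1}$ so that the $\Phi$-condition is applicable with $r=1/\diam B$; this is the real reason the proof needs the assumption at index $n-1$ rather than $n$, and it drops out immediately from $\diam B\leq 1/(a_1\cdots a_{n-1})$. In this way the single step replaces the $n$-fold chain of inequalities \eqref{eq:an-1} used in Lemma~\ref{l:Can}, and the power $(\diam B)^{1-1/n}$ is upgraded directly to $\varphi(\diam B)$.
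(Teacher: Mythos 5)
Your proof is correct and follows exactly the approach the paper intends: the paper's own proof of Lemma~\ref{l:Can2} is only a two-sentence instruction (``use the same $\mu$ as in Lemma~\ref{l:Can}, similar arguments give $\mu(B)\leq 4\varphi(\diam B)$, then apply the gauge-function mass distribution principle''), and you have supplied the missing details accurately. In particular, your identification of the single clean step --- applying the hypothesis at index $n-1$ to get $r\varphi(1/r)>y_n$ for all $r\geq a_{n-1}$, combined with the verification that $r=1/\diam B\geq a_{n-1}$ --- is precisely the substitute for the chain \eqref{eq:an-1}, and the bookkeeping $\mu(B)\leq (t+3)/(b_1\cdots b_n)\leq 4t/(b_1\cdots b_n)\leq 4y_n\diam B<4\varphi(\diam B)$ is correct. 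The only cosmetic deviation is that you restrict to $\diam B<1/a_1$ whereas the paper states the bound for $\diam B\leq 1$; your more conservative range is both easier to justify (it guarantees $n\geq 2$ so the hypothesis at index $n-1$ exists) and entirely sufficient for the mass distribution principle, so this is if anything an improvement on the paper's phrasing.
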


\begin{proof} Let $\mu$ be the same measure as in the proof of Lemma~\ref{l:Can}, then similar arguments yield that
all Borel sets $B\subset C$ with $\diam B\leq 1$ satisfy
$$\mu(B)\leq 4 \varphi(\diam B).$$
Therefore the mass distribution principle for generalized Hausdorff measures implies that $\iH^{\varphi}(C)>0$,
see also \cite[Proposition~6.44 (i)]{MP}.
\end{proof}

Instead of Theorem~\ref{t:real} we can prove the following stronger form.

\begin{theorem} \label{t:real2} Let $K\subset \RR$ be a compact set with $\lambda(K)>0$ and let $d\in \NN^+$.
Let $\varphi \in \iG(1)$ be a gauge function. Then for the prevalent $f\in C(K,\RR^d)$ there exists an open set $U_{f}\subset \RR^d$ such that $\lambda(f^{-1}(U_f))=\lambda(K)$ and for all $y\in U_{f}$ we have
$$\iH^{\varphi} (f^{-1} (y))>0.$$
\end{theorem}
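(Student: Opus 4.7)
The plan is to repeat the proof of Theorem~\ref{t:real} essentially verbatim, making two modifications: enlarge the sequence $\{a_n\}$ so that Lemma~\ref{l:Can2} can be invoked where Lemma~\ref{l:Can} was used, and upgrade Lemma~\ref{l:co} to the $\iH^{\varphi}$ setting. As before, by Theorem~\ref{t:S} it suffices to construct a Borel probability measure $\mu$ on $C(K,\RR^d)$ so that $\mu(\iA-g)=1$ for every $g\in C(K,\RR^d)$, where $\iA$ is the set appearing in Theorem~\ref{t:real2}.

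For the co-analyticity of $\iA$, I would follow the proof of Lemma~\ref{l:co} almost verbatim; the only new ingredient is a Borel-class analogue of Lemma~\ref{l:B}, namely that the set $\{(f,y)\in C(K,\RR^d)\times \RR^d : \iH^{\varphi}(f^{-1}(y))=0\}$ is Borel. This follows from the characterization that $\iH^{\varphi}(A)=0$ iff for every rational $\varepsilon,\delta>0$ there is a cover of $A$ by open balls with rational centers and radii, each of diameter at most $\delta$, whose total $\varphi$-sum is less than $\varepsilon$; the family of candidate covers is countable, and quantifying over it in the standard way yields the Borel classification.

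For the probabilistic construction, I would keep $b_n=(2s)^{-(n+3)}a_n$ unchanged so that every estimate in Statement~\ref{st:1} and~\eqref{eq:sumk} is preserved, but replace~\eqref{eq:a_n} with the requirement
\begin{equation*}
a_n \geq \max\left\{(2s)^{8n}(a_1\cdots a_{n-1}),\ \Phi\left(\frac{a_1\cdots a_{n+1}}{b_1\cdots b_{n+1}}\right)\right\}
\end{equation*}
for all sufficiently large $n$, where $\Phi$ is the function from Lemma~\ref{l:Can2}. Because $a_i/b_i=(2s)^{i+3}$ independently of the choice of $a_i$, the ratio $\prod_{i=1}^{n+1}(a_i/b_i)$ depends only on $n$, so the right-hand side above is a function of $n$ and $a_1,\ldots,a_{n-1}$ alone, and is met by taking $a_n$ to be a sufficiently large power of $2s$. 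With this choice, the three sub-lemmas in the proof of Theorem~\ref{t:real} carry over with minimal change: the first sub-lemma ($\dim_H h^{-1}(y)=1$) becomes $\iH^{\varphi}(h^{-1}(y))>0$ by invoking Lemma~\ref{l:Can2} on the same $(c_n,d_n)$-type Cantor set $C\subset h^{-1}(y)$; the second sub-lemma, which shows $\lambda(h^{-1}(U_h))=\lambda(K)$ assuming absolute continuity, is dimension-free; and the third sub-lemma only uses the bound $a_n\geq 2^{2dn}$, which is strengthened by our new choice. The principal obstacle is the Borel classification of $\{(f,y):\iH^{\varphi}(f^{-1}(y))=0\}$: while a conceptually routine adaptation of Lemma~\ref{l:B}, working out the descriptive-set-theoretic details for a general gauge function $\varphi$ requires some care.
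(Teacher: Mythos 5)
Your proposal is correct and follows the same route as the paper: keep $b_n=(2s)^{-(n+3)}a_n$, choose $a_n$ inductively so that the additional requirement $a_n\geq\Phi\bigl(\tfrac{a_1\cdots a_{n+1}}{b_1\cdots b_{n+1}}\bigr)$ is satisfied, and replace Lemma~\ref{l:Can} with Lemma~\ref{l:Can2} in the first sub-lemma. The paper leaves the co-analyticity upgrade of Lemma~\ref{l:co} (via a $\varphi$-Hausdorff-measure analogue of Lemma~\ref{l:B}) implicit, but your sketch of that step is correct and usefully fills in what the paper omits.
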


\begin{proof}
Let $\Phi\colon [1,\infty)\to [1,\infty)$ be the function defined in \eqref{eq:psi}.
Let us follow the proof of Theorem~\ref{t:real}, the only difference is that we define the numbers $a_n$
by induction such that $b_n=(2s)^{-(n+3)}a_n$ are integers and for all $n\in \NN^+$ we have
\begin{equation*} a_n\geq \max\left\{(2s)^{8n}(a_1\cdots a_{n-1}),  \Phi\left(\frac{a_1\cdots a_{n+1}}{b_1\cdots b_{n+1}}\right) \right\}.
\end{equation*}
Then applying Lemma~\ref{l:Can2} instead of Lemma~\ref{l:Can} concludes the proof.
\end{proof}

Instead of Theorem~\ref{t:maxlevel} we can prove the following stronger form.

\begin{theorem}  \label{t:maxlevel2} Let $\varphi \in \iG(1)$ be a gauge function. Then the set
$$\iC=\{f\in C[0,1]: \iH^{\varphi}(f^{-1}(y))>0 \textrm{ for all } y\in  (\min f, \max f)\}$$
is non-shy in $C[0,1]$.
\end{theorem}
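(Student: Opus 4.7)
The plan is to imitate the proof of Theorem~\ref{t:maxlevel} essentially verbatim, modifying only the growth rate imposed on the integer sequence $\{a_n\}$ and replacing the final appeal to Lemma~\ref{l:Can} with an appeal to Lemma~\ref{l:Can2}. As in that proof, by Lemma~\ref{l:nshy} it suffices, given an arbitrary compact set $\iK\subset C[0,1]$, to construct a single $g\in C[0,1]$ with $\iK+g\subset \iC$. First I would obtain a modulus of continuity $h$ for $\iK$ via Lemma~\ref{l:iK} and let $\Phi$ be the non-decreasing function associated with the gauge $\varphi$ by formula~\eqref{eq:psi}. Since $\varphi\in\iG(1)$ implies $r\varphi(1/r)\to\infty$ as $r\to\infty$, the value $\Phi(x)$ is finite for every $x\geq 1$, so $\Phi$ is a well-defined non-decreasing function into $[1,\infty)$.

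Next I would choose positive integers $a_n$ satisfying
\[
a_n \;\geq\; \max\!\left\{\frac{1}{h^{-1}(2^{-(n+2)})},\;\; 2^{5n^{2}},\;\; \Phi(32^{n})\right\},
\]
and set $b_n=\lceil a_n/32 \rceil$. The first two bounds are exactly the ones used in the proof of Theorem~\ref{t:maxlevel} (for controlling the modulus of continuity and for bounding the Lipschitz constant of $G_{m+n}$ respectively), while the third is inserted precisely to make Lemma~\ref{l:Can2} applicable at the end. Then I would define $g_n$, $G_n=\sum_{i=1}^{n} g_i$, and $g=\sum_{i=1}^{\infty} g_i$ exactly as in the proof of Theorem~\ref{t:maxlevel}.

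Now fix $f\in\iK$ and $y\in(\min(f+g),\max(f+g))$. I would reproduce verbatim the inductive construction yielding some $m\in\NN^+$, intervals $I_{i_1\dots i_n}\in\iJ_n$ and points $x_{i_1\dots i_n}\in I_{i_1\dots i_n}$ with $(f+G_{m+n})(x_{i_1\dots i_n})=y$. Setting $c_n=a_{m+n}$ and $d_n=b_{m+n}$, the compact set $C=\bigcap_n \bigcup_{(i_1,\dots,i_n)}I_{i_1\dots i_n}$ is a $(c_n,d_n)$-type compact set. Because $c_i/d_i\leq 32$ for every $i$ by the choice of $b_n$, the ratio $c_1\cdots c_{n+1}/(d_1\cdots d_{n+1})$ is bounded by $32^{n+1}$ a priori, independently of the actual values $a_{m+i}$, so by monotonicity of $\Phi$,
\[
c_n=a_{m+n}\;\geq\; \Phi(32^{m+n})\;\geq\; \Phi(32^{n+1})\;\geq\; \Phi\!\left(\frac{c_1\cdots c_{n+1}}{d_1\cdots d_{n+1}}\right).
\]
Lemma~\ref{l:Can2} then gives $\iH^{\varphi}(C)>0$, and the inclusion $C\subset(f+g)^{-1}(y)$ (forced by $(f+G_{m+n})(x_{i_1\dots i_n})=y$ together with uniform convergence $f+G_{m+n}\to f+g$, exactly as at the end of the proof of Theorem~\ref{t:maxlevel}) yields $\iH^{\varphi}((f+g)^{-1}(y))>0$, as required.

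The main difficulty is merely the bookkeeping in the choice of $\{a_n\}$: one must arrange a growth condition that is simultaneously self-consistent and strong enough to invoke Lemma~\ref{l:Can2}. The crucial observation that unlocks this is that the specific choice $b_n=\lceil a_n/32\rceil$ guarantees $a_1\cdots a_{n+1}/(b_1\cdots b_{n+1})\leq 32^{n+1}$ \emph{a priori}, regardless of how large the $a_n$ themselves are chosen. This decouples the otherwise awkward recursion (where $a_n$ would need to dominate an expression involving $a_{n+1}$) and permits a single explicit term-by-term definition of $a_n$. No further modification to the argument of Theorem~\ref{t:maxlevel} is necessary.
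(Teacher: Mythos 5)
Your proposal matches the paper's proof: both modify the growth condition \eqref{eq:aibi} on $\{a_n\}$ by replacing the $2^{5n^2}$ term with a $\Phi$-based one (your $\Phi(32^{n})$ is exactly the paper's $\Phi(2^{5n})$, since $32=2^{5}$) and then invoke Lemma~\ref{l:Can2} in place of Lemma~\ref{l:Can}, exploiting the a priori bound $a_1\cdots a_{n+1}/(b_1\cdots b_{n+1})\leq 32^{n+1}$. Your retention of $2^{5n^{2}}$ in the max alongside $\Phi(32^{n})$ is in fact slightly more careful than the paper's literal wording (which drops it), since the bound $a_n\geq 2^{n}$ is still needed in the Lipschitz estimate for $G_{m+n}$ and $\Phi(2^{5n})$ alone need not supply it for every gauge $\varphi\in\iG(1)$.
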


\begin{proof}
Let $\Phi\colon [1,\infty)\to [1,\infty)$ be the function defined in \eqref{eq:psi}.
Let us follow the proof of Theorem~\ref{t:maxlevel}, the only difference is that in \eqref{eq:aibi}
we replace $2^{5n^2}$ by $\Phi(2^{5n})$ and we apply Lemma~\ref{l:Can2} instead of Lemma~\ref{l:Can}.
\end{proof}

Fraser and Hyde proved in \cite{FH} that the prevalent $C[0,1]$ has graph of Hausdorff dimension $2$. They observed that
$\iH^{2}(\graph(f))=0$ for all $f\in C[0,1]$ by Fubini's theorem,
and raised the problem what we can say using different gauge functions. The following theorem solves this problem
by stating that the graph of the prevalent $f\in C[0,1]$ is as large as possible according to this finer scale, too.

\begin{theorem} \label{t:graphg} Let $d\in \NN^+$ and let $\psi \in \iG(d+1)$ be a gauge function. Then for the
prevalent $f\in C([0,1],\RR^d)$ we have
$$\iH^{\psi} (\graph(f))>0.$$
\end{theorem}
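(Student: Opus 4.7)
The plan is to mirror the proof of Theorem~\ref{t:graph}, replacing the dimensional lower bounds on level sets with gauge-function measure bounds coming from Theorem~\ref{t:real2}, and replacing Lemma~\ref{l:Hsec} by a generalized coarea inequality linking $\iH^{\psi}$ on the product to $\iH^{\varphi}$ on the fibers. The first step is to construct an auxiliary non-decreasing gauge function $\varphi \in \iG(1)$ satisfying $r^d\varphi(r) \leq \psi(r)$ for all sufficiently small $r$. A concrete option is
$$\eta(s) = \inf_{0 < t \leq s}\frac{\psi(t)}{t^{d+1}}, \qquad \varphi(r) = \inf_{r \leq s \leq 1} s\, \eta(s),$$
extended by $\varphi(0)=0$ and by $\varphi(r)=\varphi(1)$ for $r > 1$; since $\psi \in \iG(d+1)$ forces $\eta(s) \to \infty$ as $s \to 0^+$, a routine check gives $\varphi \in \iG(1)$, and $\varphi(r) \leq r\eta(r)$ yields $r^d \varphi(r) \leq r^{d+1}\eta(r) \leq \psi(r)$.

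Applying Theorem~\ref{t:real2} with $K = [0,1]$ and this $\varphi$, for the prevalent $f \in C([0,1],\RR^d)$ there is an open set $U_f \subset \RR^d$ with $\lambda(f^{-1}(U_f)) = 1$ and $\iH^{\varphi}(f^{-1}(y)) > 0$ for every $y \in U_f$. In particular, $U_f$ is non-empty and open in $\RR^d$, so $\lambda^d(U_f) > 0$. The technical heart of the argument is then the generalized coarea inequality: for every Borel set $E \subset [0,1] \times \RR^d$,
$$\int_{\RR^d}^{\star} \iH^{\varphi}(E^y)\, \mathrm{d}\lambda^d(y) \leq c(d)\, \iH^{\psi}(E).$$
The proof mirrors that of Lemma~\ref{lip}: given a $\delta$-cover $\{A_i\}$ of $E$, the non-empty slices $\{(A_i)^y\}$ cover $E^y$ with diameters at most $\diam A_i$, and $(A_i)^y$ is non-empty only for $y$ in the projection $\pi(A_i)$ to $\RR^d$, whose $\lambda^d$-measure is at most $c(d)(\diam A_i)^d$; summing over $i$ and using $r^d\varphi(r) \leq \psi(r)$ yields the bound after letting $\delta \to 0$ and applying Fatou's lemma.

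Applying the inequality to $E = \graph(f)$, whose slices satisfy $E^y = f^{-1}(y)$, gives
$$c(d)\,\iH^{\psi}(\graph(f)) \geq \int_{U_f}^{\star} \iH^{\varphi}(f^{-1}(y))\, \mathrm{d}\lambda^d(y),$$
so it remains to verify that the upper integral is strictly positive. Decomposing $U_f = \bigcup_n \bigl\{y \in U_f : \iH^{\varphi}(f^{-1}(y)) \geq 1/n\bigr\}$, at least one piece $V_n$ has positive $\lambda^d$-outer measure; any measurable function dominating $y \mapsto \iH^{\varphi}(f^{-1}(y))$ then exceeds $(1/n)\chi_{V_n}$, so the upper integral is at least $(1/n)$ times the outer measure of $V_n$, which is positive. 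This forces $\iH^{\psi}(\graph(f)) > 0$. The main obstacles will be the careful construction of $\varphi$ (to secure $\varphi \in \iG(1)$ together with $r^d\varphi \leq \psi$) and the measurability issues in the coarea inequality, but both are standard refinements of the techniques already developed for the classical case.
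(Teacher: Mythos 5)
Your proposal is correct and follows essentially the same route as the paper: build a gauge function $\varphi\in\iG(1)$ with $\varphi(r)r^d\leq\psi(r)$ (the paper's Lemma~\ref{l:gauge} uses the simpler $\varphi(r)=\inf_{r\leq s\leq 1}\psi(s)s^{-d}$ instead of your two-step $\eta$--$\varphi$ construction, but both work), invoke Theorem~\ref{t:real2} to get prevalently many $f$ with $\iH^\varphi(f^{-1}(y))>0$ on a non-empty open $U_f$, and conclude via a gauge-function coarea inequality (the paper's Lemma~\ref{lip2} applied to the projection of $\graph(f)$ onto $\RR^d$, which is exactly the inequality you rederive from the definition of Hausdorff premeasures). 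The only cosmetic differences are your more elaborate formula for $\varphi$ and that you state the coarea bound directly with $\psi$ on the right rather than the intermediate gauge $\sigma(r)=\varphi(r)r^d\leq\psi(r)$.
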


Before sketching the proof of Theorem~\ref{t:graphg} we need two lemmas.

\begin{lemma} \label{l:gauge} Let $d\in \NN^+$ and let $\psi\in \iG(d+1)$ be a gauge function. Then there is a gauge function
$\varphi\in \iG(1)$ such that for all $r\in [0,1]$ we have
$$\varphi(r)r^d\leq \psi(r).$$
\end{lemma}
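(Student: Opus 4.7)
The lemma asks for a non-decreasing $\varphi$ on $[0,\infty)$ with $\varphi(0)=0$, $\varphi(r)/r\to\infty$ as $r\to 0+$, and $\varphi(r)\,r^d\le\psi(r)$ on $[0,1]$. The hypothesis $\psi(r)/r^{d+1}\to\infty$ is exactly what opens the gap between the upper constraint $\psi(r)/r^d$ and the lower demand $\varphi(r)\gg r$, since their ratio is $\psi(r)/r^{d+1}$. The naive candidates $\psi(r)/r^d$ and $\sqrt{\psi(r)/r^{d-1}}$ have the right orders of magnitude but need not be monotone, so the plan is to build $\varphi$ as a carefully coordinated step function.

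My plan is to choose a decreasing sequence $r_0=1\ge r_1\ge r_2\ge\cdots\searrow 0$ satisfying simultaneously (i) $\psi(r)/r^{d+1}\ge M_n$ for all $r\in(0,r_n]$, where $M_n\to\infty$ is to be chosen, and (ii) $r_n\le r_{n-1}/2$. Both can be arranged by setting $r_n=\min(R_n,r_{n-1}/2)$, where $R_n=\sup\{r:\psi(s)/s^{d+1}\ge M_n\text{ for all }s\in(0,r]\}>0$. Then I define $\varphi(0)=0$, $\varphi(r)=v_n$ for $r\in[r_{n+1},r_n)$ with $n\ge 0$, and $\varphi(r)=v_0$ for $r\ge 1$. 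The values $v_n$ are chosen so that $v_n$ is non-increasing in $n$ (this makes $\varphi$ non-decreasing), $v_n\le\psi(r_{n+1})/r_n^d$ (this and $\psi(r)\ge\psi(r_{n+1})$, $r^d\le r_n^d$ on $[r_{n+1},r_n)$ give $\varphi(r)\,r^d\le\psi(r)$), and $v_n/r_n\to\infty$ (so that $\varphi(r)/r\ge v_n/r_n\to\infty$ on each block). On $[r_1,1]$ the constant value $\varphi(r)=v_0$ is harmless because $r^d\le 1$ there and $\psi(r)\ge\psi(r_1)\ge v_0$ once $M_1$ is chosen.

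The main obstacle is calibrating the growth of $M_n$ against the possibly very rapid decay of $R_n$: if $\psi(r)/r^{d+1}$ increases only slowly, then the forced $r_n=R_n$ can make the ratios $r_{n+1}/r_n$ tend to $0$, threatening condition $v_n/r_n\to\infty$. I will take $v_n=\sqrt{M_{n+1}}\,r_{n+1}$, so that $v_n/r_n=\sqrt{M_{n+1}}\,(r_{n+1}/r_n)$, and the bound $v_n\le\psi(r_{n+1})/r_n^d$ reduces to $\sqrt{M_{n+1}}\le (r_{n+1}/r_n)^{-d}\psi(r_{n+1})/r_{n+1}^{d+1}\cdot r_{n+1}^{d+1}/r_n^d\cdot (1/r_{n+1})$, which simplifies to a relation that is automatic provided $M_{n+1}\le\psi(r_{n+1})/r_{n+1}^{d+1}$. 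Since the latter is at least $M_{n+1}$ by construction, this holds. The divergence $v_n/r_n\to\infty$ is then forced by choosing $M_n$ to grow faster than $(r_n/r_{n+1})^2$; this is a diagonal choice, selecting $M_n$ recursively in terms of the already-fixed $r_1,\dots,r_{n-1}$ and the actual values of $\psi$. All three verifications (monotonicity of $\varphi$, the inequality $\varphi(r)r^d\le\psi(r)$, and $\varphi(r)/r\to\infty$) are then direct algebraic checks on the dyadic blocks.
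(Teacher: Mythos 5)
Your step-function construction takes a genuinely different route from the paper's, which is a one-liner: set $\varphi(0)=0$, $\varphi(r)=\inf_{s\in[r,1]}\psi(s)s^{-d}$ for $0<r\le 1$, and $\varphi(r)=\psi(1)$ for $r>1$. The running infimum is automatically non-decreasing (the interval shrinks as $r$ grows), $\varphi(r)r^d\le\psi(r)$ comes from taking $s=r$, and $\varphi(r)/r\to\infty$ follows by splitting $[r,1]$ at a small $\delta$: on $[r,\delta]$ one has $\psi(s)s^{-d}/r\ge\psi(s)s^{-(d+1)}$, while on $[\delta,1]$ one has $\psi(s)s^{-d}/r\ge\psi(\delta)/r$. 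That definition absorbs all the bookkeeping your construction has to manage by hand, and your version of that bookkeeping has real gaps.

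First, the algebraic reduction of the constraint $v_n\le\psi(r_{n+1})/r_n^d$ is wrong. Substituting $v_n=\sqrt{M_{n+1}}\,r_{n+1}$ gives $\sqrt{M_{n+1}}\le\bigl(\psi(r_{n+1})/r_{n+1}^{d+1}\bigr)\cdot(r_{n+1}/r_n)^d$, \emph{not} $\sqrt{M_{n+1}}\le\psi(r_{n+1})/r_{n+1}^{d+1}$; the factor $(r_{n+1}/r_n)^d<1$ disappears in your chain because of a sign error in the exponent. So the constraint is not ``automatic by construction''; it requires $(r_n/r_{n+1})^d\le\sqrt{M_{n+1}}$, precisely the calibration you flagged as the main obstacle. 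Second, the proposed calibration (``choose $M_n$ to grow faster than $(r_n/r_{n+1})^2$'') is circular and, worse, cannot be untangled by taking $M_n$ large: $r_{n+1}=\min(R_{n+1},r_n/2)$ depends on $M_{n+1}$, and for a slowly-divergent $\psi$ such as $\psi(r)=r^{d+1}\log\log(1/r)$ one has $R(M)\approx e^{-e^M}$, so enlarging $M_{n+1}$ collapses $r_{n+1}$ doubly exponentially fast and $(r_n/r_{n+1})^d$ outruns every polynomial in $M_{n+1}$. The workable move is the opposite: fix $r_n=2^{-n}$ and let $M_{n+1}$ be \emph{determined} as $\inf_{s\in(0,r_{n+1}]}\psi(s)s^{-(d+1)}$, so that the constraint becomes $2^d\le\sqrt{M_{n+1}}$, eventually true. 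Even after that fix, your requirement that $v_n$ be non-increasing is never verified and is \emph{not} satisfied by the formula $v_n=\sqrt{M_{n+1}}\,r_{n+1}$ whenever $M_{n+2}>4M_{n+1}$; patching this with a running minimum then requires an extra argument (e.g.\ splitting the index range at $n/2$) to recover $v_n/r_n\to\infty$. None of these repairs appear in the write-up, so as it stands the proof does not go through.
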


\begin{proof} Let $\varphi(0)=0$ and $\varphi(r)=\psi(1)$ for all $r>1$. Define $\varphi(r)=\inf_{s\in [r,1]} \psi(s) s^{-d}$ if $0<r\leq 1$.
Then clearly $\varphi(r)r^d\leq \psi(r)$ for all $r\in [0,1]$, and it is easy to check that $\varphi$ is a gauge function with
$\varphi\in \iG(1)$.
\end{proof}

For the following lemma see the proof of \cite[Theorem~7.7]{Ma} with the natural modifications.

\begin{lemma} \label{lip2}
Let $X$ be a metric space and let $d\in \NN^+$. Let $\varphi,\sigma$ be gauge functions such that
$\sigma(r)=\varphi(r)r^d$ for all $r\geq 0$. If $g\colon X \to \mathbb{R}^{d}$ is Lipschitz then
 \begin{equation*} \int_{\mathbb{R}^d} ^{\star} \mathcal{H}^{\varphi}(g^{-1}(y))\, \mathrm{d} \lambda^{d}(y)\leq c(d)
 \Lip(g)^{d} \mathcal{H}^{\sigma}(X), \end{equation*}
where $\int ^{\star}$ denotes the upper integral and $c(d)$ is a
finite constant depending on $d$ only.
\end{lemma}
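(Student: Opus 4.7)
The plan is to adapt the standard coarea-type argument behind Lemma~\ref{lip} (that is, \cite[Theorem~7.7]{Ma}) to the setting of generalized Hausdorff measures, exploiting the crucial identity $\sigma(r)=\varphi(r)r^d$ at the step where the $d$-dimensional factor coming from the images $g(A_i)$ is absorbed.

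First, I would fix $\delta>0$ and an arbitrary cover $X\subset \bigcup_{i=1}^{\infty} A_i$ with $\diam A_i\le \delta$ for all $i$. The key geometric observations are:
\begin{enumerate}[(i)]
\item Since $g$ is Lipschitz, $g(A_i)$ has diameter at most $\Lip(g)\diam A_i$, hence lies in a closed Euclidean ball of that radius; thus $\lambda^d(g(A_i))\le c(d)(\Lip(g)\diam A_i)^d$ for a dimensional constant $c(d)$.
\item For every $y\in \RR^d$, the fiber $g^{-1}(y)$ is covered by those $A_i$ for which $y\in g(A_i)$, and each such $A_i$ still has diameter at most $\delta$. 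Hence, pointwise in $y$,
$$\iH^{\varphi}_{\delta}(g^{-1}(y))\le \sum_{i=1}^{\infty} \chi_{g(A_i)}(y)\,\varphi(\diam A_i).$$
\end{enumerate}

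Next I would integrate the pointwise inequality in (ii). Since the right-hand side is a non-negative Borel function of $y$, the upper integral on the left is bounded by the ordinary integral on the right. Applying (i) and $\sigma(r)=\varphi(r)r^d$ term by term yields
\begin{align*}
\int_{\RR^d}^{\star}\iH^{\varphi}_{\delta}(g^{-1}(y))\,\mathrm{d}\lambda^d(y)
&\le \sum_{i=1}^{\infty}\varphi(\diam A_i)\,\lambda^d(g(A_i))\\
&\le c(d)\Lip(g)^d\sum_{i=1}^{\infty}\varphi(\diam A_i)(\diam A_i)^d\\
&= c(d)\Lip(g)^d\sum_{i=1}^{\infty}\sigma(\diam A_i).
\end{align*}
Taking the infimum over all admissible covers gives
$$\int_{\RR^d}^{\star}\iH^{\varphi}_{\delta}(g^{-1}(y))\,\mathrm{d}\lambda^d(y)\le c(d)\Lip(g)^d\,\iH^{\sigma}_{\delta}(X).$$

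Finally, I would let $\delta\to 0+$. The family $\iH^{\varphi}_{\delta}(g^{-1}(y))$ is monotone non-decreasing in $1/\delta$ and tends pointwise to $\iH^{\varphi}(g^{-1}(y))$, and likewise $\iH^{\sigma}_{\delta}(X)\nearrow \iH^{\sigma}(X)$. The monotone convergence theorem (valid for upper integrals of monotone increasing sequences of non-negative functions, since the upper integral of a non-negative function equals the infimum of integrals of non-negative measurable majorants and passes to monotone limits) produces the desired bound
$$\int_{\RR^d}^{\star}\iH^{\varphi}(g^{-1}(y))\,\mathrm{d}\lambda^d(y)\le c(d)\Lip(g)^d\,\iH^{\sigma}(X).$$
The main obstacle is the last step: one has to verify the monotone convergence property for the upper integral, which in the classical proof of Lemma~\ref{lip} is immediate from the fact that the Hausdorff measures of fibers are Borel (or at least universally measurable) in $y$; in the generalized setting a brief justification is needed that $y\mapsto \iH^{\varphi}_{\delta}(g^{-1}(y))$ remains sufficiently measurable, or alternatively one invokes the general monotone convergence property of the outer integral directly. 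Modulo this routine measurability check, the proof is a verbatim adaptation of the classical argument with the exponent $t=s+d$ replaced by the functional relation $\sigma(r)=\varphi(r)r^d$.
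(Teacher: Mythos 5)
Your proof is correct and is exactly the ``natural modifications'' of \cite[Theorem~7.7]{Ma} that the paper invokes without writing out: replace $(\diam A_i)^{t-d}$ by $\varphi(\diam A_i)$, absorb the factor $\lambda^d(g(A_i))\le c(d)\Lip(g)^d(\diam A_i)^d$ using $\sigma(r)=\varphi(r)r^d$, and pass to the limit $\delta\to0+$. Your flag about the limiting step is the right one to raise, and your resolution is sound (one may also replace $\chi_{g(A_i)}$ by the indicator of a closed ball of radius $\Lip(g)\diam A_i$ containing $g(A_i)$ to sidestep any measurability concern for the dominating function, exactly as in Mattila's original argument).
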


\begin{proof}[Proof of Theorem~\ref{t:graphg}] By Lemma~\ref{l:gauge} there is a gauge function
$\varphi\in \iG(1)$ such that $\varphi(r)r^d\leq \psi(r)$ for all $r\in [0,1]$. Let us define
$\sigma(r)=\varphi(r)r^d$ for all $r\geq 0$. Consider
\begin{align*} \iA=\{&f\in C([0,1],\mathbb{R}^d): \textrm{ there exists a non-empty open set} \\
&U_{f}\subset \mathbb{R}^d \textrm{ such that } \iH^{\varphi} (f^{-1}(y))>0 \textrm{ for all } y\in U_{f}\}.
\end{align*}
Theorem~\ref{t:real2} yields that $\iA$ is prevalent in $C(K,\mathbb{R}^d)$. Let us fix $f\in \iA$, it is enough to prove that
$\iH^{\psi}(\graph(f))>0$. Let $g\colon [0,1]\times \RR^d\to \RR^d$, $g(x,y)=y$ be the natural projection onto $\RR^d$ and let $X=\graph(f)$.
Applying Lemma~\ref{lip2} for $X$ and $g|_{X}$ implies that $\iH^{\sigma}(\graph(f))>0$. Since $\sigma(r)\leq \psi(r)$ for all $r\in [0,1]$,
we obtain that $\iH^{\psi}(\graph(f))>0$. The proof is complete.
\end{proof}

\section{Open problems} \label{s:open}

Let $\{B(t): t\in [0,1]\}$ be a standard linear Brownian motion.
Antunovi\'c et al.\ \cite[Theorem~1.5]{ABPR} proved that for every $f\in C[0,1]$ the zero set
$\iZ(B-f)$ has Hausdorff dimension at least $1/2$ with positive probability. Moreover, their proof gives that
$\iH^h(\iZ(B-f))>0$, where $h$ is a gauge function such that $h(2^{-n})=2^{-(\beta_1+\dots +\beta_n)}$ and $\beta_n\nearrow 1/2$.
Fubini's theorem implies that, with positive probability,
we have $\iH^h((B-f)^{-1}(y))>0$ for positively many $y$. Peres and Sousi proved a general 0-1 law \cite[Theorem~2.1]{PS1},
which yields that the above property holds with probability one.\footnote{More precisely, for every
closed set $A\subset [0,1]$ define the random variable $\Psi(A)$ such that $\Psi(A)=1$ if $\iH^h(A\cap (B-f)^{-1}(y))>0$ for positively many $y$ and $\Psi(A)=0$ otherwise. Applying \cite[Theorem~2.1]{PS1} for $\Psi$ yields that $\mathbb{P}(\Psi([0,1])>0)\in \{0,1\}$.} Therefore, almost surely,
$\dim_H (B-f)^{-1}(y)\geq 1/2$ for positively many $y$. We would like to know whether `positively many' can be replaced by
`non-empty open' and `almost every with respect to the occupation measure'.
The following problem asks whether the Wiener measure witnesses a weaker form of Corollary~\ref{c:occup}.

\begin{problem} Let $\{B(t): t\in [0,1]\}$ be a standard one-dimensional Brownian motion and let $f\in C[0,1]$.
Does there exist a random non-empty open set $U\subset \RR$ such that, almost surely, for all $y\in U$ we have
$$\dim_H (B-f)^{-1}(y)\geq 1/2?$$
Let $U$ be the maximal such open set. Does $\lambda((B-f)^{-1}(U))=1$ hold almost surely?
\end{problem}

Let $0<\alpha<1$ and let $C^{\alpha}[0,1]$ denote the set of $\alpha$-H\"older continuous functions $f\colon [0,1] \to \RR$ endowed with the norm
$$||f||_{\alpha}=\sup_{x\in [0,1]} |f(x)|+\sup_{0\leq x<y\leq 1} \frac{|f(x)-f(y)|}{|x-y|^{\alpha}}.$$
Clearly $C^{\alpha}[0,1]$ is a Banach space. Clausel and Nikolay \cite[Theorem~2]{CN}
proved that the graph of the prevalent $f\in C^{\alpha}[0,1]$ is of Hausdorff dimension $2-\alpha$, see also \cite{BH} for a generalization.
Studying the level sets seems to be a more delicate matter.

\begin{problem} Let $0<\alpha<1$. Is it true that for the prevalent $f\in C^{\alpha}[0,1]$ there exists an open set
$U_f\subset \RR$ such that $\lambda(f^{-1}(U_f))=1$ and for all $y\in U_f$ we have
\begin{equation*} \label{eq:Hold} \dim_H f^{-1}(y)\geq 1-\alpha?
\end{equation*}
Does $\dim_H f^{-1}(y)\geq 1-\alpha$ hold at least for positively many $y$?
\end{problem}

\begin{problem} Can we omit the condition that $K$ is non-exploding
from the Main Theorem, or more generally, from Theorem~\ref{t:Z}?
\end{problem}

We would like to describe the compact metric spaces $K$ for which Theorem~\ref{t:main} can be strengthened.
Here we consider only the one-dimensional case.

\begin{problem} Characterize the compact sets $K\subset \RR$ such that for the prevalent
$f\in C(K,\RR)$ there is a non-empty open set $U_f\subset \RR$ such that for all
$y\in U_f$ we have $\dim_H f^{-1}(y)=\dim_H K$.
\end{problem}

\begin{problem} Characterize the compact sets $K\subset \RR$ such that for the prevalent
$f\in C(K,\RR)$ there exists a $y_f\in \RR$ such that $\dim_H f^{-1}(y_f)=\dim_H K$.
\end{problem}

\subsection*{Acknowledgments}
We are indebted to Y. Peres, M. Vizer and O. Zindulka for numerous illuminating conversations.


\begin{thebibliography}{99}

\bibitem{ABPR} T. Antunovi\'c, K. Burdzy, Y. Peres, J. Ruscher,
Isolated zeros for Brownian motion with variable drift, \textit{Electron. J. Probab.} \textbf{16} (2011), no. 65, 1793--1814.

\bibitem{B} R. Balka, Inductive topological Hausdorff dimensions and fibers of generic continuous functions,
\textit{Monatsh. Math.} 174 (2014), no. 1, 1--28.

\bibitem{BBE} R. Balka, Z. Buczolich, M. Elekes, A new fractal dimension: The topological Hausdorff
dimension, \textit{Adv. Math.} \textbf{274} (2015), 881--927.

\bibitem{BBE2} R. Balka, Z. Buczolich, M. Elekes, Topological Hausdorff dimension and level sets of
generic continuous functions on fractals, \textit{Chaos Solitons Fractals} \textbf{45} (2012), no. 12, 1579--1589.

\bibitem{BDE} R. Balka, U. B. Darji, M. Elekes, Bruckner-Garg-type results with respect to Haar null sets in $C[0,1]$, to appear
in \textit{Proc. Edinb. Math. Soc.}, arXiv:1311.5293.

\bibitem{BH} F. Bayart, Y. Heurteaux,
On the Hausdorff dimension of graphs of prevalent continuous functions on compact sets, In:
\textit{Further Developments in Fractals and Related Fields}, edited by Julien Barral and St\'ephane Seuret, Springer, 2013, 25--34.

\bibitem{Bi} P. Billingsley, \textit{Probability and measure}, Third edition, John Wiley \& Sons, 1995.

\bibitem{CN} M. Clausel, S. Nikolay, Some prevalent results about strongly monoH\"older functions,
Nonlinearity \textbf{23} (2010), no.~9, 2101--2116.

\bibitem{C} J. P. R. Christensen, On sets of Haar measure zero in abelian Polish groups,
\textit{Israel J. Math.} \textbf{13} (1972), 255--260.

\bibitem{D} R. Dougherty, Examples of non-shy sets, \textit{Fund. Math.} \textbf{144} (1994), 73--88.

\bibitem{EV} M. Elekes, Z. Vidny\'anszky, Haar null sets without $G_\delta$ hulls, to appear in \textit{Israel J. Math.}, arXiv:1312.7667.

\bibitem{F} K. Falconer, \textit{Fractal geometry: Mathematical
foundations and applications}, Second Edition, John Wiley \& Sons,
2003.

\bibitem{F1} K. Falconer, Sets with large intersection properties, \textit{J. London Math. Soc.} \textbf{49} (1994), no. 2,
267--280.

\bibitem{F2} K. Falconer, \textit{Techniques in fractal geometry}, John Wiley \& Sons, 1997.

\bibitem{FF} K.~J. Falconer, J.~M. Fraser,
The horizon problem for prevalent surfaces, \textit{Math. Proc. Cambridge Philos. Soc.} \textbf{151} (2011), 355--372.

\bibitem{FH} J. M. Fraser, J. T. Hyde, The Hausdorff dimension of graphs of prevalent continuous
functions, \textit{Real Anal. Exchange} \textbf{37} (2011), no. 2, 333--352.

\bibitem{Fr} D. H. Fremlin, \textit{Measure Theory, vol. 4,
Topological Measure Spaces}, Torres Fremlin, 2003.

\bibitem{GJMNOP} V. Gruslys, J. Jonu\v{s}as, V. Mijovi\`c, O. Ng, L. Olsen, I. Petrykiewicz, Dimensions of
prevalent continuous functions, \textit{Monatsh. Math.} \textbf{166} (2012), 153--180.

\bibitem{Ha} P. R. Halmos, \textit{Measure theory}, Springer-Verlag, 1974.

\bibitem{Ho} J. D. Howroyd, On dimension and on the existence of sets of finite positive Hausdorff measure, \textit{Proc. London Math. Soc.}
\textbf{70} (1995), no. 3, 581--604.

\bibitem{HSY} B. Hunt, T. Sauer, J. Yorke, Prevalence: a translation-invariant ``almost
every'' on infinite-dimensional spaces, \textit{Bull. Amer. Math. Soc.} \textbf{27} (1992), 217--238.

\bibitem{IT} S. Ikeda, M. Tamashiro, Frostman theorem for the packing measure,
\textit{C. R. Math. Acad. Sci. Paris} \textbf{320} (1995), no. 1, 1445--1448.

\bibitem{JP} H. Joyce, D. Preiss, On the existence of subsets of finite positive packing measure,
\textit{Mathematika} \textbf{42} (1995), 15--24.

\bibitem{K} A. S. Kechris, \textit{Classical descriptive set theory}, Springer-Verlag, 1995.

\bibitem{KMZ} T. Keleti, A. M\'ath\'e, O. Zindulka, Hausdorff dimension of metric spaces and Lipschitz maps onto cubes, \textit{Int. Math. Res. Not. IMRN}, \textbf{2014} (2014), no. 2, 289--302.

\bibitem{Ki} B. Kirchheim, Hausdorff measure and level sets of typical continuous mappings
in Euclidean spaces, \textit{Trans. Amer. Math. Soc.} \textbf{347} (1995),
no. 5, 1763--1777.

\bibitem{Ma} P. Mattila, \textit{Geometry of sets and measures in Euclidean spaces}, Cambridge Studies in Advanced Mathematics No.~44,
Cambridge University Press, 1995.

\bibitem{MW} R. D. Mauldin, S. C. Williams, On the Hausdorff dimension of some graphs, \textit{Trans. Amer. Math. Soc.} \textbf{298} (1986), 793--803.

\bibitem{Mc} M. McClure, The prevalent dimension of graphs, \textit{Real Anal. Exchange} \textbf{23} (1997), 241--246.

\bibitem{MN} M. Mendel, A. Naor, Ultrametric subsets with large Hausdorff dimension, \textit{Invent. Math.} \textbf{192} (2013), no. 1, 1--54.

\bibitem{MP} P. M\"orters, Y. Peres, \textit{Brownian Motion}, with an appendix by
Oded Schramm and Wendelin Werner, Cambridge University Press, 2010.

\bibitem{M} J. Mycielski, Some unsolved problems on the prevalence of ergodicity, instability
and algebraic independence, \textit{Ulam Quart.} \textbf{1} (1992), no. 3, 30–-37.

\bibitem{NZ} A. Nekvinda, O. Zindulka, Monotone metric spaces, \textit{Order} \textbf{29} (2012), no. 3, 545--558.

\bibitem{O} T. Orponen, On the packing measure of slices of self-similar sets,
to appear in \textit{J. Fractal Geom.}, arXiv:1309.3896.

\bibitem{PS1} Y. Peres, P. Sousi, Brownian motion with variable drift: 0-1 laws, hitting probabilities
and Hausdorff dimension, \textit{Math. Proc. Cambridge Philos. Soc.}, \textbf{153} (2012), no. 2, 215--234.

\bibitem{PS} Y. Peres, P. Sousi, Dimension of Fractional Brownian motion with variable drift,
to appear in \textit{Probab. Theory Related Fields}, arXiv:1310.7002.

\bibitem{Ro} C. A. Rogers, \textit{Hausdorff measures}, Cambridge University Press, 1970.

\bibitem{R} C. A. Rogers, Sets non-$\sigma$-finite for Hausdorff measures, \textit{Mathematika} \textbf{9} (1962), no. 2, 95--103.

\bibitem{Sh} A. Shaw, Prevalence, \textit{M.Math Dissertation}, University of St. Andrews, 2010.

\bibitem{SS} M. Sion, D. Sjerve, Approximation properties of measures generated by continuous set functions, \textit{Mathematika}
\textbf{9} (1962), no. 2, 145--156.

\bibitem{S} S. Solecki, On Haar null sets, \textit{Fund. Math.} \textbf{149} (1996), 205--210.

\bibitem{THJ} F. Tops{\o}e, J. Hoffmann-J{\o}rgensen, Analytic spaces and their application,
In: \textit{Analytic Sets}, edited by C. A. Rogers et al.\, Academic Press, London, 1980, 317--401.

\bibitem{Za} L. Zaj\'{\i}\v{c}ek, On differentiability properties of typical continuous functions and Haar null sets,
\textit{Proc. Amer. Math. Soc.} \textbf{134} (2005), no.~4, 1143--1151.

\bibitem{Z} O. Zindulka, Mapping Borel sets onto balls and self-similar sets by Lipschitz and nearly
Lipschitz maps, in preparation.

\end{thebibliography}
\end{document}